\theoremstyle{plain}   
\theoremstyle{plain}   
\theoremstyle{plain} 	
\theoremstyle{plain} 	
\theoremstyle{plain} 	
\theoremstyle{plain} 	
\theoremstyle{plain}	
\theoremstyle{plain}	
\theoremstyle{plain}	
\theoremstyle{plain}	
\theoremstyle{plain}   
\theoremstyle{definition}
\newtheorem{Exa}{Example}
\def\norm#1{\|#1\|}
\def\ceil#1{\left\lceil#1\right\rceil}
\def\floor#1{\left\lfloor#1\right\rfloor}
\def\implies{\Longrightarrow}
\def\iff{\Longleftrightarrow}
\def\indicator{\mathds{1}}
\def\Var{\mbox{Var}}
\def\Cov{\mbox{Cov}}
\def\EB{\textbf{E}}
\def\PB{\textbf{P}}
\def\E{\mathbb{E}}
\def\N{\mathbb{N}}
\def\P{\mathbb{P}}
\def\R{\mathbb{R}}
\def\Z{\mathbb{Z}}
\def\FM{\mathcal{F}}
\def\GM{\mathcal{G}}
\def\HM{\mathcal{H}}
\def\IM{\mathcal{I}}
\def\KM{\mathcal{K}}
\def\MM{\mathcal{M}}
\def\RM{\mathcal{R}}
\def\SM{\mathcal{S}}
\def\YM{\mathcal{Y}}
\def\ZM{\mathcal{Z}}
\def\Ct{\widetilde{C}}
\def\Jt{\widetilde{J}}
\def\Kt{\widetilde{K}}
\def\Rt{\widetilde{R}}
\def\St{\widetilde{S}}
\def\Vt{\widetilde{V}}
\def\Wt{\widetilde{W}}
\def\Xt{\widetilde{X}}
\def\Yt{\widetilde{Y}}
\def\Zt{\widetilde{Z}}
\def\mt{\widetilde{m}}
\def\st{\widetilde{s}}
\def\xt{\widetilde{x}}
\def\Uh{\widehat{U}}
\def\Vh{\widehat{V}}
\def\Yh{\widehat{Y}}
\def\Zb{\overline{Z}}
\def\epsilont{\widetilde{\epsilon}}
\def\deltat{\widetilde{\delta}}
\def\zetat{\widetilde{\zeta}}
\begin{document}


\section{Introduction and statement of results}
\label{sec:Intro} Let $\Omega = [0,1]^{\Z \times \N}$. An \emph{excited
random walk} (ERW) started from position $x_0$ in a \emph{cookie environment}
$\omega = (\omega(x,i))_{x \in \Z, i \in \N} \in \Omega$ is an integer-valued
stochastic process $(X_n)_{n \geq 0}$ with probability measure
$P_{x_0}^{\omega}$ given by
\begin{align*}
& P_{x_0}^{\omega}(X_0 = x_0) = 1, \\
& P_{x_0}^{\omega}(X_{n+1} = X_n + 1|X_0, \ldots, X_n) = \omega(X_n, I_n), \\
& P_{x_0}^{\omega}(X_{n+1} = X_n - 1|X_0, \ldots, X_n) = 1 - \omega(X_n, I_n),
\end{align*}
where $I_n = |\{0 \leq m \leq n: X_m = X_n\}|$. The name cookie environment
comes from the following informal interpretation first given in
\cite{Zerner2005}. At each site $x \in \Z$ we initially place an infinite
stack of ``cookies''. The strength of the $i$-th cookie at site $x$ is
$\omega(x,i)$. Upon the $i$-th visit to $x$ the walker consumes the $i$-th
cookie there, which biases it to jump right with probability $\omega(x,i)$
and left with probability $1 - \omega(x,i)$.

A multi-dimensional form of this excited random walk model was first
introduced in \cite{Benjamini2003}, but with only a single cookie per site.
Subsequently the model was extended in \cite{Zerner2005} to the case of
infinite cookie stacks and random environments, for the 1-dimensional
setting, and has since been a topic of substantial interest
\cite{Kosygina2008, Basdevant2008, Basdevant2008b, Kosygina2011,
Kosygina2014, Amir2016, Kozma2016, Kosygina2017}. For a good and fairly
recent survey the reader is referred to \cite{Kosygina2013}.

Henceforth, we will be concerned only with the case of 1-dimensional ERW when
the environment $\omega$ is itself random. The setup is analogous to the case
of classical random walk in random environment. We first sample $\omega \in
\Omega$ according to some probability measure $\PB$ on $\Omega$. Then,
conditional on selecting the particular environment $\omega$, the random walk
proceeds as described above. For an initial position $x \in \Z$,
$P_x^{\omega}$ is called the \emph{quenched measure} of the walk in the
environment $\omega$, and the \emph{averaged} or \emph{annealed} measure
$P_x$ is defined by
\begin{align}
\label{eq:DefAnnealedProbMeasure}
P_x(\cdot) \equiv  \int P_x^{\omega}(\cdot) \PB(d\omega) = \EB[P_x^{\omega}(\cdot)].
\end{align}

Of course, to say anything meaningful about the behavior of the walk under
the averaged measure $P_x$ one must put some assumptions on the probability
measure $\PB$ over cookie environments. The following assumptions have often
been used in previous works: \begin{enumerate}[(POS)]
\item[(IID)] The sequence of cookie stacks $(\omega(x,\cdot))_{x \in \Z}$
    is i.i.d. under $\PB$.
\item[(SE)] The sequence of cookie stacks $(\omega(x,\cdot))_{x \in \Z}$ is
    stationary and ergodic under $\PB$. \
\item[(BD)] The cookie stacks are of uniform bounded height. That is, there
    exists a deterministic $M \in \N$ such that $\omega(x,i) = 1/2$ a.s.,
    for all $i > M$ and $x \in \Z$.
\item[(POS)]  The cookies are positive (or right biased): $\omega(x,i) \geq
    1/2$ a.s., for all $x \in \Z$ and $i \in \N$.
\item[(ELL)] The cookie environment is elliptic: $\omega(x,i) \in (0,1)$
    a.s., for all $x \in \Z$ and $i \in \N$.
 \end{enumerate}

If the measure $\PB$ satisfies either (IID) or (SE) and either (POS) or (BD)
then the total expected drift per site
\begin{align}
\label{eq:OriginalDeltaDef}
\delta \equiv \sum_{i = 1}^{\infty} \Big( 2 \EB[\omega(x,i)] - 1 \Big)
\end{align}
is well defined (possibly $+\infty$) and independent of $x$, and it turns out
that many key properties of the walk depend on this parameter $\delta$. The
(ELL) condition is only a technical assumption that is necessary to avoid
certain trivialities, and some weaker forms have been used instead in certain
instances.

\subsection{Summary of known results}

Here we list some important known results about the ERW model relevant to our
work. Some of these results hold (and were initially stated) with a somewhat
weaker form of ellipticity than the (ELL) condition given above, but these
differences will not be important for us. The following terminology will be
used in our statements, as well as in the statements of the new results that
follow in Section \ref{subsec:StatementNewResults}.
\begin{itemize}
\item We say that the random walk $(X_n)$ is \emph{recurrent} if $P_0(X_n =
    0, \mbox{ i.o.}) = 1$, \emph{right transient} if $P_0(X_n \rightarrow
    +\infty) = 1$, and \emph{left transient} if $P_0(X_n \rightarrow
    -\infty) = 1$.
\item We say that the random walk $(X_n)$ \emph{satisfies a law of large
    numbers} with \emph{velocity} $v \in \R$ if $\lim_{n \to \infty} X_n/n
    = v$, $P_0$ a.s. If $v \not= 0$, we say the random walk is
    \emph{ballistic}.
\end{itemize}

\begin{theorem}(Zero-One Law for Directional Transience, Theorem 1.2 of \cite{Amir2016}) \\
\label{thm:ZeroOneLawSE} Assume $\PB$ is (SE) and (ELL). Then $P_0(X_n
\rightarrow +\infty) \in \{0,1\}$ and $P_0(X_n \rightarrow -\infty) \in
\{0,1\}$.
\end{theorem}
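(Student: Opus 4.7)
The plan is to prove $P_0(A^+) \in \{0,1\}$, where $A^+ := \{X_n \to +\infty\}$; the statement for $A^-$ is symmetric. I would assume $p := P_0(A^+) > 0$ and show $p = 1$, in three stages: a trichotomy from ellipticity, a localization-plus-escape estimate driven by stationarity, and a shift-invariance argument at the quenched level.

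First, ellipticity precludes the walk from remaining in any bounded interval forever, so one obtains the trichotomy $P_0(A^+ \cup A^- \cup R) = 1$, where $R := \{\liminf_n X_n = -\infty \text{ and } \limsup_n X_n = +\infty\}$. Second, I would localize by writing $A^+ = \bigcup_k A^+_k$ with $A^+_k := A^+ \cap \{\min_n X_n \geq -k\}$, and choose $k$ with $P_0(A^+_k) \geq p/2 > 0$; on $A^+_k$ the trajectory only samples cookies at sites $\geq -k$.

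I would then establish a uniform escape bound: there exists $\eta_* > 0$ such that for every $N \geq 0$, on the event $\{\tau_N < \infty\}$ (with $\tau_N := \inf\{n : X_n = N\}$), the conditional annealed probability that the walk subsequently never falls below $N-k-1$ and tends to $+\infty$ is at least $\eta_*$. Two ingredients enter: (i) by stationarity of $\PB$, the cookie stacks at sites $\geq N+1$ are untouched at time $\tau_N$ and have the same law as those at sites $\geq 1$, so up to translation they replicate the setting of $A^+_k$; (ii) by ellipticity, the walker crosses the previously-visited window $[N-k,\,N]$ and reaches $N+k+1$ with probability at least $\rho^{k+1} > 0$ for some $\rho$ depending only on the ellipticity constant. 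Applied along the infinitely many finite hitting times available on $R$, a conditional Borel--Cantelli argument yields $P_0(R) = 0$, whence $P_0(A^+) + P_0(A^-) = 1$.

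To exclude the intermediate configuration $0 < P_0(A^+) < 1$, I would show that the quenched probability $q(\omega) := P_0^\omega(A^+)$ is $T$-invariant $\PB$-a.s., where $T$ is the spatial shift on $\Omega$. Identifying $P_0^{T\omega}(X_n \to +\infty) = P_1^\omega(X_n \to +\infty)$ directly from the definition of $T$ reduces this to showing $P_0^\omega(A^+) = P_1^\omega(A^+)$ $\PB$-a.s., which I would obtain by an ellipticity-based coupling of the two quenched walks that forces them to asymptotically agree modulo a finite delay. Ergodicity of $\PB$ under $T$ then forces $q$ to be $\PB$-a.s.\ constant, and the escape bound forces this constant to be $0$ or $1$. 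The main obstacle is precisely this coupling: because the ERW consumes cookies as it progresses, walks from $0$ and from $1$ in the same $\omega$ see different stacks at commonly-visited sites; managing the cookie-consumption bookkeeping and leveraging ellipticity near the origin to synchronize consumption patterns almost surely is the technical heart of the proof.
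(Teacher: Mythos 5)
The paper offers no proof of Theorem~\ref{thm:ZeroOneLawSE}; it is imported verbatim from Amir, Berger, and Orenshtein \cite{Amir2016}, so there is no in-paper argument to compare against. Evaluated on its own terms, your outline has two substantive gaps, and both are artifacts of replacing (IID) by general (SE).

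The ``uniform escape bound'' in step~3 is the central flaw. You appeal to stationarity of $\PB$ to assert that, conditional on the trajectory up to $\tau_N$, the unexplored stacks at sites $\geq N+1$ still carry the stationary marginal law, hence furnish a translation-invariant lower bound $\eta_* > 0$ on the subsequent escape probability. That inference is valid under (IID), where $(\omega(j,\cdot))_{j > N}$ is independent of everything the walk has seen up to time $\tau_N$. Under a general stationary ergodic $\PB$ the trajectory through $\tau_N$ is a function of $(\omega(j,\cdot))_{j \leq N}$ and the coin tosses, and $(\omega(j,\cdot))_{j > N}$ is typically \emph{not} independent of $(\omega(j,\cdot))_{j \leq N}$. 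So the conditional law of the environment ahead, given the stopping information at $\tau_N$, is not the stationary marginal, there is no a priori uniform $\eta_*$, and the conditional Borel--Cantelli step has nothing to feed on. This is exactly the obstruction that makes the (SE) zero-one law hard; the ping-pong-style argument you describe is the standard (IID) route, and it does not transfer.

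The shift-invariance claim in step~5 is also stronger than what is available. You correctly identify $P_0^{T\omega}(A^+) = P_1^\omega(A^+)$, but then want the \emph{function} $q(\omega) := P_0^\omega(A^+)$ to be $T$-invariant, which would require $P_0^\omega(A^+) = P_1^\omega(A^+)$ for $\PB$-a.e.\ $\omega$. Since the ERW is not Markov, $q(\omega)$ is not forced to be $\{0,1\}$-valued quenched by quenched (the quenched tail $\sigma$-field is not obviously trivial), and the two walks share one environment but have different cookie-consumption histories at overlapping sites, so the coupling you gesture at has no mechanism to synchronize visit counts. What a finite-modification argument in the spirit of Lemma~\ref{lem:FiniteModificationArgument} \emph{does} give is the weaker statement $\{P_0^\omega(A^+) > 0\} = \{P_1^\omega(A^+) > 0\}$ up to $\PB$-null sets: excise from a witnessing trajectory the excursions below the starting site, as in that lemma, to transfer positivity in either direction. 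This makes $\{q > 0\}$ a $T$-invariant event, so ergodicity yields $\PB(q > 0) \in \{0,1\}$. But that only handles the case $\PB(q > 0) = 0$; in the remaining case it still leaves you needing to upgrade ``$q > 0$ a.s.'' to ``$q = 1$ a.s.,'' which is precisely where the defective escape bound of step~3 would be needed again.
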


\begin{theorem}(Law of Large Numbers, Theorem 4.1 of \cite{Kosygina2013}) \\
\label{thm:LLNSE} If $\PB$ is (SE) and the conclusion of Theorem
\ref{thm:ZeroOneLawSE} holds, then $(X_n)$ satisfies a law of large numbers
with some velocity $v \in [-1,1]$. In particular, if $\PB$ is (SE) and (ELL),
then $(X_n)$ satisfies a law of large numbers.
\end{theorem}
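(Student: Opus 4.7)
The plan is to split the argument according to the two-sided directional zero-one law assumed in the hypothesis into three mutually exclusive cases: (i) $P_0(X_n \to +\infty) = 1$, (ii) $P_0(X_n \to -\infty) = 1$, and (iii) $P_0(X_n \to +\infty) = P_0(X_n \to -\infty) = 0$. Case (ii) reduces to case (i) via the reflection $x \mapsto -x$ (which preserves the (SE) property), so it suffices to handle (i) and (iii).

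For case (i), define the hitting times $T_k = \inf\{n \geq 0 : X_n = k\}$ and set $\tau_k = T_k - T_{k-1}$ for $k \geq 1$; on the $P_0$-a.s.\ event $\{X_n \to +\infty\}$ every $T_k$ is finite. Because $\{X_n \geq k\} = \{T_k \leq n\}$, a.s.\ convergence $T_k/k \to u \in [1,\infty]$ is equivalent to $X_n/n \to 1/u$, so the goal reduces to an ergodic theorem for $(\tau_k)$. I would argue that $(\tau_k)_{k \geq 1}$ is a stationary ergodic sequence under $P_0$ by lifting to the ``environment viewed from the walker at new-maximum times'': at each time $T_k$, record the partially-consumed cookie configuration observed from position $k$, and use the spatial shift-invariance of $\PB$---which guarantees that cookies at sites not yet visited agree in law with a shift of the original environment---together with the ergodicity of $\PB$ to transfer stationarity and ergodicity to $(\tau_k)$. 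Birkhoff's ergodic theorem then gives $T_k/k \to E_0[\tau_1] \in [1,\infty]$ a.s., yielding the LLN with $v := 1/E_0[\tau_1] \in [0,1]$.

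For case (iii), the goal is to show $X_n/n \to 0$ $P_0$-a.s. I would argue by contradiction: if $P_0(\limsup X_n/n > 0) > 0$, then by ergodicity of $\PB$ the random quantity $\limsup X_n/n$ is $P_0$-a.s.\ equal to a deterministic constant, and a strictly positive value would, in combination with standard hitting-time estimates for directional transience, force $P_0(X_n \to +\infty) > 0$, contradicting (iii); the symmetric argument controls the $\liminf$, yielding $v = 0$.

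The main obstacle is establishing stationarity and ergodicity of $(\tau_k)$ in case (i). The difficulty is that $\tau_{k+1}$ depends not only on the fresh cookies at sites $\geq k+1$ but also on the partially-consumed cookies at sites $\leq k$ that the walker may revisit during leftward excursions; thus $(\tau_k)$ is not a function of disjoint pieces of the original environment, and one must work in the enlarged state space of modified cookie environments, verifying that the shift structure inherited from $\PB$ still produces a stationary ergodic process there.
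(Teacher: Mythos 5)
The three-case decomposition via the directional zero-one law is the right opening move, and the spatial reflection handles case (ii). But the central claim in case (i), that $(\tau_k)_{k \geq 1}$ is a stationary sequence under $P_0$, is false, and you have diagnosed the reason yourself without repairing it. Concretely, $\tau_1$ is a measurable functional of the \emph{fresh} cookie stacks at sites $\leq 0$ and the coins there, whereas $\tau_2$ is (after shifting coordinates by one) a functional of the \emph{partially consumed} stacks at sites $\leq 0$, the consumption pattern being determined by the trajectory on $[0,T_1]$. Stationarity of $\PB$ makes the fresh half-line environments at levels $0$ and $1$ equal in law, but it says nothing about the law of the consumed half-line environment at time $T_1$, which is not the same as the fresh one. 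Hence $\tau_1$ and $\tau_2$ are not equal in law in general, $(\tau_k)$ is not stationary, and Birkhoff's theorem does not apply. Saying that one ``must work in the enlarged state space of modified cookie environments, verifying that the shift structure inherited from $\PB$ still produces a stationary ergodic process there'' names the obstruction but does not overcome it. Closing this gap is exactly the substance of the cited Theorem 4.1 of \cite{Kosygina2013} (which the present paper invokes rather than reproves), and it requires a genuinely different mechanism --- a coupling or comparison with an auxiliary stationary process, a sub/superadditivity argument for the hitting times, or a construction of an invariant measure for the environment seen from the particle at ladder epochs --- none of which appears in your sketch.

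Case (iii) has a parallel problem. Ergodicity of $\PB$ gives almost sure constancy of shift-invariant functionals of $\omega$ alone, but $\limsup_n X_n/n$ also depends on the walk's coin tosses, so it is not obviously $P_0$-a.s.\ constant; a separate zero-one argument for the joint environment-and-walk randomness is needed and not given. Even granting constancy, the passage from $\limsup_n X_n/n = c > 0$ to $P_0(X_n \to +\infty) > 0$ is not supplied by ``standard hitting-time estimates'': a recurrent walk can a priori make right excursions of linear extent and still return to the origin, so ruling out such excursions in the recurrent regime is essentially the statement to be proved here, not a fact one can borrow.
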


\begin{theorem}(Transience vs. Recurrence Threshold)
\label{thm:TransienceRecurrenceKnown}
\item[(i)] (Theorem 1 of \cite{Kosygina2008}). Assume $\PB$ satisfies (IID),
    (BD), and (ELL) and let $\delta$ be as in \eqref{eq:OriginalDeltaDef}.
    Then $(X_n)$ is recurrent if $\delta \in [-1,1]$, right transient if
    $\delta > 1$, and left transient if $\delta < -1$.
\item[(ii)] (Theorem 12 of \cite{Zerner2005}) Assume $\PB$ satisfies (SE),
    (POS), and (ELL) and let $\delta$ be as in \eqref{eq:OriginalDeltaDef}.
    Then $(X_n)$ is recurrent if $\delta \in [0,1]$ and right transient if
    $\delta > 1$.
\end{theorem}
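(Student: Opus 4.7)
The plan is to use the standard backward branching-process representation of ERW, converting the walk into a Markov chain indexed by site whose behavior as one tries to escape to $+\infty$ encodes transience vs.\ recurrence. For part (i), fix a level $L \in \N$, let $T_L$ be the first hitting time of $L$ under $P_0^\omega$, and define the down-crossing counts $D_n^L := |\{k < T_L : X_k = n+1, X_{k+1} = n\}|$ for $0 \leq n \leq L-1$, with $D_L^L := 0$. A direct counting argument (one more up-crossing than down-crossing across each edge the walker must traverse) shows that the number of visits to site $n$ before $T_L$ is $V_n^L = D_{n-1}^L + D_n^L + 1$, of which $D_n^L + 1$ are right-steps and $D_{n-1}^L$ are left-steps; equivalently, $D_{n-1}^L$ equals the number of failures before the $(D_n^L + 1)$-th success in the Bernoulli sequence with probabilities $\omega(n,1), \omega(n,2), \ldots$. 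Under (IID), the cookie stacks at distinct sites are independent, so $(D_{L-k}^L)_{0 \leq k \leq L}$ is a time-homogeneous Markov chain on $\{0, 1, 2, \ldots\}$ whose transition kernel is independent of $L$.

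A direct computation, using (BD) to make the cookies past index $M$ fair, gives $\E[D_{n-1}^L - D_n^L \mid D_n^L = j] \to 1 - \delta$ as $j \to \infty$, with uniformly bounded conditional second moments. Lamperti's diffusion criteria then yield: the chain is positive recurrent iff $\delta > 1$, null recurrent iff $\delta = 1$, and transient iff $\delta < 1$. Translating back, positive recurrence forces $\sup_L D_0^L < \infty$ $P_0$-a.s., which is equivalent to right-transience of the walk, while the other regimes force $D_0^L \to \infty$, ruling out right-transience. A symmetric chain of up-crossings at negative sites handles $\delta < -1$; combining the two with a directional zero-one law gives the threshold picture on $[-1, 1]$. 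For part (ii), (SE) destroys the Markov property, but (POS) enables a monotone coupling with an (IID, POS) walk sharing the same one-site marginal, which transfers the transience conclusion for $\delta > 1$ from part (i); recurrence on $[0,1]$ follows as in Zerner's original proof from a Borel-Cantelli argument exploiting ergodicity, since a right-biased walk with per-site drift at or below the threshold accumulates too slowly to escape to $+\infty$.

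The principal obstacle is the critical case $\delta = 1$ in part (i). Away from criticality the linear drift $1 - \delta$ makes the Foster-Lyapunov comparison routine, but at $\delta = 1$ the leading-order drift vanishes and null recurrence must be extracted from the second-order term using the Lamperti test function $f(j) = \log(j+1)$. This demands a sharp, $j$-uniform expansion of the mean and variance of $D_{n-1}^L - D_n^L$, for which the bounded-stack assumption (BD) is essential: it guarantees that beyond the first $M$ cookies the one-step increment reduces to the exactly computable fluctuations of a shifted negative binomial, making the second-moment drift condition explicit.
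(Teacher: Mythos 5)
Your plan runs the branching--process machinery on the wrong process, and the resulting Lamperti thresholds do not match the theorem. The down-crossing counts $(D^L_{L-k})_k$ you construct form the \emph{backward} branching process (the one the paper calls $V_k$); its one-step expected increment tends to $1-\delta$, as you correctly compute. But for a Lamperti chain whose increments have variance of order $2j$ at height $j$, the null-recurrent regime is a nondegenerate interval of drifts, not a single point: with drift $c$ and second moment $\sim 2j$, the chain is transient iff $c>1$, positive recurrent iff $c<-1$, and null recurrent for $-1\le c\le 1$. Plugging in $c=1-\delta$ gives transience of the backward chain iff $\delta<0$, null recurrence for $0\le\delta\le 2$, and positive recurrence only for $\delta>2$. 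So the claim ``positive recurrent iff $\delta>1$, null iff $\delta=1$, transient iff $\delta<1$'' is a Foster-type bounded-increment dichotomy, not the Lamperti one, and it lands on the wrong threshold: positive recurrence of the backward chain kicks in at $\delta=2$, the \emph{ballisticity} threshold (Theorem \ref{thm:BallisticityKnown}), not the transience threshold $\delta=1$. The process that does see $\delta=1$ is the \emph{forward} branching process $U_k$ (right-crossings before $\tau_0^X$), whose drift is $+\delta$ and whose Lamperti exponent $\theta=\delta$ hits the critical value $1$ exactly where the theorem claims; this is the route taken by Kosygina--Zerner and, in this paper, by Lemma \ref{lem:TransienceRecurrenceInTermsOfSurvivalUk} together with Lemma \ref{lem:TransienceRecurrenceConditionsForMC}. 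There is also a secondary gap: even granting good behavior of the backward chain, $\sup_L D^L_0<\infty$ is equivalent to the walk visiting site $1$ finitely often, which holds for left-transient walks as well, so it cannot be ``equivalent to right-transience'' without further input; and for $T_L=\infty$ the backward recursion no longer describes actual down-crossings, so the chain you analyze and the walk decouple precisely in the left-transient regime.

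For part (ii) the proposed coupling does not exist: two probability measures on cookie stacks with the same one-site marginal cannot stochastically dominate one another unless they coincide, so you cannot transfer transience from an (IID) walk to an (SE) walk via a monotone coupling with matching marginals. Indeed, the paper's Examples \ref{exmp1}--\ref{exmp3} exhibit (SE)/(POS) environments with the same $\delta$ as their (IID) counterparts whose walks behave drastically differently. Zerner's proof in the (SE), (POS) setting instead works directly with the forward branching process and a Lyapunov/martingale argument exploiting ergodicity, which is also the structure behind the paper's Theorem \ref{thm:TransienceRecurrence}.
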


\begin{theorem}(Ballisticity Threshold, Theorem 2 of \cite{Kosygina2008})\\
\label{thm:BallisticityKnown} Assume $\PB$ satisfies (IID), (BD), and (ELL)
and let $\delta$ be as in \eqref{eq:OriginalDeltaDef}. Also, let $v$ be the
velocity from Theorem \ref{thm:LLNSE}. Then $v = 0$ if $\delta \in [-2,2]$,
$v > 0$ if $\delta > 2$, and $v < 0$ if $\delta < -2$.
\end{theorem}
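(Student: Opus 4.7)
The plan is to reduce the ballisticity computation to the analysis of a branching process with migration, following the approach of \cite{Kosygina2008}. By Theorem~\ref{thm:TransienceRecurrenceKnown}(i) the walk is recurrent whenever $\delta \in [-1,1]$, which combined with Theorem~\ref{thm:LLNSE} forces $v = 0$ in that range. The reflection $\omega(x,i) \mapsto 1 - \omega(-x,i)$ preserves (IID), (BD), and (ELL), maps $\delta \mapsto -\delta$, and sends $v \mapsto -v$, so it suffices to establish $v > 0 \iff \delta > 2$ in the right-transient regime $\delta > 1$.

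Let $T_n := \inf\{k : X_k = n\}$. In the right-transient regime, $T_n < \infty$ almost surely and $T_n/n \to 1/v$ by Theorem~\ref{thm:LLNSE}, with $v = 0$ interpreted as $T_n/n \to \infty$. Counting up- and down-steps in $[0, T_n]$ gives $T_n = n + 2 \sum_{k \in \Z} D_k^n$, where $D_k^n$ is the number of jumps from $k+1$ to $k$ before time $T_n$. Processing sites from right to left shows that, conditional on the cookies, $D_{k-1}^n$ is the number of left-jumps produced by reading $\omega(k,1), \omega(k,2), \ldots$ as independent Bernoulli trials and stopping at the $(D_k^n + 1)$-st right-jump. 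Under (IID), the stacks $\omega(k,\cdot)$ are independent across $k$, so the backward chain $Z_j := D_{n-j}^n$ extends (using an infinite i.i.d.\ reservoir of cookie columns) to a time-homogeneous Markov chain $(Z_j)_{j \geq 0}$ on $\Z_{\geq 0}$. Under (BD), the first $M$ trials at each site are cookie-biased while subsequent trials are fair, which, after a short calculation, gives the one-step conditional mean $E[Z_{j+1} \mid Z_j = z] = z + 1 - \delta$ for $z \geq M$, with conditional variance of order $z$. Thus $(Z_j)$ is a branching process with bounded migration and critical $\mathrm{Geom}(1/2)$ offspring, possessing a restoring drift $1 - \delta < 0$ whenever $\delta > 1$.

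The main technical step is to show that $(Z_j)$ admits a unique stationary distribution $\pi$ and that $E_\pi[Z] < \infty \iff \delta > 2$. Granted this, an ergodic theorem for the chain, combined with a boundary estimate $\sum_{k < 0} D_k^n = o(n)$ a.s.\ (a consequence of right-transience and a standard large-deviation bound on leftward excursions), yields $T_n/n \to 1 + 2 E_\pi[Z]$, establishing $v > 0 \iff \delta > 2$; the reflection argument then handles $\delta < -1$. The main obstacle is the moment analysis. Positive recurrence of $(Z_j)$ follows from the negative drift $1 - \delta$ via a Foster--Lyapunov argument with test function $V(z) = z$, but the sharp threshold $\delta = 2$ for $E_\pi[Z] < \infty$ requires a finer calculation, either with the Lyapunov function $V(z) = z^2$ (balancing the $O(1)$ drift against the $O(z)$ variance) or by a direct generating-function analysis of the branching-with-migration recursion; special care is needed at the boundary $\delta = 2$, where one must confirm $E_\pi[Z] = \infty$ and hence $v = 0$.
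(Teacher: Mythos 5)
The theorem you are asked to prove is not actually proved in the paper; it is imported verbatim from \cite{Kosygina2008}. The paper's closest analogue with a proof is Theorem~\ref{thm:Ballisticity}, proved in Section~\ref{subsec:ProofTheoremBallisticity}, and the strategy you sketch is essentially that one restricted to the (IID) setting: reduce to $\delta>1$ by recurrence and spatial reflection, pass to the backward branching process via the down-crossing identity $\tau^X_n = n + 2\sum_k D_{n,k}$ and Lemma~\ref{lem:DowncrossingBackwardBPSameDistribution}, use right-transience to kill $\sum_{k<0}D_{n,k}$, and conclude via the Kac formula that $1/v = 1+2\E_\pi[Z]$. Your computation of the conditional drift $\E[Z_{j+1}-Z_j\mid Z_j=z]=1-\delta$ for $z\ge M$ and the quadratic variation of order $z$ is correct.

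The one genuine gap is in the moment threshold $\E_\pi[Z]<\infty\iff\delta>2$. The Lyapunov argument with $V(z)=z^2$ (giving $\E_z[V(Z_1)-V(z)]\approx 2z(2-\delta)+O(1)$) does cleanly give the sufficiency direction $\delta>2\Rightarrow\E_\pi[Z]<\infty$. But it does not give the necessity: a Lyapunov function failing to be a supermartingale proves nothing about infinite moments, and you need a matching lower bound on the tail of $\pi$ for the whole range $\delta\in(1,2]$, not merely special care at $\delta=2$. Both \cite{Kosygina2008} and the present paper establish this by proving sharp polynomial tail asymptotics for the return time to zero and the accumulated area of an excursion --- here Propositions~\ref{prop:DecayOfHittingTimeSigma0V} and~\ref{prop:DecayOfAccumulatedSumTillTimeSigma0V}, which assert $P(\sigma_0^V>x)\sim c\,x^{-\delta}$ and $P\bigl(\sum_{k\le\sigma_0^V}V_k>x\bigr)\sim c'\,x^{-\delta/2}$, proved via a diffusion approximation of the branching process. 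Your alternative suggestion of a direct generating-function analysis of the branching-with-migration recursion is indeed the route available in the (IID) case, but as written the proposal treats this as a side remark rather than the load-bearing step, and the Lyapunov approach you foreground would not on its own establish $v=0$ for $\delta\in(1,2]$.
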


\begin{theorem} (Limit Laws, Theorem 6.5 of \cite{Kosygina2013})
\label{thm:LimitLawsKnown} Assume that $\PB$ satisfies (IID), (BD), and (ELL)
and let $\delta$ be as in \eqref{eq:OriginalDeltaDef}. Also, let $v$ be the
velocity from Theorem \ref{thm:LLNSE}. For $\alpha \in (0,2]$ and $b > 0$,
let $Z_{\alpha, b}$ be a random variable with totally asymmetric stable law
of index $\alpha$, defined by its characteristic function
\begin{align}
\label{eq:CharacteristicFunctionStableLaw}
\phi_{\alpha,b}(t) \equiv E[ e^{itZ_{\alpha, b}}] = \begin{cases}
\exp\big[ -b|t|^{\alpha} \big(1 - i \tan(\frac{\pi \alpha}{2}) \mathrm{sgn}(t) \big) \big], ~~\alpha \not= 1,\\
\exp\big[ -b|t| \big(1 + \frac{2i}{\pi} \log|t| \mathrm{sgn}(t) \big) \big], ~~~~~\alpha = 1.
\end{cases}
\end{align}
(Note that $Z_{2, b}$ is simply a normal random variable with mean 0 and
variance $2 b$.)

\begin{itemize}
\item[(i)] If $\delta \in (1,2)$ then there is some $b > 0$ such that
\begin{align}
\label{eq:LimitLawDeltaIn12}
\frac{X_n}{n^{\delta/2}} \rightarrow (Z_{\delta/2, b})^{-\delta/2} \mbox{ in distribution, as } n \rightarrow \infty.
\end{align}

\item[(ii)] If $\delta = 2$ then there exist constants $a, b > 0$ and a
    sequence $\Gamma(n) \sim an/\log(n)$ such that
\begin{align}
\label{eq:LimitLawDeltaEqual2}
\frac{X_n - \Gamma(n)}{a^2 n/\log^2(n)} \rightarrow -Z_{1,b} \mbox{ in distribution, as } n \rightarrow \infty.
\end{align}

\item[(iii)] If $\delta \in (2,4)$ then there is some $b > 0$ such that
\begin{align}
\label{eq:LimitLawDeltaIn24}
\frac{X_n - v n}{n^{2/\delta}} \rightarrow - Z_{\delta/2, b} \mbox{ in distribution, as } n \rightarrow \infty.
\end{align}

\item[(iv)] If $\delta = 4$ then there is some $b > 0$ such that
\begin{align}
\label{eq:LimitLawDeltaEqual4}
\frac{X_n - v n}{\sqrt{n \log(n)}} \rightarrow Z_{2, b} \mbox{ in distribution, as } n \rightarrow \infty.
\end{align}

\item[(v)] If $\delta > 4$ then there is some $b > 0$ such that
\begin{align}
\label{eq:LimitLawDeltaGreater4}
\frac{X_n - v n}{\sqrt{n}} \rightarrow Z_{2, b} \mbox{ in distribution, as } n \rightarrow \infty.
\end{align}
\end{itemize}
\end{theorem}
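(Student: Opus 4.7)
The plan is to follow the by now standard ``backward branching process'' approach developed through a sequence of works by Basdevant--Singh, Kosygina--Mountford, and Kosygina--Zerner. First I would reduce the question about $X_n$ to one about the hitting times $T_n := \inf\{k \geq 0 : X_k = n\}$ via the identity $\{X_n \geq x\} = \{T_x \leq n\}$. In the ballistic regimes ($\delta > 2$) a Taylor-type linearization converts a limit law for $T_n - n/v$ into a limit law for $X_n - vn$, with a sign flip; in the sub-ballistic regime $\delta \in (1,2)$ the inversion is nonlinear and produces the exponent $-\delta/2$ visible in (i).

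Next I would decompose $T_n = n + 2\sum_{k=0}^{n-1} U_k^n$, where $U_k^n$ counts the number of left-jumps from site $k+1$ before time $T_n$. The key structural fact, which uses the i.i.d.\ assumption on cookie stacks across sites, is that the \emph{backward} sequence $(U_{n-1}^n, U_{n-2}^n, \ldots, U_0^n)$ is a time-homogeneous Markov chain under the annealed measure $P_0$, with a transition kernel $K$ independent of $n$. This chain is of branching type with migration: given $U_k^n = m$, the next value is a sum of $m+1$ i.i.d.\ contributions generated by a single cookie stack. The technical heart of the proof, and the step I expect to be the main obstacle, is the tail analysis of the stationary distribution $\pi$ of $K$. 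A Lyapunov/Lamperti-type computation adapted to the (ELL) and (BD) assumptions shows that $K$ is positive recurrent iff $\delta > 1$ and yields the polynomial tail $\pi([x,\infty)) \asymp x^{-\delta/2}$, with logarithmic corrections at the critical values $\delta \in \{2,4\}$; matching upper and lower bounds requires a delicate comparison with a Bessel-like diffusion and precise control of the drift function near infinity.

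Once the tail of $\pi$ is understood, stable limit theorems for functionals of Markov chains with regularly varying marginals yield the limit distribution of $\sum_k U_k^n$, and hence of $T_n$. Inverting the relation between $T_n$ and $X_n$ and carrying the scaling through then produces the five regimes: in (i), $T_n$ is dominated by a single large summand, $T_n/n^{2/\delta}$ converges to $Z_{\delta/2,b}$, and nonlinear inversion yields the $(Z_{\delta/2,b})^{-\delta/2}$ limit for $X_n/n^{\delta/2}$; cases (ii) and (iv) add the expected logarithmic corrections at the critical exponents $\delta = 2$ and $\delta = 4$; case (iii) is the ballistic stable regime, where the sign of the stable fluctuation flips under linearized inversion; and case (v), $\delta > 4$, reduces to a classical CLT via the annealed regeneration structure of ERW, which in this range has a finite second moment.
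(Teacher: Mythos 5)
The paper does not actually prove this theorem---it is quoted as a known result, cited to Kosygina--Zerner (2013) and originally due to Kosygina--Zerner (2008), Kosygina--Mountford (2011), and Basdevant--Singh (2008). What the paper does prove is the Markovian generalization, Theorem~\ref{thm:LimitLaws}, in Section~\ref{subsec:ProofTheoremLimitLaws}, and it follows the same template you outline: backward branching process, diffusion approximation, stable limits for hitting times, and inversion. So your skeleton is in line with both the cited literature and the paper's own work.

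There is, however, one step in your plan that would not go through as written. You propose to compute the tail of the \emph{stationary distribution} $\pi$ of the backward branching process, claim $\pi([x,\infty)) \asymp x^{-\delta/2}$, and then invoke ``stable limit theorems for functionals of Markov chains with regularly varying marginals.'' Two issues. First, $x^{-\delta/2}$ is not the tail of $\pi$ (which, when $\delta > 2$, decays like $x^{-(\delta-2)}$); the quantity with tail $\asymp x^{-\delta/2}$ is the \emph{area under a single excursion} from $0$, $\sum_{k=0}^{\tau_0^V} V_k$, with the companion estimate $P(\tau_0^V > x) \sim c x^{-\delta}$ for the excursion length. Second, and more importantly, a regularly varying one-dimensional marginal of a Markov chain does not by itself give a stable limit theorem for the correlated partial sums $\sum_k V_k$; there is no generic theorem to invoke here. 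The argument that works---and is what Kosygina--Mountford use and what this paper generalizes in Propositions~\ref{prop:DecayOfHittingTimeTau0Vh}--\ref{prop:DecayOfAccumulatedSumTillTimeSigma0V}---is to exploit the regeneration structure at the a.s.~finite return times of $(V_k)$ to $0$: the excursion lengths and excursion areas form i.i.d.\ sequences, the tail estimates above are tails of those i.i.d.\ random variables, and the classical Gnedenko--Kolmogorov theorem for i.i.d.\ sums is then applied. Relatedly, there are no ``logarithmic corrections'' in the tail estimates themselves (they are clean power laws for all $\delta>1$); the logarithms at $\delta\in\{2,4\}$ enter only in the normalization of the i.i.d.\ stable limit theorem, because the summand's tail exponent then sits at the boundary ($1$ or $2$) of the relevant domain of attraction. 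The remaining ingredients you describe---the squared-Bessel diffusion comparison behind the tail estimates, and the inversion of hitting-time limits with a sign flip in the ballistic regime and the nonlinear $(\cdot)^{-\delta/2}$ inversion when $\delta \in (1,2)$---are right.
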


\begin{remark}
Analogous results to Theorem \ref{thm:LimitLawsKnown} hold in the case of
negative $\delta$ by symmetry.
\end{remark}

\begin{remark}
Reference \cite{Kosygina2013} is a review paper and many of the results there
were proved earlier in other places (either partially or completely). Case
(v) of Theorem \ref{thm:LimitLawsKnown} was originally proven in
\cite{Kosygina2008}. Cases (iii) and (iv) were originally proven in
\cite{Kosygina2011}. Case (i) under some stronger hypothesis was first shown
in \cite{Basdevant2008b}. The extension to the (IID), (BD), (ELL) case
follows from the same methods used in \cite{Kosygina2011}, as noted in that
work.
\end{remark}

\begin{remark}If the cookie stacks are unbounded and negative cookies are allowed then $\delta$ is not necessarily well defined.
However, recent work in \cite{Kozma2016} and \cite{Kosygina2017} has
considered extensions of the parameter $\delta$ to certain unbounded
environments with both positive and negative cookies. Specifically, in
\cite{Kozma2016} the authors consider deterministic, periodic cookie stacks,
which are the same at each site $x$. Then, in \cite{Kosygina2017} this model
is extended to the case where the cookie stack $(\omega(x,i))_{i \in \N}$ at
each site $x$ is a finite state Markov chain, started from some distribution
$\eta$ that is the same for all $x$. In this latter case of Markovian cookie
stacks, analogs of Theorems \ref{thm:TransienceRecurrenceKnown},
\ref{thm:BallisticityKnown}, and \ref{thm:LimitLawsKnown} are all proved for
the random walk $(X_n)$, in terms of some parameters $\delta$ and $\deltat$,
which are generalized versions of the $\delta$ given in
\eqref{eq:OriginalDeltaDef} and its negative. This work extends many of the
old results from (IID), (BD), and (ELL) environments by removing the (BD)
assumption while maintaining the (IID) assumption. Our Assumption (A)
presented in the following section will go in a somewhat different direction.
We will maintain the (BD) assumption, but weaken the (IID) condition.
\end{remark}

\subsection{Statement of new results}
\label{subsec:StatementNewResults} In light of Theorems
\ref{thm:TransienceRecurrenceKnown}-(i), \ref{thm:BallisticityKnown}, and
\ref{thm:LimitLawsKnown} we see that the behavior of the random walk $(X_n)$
is fairly well understood in the case of (IID), (BD), and (ELL) environments.
On the other hand, much less is known if the (IID) assumption is weakened to
(SE). A zero-one law for directional transience and law of large numbers
still hold, but it is generally not well understood when the walk will be
transient or ballistic. In the case that the cookies are all positive,
Theorem \ref{thm:TransienceRecurrenceKnown}-(ii) implies that the same
threshold for transience as the (IID) case holds, without the boundedness
assumption on the cookie stacks. However, even in the case of bounded cookie
stacks, it is still unknown whether the same transience/recurrence threshold
is always valid for (SE) environments with both positive and negative
cookies. Furthermore, there are simple counterexamples (given in Section
\ref{subsec:Counterexamples}) which indicate that the ballisticity threshold
of Theorem \ref{thm:BallisticityKnown} is not, in general, valid for (SE)
environments, even if (BD), (ELL), and (POS) are all assumed.

It is thus reasonable to wonder if there are classes of non-(IID)
environments for which an explicit characterization of the behavior of the
random walk $(X_n)$ is possible, similar to the (IID) case. A natural first
step in this direction is to consider Markovian environments, and we will
show that in fact Theorems \ref{thm:TransienceRecurrenceKnown}-(i),
\ref{thm:BallisticityKnown}, and \ref{thm:LimitLawsKnown} all extend to a
large class of Markovian environments.

\begin{definition}
Let $\SM$ be a countable (either finite or countably infinite) set, and let
$(S_n)$ be a discrete time Markov chain on state space $\SM$ with transition
matrix $\KM = \{\KM(s,s')\}_{s, s' \in \SM}$. The Markov chain $(S_n)$ is
said to be \emph{ergodic} if it is irreducible, aperiodic, and positive
recurrent. In this case (see \cite[Theorem 21.14]{Levin2009}), there exists a
unique stationary distribution $\pi$ on $\SM$ satisfying $\pi = \pi \KM$, and
\begin{align}
\label{eq:ErgodicMCDef}
\lim_{n \to \infty} \norm{ \KM^n(s, \cdot) - \pi}_{TV} = 0, \mbox{ for each } s \in \SM,
\end{align}
where $\norm{\mu - \nu}_{TV} \equiv \frac{1}{2} \norm{\mu - \nu}_1$ is the
total variational norm between two probability distributions $\mu$ and $\nu$
on $\SM$. The Markov chain $(S_n)$ is said to be \emph{uniformly ergodic} if
the rate of convergence in \eqref{eq:ErgodicMCDef} is uniform in the initial
state. That is, if
\begin{align}
\label{eq:UniformlyErgodicMCDef}
\lim_{n \to \infty} \Big[ \sup_{s \in \SM} \norm{ \KM^n(s, \cdot) - \pi}_{TV} \Big] = 0.
\end{align}
\end{definition}
Of course, all irreducible, aperiodic Markov chains on a finite state space
are uniformly ergodic, and many natural positive recurrent Markov chains on a
countably infinite state space are also uniformly ergodic. The positive
recurrent assumption is clearly necessary to give some asymptotic form of
stationarity for the Markov chain $(S_n)$, which, in attempt to partially
extend from (IID) environments to (SE) environments, is what we will want to
have.

For given $M \in \N$, we denote by $\SM^*_M$ the set of all elliptic cookie
stacks of height $M$:
\begin{align*}
\SM^*_M = \{s = (s(i))_{i \in \N}: s(i) \in (0,1) \mbox{ for } i = 1, \ldots, M \mbox{ and } s(i) = 1/2 \mbox{ for } i > M\}.
\end{align*}
If $\SM \subset \SM^*_M$ and $(S_k)_{k \in \Z}$ is a stochastic process
taking values in $\SM$, then we can define a bounded, elliptic cookie
environment $\omega = (\omega(k,i))_{k \in \Z, i \in \N}$ by
\begin{align}
\label{eq:DefineEnvironmentBySk}
\omega(k,i) = S_k(i) ~,~ k \in \Z \mbox{ and } i \in \N.
\end{align}
In the theorems below we will always make the following assumption on our
cookie environments.
\medskip

\noindent
\textbf{Assumption (A)}\\
The probability measure $\PB$ on cookie environments $\omega \in \Omega$ is
the probability measure obtained when $\omega$ is defined by
\eqref{eq:DefineEnvironmentBySk} and the process $(S_k)_{k \in \Z}$ is as
follows: $M \in \N$ is a positive integer, $\SM \subset \SM^*_M$ is a
countable set, and $(S_k)_{k \in \Z}$ and $(R_k)_{k \in \Z}$ are both
uniformly ergodic Markov chains on the state space $\SM$, where $R_k \equiv
S_{-k}$, $k \in \Z$.

\smallskip

\begin{remark}
If $(S_k)$ is an ergodic Markov chain on a countable state space $\SM$, then
the reversed process $(R_k)$ defined by $R_k = S_{-k}$, $k \in \Z$, is also
always an ergodic Markov chain. However, if $(S_k)$ is a uniformly ergodic
Markov chain, then the reversed process $(R_k)$ is not necessarily uniformly
ergodic. We assume explicitly that both the stack sequence $(S_k)$ and its
reversal $(R_k)$ are uniformly ergodic Markov chains, so that our Assumption
(A) is symmetric with respect to spatial directions of the model. This
condition is satisfied in many natural cases, e.g. when the Markov chain
$(S_k)$ is uniformly ergodic and reversible, or when the state space $\SM$ is
finite and $(S_k)$ is irreducible and aperiodic.

With our current methods of proof, this symmetric assumption of
bi-directional uniform ergodicity is necessary for a complete extension of
the results from the (IID) setting given in Theorems
\ref{thm:TransienceRecurrenceKnown}-(i), \ref{thm:BallisticityKnown}, and
\ref{thm:LimitLawsKnown}. If one assumes instead only that $(S_k)$ is
uniformly ergodic (or only that $(R_k)$ is uniformly ergodic) then the
transience/recurrence characterization of Theorem
\ref{thm:TransienceRecurrenceKnown}-(i) still extends fully (with only minor
modifications of the proof given in this paper). However, the results of
Theorem \ref{thm:BallisticityKnown} on ballisticity and Theorem
\ref{thm:LimitLawsKnown} on limiting distributions do not quite fully extend
(only one-sided versions are available, for $\delta > 0$ when $(R_k)$ is
uniformly ergodic, or for $\delta < 0$ when $(S_k)$ is uniformly ergodic).
\end{remark}

In the case Assumption $(A)$ is satisfied we will denote the transition
matrix for the Markov chain $(S_k)_{k \in \Z}$ by $\KM = \{\KM(s,s')\}_{s, s'
\in \SM}$ and the marginal distribution of $S_0$ by $\phi = (\phi(s))_{s \in
\SM}$. Together the pair $(\KM, \phi)$ completely determines the law of the
process $(S_k)$, and, hence, the probability measure $\PB$. If $\phi = \pi$
is the stationary distribution of the Markov chain, then $(S_k)_{k \in \Z}$
is a stationary and ergodic stochastic process. Thus, the (SE) assumption is
satisfied for the environment $\omega$, and the $\delta$ defined in
\eqref{eq:OriginalDeltaDef} is, indeed, well\vadjust{\goodbreak} defined. If
$\phi \not= \pi$, then the process $(S_k)_{k \in \Z}$ is no longer
stationary, so the definition \eqref{eq:OriginalDeltaDef} is no longer
directly applicable, because it is not the same for each site $x$.
Nevertheless, asymptotically, for $|k|$ large, $S_k$ has distribution close
to $\pi$, whatever the distribution $\phi$ on $S_0$ is. We, thus, extend the
definition of $\delta$ as follows when Assumption (A) is satisfied:
\begin{align}
\label{eq:DefDeltaMarkov}
\delta \equiv \sum_{s \in \SM} \pi(s) \cdot \delta(s) ~~~\mbox{ where }~~~ \delta(s) \equiv \sum_{i = 1}^{\infty} \big(2 s(i) - 1\big) = \sum_{i = 1}^{M} \big(2 s(i) - 1\big).
\end{align}
Our first theorem gives a threshold for transience vs. recurrence of the
random walk $(X_n)$, analogous to Theorem
\ref{thm:TransienceRecurrenceKnown}.
\begin{theorem}
\label{thm:TransienceRecurrence} Assume that Assumption (A) is satisfied for
the probability measure $\PB$ on cookie environments, and let $\delta$ be as
in \eqref{eq:DefDeltaMarkov}. Then the ERW $(X_n)_{n \geq 0}$ is recurrent if
$\delta \in [-1,1]$, right transient if $\delta > 1$, and left transient if
$\delta < -1$.
\end{theorem}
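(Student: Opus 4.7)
My plan is to extend the backward branching chain analysis of \cite{Kosygina2008} from the i.i.d.\ to the Markovian setting, using bi-directional uniform ergodicity both to reduce to a stationary environment and to retain the critical threshold at $\delta = 1$. First I would reduce to the case $\phi = \pi$: because both $(S_k)$ and $(R_k)$ are uniformly ergodic, a standard coupling argument gives that for any initial distribution $\phi$ the law of $(S_k)_{|k|\ge N}$ converges in total variation to the stationary law as $N\to\infty$, and since transience, recurrence, and directional limits of $(X_n)$ are insensitive to modifications of the cookie environment on any fixed finite set of sites, $P_0^\phi(X_n \to \pm\infty) = P_0^\pi(X_n \to \pm\infty)$. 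In the stationary regime, \Theref{thm:ZeroOneLawSE} provides a zero-one law and \Theref{thm:LLNSE} a law of large numbers, so it suffices to compute $P_0(X_n \to +\infty)$, with the $-\infty$ case handled symmetrically via the hypothesis on $(R_k)$.

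Next I would set up the backward branching chain. For $n \ge 1$ let $T_n = \inf\{m : X_m = n\}$ and $D_k^n = \#\{m < T_n : X_m = k,\ X_{m+1} = k-1\}$ for $1 \le k \le n$. Reading $k$ downwards from $n$, a standard excursion argument shows that $(D_k^n, S_k)$ is a time-homogeneous Markov chain: given $(D_{k+1}^n, S_k) = (d, s)$, the variable $D_k^n$ equals the number of failures before the $(d+1)$-st success in the Bernoulli sequence with biases $(s(i))_{i \ge 1}$ (which are $1/2$ for $i > M$), and $S_{k-1}$ is drawn from the time-reversed kernel of $(S_k)$ at $s$. The walk $(X_n)$ is right-transient iff this chain, initialized appropriately at level $n$, escapes to $+\infty$ with positive probability uniformly in $n$. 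A direct computation gives
\begin{align*}
\E[D_k^n - D_{k+1}^n \mid D_{k+1}^n = d,\ S_k = s] = 1 - \delta(s) + o(1) \quad \text{as } d \to \infty,
\end{align*}
with $\delta(\cdot)$ as in \eqref{eq:DefDeltaMarkov}, identifying $(D_k^n)$ as a near-critical branching chain in a Markovian environment.

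The heart of the argument is then the survival analysis for this chain. Under $\pi$ the averaged per-step increment is $1 - \delta$, and Lyapunov / second-moment techniques analogous to those of \cite{Kosygina2008}, combined with uniform ergodicity of the reversed environment to give uniform control on block averages of $\delta(S_k)$ independent of the starting state, should yield positive escape probability iff $\delta > 1$. The symmetric statement, using uniform ergodicity of $(S_k)$, gives left transience iff $\delta < -1$. If $\delta \in [-1, 1]$ the walk is neither right- nor left-transient, and the zero-one law combined with \Theref{thm:LLNSE} forces the velocity $v$ to be $0$ and the walk to be recurrent.

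The main obstacle is this survival analysis. In the IID setting \cite{Kosygina2008} exploits explicit moment and generating-function computations for branching processes with i.i.d.\ offspring; here the offspring law depends on the Markov state $S_k$, so one must either (i) push through an analogous Lyapunov argument on the enlarged state space $(d, s)$, for which uniform ergodicity of the driving chain is precisely the property needed to control long-block sums of $\delta(S_k)$ uniformly in the starting state, or (ii) couple the Markovian offspring sequence across uniformly ergodic blocks to its stationary i.i.d.-block counterpart and transfer the IID threshold. In either approach, bi-directional uniform ergodicity is what lets the argument be carried out symmetrically on both sides of the origin while preserving the sharp criterion $\delta = \pm 1$.
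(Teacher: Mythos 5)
You have set up the \emph{down-crossing} (backward) branching chain $(D_k^n)$, but the paper's proof of Theorem~\ref{thm:TransienceRecurrence} is built on the \emph{forward} branching process $(U_k)$, which counts up-crossings of each edge before $\tau_0^X$, and for the transience question this choice is essential. Your own drift computation gives $\E[D_k^n - D_{k+1}^n \mid D_{k+1}^n = d,\ S_k = s] \to 1 - \delta(s)$, so the averaged per-site drift of your chain is $1 - \delta$, while its variance at level $x$ grows like $2x$. Feeding these into a Lyapunov criterion for $\N_0$-valued Markov chains such as Lemma~\ref{lem:TransienceRecurrenceConditionsForMC} gives $\theta \approx 1 - \delta$: the down-crossing chain escapes to $+\infty$ precisely when $\delta < 0$ and is recurrent when $\delta \ge 0$, a threshold at $\delta = 0$, not $\delta = 1$. (Your concluding sentence also has the sign reversed: when $\delta > 1$ the drift $1 - \delta$ is negative, so the chain does not escape.) This mismatch is not a Markovian-environment artifact --- it is already present in the i.i.d.\ case --- and it is exactly why one works with the forward process, whose per-site drift is $+\delta(s)$ and hence $\theta \approx \delta$. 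There is also a well-posedness problem: $D_k^n$ is only defined on $\{\tau_n^X < \infty\}$, which is part of what needs proving, and ``escapes to $+\infty$ uniformly in $n$'' is not a clean criterion for a chain run only $n$ steps, whereas $0$ is genuinely absorbing for $(U_k)$, so survival of $(U_k)$ is a well-posed question.

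The rest of your architecture --- observing the branching chain at returns of the driving Markov chain to a fixed state $s$, controlling drift, variance and concentration over one excursion via the uniform ergodicity of $(S_k)$ and $(R_k)$, and applying a Lyapunov criterion --- is essentially what the paper does, but on $(\Uh_k)$. The correct reduction is Lemmas~\ref{lem:UkUkPrimeSurvival}--\ref{lem:TransienceRecurrenceInTermsOfSurvivalUk}: the walk is right transient iff $P_{1,s}^U(U_k > 0,\ \forall k > 0) > 0$, and this is settled by Corollary~\ref{cor:TransienceRecurrenceConditionsForMC} using the estimates of Lemmas~\ref{lem:ConcentrationEstimateUkVk}--\ref{lem:VarianceUkVkStoppedAtTaus}. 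Two smaller points: the reduction to $\phi = \pi$ is unnecessary here, since the zero-one law under $P_{0,\pi}$ together with $\pi(s) > 0$ for all $s$ already pins down the dichotomy under every $P_{0,s}$ and hence under $P_0$; and the Lyapunov machinery the paper uses is that of Kozma--Orenshtein--Shinkar, not Kosygina--Zerner. Your down-crossing chain is precisely what the paper uses later, for Theorems~\ref{thm:Ballisticity} and~\ref{thm:LimitLaws}, where the relevant property becomes positive recurrence of that chain rather than its escape.
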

Our next theorem gives a threshold for ballisticity of the random walk,
analogous to Theorem \ref{thm:BallisticityKnown}.
\begin{theorem}
\label{thm:Ballisticity} Assume that Assumption (A) is satisfied for the
probability measure $\PB$ on cookie environments, and let $\delta$ be as in
\eqref{eq:DefDeltaMarkov}. Then there exists a deterministic $v \in [-1,1]$
such that $X_n/n \rightarrow v$, $P_0$ a.s. Moreover, $v = 0$ if $\delta \in
[-2,2]$, $v > 0$ if $\delta > 2$, and $v < 0$ if $\delta < -2$.
\end{theorem}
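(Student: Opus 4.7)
The plan is to adapt the hitting-time / backward-branching-process argument from the i.i.d.\ case \cite{Kosygina2008} to the uniformly ergodic Markov setting, exploiting that Assumption (A) is symmetric in $(S_k)$ and $(R_k)$ to handle both signs of $\delta$.

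\textbf{Existence of $v$.} When $\phi = \pi$, the environment is stationary and ergodic, so combining Theorem \ref{thm:TransienceRecurrence} with Theorem \ref{thm:LLNSE} gives $X_n/n \to v$ $P_0$-a.s.\ for some deterministic $v \in [-1, 1]$. For general $\phi$, uniform ergodicity allows a standard coupling of $(S_k)_{k \in \Z}$ with its stationary version so that, almost surely, the two environments agree outside a finite window around the origin; since $\lim X_n/n$ is insensitive to such finite-window modifications, the SLLN transfers with the same $v$. When $\delta \in [-1,1]$, recurrence from Theorem \ref{thm:TransienceRecurrence} together with the SLLN forces $v = 0$.

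\textbf{Transient regime and backward branching chain.} Assume $\delta > 1$ (the case $\delta < -1$ follows by reflecting space, using that $(R_k)$ satisfies the same hypotheses as $(S_k)$). Theorem \ref{thm:TransienceRecurrence} provides right transience, so the hitting times $T_n := \inf\{m : X_m = n\}$ are a.s.\ finite and $v = \lim_{n \to \infty} n/T_n$. With $U_k^n$ denoting the number of jumps from $k$ to $k-1$ before $T_n$ and $U_k^\infty := \lim_n U_k^n < \infty$ a.s., one has
\[
T_n = n + 2 \sum_{k} U_k^n.
\]
Read in the direction of decreasing $k$, the pair $(U_k^\infty, S_k)$ is a Markov chain on $\N \times \SM$: given $S_k = s$ and $U_{k+1}^\infty = m$, the count $U_k^\infty$ is determined by applying the cookie stack $s$ over $m+1$ right-jumps out of site $k$, yielding conditional mean $m + 1 - \delta(s)$; the next state $S_{k-1}$ is drawn from the kernel of $(R_k)$. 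Uniform ergodicity of $(R_k)$ endows this joint chain with a Doeblin-type minorization and a unique stationary distribution $\mu$ on $\N \times \SM$ whose $\SM$-marginal is $\pi$.

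\textbf{The $\delta = 2$ threshold and main obstacle.} The crucial quantitative step is to show that $\int U\, d\mu < \infty$ iff $\delta > 2$. Averaging the conditional mean $m + 1 - \delta(s)$ against $\pi$ produces drift $1 - \delta$, suggesting null behavior for $\delta \in (1,2]$ and positive recurrence with finite mean for $\delta > 2$; the sharp threshold emerges by comparing the $U$-chain with a random walk whose log-drift is a Markov-additive functional of $(S_k)$. Under uniform ergodicity of $(R_k)$, Kesten's renewal theorem for Markov chains yields the tail asymptotic
\[
\mu(U > n) \asymp n^{-\delta/2},
\]
so the $U$-mean under $\mu$ is finite precisely for $\delta > 2$. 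Given this, the ergodic theorem for the uniformly ergodic chain $(U_k^\infty, S_k)$ gives $\frac{1}{n}\sum_{k=0}^{n-1} U_k^\infty \to \int U\, d\mu$, and one checks that $\frac{1}{n}\sum_{k=0}^{n-1}(U_k^\infty - U_k^n) \to 0$ by estimating the number of down-jumps made after time $T_n$. Hence $T_n/n \to 1 + 2 \int U\, d\mu$, yielding $v > 0$ for $\delta > 2$ and $v = 0$ for $\delta \in (1, 2]$. The principal obstacle is this Markov generalization of the $n^{-\delta/2}$ Kesten--Kozlov--Spitzer tail: transferring the log-drift random walk analysis from the i.i.d.\ to the Markov-additive setting requires a renewal theorem for Markov chains with sufficient uniformity in the modulating state, which is where the uniform ergodicity of $(R_k)$ (rather than plain ergodicity) is critically used.
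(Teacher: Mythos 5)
Your overall architecture matches the paper's: show the SLLN in the stationary case and transfer, then for $\delta>1$ express $v$ through the backward (down-crossing) branching chain $(V_k,R_k)$, and reduce everything to showing that the long-run average of $V$ is finite precisely for $\delta>2$. The final renewal/ergodic bookkeeping you sketch (passing from $U_k^n$ to $U_k^\infty$, $T_n/n\to 1+2\int U\,d\mu$) is also in the right spirit, and corresponds to the paper's regeneration times $\sigma_i$ at which $(V_k,R_k)=(0,s)$ and the SLLN for the i.i.d.\ cycle sums $Q_i$ and cycle lengths $\Delta_{\sigma,i}$.

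However, the load-bearing step is precisely the one you outsource to the wrong machinery, and this is a genuine gap. You claim the tail asymptotic comes from ``Kesten's renewal theorem for Markov chains'' applied to a ``random walk whose log-drift is a Markov-additive functional of $(S_k)$.'' Kesten's implicit renewal theorem is the tool for \emph{multiplicative} random walks and affine recursions $R\stackrel{d}{=}AR'+B$, which is what one gets from the potential $\prod_k\rho_k$ in classical RWRE. The backward branching process here, $V_{k+1}=\sum_{i=1}^{V_k+1}H_i^{k+1}$ with bounded perturbation, is a \emph{critical} branching process with migration: the offspring mean is exactly $1$ and the bias enters only additively through the bounded cookie correction. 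Taking logarithms does not linearize it into a Markov-additive random walk with nonzero drift, and there is no multiplicative cocycle whose top Lyapunov exponent encodes $\delta$. The polynomial exponent $\delta/2$ for $P_{0,s}^V(\sum_{k\le\sigma_0^V}V_k>x)$ comes instead from a \emph{diffusion approximation}: the process $(\Vh_k)$ (the $V$-chain sampled at return times of $R$ to a fixed state $s$, which is a genuine time-homogeneous $\N_0$-valued Markov chain) converges under diffusive scaling to the SDE $dY=\alpha\beta\,dt+\sqrt{2\alpha Y^+}\,dB$, i.e.\ a squared Bessel process of generalized dimension $2\beta=2(1-\delta)$. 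The $x^{-\delta}$ tail for the cycle length and the $x^{-\delta/2}$ tail for the cycle area are then read off from the corresponding Bessel first-passage and occupation-time asymptotics, transferred back to the discrete chain via the exit-probability and concentration estimates (Lemmas~\ref{lem:ConcentrationEstimateUkVk}, \ref{lem:ExpectationUkVkStoppedAtTausS}, \ref{lem:VarianceUkVkStoppedAtTaus}, \ref{lem:ModifiedLemma51FromLimitLawsERW}) and the general scaling machinery of Proposition~\ref{prop:GeneralizedMachineryFromLimitLawsERW}, following Kosygina--Mountford. Uniform ergodicity of $(R_k)$ enters through the exponential tail of $\tau_s^R$ (Lemma~\ref{lem:ExponentialTailHittingTimesUniformlyErgodicMCs}), which drives these moment and concentration estimates --- not through a Doeblin minorization of the joint chain $(V_k,R_k)$ on $\N\times\SM$. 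That chain is not Doeblin: for $\delta\in(1,2]$ it is positive recurrent but not geometrically ergodic, since the stationary mean of $V$ is infinite. A related slip: the stationary tail $\mu(U>n)$ is of order $n^{1-\delta}$, not $n^{-\delta/2}$; the $n^{-\delta/2}$ exponent belongs to the cycle-area tail $P(\sum_{\text{cycle}}V>n)$. The finiteness conclusion ($\delta>2$) happens to come out the same either way, but the identification is wrong. In short, you have correctly isolated the crucial threshold computation, but proposed an approach to it that does not apply to this process; replacing it with the squared-Bessel diffusion approximation and its attendant exit/concentration estimates is not a cosmetic change but the mathematical heart of the proof.
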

Our final theorem characterizes the limiting distribution of $(X_n)$,
analogous to Theorem \ref{thm:LimitLawsKnown}.
\begin{theorem}
\label{thm:LimitLaws} Assume that Assumption (A) is satisfied for the
probability measure $\PB$ on cookie environments, and let $\delta$ be as in
\eqref{eq:DefDeltaMarkov}. Also, let $v$ be the velocity of the random walk
$(X_n)$ as in Theorem \ref{thm:Ballisticity}. Then (i)-(v) of Theorem
\ref{thm:LimitLawsKnown} all hold.
\end{theorem}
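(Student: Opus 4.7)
My strategy is to mirror the Kesten--Kozlov--Spitzer / Basdevant--Singh / Kosygina--Mountford proof of Theorem~\ref{thm:LimitLawsKnown} in the (IID), (BD), (ELL) setting, replacing the scalar backward branching process by a Markov-modulated one whose driving chain is the reversed stack process $(R_k)$. Throughout I work with the hitting times $T_n = \min\{k \geq 0 : X_k = n\}$ and extract the limit law for $X_n$ at the end via the inversion $\{T_n \leq t\} = \{X_t \geq n\}$ combined with the velocity supplied by Theorem~\ref{thm:Ballisticity}. Writing $D_n^k$ for the number of downcrossings of the edge $\{k-1,k\}$ by the walk before $T_n$, a standard accounting gives
\begin{equation*}
T_n = n + 2\sum_{k=1}^{n} D_n^k.
\end{equation*}
Set $V_j^{(n)} := D_n^{n-j}$. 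In the (IID) case, $(V_j^{(n)})_{j \geq 0}$ is itself a Markov chain whose transition law is a mixture of geometrics determined by a fresh stack at each step. Under Assumption~(A) the correct augmentation is $Y_j^{(n)} := (V_j^{(n)}, S_{n-j})$, which is a Markov chain on $\Z_{\geq 0} \times \SM$: the conditional law of $V_{j+1}^{(n)}$ depends only on $V_j^{(n)}$ and on $S_{n-j-1}$, and the stacks themselves evolve as the reversed chain $(R_k)$.

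The first substantive step is to establish uniform ergodicity and a regeneration structure for $(Y_j^{(n)})$. The environmental coordinate is controlled by the uniform ergodicity of $(R_k)$ in Assumption~(A), while boundedness ($M < \infty$) together with ellipticity provides a Doeblin-type minorisation for the $V$-coordinate (from any state there is a positive one-step probability of hitting $0$). Together these give a small-set regeneration scheme that splits $(Y_j^{(n)})$ into almost i.i.d.\ blocks with exponentially small length tails and, by letting $n \to \infty$ with $V_0^{(n)} = 0$, produces a stationary law $\mu_\infty$ on $\Z_{\geq 0} \times \SM$. The same block decomposition will later represent $\sum_j V_j^{(n)}$ as a sum of asymptotically i.i.d.\ contributions with controlled error.

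The principal obstacle is the precise polynomial tail of the $V$-marginal of $\mu_\infty$: one must show that $\mu_\infty(V > x) \sim C x^{-\delta}$ with $\delta$ as in~\eqref{eq:DefDeltaMarkov}. In the (IID) case this is obtained by realising stationary $V$ as a perpetuity and invoking Goldie's implicit renewal theorem; here the driving stochastic recursion is Markov-modulated by $(S_k)$, so I would appeal to the Markov-modulated implicit renewal theorem (Kesten; Roitershtein). The critical exponent emerges as the unique $s > 0$ for which the Perron--Frobenius eigenvalue of a one-parameter family of transfer operators on $\ell^\infty(\SM)$, built from the kernel $\KM$ and the $s$-moments of the one-step offspring law, equals $1$. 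Uniform ergodicity of \emph{both} $(S_k)$ and $(R_k)$, together with finiteness of $M$ and ellipticity, supplies the quasi-compactness needed to isolate this eigenvalue; differentiating the eigenvalue relation at $s = 0$ and comparing with the stationary drift identifies this critical $s$ with the $\delta$ of~\eqref{eq:DefDeltaMarkov}. This is also the point at which both halves of the bi-directional uniform ergodicity are genuinely used, explaining the symmetric form of Assumption~(A).

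Once the tail estimate $\mu_\infty(V > x) \sim C x^{-\delta}$ is in hand, the remainder of the argument tracks the (IID) proof with only cosmetic changes. The asymptotics of $T_n$ are extracted from a classical stable limit theorem applied to the sum of i.i.d.\ regeneration-block contributions, in each of the five regimes $\delta \in (1,2)$, $\delta = 2$, $\delta \in (2,4)$, $\delta = 4$, $\delta > 4$, with the logarithmic centring in the $\delta = 2$ and $\delta = 4$ cases coming, as usual, from truncation of the heavy-tailed step distribution. Inverting from $T_n$ to $X_n$ via $\{T_n \leq t\} = \{X_t \geq n\}$ and invoking Theorem~\ref{thm:Ballisticity} yields cases (i)--(v) of Theorem~\ref{thm:LimitLawsKnown} verbatim, which is precisely Theorem~\ref{thm:LimitLaws}.
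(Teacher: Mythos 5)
Your high-level scaffolding (downcrossings, the reversed stack process, augmenting $V$ by the environmental coordinate to Markovianise it, regeneration blocks, classical stable limit theorems, then inversion to $X_n$) matches the paper's architecture and is sound. The gap is in the step you correctly flag as ``the principal obstacle,'' and it is a real one: the backward branching process $(V_k)$ is \emph{not} a multiplicative stochastic recursion of the form $V_{k+1}=A_kV_k+B_k$ to which Kesten's or Goldie's implicit renewal theorem (or Roitershtein's Markov-modulated extension) applies. Conditionally on $V_k$, the increment $V_{k+1}-V_k$ is a mean-$\delta_k$ sum of $V_k$-many centred geometrics plus a bounded migration term; as $V_k\to\infty$ the ``multiplicative factor'' $V_{k+1}/V_k$ concentrates at $1$ with fluctuations of order $V_k^{-1/2}$. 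There is therefore no nondegenerate random matrix/scalar $A$ whose $s$-th moment (or transfer-operator spectral radius) could be tuned to $1$ to produce a Kesten exponent; the critical exponent $\delta$ in this model comes from a completely different mechanism, namely the ratio of drift to diffusion coefficient in a functional limit to a squared Bessel process of generalized dimension $2(1-\delta)<0$. That is the content of Lemma~\ref{Lem:DiffusionApproximationLimit} and Lemma~\ref{Lem:DiffusionApproximationProperties}, which the paper inherits from Kosygina--Mountford and Kosygina--Peterson, and it is what makes Propositions~\ref{prop:DecayOfHittingTimeTau0Vh}--\ref{prop:DecayOfAccumulatedSumTillTimeSigma0V} provable.

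Two secondary problems compound this. First, the quantity you target, $\mu_\infty(V>x)$, is not the one needed: after regeneration you must feed the stable limit theorem the tails of the \emph{block length} $\Delta_{\sigma,i}$ and of the \emph{block sum} $Q_i=\sum_{k}V_k$ over a regeneration cycle, which are $\sim c\,x^{-\delta}$ and $\sim c\,x^{-\delta/2}$ respectively (Propositions~\ref{prop:DecayOfHittingTimeSigma0V}--\ref{prop:DecayOfAccumulatedSumTillTimeSigma0V}); a tail estimate for the one-dimensional stationary marginal does not yield these, because the quadratic relationship between excursion length and excursion area (visible through the Bessel scaling) is essential for the $x^{-\delta/2}$ exponent. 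Second, even the stated exponent for the stationary marginal is off: by the cycle (Kac) formula and the scale/speed computation for the limiting diffusion (speed density $\propto y^{-\delta}$), one gets $\mu_\infty(V>x)\asymp x^{-(\delta-1)}$, not $x^{-\delta}$. So if you were to pursue the stationary-marginal route you would neither have the right exponent nor the right object.

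The fix is to replace the implicit-renewal step by the diffusion-approximation machinery: prove convergence of the subsampled process $(\Vh_k)$ (the chain observed at return times of $R$ to a reference state $s$) to the squared Bessel diffusion, derive hitting-time and area tails for the diffusion, then transfer them to the discrete chain using the overshoot/exit-probability estimates (Lemma~\ref{lem:ModifiedLemma51FromLimitLawsERW} and the lemmas of Appendix~\ref{app:ProofOfPropsDecayAndAccumulatedSumTimeTau0Vh}); this is precisely where the uniform ergodicity of $(S_k)$ and $(R_k)$ is used, to control the random intervals between regeneration anchor points. Once Propositions~\ref{prop:DecayOfHittingTimeSigma0V}--\ref{prop:DecayOfAccumulatedSumTillTimeSigma0V} are in hand, the rest of your outline (stable limit theorem on blocks, inversion to $\tau^X_n$, backtracking control, and finally $X_n$) proceeds essentially as you sketch, plus a coupling argument to pass from the stationary initial law $\pi$ to a general $\phi$.
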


\begin{remark}
Just as for Theorem \ref{thm:LimitLawsKnown} in the (IID) case, analogous
results to Theorem \ref{thm:LimitLaws} also hold in the case of negative
$\delta$. This follows from the fact that Assumption (A) is preserved when
spatial directions of the model are interchanged (see Remark
\ref{rem:InvarianceAssumptionAWhenInterchangeSpatialDirections} below).
\end{remark}

\begin{remark}
In fact, Assumption (A) in Theorems
\ref{thm:TransienceRecurrence}-\ref{thm:LimitLaws} can be weakened to the
somewhat more general \emph{hidden Markov} Assumption (B) given below. The
proofs are essentially unchanged; it is only for convenience of notation and
language that our proofs are written using Assumption (A) instead of (B).
\end{remark}

\noindent
\textbf{Assumption (B)}\\
The probability measure $\PB$ on cookie environments $\omega \in \Omega$ is
the probability measure obtained when $\omega$ is defined by
\eqref{eq:DefineEnvironmentBySk} and the process $(S_k)_{k \in \Z}$ is as
follows: $(Z_k)_{k \in \Z}$ is a uniformly ergodic Markov chain on a
countable state space $\ZM$ such that the reversed Markov chain $(Z_{-k})_{k
\in \Z}$ is also uniformly ergodic, $M \in \N$ is a positive integer, $\SM
\subset \SM^*_M$ is a countable set, and $S_k = f(Z_k)$ where $f: \ZM
\rightarrow \SM$ is some observation function ($f$ not necessarily 1-1).

\subsection{Counterexamples}
\label{subsec:Counterexamples}

In this section we present three closely related counterexamples, which
indicate that the threshold for positivity of the speed given in Theorem
\ref{thm:BallisticityKnown} (and also as a consequence the limiting
distributions presented in Theorem \ref{thm:LimitLawsKnown} when $\delta >
2$), do not extend as far as one might hope from the (IID), (BD), (ELL) case.
The first example shows that the threshold for ballisticity does not in
general hold for (POS) and (SE) environments. The second, which is a
modification of the first, shows that the same conclusion is true if one adds
(BD) and (ELL) to the (POS) and (SE) assumptions. These examples are known,
see e.g. \cite[Example 5.7]{Kosygina2013} (and also \cite[page
290]{Mountford2006} for an earlier and somewhat related example). The final
example, which we present here for the first time, is a modification of the
second. It shows that if one considers Markovian environments as in
Assumption (A) with bounded and elliptic cookie stacks, but assumes only that
the Markov chain $(S_k)_{k \in \Z}$ is ergodic (rather than uniformly
ergodic), then again the threshold of Theorem \ref{thm:BallisticityKnown} for
ballisticity is not in general valid\footnotemark{}. Thus, our uniformly
ergodic assumption on the Markov chains $(S_k)$ and $(R_k)$, or at least some
assumption beyond ergodicity, is necessary for the (IID) results to translate
completely. \footnotetext{In fact, although this is not explicit in their
descriptions, the first two examples are of the hidden Markov type as in
Assumption (B), but where the underlying Markov chain $(Z_k)_{k \geq 0}$ is
not uniformly ergodic.}

The following definition and lemma on monotonicity of the speed will be
needed for our examples.

\begin{definition} If $\omega_1, \omega_2 \in \Omega$ are two cookie environments, we say that $\omega_1$ \emph{dominates} $\omega_2$ if
$\omega_1(x,i) \geq \omega_2(x,i)$, for all $x \in \Z$ and $i \in \N$. If
$\PB_1$ and $\PB_2$ are two probability measures on $\Omega$, we say that
$\PB_1$ \emph{dominates} $\PB_2$ if there exists some joint probability
measure $\overline{\PB}$ on $\Omega \times \Omega$ with marginals $\PB_1$ and
$\PB_2$ such that $\overline{\PB}\big(\{(\omega_1, \omega_2) \in \Omega^2 :
\omega_1 \mbox{ dominates } \omega_2 \} \big) = 1$.
\end{definition}

\begin{lemma}[Special case of Proposition 4.2 from \cite{Kosygina2013}]
\label{lem:MonotonicityOfSpeed} Let $\PB_1$ and $\PB_2$ be two (SE) and (ELL)
probability measures on $\Omega$ such that $\PB_1$ \emph{dominates} $\PB_2$,
and let $v_1$ and $v_2$ be the corresponding velocities of the associated
ERWs (as in Theorem \ref{thm:LLNSE}). Then $v_1 \geq v_2$.
\end{lemma}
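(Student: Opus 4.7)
My plan is to prove the lemma via a coupling argument. On an enlarged probability space, I would realize $\PB_1$ and $\PB_2$ as the marginals of a joint law $\overline{\PB}$ on $\Omega \times \Omega$ under which $\omega_1 \geq \omega_2$ pointwise almost surely, and independently sample a family of IID uniform $[0,1]$ random variables $\{U(x,i) : x \in \Z, i \in \N\}$. For each $j \in \{1,2\}$, I would define the coupled walk $X^{(j)}$ started at $0$ by the shared-cookie rule: on its $i$-th visit to site $x$, $X^{(j)}$ steps right iff $U(x,i) \leq \omega_j(x,i)$. Each $X^{(j)}$ then has the correct quenched (hence averaged) law in environment $\omega_j$, and Theorem~\ref{thm:LLNSE} applied to each marginal gives $X^{(j)}_n / n \to v_j$ almost surely on this joint space.

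The lemma thus reduces to showing, almost surely, $\liminf_n (X^{(1)}_n - X^{(2)}_n)/n \geq 0$. The cleanest sufficient condition is the pathwise inequality $X^{(1)}_n \geq X^{(2)}_n$ for every $n$, which once established immediately yields $v_1 \geq v_2$ upon dividing by $n$ and taking limits. The inductive step in proving this comparison is straightforward when the two walks occupy a common site with the same number of prior visits there: they consult the same uniform $U(x,i)$, and $\omega_1(x,i) \geq \omega_2(x,i)$ ensures that walk $1$ steps right whenever walk $2$ does (and walk $2$ steps left whenever walk $1$ does), preserving the ordering. Likewise when $X^{(1)}_n \geq X^{(2)}_n + 2$ the inequality is propagated trivially, since a single step changes each walk's position by only $\pm 1$.

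The main obstacle is the inductive step when the two walks sit at a common site with different visit counts, or at adjacent positions $X^{(1)}_n = X^{(2)}_n + 1$ for which the next steps consult cookies at different sites or different indices: in these configurations the shared-cookie coupling provides no direct comparison, and the naive coupling can in fact generate crossings. The proof therefore needs either (a) a strengthening of the coupling so that, under the domination $\omega_1 \geq \omega_2$, the visit-count profiles of the two walks remain ordered in a way that keeps them in the ``matched-cookie'' regime whenever they meet, or (b) an alternative monotone observable---for instance a joint invariant on rightward and leftward edge-crossing counts across every edge, which by a telescoping argument implies the position comparison---whose propagation through the coupling dynamics can be verified directly. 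Carrying out either approach rigorously, in a way that also respects the stationarity and ellipticity of $\PB_1$ and $\PB_2$, is where the bulk of the technical work lies; once such a comparison is secured, the conclusion $v_1 \geq v_2$ follows at once from the two laws of large numbers.
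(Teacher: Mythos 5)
The paper does not prove this lemma: it quotes it as a special case of Proposition 4.2 of \cite{Kosygina2013} (which traces back to Zerner's original work), so there is no in-paper argument to compare against. Judged on its own terms, your proposal has a genuine gap, which to your credit you flag — but it is not a fixable technicality, because the pathwise claim you try to reduce to is false.

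Specifically, position domination $X^{(1)}_n \geq X^{(2)}_n$ does \emph{not} hold under the shared-uniform coupling. The failure is exactly the one you identify: once the two walks desynchronize, they can return to a common site with different local times and consult \emph{independent} uniforms $U(x,i_1)$, $U(x,i_2)$, and $\omega_1 \geq \omega_2$ gives no comparison between those two outcomes. One can write down short concrete trajectories, consistent with the coupling, in which $X^{(2)}$ overtakes $X^{(1)}$. So the induction you sketch cannot be repaired by controlling visit-count profiles (your option (a)); and your option (b), while pointing at the right object, aims at the wrong target, since no position comparison holds.

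What actually works is to replace position domination by a hitting-time comparison obtained from monotonicity of the down-crossing counts, i.e.\ the backward branching process of Section~\ref{subsec:BackwardBP}. Couple the coins so that $\xi_i^{k,(1)} \geq \xi_i^{k,(2)}$ for all $k,i$ (possible since $\omega_1(k,i)\geq\omega_2(k,i)$ a.s.). Recall that $D_{n,k-1}$ is the number of failures in $(\xi_i^{k-1})_{i\in\N}$ before a prescribed number of successes determined by $D_{n,k}$ (success target $D_{n,k}+1$ for $1\leq k\leq n$, target $D_{n,k}$ for $k\leq 0$). Environment 1 has more successes per trial, and an inductively smaller success target, so $D_{n,k}^{(1)} \leq D_{n,k}^{(2)}$ for every $k\leq n$, starting from $D_{n,n}=0$ for both. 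By \eqref{eq:RelationHittingTimesAndDownCrossings} this yields $\tau_n^X$ pathwise smaller in environment 1 whenever both are finite, and translating this into $v_1 \geq v_2$ via $v = \lim_n n/\tau_n^X$ (with standard bookkeeping when one or both velocities are nonpositive) finishes the argument. The walks can cross under this coupling; the down-crossing counts cannot — that is the monotone observable your option (b) should have targeted.
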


\begin{Exa}
\label{exmp1}
Define cookie stacks $s_0$ and $s_1$ by $s_0(i) = 1$ for all $i
\in \N$, $s_1(i) = 1/2$ for all $i \in \N$. That is, $s_0$ is an infinite
stack of completely right biased cookies, and $s_1$ is an infinite stack of
``placebo cookies'' which induce no bias on the random walker. Let $\SM =
\{s_0, s_1\}$, and let $(S_k)_{k \in \Z}$ be a stationary and ergodic process
taking values in $\SM$ such that the intervals between consecutive
occurrences of $s_0$ in the process $(S_k)$ are i.i.d. Define the random
environment $\omega \in \Omega$ by \eqref{eq:DefineEnvironmentBySk}. Also,
define $\tau_1 = \inf\{k > 0: S_k = s_0\}$ and $\tau_{i+1} = \inf\{k >
\tau_i: S_k = s_0\}$, for $i \geq 1$, and let $T_i = \inf\{n > 0: X_n =
\tau_i\}$. That is, $T_i$ is the time at which the walker first reaches the
$i$-th position to the right of $0$ which has an $s_0$ stack. Assume that the
distribution of the random variable $(\tau_2 - \tau_1)$, i.e. the
distribution of the distance between consecutive occurrences of stack $s_0$
in the process $(S_k)$, is such that $E(\tau_2 - \tau_1) < \infty$, but
$E\left((\tau_2 - \tau_1)^2\right) = \infty$. Then the following all hold.
\begin{enumerate}
\item $\delta = \infty > 2$.
\item $E(T_2 - T_1) = \infty$, since $E(T_2 - T_1|\tau_2 - \tau_1 = \ell) =
    \ell^2$.
\item $v \equiv \lim_{n \to \infty} X_n/n = \lim_{i \to \infty} \tau_i/T_i
    = \dfrac{E(\tau_2 - \tau_1)}{E(T_2 - T_1)} = 0$.
\end{enumerate}
\end{Exa}

\begin{Exa}
\label{exmp2} Let $M \in \N$ and $p \in (1/2,1)$, and modify Example
\ref{exmp1} so that the stack $s_0$ is defined as follows: $s_0(i) = p$ for
$i = 1, \ldots, M$ and $s_0(i) = 1/2$ for $i > M$. Then the natural coupling
between environments shows that the probability measure $\PB_1$ on cookie
environments from Example \ref{exmp1} dominates the new probability measure
$\PB_2$. Thus, by Lemma \ref{lem:MonotonicityOfSpeed}, $v_2 \leq v_1 = 0$,
where $v_1$ and $v_2$ are the associated velocities of the random walks. This
construction works for any $M \in \N$, and so we may choose $M$ sufficiently
large that $\delta = M(2p-1)/E(\tau_2 - \tau_1) > 2$ in the new modified
case. Then we have a (SE), (POS), (BD), (ELL) probability measure on cookie
environments with velocity 0, but $\delta > 2$.
\end{Exa}

\begin{Exa}
\label{exmp3} Let $s_0$, $s_1$ and the process $(S_k)_{k \in \Z}$ be as in
Example \ref{exmp2}. Assume that $M$ is sufficiently large that $\frac{M(2p
-1)}{E(\tau_2 - \tau_1)} > 3$, and that the distribution of $(\tau_2 -
\tau_1)$ has support on all of $\N$. Let $\tilde{\SM} = \{\tilde{s}_j : j
\geq 0 \}$ where the stacks $\tilde{s}_j$ are defined by $\tilde{s}_0 = s_0$
and, for $j \geq 1$,
\begin{align*}
\tilde{s}_j(1) = 1/2 - \frac{1}{3j} ~~\mbox{ and }~~ \tilde{s}_j(i) = 1/2,~ i > 1.
\end{align*}
Now, define a new process $(\St_k)_{k \in \Z}$ from the process $(S_k)_{k \in
\Z}$ by the following projection:
\begin{align*}
& \mbox{$\St_k = \tilde{s}_j$, where $j= k - \ell$ and $\ell = \sup\{m \leq k : S_m = s_0\}$.}
\end{align*}
Thus, $\St_k$ is equal to $\tilde{s}_0$ if $S_k = s_0$, and otherwise $\St_k$
equals $\st_j$ where $j$ is the number of time steps since the last
occurrence of $s_0$ for the process $(S_m)_{m \in \Z}$.  With this
construction $(\St_k)_{k \in \Z}$ is a Markov chain with transition
probabilities
\begin{align*}
& P(\St_{k+1} = \tilde{s}_0 | \St_k = \tilde{s}_j) = P(\tau_2 - \tau_1 = j + 1 | \tau_2 - \tau_1 > j), \\
& P(\St_{k+1} = \tilde{s}_{j+1} | \St_k = \tilde{s}_j) = P( \tau_2 - \tau_1 > j + 1 | \tau_2 - \tau_1 >j).
\end{align*}
Moreover, the state space $\tilde{\SM}$ of this Markov chain has only
bounded, elliptic stacks, and the Markov chain $(\St_k)$ itself is stationary
(since $(S_k)$ is), irreducible and aperiodic (since $(\tau_2 - \tau_1)$ has
support on all of $\N$), and positive recurrent (since $E(\tau_2 - \tau_1) <
\infty$). Finally, for the probability measure $\PB_3$ on environments
$\omega$ constructed from the $(\St_k)$ process and associated parameter
$\delta$ the following hold:
\begin{enumerate}
\item $\PB_3$ is dominated by the probability measure $\PB_2$ from Example
    \ref{exmp2}. Hence, by Lemma \ref{lem:MonotonicityOfSpeed},  $v_3 \leq
    v_2 = 0$, where $v_3$ and $v_2$ are associated velocities.
\item Let $\delta(\tilde{s}_j) = \sum_{i = 1}^{\infty} (2 \tilde{s}_j(i) -
    1)$ be the net drift in stack $\tilde{s}_j$, and let $\pi = (\pi_j)_{j
    \geq 0}$ be the stationary distribution of the Markov chain
    $(\widetilde{S}_k)$. Then $\delta(\tilde{s}_0) = M(2p - 1)$ and $-2/3 =
    \delta(\tilde{s}_1) < \delta(\tilde{s}_2) < \delta(\tilde{s}_3) <
    \ldots$, so
\begin{align*}
&\delta = \sum_{j=0}^{\infty} \pi_j \delta(\tilde{s}_j)
> \pi_0 \delta(\tilde{s}_0) + \sum_{j=1}^{\infty} \pi_j \delta(\tilde{s}_1) \\
&=  \frac{1}{E(\tau_2 - \tau_1)} M(2p - 1) + \left[1 - \frac{1}{E(\tau_2 - \tau_1)}\right] (-2/3) > 2.
\end{align*}
\end{enumerate}
In summary, the stack sequence $(\widetilde{S}_k)_{k \in \Z}$ (hence also the
reversed sequence $(\widetilde{R}_k)_{k \in \Z})$ are stationary and ergodic
Markov chains, and $\delta > 2$, but the velocity $v_3 \leq 0$. So, the
ballisticity threshold of Theorem \ref{thm:Ballisticity} does not extend to
the case when the sequence of cookies stacks is simply an ergodic Markov
chain rather than a uniformly ergodic one. Indeed, to the best of our
knowledge, it is not even known for this example whether the walk is
transient or recurrent since the environment does not satisfy the (POS)
condition (and, thus, Theorem \ref{thm:TransienceRecurrenceKnown}-(ii) is not
applicable).
\end{Exa}

\subsection{Outline of paper}
\label{subsec:OutlineOfPaper}

An outline of the remainder of the paper is as follows. In Section
\ref{subsec:Notation} we introduce some basic notation and conventions that
will be used throughout. In Section \ref{sec:BranchingProcesses} we review a
well known connection between ERW and certain branching processes, called the
\emph{forward branching process} and \emph{backward branching process}. We
also introduce some related processes, which are easier to analyze, and give
some concentration estimates and expectation and variance calculations for
these related processes. In Section
\ref{sec:ProofTheoremTransienceRecurrence} we prove Theorem
\ref{thm:TransienceRecurrence}. The proof is based on the connection between
the ERW and forward branching process, and follows the general approach used
in \cite{Kozma2016}. In Section
\ref{sec:ProofTheoremsBallisticityAndLimitLaws} we prove Theorems
\ref{thm:Ballisticity} and \ref{thm:LimitLaws}. The proofs are based on a
connection between the ERW and backward branching process and follow the
general approach used in \cite{Kosygina2011}, \cite{Kosygina2017}. Central to
these arguments is a diffusion approximation limit for the backward branching
process introduced in \cite{Kosygina2011}. The proofs of several technical
results are deferred to the appendices.

\subsection{Notation}
\label{subsec:Notation} The positive integers are denoted by $\N$ (as above),
and the non-negative integers by $\N_0$. The infimum of the empty set is
defined to be $\infty$, and $\sum_{i = j}^k z_i \equiv 0$, for any $j > k$
and sequence $(z_i)$. For a stochastic process $Z = (Z_n)_{n \geq 0}$,
$\tau_x^Z \equiv \inf\{n > 0: Z_n = x\}$. The same notation is also used for
a continuous time process $(Z(t))_{t \geq 0}$ with continuous sample paths.
For sequences of real numbers $(a_n), (b_n)$ we write $a_n \sim b_n$ if
$\lim_{n \to \infty} a_n/b_n = 1$. Similarly, $f(x) \sim g(x)$ means $\lim_{x
\to \infty} f(x)/g(x) = 1$, for real valued functions $f$ and $g$. Constants
of the form $c_i$ are assumed to carry over between the various propositions
and lemmas throughout. Other constants $C, c, C_i, K_i$, ... etc. are
particular only to the specific lemma or proposition where they are
introduced.

Unless otherwise specified it is assumed in the remainder of the paper that
Assumption (A) holds for the probability measure $\PB$ on cookie
environments. The stack height $M$, state set $\SM \subset \SM_M^*$, and
transition matrix $\KM$ for the Markov chain $(S_k)_{k \in \Z}$ are all
assumed to be fixed. The marginal distribution of $S_0$ (according to $\PB$)
is denoted by $\phi$, and the stationary distribution of the Markov chain
$(S_k)$ is denoted by $\pi$. Also, $\delta$ is given by
\eqref{eq:DefDeltaMarkov}. By a slight abuse of notation, we will use $\PB$
as the probability measure for the Markov chain $(S_k)$ itself, as well as
the environment $\omega$ derived from it according to
\eqref{eq:DefineEnvironmentBySk}. The probability measures $\PB_s$, $s \in
\SM$, and $\PB_{\pi}$ are the modified probability measures for the Markov
chain $(S_k)$ (or equivalently for the environment $\omega$) when $S_0 = s$
or $S_0 \sim \pi$:
\begin{align*}
\PB_s(\cdot) \equiv \PB(\cdot |S_0 = s) ~~\mbox{ and } ~~ \PB_{\pi}(\cdot) \equiv \sum_{s \in \SM} \pi(s) \PB_s(\cdot).
\end{align*}
Expectations with respect to $\PB_s$ and $\PB_{\pi}$ are denoted by $\EB_s$
and $\EB_{\pi}$, respectively, and the corresponding averaged measures for
the random walk $(X_n)$ started from position $x$ are
\begin{align*}
P_{x,s}(\cdot) \equiv \EB_s[ P_x^{\omega}(\cdot)] ~~ \mbox{ and } ~~ P_{x, \pi}(\cdot) \equiv \EB_{\pi}[ P_x^{\omega}(\cdot)].
\end{align*}
The probability measure $\P$ and corresponding expectation operator $\E$ will
be used generically for auxiliary random variables living on outside
probability spaces, separate from those of the environment $\omega$ and
random walk $(X_n)$.

\section{Branching processes}
\label{sec:BranchingProcesses}

In this section we introduce our main tool in the analysis of the ERW, which
is a connection with two related branching processes known as the forward
branching process and backward branching process. The definition of the
forward branching process is given in Section \ref{subsec:ForwardBP}, and the
definition of the backward branching process in Section
\ref{subsec:BackwardBP}. Some related branching processes which are easier to
analyze are introduced in Section \ref{subsec:RelatedProcesses}. Various
concentration estimates and expectation and variance calculations for some of
the related branching processes are given in Section
\ref{subsec:ExpectationVarianceConcentrationEstimates}.

\subsection{The forward branching process}
\label{subsec:ForwardBP}

The construction of both the forward and backward branching processes is
based on the \emph{coin tossing construction} of the ERW introduced in
\cite{Kosygina2008}. For a fixed environment $\omega \in \Omega$, we
initially flip an infinite sequence of coins at each site $k$, where the
$i$-th coin at site $k$ has probability $\omega(k,i)$ of landing heads. The
walker begins its walk at some given site $x$, and if it ever reaches site
$k$ for the $i$-th time, then it jumps right if the $i$-th coin toss at site
$k$ was heads and left otherwise. More formally, let $\xi_i^k$, $k \in \Z$
and $i \in \N$, be independent random variables such that $\xi_i^k$ has
Bernoulli distribution with parameter $p = \omega(k,i)$. Then the random walk
$(X_n)_{n \geq 0}$ started from position $x$ in the given environment
$\omega$ can be constructed from the $\xi_i^k$'s as follows:
\begin{align}
\label{eq:CoinTossingConstructionOfXn}
X_0 = x~\mbox{ and }~ X_{n+1} = (2 \xi_{I_n}^{X_n} - 1) + X_n
\end{align}
where $I_n = |\{0 \leq m \leq n:X_m = X_n\}|$. We will say that $\xi_i^k$ is
a \emph{success} if $\xi_i^k = 1$ (i.e. heads) and a failure if $\xi_i^k = 0$
(i.e. tails). The \emph{forward branching process} $(U_k)_{k \geq 0}$ started
from level $u_0 \in \N_0$ is defined by
\begin{align}
\label{eq:DefForwardBP}
U_0 = u_0 ~\mbox{ and }~ U_{k+1} = \inf \Big\{m : \sum_{i = 1}^m \indicator{\{\xi_i^{k+1} = 0\}} = U_k \Big\} - U_k.
\end{align}
That is, $U_{k+1}$ is the number of successes in the sequence
$(\xi_i^{k+1})_{i \in \N}$ before the $U_k$-th failure\footnotemark{}.
\footnotetext{In the case $U_k = \infty$ (when \eqref{eq:DefForwardBP} is no
longer directly meaningful) we will extend this interpretation, so that
$U_{k+1}$ is then defined to be the total number of successes in the sequence
$(\xi_i^{k+1})_{i \in \N}$.} If we define $G_i^k$ to be the number of
successes in the sequence $(\xi_j^{k})_{j \in \N}$ between the $(i-1)$-th and
$i$-th failures then we have, for each $k \geq 0$,
\begin{align}
\label{eq:DefForwardBPInTermsOfGeometrics}
U_{k+1} = \sum_{i = 1}^{U_k} G_i^{k+1}, \mbox{ where } (G_i^k)_{i > M, k \geq 1} \mbox{ are i.i.d. Geo(1/2) random variables.}
\end{align}
Thus, the process $(U_k)_{k \geq 0}$ may be seen as a type of branching
process with a time dependent migration term. More precisely, the $k$-th step
of the process may be interpreted as a combination of the following 3 things:
\begin{itemize}
\item First, $U_k \wedge M$ individuals emigrate out of the population
    before reproducing.
\item Then, all remaining individuals (if any) have a Geo(1/2) number of
    offspring independently.
\item Finally, $\sum_{i=1}^{U_k \wedge M} G_i^{k+1}$ individuals immigrate
    into the population after reproduction.
\end{itemize}

Now, this construction for the process $U = (U_k)_{k \geq 0}$ has been for a
fixed environment $\omega$, but one can also consider the same process when
the environment $\omega$ is first chosen randomly according to some
probability measure. We will denote by $P_{u_0}^{U,\omega}$ the probability
measure for the process $U$ started from level $u_0$ in a fixed environment
$\omega$, as constructed above, and by $P_{u_0,s}^U$ the probability measure
for the joint process $(U_k,S_k)_{k \geq 0}$ when $U_0 = u_0$ and $S_0 = s$.
That is, we first sample $(S_k)_{k \in \Z}$ according to $\PB_s$ to get an
environment $\omega = (\omega(k,i))_{k \in \Z, i \in \N} = (S_k(i))_{k \in
\Z, i \in \N}$, and then we sample $(U_k)_{k \geq 0}$ according to
$P_{u_0}^{U,\omega}$. This two step procedure gives a joint measure
on\footnotemark{} $(U_k,S_k)_{k \geq 0}$ which is the measure $P_{u_0,s}^U$.
The measures $P_{u_0,\pi}^U$ and $P_{u_0}^U$ are the averaged measures when
$S_0$ is distributed according to $\pi$ or $\phi$, respectively:
\begin{align*}
P_{u_0,\pi}^{U}(\cdot) \equiv \sum_{s \in \SM} \pi(s) P_{u_0,s}^U(\cdot) ~~~~\mbox{ and }~~~~ P_{u_0}^{U}(\cdot) \equiv \sum_{s \in \SM} \phi(s) P_{u_0,s}^U(\cdot).
\end{align*}
\footnotetext{Note that the process $(U_k)_{k \geq 0}$  depends only on
$S_k(i) = \omega(k,i)$, for $i,k \geq 1$. So, we do not need to completely
specify $\omega$ to construct $(U_k)_{k \geq 0}$. It is sufficient to
consider $(S_k)_{k \geq 0}$.} Under any of these measures $P_{u_0,s}^U$,
$P_{u_0,\pi}^{U}$, and $P_{u_0}^{U}$ the joint process $(U_k, S_k)_{k \geq
0}$ is a time-homogeneous Markov chain with transition probabilities
$p^U_{(u,r)(u',r')} \equiv \mbox{Prob}(U_{k+1} = u', S_{k+1} = r'|U_k = u,
S_k = r)$ given by
\begin{align}
\label{eq:UkSkMarkovChainTransitionProbs}
p^U_{(u,r)(u',r')} = \KM(r,r') P^{U,\omega_{r'}}_u(U_1 = u'),
\end{align}
where $\KM$ is the transition matrix for the Markov chain $(S_k)$ and
$\omega_{r'}$ is the deterministic environment with stack $r'$ at each site:
$\omega_{r'}(x,i) = r'(i)$, for all $x \in \Z$ and $i \in \N$.

The main interest in the forward branching process is its connection to a
related process $(U_k')_{k \geq 1}$ defined by
\begin{align*}
U_k' = |\{0 \leq n < \tau_0^X: X_n = k, X_{n+1} = k+1\}|.
\end{align*}
Clearly, survival of the process $(U_k')$, i.e. occurrence of the event
$\{U_k' > 0, \forall k > 0\}$, is closely related to right transience of the
random walk $(X_n)$. The following lemma is standard, but we will provide a
proof for the convenience of the reader.

\begin{lemma}
\label{lem:UkUkPrimeSurvival} Assume the process $(U_k)_{k \geq 0}$ is
started from level $u_0 = 1$ and that $(X_n)_{n \geq 0}$ is started from
position $X_0 = 1$. Then, for any realization of the random variables
$(\xi_i^k)_{k \in \Z, i \in \N}$, $U_k' \leq U_k$ for all $k \geq 1$.
Moreover, for any realization of the random variables $(\xi_i^k)_{k \in \Z, i
\in \N}$ such that $\tau_0^X < \infty$, $U_k' = U_k$ for all $k \geq 1$.
\end{lemma}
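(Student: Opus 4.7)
My plan is to prove both statements simultaneously by induction on $k \ge 1$, using the coin-by-coin description of the walk together with a simple edge-crossing identity. Throughout, let $V_k$ denote the number of visits of $X$ to site $k$ before time $\tau_0^X$, and let $L_k$ denote the number of left jumps from $k$ before $\tau_0^X$. Since every visit to $k$ produces exactly one exit (to either $k-1$ or $k+1$), the first $V_k$ coins at site $k$ split as $U_k'$ successes and $L_k$ failures; in particular $U_k' = V_k - L_k$.

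For the base case $k=1$: since $X_0 = 1$, the walker reaches $0$ precisely the first time it uses a failure at site $1$, and $\tau_0^X$ occurs at that instant. Consequently $L_1 \le 1 = U_0$, with equality iff $\tau_0^X < \infty$. If $L_1 = 1$ then $V_1$ is the position of the first failure in $(\xi_i^1)$, so $U_1' = V_1 - 1 = U_1$; if $L_1 = 0$ then the first $V_1$ coins at site $1$ are all successes, so $V_1$ is strictly less than the position of the first failure (or the latter is $+\infty$), giving $U_1' = V_1 \le U_1$.

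For the inductive step ($k \ge 2$), the key ingredient is the telescoping edge-crossing identity applied to the edge $\{k-1,k\}$: for any $T \ge 0$,
\[
R_k(T) - L_k(T) \;=\; \mathds{1}_{\{X_T \ge k\}} - \mathds{1}_{\{X_0 \ge k\}},
\]
where $R_k(T), L_k(T)$ count right and left crossings of this edge up to time $T$. For $k \ge 2$ the right-hand side lies in $\{0,1\}$, so letting $T \uparrow \tau_0^X$ (or $T \to \infty$ if $\tau_0^X = \infty$) yields $L_k \le U_{k-1}'$. Combining with the inductive hypothesis gives $L_k \le U_{k-1}$, so the $U_{k-1}$-th failure in $(\xi_i^k)$ occurs at some position $p \ge V_k$, whence the number of successes in positions $1,\ldots,V_k$ is at most that in positions $1,\ldots,p-1$; that is, $U_k' \le U_k$. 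When $\tau_0^X < \infty$ the inductive hypothesis upgrades to $U_{k-1}' = U_{k-1}$, and since $X_{\tau_0^X} = 0 < k$ the crossing identity forces $L_k = U_{k-1}'$ exactly. Moreover, the walker's last visit to $k$ must end with a left jump (else it would remain $\ge k+1$ forever, contradicting $X_{\tau_0^X} = 0$), so the $V_k$-th coin at $k$ is the $L_k$-th $=$ $U_{k-1}$-th failure, giving $V_k = p$ and $U_k' = V_k - L_k = U_k$.

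The main bookkeeping hurdle is to handle realizations in which some of $V_k$, $U_{k-1}'$, or $U_{k-1}$ are infinite; these cases are dispatched using the paper's convention that $U_{k+1}$ denotes the total number of successes in $(\xi_i^{k+1})$ when $U_k = \infty$, together with the fact that $V_k = \infty$ forces the walker to use every coin at $k$ and so $U_k'$ equals that same total. Thus the inequality (and the equality under $\tau_0^X<\infty$) reduces in the degenerate regime to an obvious identification.
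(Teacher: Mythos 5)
Your proof has a genuine gap in the inductive step, in the passage deriving $U_k' \le U_k$. You infer from $L_k \le U_{k-1}$ that ``the $U_{k-1}$-th failure in $(\xi_i^k)$ occurs at some position $p \ge V_k$.'' That inference is not combinatorially valid. Among the first $V_k$ coins at site $k$ there are exactly $L_k$ failures. If $L_k < U_{k-1}$ then indeed the $U_{k-1}$-th failure (when it exists) lies strictly beyond position $V_k$, and your argument goes through. But if $L_k = U_{k-1}$ then the $U_{k-1}$-th failure is the $L_k$-th failure, which sits at some position $q \le V_k$, and nothing purely combinatorial rules out $q < V_k$; that would give $U_k' = V_k - L_k > q - U_{k-1} = U_k$, the wrong inequality.

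The fix uses the same dynamical observation you invoke for the equality case, but with a different source of contradiction. When $L_k = U_{k-1}$ the inductive hypothesis together with the crossing bound gives $U_{k-1} = L_k \le U_{k-1}' \le U_{k-1}$, so $U_{k-1}' = L_k$. If the $V_k$-th coin at site $k$ were a success, the last visit to $k$ would end with a right jump and the walk would stay $\ge k+1$ thereafter, so the crossing identity would eventually give $U_{k-1}' = L_k + 1 > U_{k-1}$, contradicting $U_{k-1}' \le U_{k-1}$. Hence the $V_k$-th coin is a failure, it is necessarily the $L_k$-th one, so $p = V_k$, and only then does the desired conclusion follow. With that step supplied, the argument is sound. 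For what it is worth, your organization differs from the paper's: you run a single induction covering both $\tau_0^X < \infty$ and $\tau_0^X = \infty$ via the edge-crossing identity, whereas the paper splits into the two cases up front and inducts separately in each. Your version makes the crossing bookkeeping explicit, which is appealing, but it is exactly the borderline case $L_k = U_{k-1}$ in the unified induction that must be tracked carefully --- and that is where the gap arose.
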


\noindent
\textbf{Note:} The lemma does not specify anything about the
probability measure on the environment $\omega$. The relation between $U_k$
and $U_k'$ is a deterministic function of the values of $(\xi_i^k)_{k \in \Z,
i \in \N}$, from which both processes $(U_k)$ and $(U_k')$ are constructed.

\begin{proof}
First fix a realization $(\xi_i^k)_{k \in \Z, i \in \N}$ such that $\tau_0^X
< \infty$. By definition $U_1'$ is the number of right jumps from site 1
before time $\tau_0^X$, which (if $\tau_0^X < \infty$) is simply the number
of right jumps from site $1$ before the first left jump from this site, or
equivalently the number of successes in the sequence $(\xi_i^1)_{i \in \N}$
before the first failure.  Since we assume $u_0 = 1$, the latter quantity is
exactly $U_1$, so we have $U_1' = U_1$. Now suppose that $U_k = U_k' = m \geq
0$, for some $k \geq 1$. Then, by the definition of the $(U_j')$ process, the
random walk $(X_n)$ must jump right from site $k$ exactly $m$ times prior to
time $\tau_0^X$. Thus, the walk must jump left from site $k+1$ exactly $m$
times prior to time $\tau_0^X$. Thus, the number of right jumps from site
$k+1$ prior to time $\tau_0^X$ is exactly the number of right jumps from site
$k+1$ before there are $m$ left jumps from it, or equivalently the number of
successes in the sequence $(\xi_i^{k+1})_{i \in \N}$ before the $m$-th
failure. Since $U_k = m$, this shows that $U_{k+1}' = U_{k+1}$. It follows,
by induction, that $U_k' = U_k$ for all $k \geq 1$.

Now, fix a realization $(\xi_i^k)_{k \in \Z, i \in \N}$ such that $\tau_0^X =
\infty$. Then, for each $k \geq 1$, $U_k'$ is simply the total number of
right jumps from site $k$ by the random walk $(X_n)$. Because the walk never
jumps left from site $1$, the total number of right jumps from site 1 is at
most the number of successes in the sequence $(\xi_i^1)_{i \in \N}$ before
the first failure. So, we have $U_1' \leq U_1$. Now suppose that $U_k = \ell$
and $U_k' = m$ for some $0 \leq m \leq \ell \leq \infty$ and $k \geq 1$. If
$m = 0$, then the walk never jumps right from site $k$, so it never reaches
site $k+1$, so $U'_{k+1}$ = 0, so $U'_{k+1} \leq U_{k+1}$. If $1 \leq m <
\infty$, then the walk jumps right from site $k$ exactly $m$ times total, so
either it jumps left from site $k+1$ exactly $(m-1)$ times or it jumps left
from site $k+1$ exactly $m$ times and never returns to site $k+1$ after the
$m$-th left jump. Either way, the total number of right jumps from site $k+1$
can be at most the number of successes in $(\xi_i^{k+1})_{i \in \N}$ before
the $m$-th failure, which is at most the number of successes in
$(\xi_i^{k+1})_{i \in \N}$ before the $\ell$-th failure. Hence, again,
$U_{k+1}' \leq U_{k+1}$. Finally, if $m = \infty$ then the walk must jump
right from site $k$ infinitely many times, so it must jump left from site
$k+1$ infinitely many times, so it must visit site $k+1$ infinitely often, so
the total number of right jumps from site $k+1$ is simply the total number of
successes in the sequence $(\xi_i^{k+1})_{i \in \N}$, which is equal to
$U_{k+1}$ (since $\ell = \infty$, with $m = \infty$ and $\ell \geq m$). Thus,
in all possible cases $U_{k+1}' \leq U_{k+1}$, so it follows, by induction,
that $U_k' \leq U_k$ for all $k \geq 1$.
\end{proof}

In Appendix \ref{app:ProofLemmaFiniteModificationArgument} we will prove the
following basic fact using a finite modification argument.

\begin{lemma}
\label{lem:FiniteModificationArgument} Define $A^+ = \{X_n > 0, \forall n > 0
\mbox{ and } \lim_{n \to \infty} X_n = +\infty \}$. Then, for each $x \in \N$
and $s \in \SM$, $P_{x,s}(A^+) > 0$ if and only if $P_{x,s}( X_n \rightarrow
+\infty) > 0$.
\end{lemma}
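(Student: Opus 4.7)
The forward direction is immediate since $A^+ \subset \{X_n \to +\infty\}$, so I focus on the converse. My plan is to establish the stronger quenched implication $P_x^\omega(X_n \to +\infty) > 0 \Rightarrow P_x^\omega(A^+) > 0$ for $\PB_s$-almost every $\omega$, and then average. Under Assumption~(A), $\PB_s$-almost every $\omega$ is elliptic in the strong sense that $\omega(k,i) \in (0,1)$ for $i \leq M$ and $\omega(k,i) = 1/2$ for $i > M$, since every stack lies in $\SM \subset \SM^*_M$.

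Fix such an elliptic $\omega$ with $P_x^\omega(X_n \to +\infty) > 0$. On $\{X_n \to +\infty\}$ the walk visits $\{k \leq 0\}$ only finitely often, so the events $\{X_n \to +\infty, X_m \geq 1 \, \forall m \geq N\}$ increase to $\{X_n \to +\infty\}$, and continuity from below yields an $N$ for which the former has positive probability. Since there are only finitely many paths of length $N$ starting at $x$, I can isolate a specific witness $\gamma^* = (x, x_1, \ldots, x_N)$ ending at some $y := x_N \geq 1$ with visit-count vector $V^*$. For fixed $\omega$ the pair $(X_n, (I_n^k)_k)$ is a genuine Markov chain, so the Markov property at time $N$ gives
\[
P_y^{\omega/V^*}\big(X_m \geq 1 \, \forall m \geq 0, \, X_n \to +\infty\big) > 0,
\]
where $\omega/V^*$ denotes the environment obtained from $\omega$ by shifting away the cookies consumed along $\gamma^*$.

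Next I pick a \emph{safe} path $\gamma'$ of length $|y-x|$ going monotonically from $x$ to $y$ while staying in $\{1, 2, \ldots\}$ (possible since $x, y \geq 1$), with visit-count vector $V'$. Ellipticity gives $P_x^\omega((X_0, \ldots, X_{|y-x|}) = \gamma') > 0$ as a product of coin probabilities in $(0,1)$. The key step is to transfer the positive continuation probability from state $(y, V^*)$ to state $(y, V')$: the shifted environments $\omega/V^*$ and $\omega/V'$ agree except at the finitely many $(k, i)$ with $k$ visited by $\gamma^* \cup \gamma'$ and $i \leq M$, at which coordinates both values lie in $(0, 1)$. Kakutani's theorem (or a direct finite Radon--Nikodym computation) then implies that the associated product Bernoulli laws on coin space are equivalent, hence so are their pushforwards $P_y^{\omega/V^*}$ and $P_y^{\omega/V'}$ on path space. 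This transfers positivity, and a second application of the Markov property yields
\[
P_x^\omega(A^+) \geq P_x^\omega\big((X_0, \ldots, X_{|y-x|}) = \gamma'\big) \cdot P_y^{\omega/V'}\big(X_m \geq 1 \, \forall m \geq 0, \, X_n \to +\infty\big) > 0,
\]
where the safeness of $\gamma'$ ensures $X_m \geq 1$ throughout the first $|y-x|$ steps. Averaging over $\omega$ completes the proof.

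The main obstacle, which dictates the structure above, is that $\gamma^*$ typically visits sites $\leq 0$, so $V^*$ has mass on $\{k \leq 0\}$ and cannot be reproduced exactly by any safe path. The absolute-continuity step is what bridges this mismatch, and it relies essentially on the ellipticity of $\omega$ together with the bounded-stack condition $\SM \subset \SM^*_M$ which guarantees that $\omega/V^*$ and $\omega/V'$ differ at only finitely many cookie positions.
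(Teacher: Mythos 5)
Your proof is correct, but it takes a genuinely different route from the paper's.

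The paper works entirely combinatorially: given a finite path $\zeta$ from $x$ to $x+1$ after which the walk stays $>x$ forever with positive $P_x^\omega$-probability, it constructs a reduced path $\tilde\zeta$ by deleting all leftward excursions from $x$. The key observation is that this deletion leaves the visit counts to every site $k\geq x+1$ unchanged, so the conditional probability of staying $>x$ forever given the reduced path is \emph{identical} to that given the original path; no comparison of measures is needed. The reduced path itself never leaves $\{x, x+1, \ldots\}$, so it directly produces the event $A^+$. Your argument instead picks an arbitrary positive-probability path $\gamma^*$ to some $y\geq 1$, replaces it by a safe monotone path $\gamma'$, and bridges the mismatch in shifted environments $\omega/V^*$ versus $\omega/V'$ by mutual absolute continuity of the corresponding product coin measures. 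The two strategies put the weight in different places. Your version leans essentially on the bounded-stack assumption $\SM\subset\SM^*_M$: without it, the two shifted cookie sequences at a given site would be shifts of each other and could differ at infinitely many coordinates, and Kakutani's criterion would need to be checked rather than being automatic. The paper's excursion-removal argument makes no use of (BD) at all and is valid for any elliptic environment, which is slightly stronger; it also avoids invoking the quenched Markov chain on $(X_n,(I_n^k)_k)$ and any measure-comparison machinery. On the other hand, your absolute-continuity approach is conceptually modular and would adapt more readily to settings where the excursion-deletion bookkeeping fails (for instance, walks with step distributions that are not nearest-neighbor). Both proofs are valid here; the paper's is a bit more elementary and more general within the nearest-neighbor elliptic setting.
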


Using this fact along with Lemma \ref{lem:UkUkPrimeSurvival} and Theorem
\ref{thm:ZeroOneLawSE}, we now establish an explicit criteria relating
transience/recurrence of the random walk $(X_n)$ to the forward branching
process $(U_k)$. This criteria will be used to prove Theorem
\ref{thm:TransienceRecurrence} in Section
\ref{sec:ProofTheoremTransienceRecurrence}. For the statement of the lemma
recall that $P_{u_0,s}^U(\cdot)$ is the joint probability measure for
$(U_k,S_k)_{k \geq 0}$ when $U_0 = u_0$ and $S_0 = s$.

\begin{lemma}
\label{lem:TransienceRecurrenceInTermsOfSurvivalUk} The following hold:
\begin{align*}
& \mbox{ If $\exists s \in \SM$ such that } P_{1,s}^{U}(U_k > 0, \forall k > 0) > 0, \mbox{ then } P_0(X_n \rightarrow +\infty) = 1. \\
& \mbox{ If $\exists s \in \SM$ such that } P_{1,s}^{U}(U_k > 0, \forall k > 0) = 0, \mbox{ then } P_0(X_n \rightarrow +\infty) = 0.
\end{align*}
\end{lemma}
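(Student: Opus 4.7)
My plan is to prove the two assertions together by establishing the equivalence
\begin{align*}
P_{1,s}^U\big(U_k>0,\,\forall k>0\big)>0 \iff P_{1,s}(X_n\to+\infty)>0 \iff P_0(X_n\to+\infty)=1,
\end{align*}
which reduces both claims to a single dichotomy.

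For the first equivalence I couple $(X_n)$ with $X_0=1$ and $(U_k)$ with $U_0=1$ via the common coin tosses $(\xi_i^k)$, in the environment with $S_0=s$. Two pathwise inclusions follow from Lemma \ref{lem:UkUkPrimeSurvival}. On $\{\tau_0^X<\infty\}$ the walker has visited only finitely many sites by time $\tau_0^X$, so $U_k'=0$ for all large $k$; together with the equality $U_k'=U_k$ valid on this event, this forces $U_k=0$ eventually, and contrapositively $\{U_k>0,\,\forall k>0\}\subseteq\{\tau_0^X=\infty\}$. Conversely, on the event $A^+=\{X_n>0,\,\forall n>0,\,X_n\to+\infty\}$ the walker visits every site $k\geq 1$ and makes at least one right jump from it, so $U_k'\geq 1$, and then $U_k\geq U_k'\geq 1$ by the inequality part of Lemma \ref{lem:UkUkPrimeSurvival}, giving $A^+\subseteq\{U_k>0,\,\forall k>0\}$. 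To close these into an equivalence I argue that $\{\tau_0^X=\infty\}\setminus A^+$ is $P_{1,s}$-null, which is where (BD) and (ELL) enter: any path confined to $[1,\infty)$ but not tending to $+\infty$ must stay in some bounded strip $[1,N]$ and hence visit some site $k^*$ infinitely often; by (BD) its cookie stack is exhausted after at most $M$ visits, the subsequent flips $\xi_i^{k^*}$ form an i.i.d.\ fair sequence, and a standard Borel--Cantelli argument then propagates the walker to visit $k^*\pm 1$ infinitely often and inductively every integer infinitely often---in particular $0$, contradicting $\tau_0^X=\infty$. Combined with Lemma \ref{lem:FiniteModificationArgument}, this yields the first equivalence.

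For the second equivalence I invoke the zero-one law and irreducibility of $(S_k)$. Since $\pi(s)>0$ for every $s\in\SM$, positivity of $P_{1,s}(X_n\to+\infty)$ for one $s$ is equivalent to $P_{1,\pi}(X_n\to+\infty)>0$. Under $\PB_\pi$ the environment is stationary and ergodic, so Theorem \ref{thm:ZeroOneLawSE} gives $P_{0,\pi}(X_n\to+\infty)\in\{0,1\}$; space-stationarity of $\PB_\pi$ identifies this value with $P_{1,\pi}(X_n\to+\infty)$, and because $\pi(s)>0$ for every $s$, each $P_{0,s}(X_n\to+\infty)$ must equal $P_{0,\pi}(X_n\to+\infty)$, so that averaging against $\phi$ gives $P_0(X_n\to+\infty)=P_{0,\pi}(X_n\to+\infty)\in\{0,1\}$. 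Both assertions of the lemma now follow by taking the positive and zero cases of the chain of equivalences.

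The main obstacle is the dichotomy step that converts ``avoids $0$'' into ``escapes to $+\infty$''; this is where the assumptions (BD) and (ELL) are critical, and it requires the most care. The remainder is a clean combination of the coupling inclusions, Lemma \ref{lem:FiniteModificationArgument}, and the zero-one law together with irreducibility of the Markov chain.
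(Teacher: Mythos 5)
Your proof is correct and takes essentially the same route as the paper. Both arguments rest on the same four ingredients: the pathwise comparison between $(U_k)$ and $(U_k')$ from Lemma~\ref{lem:UkUkPrimeSurvival}, the finite-modification Lemma~\ref{lem:FiniteModificationArgument}, the zero-one law of Theorem~\ref{thm:ZeroOneLawSE} together with positivity of $\pi$ on every state, and the fact that under (BD) and (ELL) the walk cannot have a finite liminf. The paper organizes these into a dichotomy (case (a) vs.\ case (b)), while you present a single chain of equivalences; this is only a cosmetic difference. One local imprecision is worth flagging: you assert that a path confined to $[1,\infty)$ that does not tend to $+\infty$ ``must stay in some bounded strip $[1,N]$'' --- this is false (a path with $\liminf_n X_n < \infty$ but $\limsup_n X_n = \infty$ is not confined to a strip). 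The conclusion you actually need, that some site is visited infinitely often, follows directly from $\liminf_n X_n < \infty$ for a nearest-neighbor walk and does not require the strip claim, so your Borel--Cantelli propagation then goes through unchanged; just drop the strip assertion. The paper itself merely states that $P_{1,s}(\liminf_n X_n = x) = 0$ for each fixed $x$ without giving the Borel--Cantelli details, so your sketch of that step is, if anything, more explicit than what appears in the paper.
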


\begin{proof}
We will say that ``$U_k$ survives'' if $U_k > 0$ for all $k > 0$, and
similarly for $U_k'$. Also, we extend the probability measure $P_{x,s}$ for
the random walk $(X_n)$ to the process $(U_k')$ derived from it. By Theorem
\ref{thm:ZeroOneLawSE}, $P_{1,\pi}(X_n \rightarrow + \infty) = P_{0,\pi}(X_n
\rightarrow + \infty) \in \{0,1\}$. Since $\pi(s) > 0$ for all $s \in \SM$,
this implies that either
\begin{align*}
& \mbox{ (a) $P_{1,s}(X_n \rightarrow +\infty) = P_{0,s}(X_n \rightarrow +\infty) = 1$, for all $s \in \SM$, or  } \\
& \mbox{ (b) $P_{1,s}(X_n \rightarrow +\infty) = P_{0,s}(X_n \rightarrow +\infty) = 0$, for all $s \in \SM$. }
\end{align*}
We consider these two cases separately. \\

\noindent
\emph{Case (a):} \\
In this case, it follows from Lemma \ref{lem:FiniteModificationArgument} that
$P_{1,s}(A^+) > 0$, for each $s \in \SM$. Hence, $P_{1,s}(U_k' \mbox{
survives}) > 0$, for each $s \in \SM$. By Lemma \ref{lem:UkUkPrimeSurvival}
this implies
$P_{1,s}^{U}(U_k \mbox{ survives}) > 0$, for each $s \in \SM$. \\

\noindent
\emph{Case (b):} \\
Since $\omega(x,i) = 1/2$ for each $i > M$ and $x \in \Z$, $\PB_s$ a.s., we
have $P_{1,s}(\liminf_{n \to \infty} X_n = x) = 0$, for each $x \in \Z$.
Thus, if $P_{1,s}(X_n \rightarrow +\infty) = 0$ for each $s \in \SM$, then
$P_{1,s}(\liminf_{n \to \infty} X_n = -\infty) = 1$ for each $s \in \SM$, and
in particular, $P_{1,s}(\tau_0^X < \infty) = 1$ for each $s \in \SM$. By
Lemma \ref{lem:UkUkPrimeSurvival} and the definition of the $(U_k')$ process
this
implies $P_{1,s}^{U}(U_k \mbox{ survives}) = 0$ for each $s \in \SM$. \\

\noindent Thus, we have established the following dichotomy: Either
\begin{align*}
& \mbox{(a') $P_{1,s}^{U}(U_k \mbox{ survives}) > 0$ for each $s \in \SM$ and (a) holds, or} \\
& \mbox{(b') $P_{1,s}^{U}(U_k \mbox{ survives}) = 0$ for each $s \in \SM$ and (b) holds.}
\end{align*}
Since $P_0(X_n \rightarrow +\infty) = 1$ if (a) holds, and $P_0(X_n
\rightarrow +\infty) = 0$ if (b) holds, this establishes the lemma.
\end{proof}

\subsection{The backward branching process}
\label{subsec:BackwardBP}

Let the random variables $(\xi_i^k)_{k \in \Z, i \in \N}$ be as above in
Section \ref{subsec:ForwardBP}. We continue to assume the random walk $(X_n)$
is constructed from these random variables using
\eqref{eq:CoinTossingConstructionOfXn}. Also, we recall that $(R_k)_{k \in
\Z}$ is the spatial reversal of the stack sequence $(S_k)_{k \in \Z}$:
\begin{align}
\label{eq:DefRk}
R_k = S_{-k} ~,~ k \in \Z.
\end{align}
The \emph{backward branching process} $(V_k)_{k \geq 0}$ started from level
$v_0 \in \N_0$ is defined by
\begin{align}
\label{eq:DefBackwardBP}
V_0 = v_0 ~\mbox{ and }~ V_{k+1} = \inf \Big\{ m : \sum_{i=1}^m \indicator\{\xi_i^{-(k+1)} = 1\} = V_k + 1 \Big\} - (V_k + 1).
\end{align}
That is, $V_{k+1}$ is the number of failures in the sequence
$(\xi_i^{-(k+1)})_{i \in \N}$ (i.e. at stack $R_{k+1}$) before there are $V_k
+ 1$ successes. If we let $H_i^k$ be the number of failures in the sequence
$(\xi_j^{-k})_{j \in \N}$ between the $(i-1)$-th and $i$-th successes then
\begin{align}
\label{eq:DefBackwardBPInTermsOfGeometrics}
V_{k+1} = \sum_{i = 1}^{V_k + 1} H_i^{k+1}, \mbox{ where } (H_i^k)_{i > M, k \geq 1} \mbox{ are i.i.d. Geo(1/2) random variables. }
\end{align}
Thus, by similar reasoning as for the forward branching process $(U_k)_{k
\geq 0}$, this process $(V_k)_{k \geq 0}$ may also be seen as a type of
branching process with a time dependent migration term.

In fact, the definition of the backward branching process $(V_k)$ is almost
exactly symmetric to the definition of the forward branching process $(U_k)$
with successes replaced by failures and $S_k$ replaced by $R_k$, but there is
one notable difference: In the backward process we count failures until $V_k
+ 1$ successes, whereas in the forward process we only count successes until
$U_k$ failures. This ``$+1$'' is important, because it means that $0$ is not
an absorbing state for the process $(V_k)$, as it is for the process $(U_k)$.

Our interest in the backward branching process stems from the following lemma
about down crossings. The analog in the case of (IID) environments is well
known.

\begin{lemma}
\label{lem:DowncrossingBackwardBPSameDistribution} Assume that $\delta > 1$
and $X_0 = V_0 = 0$. For $n \in \N$ and $k \leq n$, let
\begin{align*}
D_{n,k} = |\{0 \leq m < \tau^X_n : X_m = k, X_{m+1} = k - 1\}|
\end{align*}
be the number of down crossings of the edge $(k,k-1)$ by the random walk
$(X_m)$ up to time $\tau_n^X$. Then $(V_0, V_1, \ldots ,V_n)$ and $(D_{n,n},
D_{n,n-1}, \ldots, D_{n,0})$ have the same distribution if the environment
$\omega$ is chosen according to the stationary measure $\PB_{\pi}$.
\end{lemma}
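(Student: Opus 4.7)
The proof plan is to establish a bijective-style recursion relating $D_{n,k-1}$ to $D_{n,k}$ that mirrors the defining recursion \eqref{eq:DefBackwardBP} for the backward branching process, and then exploit the stationarity of $(S_k)$ under $\PB_\pi$ to match the two stack sequences used. Since $\delta > 1$, Theorem \ref{thm:TransienceRecurrence} gives that $(X_n)$ is right transient under $P_{0,\pi}$, so $\tau_n^X < \infty$ almost surely and every $D_{n,k}$ is a.s.\ finite. Note also that $D_{n,n} = 0 = V_0$ since $\tau_n^X$ is the first time the walk reaches $n$.

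Next I would establish the key recursion: for $1 \leq k \leq n$,
\begin{align*}
D_{n,k-1} = \inf\Big\{ m : \sum_{i=1}^m \indicator\{\xi_i^{k-1} = 1\} = D_{n,k} + 1 \Big\} - (D_{n,k} + 1).
\end{align*}
The argument is an accounting of visits to site $k-1$ before time $\tau_n^X$. Every such visit uses one coin flip $\xi_i^{k-1}$ in order; each visit ends with a success (upcrossing of $(k-1,k)$) or a failure (downcrossing of $(k-1,k-2)$). Because the walker starts at $0$ and is at $n$ at time $\tau_n^X$, conservation of net crossings over edge $(k-1,k)$ yields that the number of successes at site $k-1$ equals $D_{n,k}+1$, while the number of failures equals $D_{n,k-1}$. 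The final coin flip used at $k-1$ must be a success, since otherwise the walker would land in $k-2$ but still need to cross back through $k-1$ to reach $n$ for the first time; hence the recursion above holds exactly.

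With this recursion in hand, I would observe that conditionally on the environment, the coin flips $(\xi_i^k)_{i\in\N}$ at distinct sites $k$ are independent. Therefore the sequence $(D_{n,n}, D_{n,n-1}, \ldots, D_{n,0})$ is a Markov chain (in the decreasing spatial index) whose transition from state $v$ to the next state uses independent Bernoulli trials with success probabilities given by the stack at the current site; the sequence of stacks consulted is $(S_{n-1}, S_{n-2}, \ldots, S_0)$. The backward branching process $(V_0, V_1, \ldots, V_n)$ evolves by the identical transition rule (compare \eqref{eq:DefBackwardBP}), with the sequence of stacks being $(R_1, R_2, \ldots, R_n) = (S_{-1}, S_{-2}, \ldots, S_{-n})$. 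Under $\PB_\pi$ the process $(S_k)_{k \in \Z}$ is stationary, so applying the shift $k \mapsto k-n$ yields that $(S_{n-1}, S_{n-2}, \ldots, S_0)$ and $(S_{-1}, S_{-2}, \ldots, S_{-n})$ have the same joint distribution. Combined with the matching transitions and matching initial values $D_{n,n} = 0 = V_0$, this gives the claimed distributional identity.

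The main technical point to get right is the recursion for $D_{n,k-1}$, especially the claim that the last cookie consumed at site $k-1$ before $\tau_n^X$ is necessarily a success and the bookkeeping at the boundary site $k-1=0$ where the walker originates (here $U_{n,0} = D_{n,0}$ rather than $D_{n,0}+1$, but an extra $+1$ from the initial visit at time $0$ balances the count so that the same recursion applies). Everything else is routine once this accounting is established and stationarity is invoked.
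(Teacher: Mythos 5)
Your proposal is correct and follows essentially the same approach as the paper's proof: both establish the pathwise recursion that $D_{n,k-1}$ is the number of failures in $(\xi_i^{k-1})_i$ before the $(D_{n,k}+1)$-th success (via the same conservation-of-crossings argument and the observation that the last coin consumed at a site $<n$ before $\tau_n^X$ must be a success), thereby identifying the spatially reversed down-crossing sequence as a copy of the backward branching process driven by the stacks $(S_{n-1},\ldots,S_0)$, and then invoke stationarity of $(S_k)$ under $\PB_\pi$ to match this with the stacks $(S_{-1},\ldots,S_{-n})$ that drive $(V_0,\ldots,V_n)$. The paper packages the induction as a direct coupling against the shifted coins $\widetilde{\xi}_i^k=\xi_i^{k+n}$, but the content is identical to your argument.
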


\begin{proof}
Since we assume that $\delta > 1$, it follows from Theorem
\ref{thm:TransienceRecurrence} that $\tau^X_n$ is $P_{0,\pi}$ a.s.
finite\footnotemark{} for each $n$. Fix $n \in \N$ and any realization of the
random variables $(\xi_i^k)_{k \in \Z, i \in \N}$ such that $\tau^X_n$ is
finite. Then, define $\widetilde{\xi}_i^k = \xi_i^{k+n}$, for $k \in \Z$ and
$i \in \N$. Let $D_{n,k}$, $0 \leq k \leq n$, be as in the statement of the
lemma when the random walk $(X_m)$ is generated according to the specific
fixed values of $(\xi_i^k)_{k \in \Z, i \in \N}$, and let $(\Vt_k)_{k \geq
0}$ be defined as in \eqref{eq:DefBackwardBP} with $v_0 = 0$, but
$\xi_i^{-(k+1)}$ replaced with $\widetilde{\xi}_i^{-(k+1)}$. We claim that,
in this case, $D_{n,n-k} = \Vt_k$, for each $0 \leq k \leq n$.
\footnotetext{Theorem \ref{thm:TransienceRecurrence} will not be proved till
later in Section \ref{sec:ProofTheoremTransienceRecurrence}, but the proof
uses only the forward branching process described above, and is thus
independent of the development in this section. The theorem is stated when
$\omega$ is chosen according to $\PB \equiv \PB_{\phi}$, rather than
$\PB_{\pi}$, but $\phi$ is allowed to be any initial distribution on $\SM$ in
the theorem. So, in particular, the conclusion is valid when $\phi = \pi$.}

The proof is by induction on $k$. For $k = 0$ we have $\Vt_0 = 0$, by
assumption, and $D_{n,n} = 0$, since the walk cannot down cross the edge
$(n,n-1)$ before first hitting site $n$. Now assume $D_{n, n - k} = \Vt_k =
\ell$ for some $0 \leq k < n$ and $\ell \geq 0$. Then the walk $(X_m)$ must
jump left from site $n-k$ exactly $\ell$ times prior to time $\tau_n^X$.
Thus, the walk must jump right from site $n - (k+1)$ exactly $\ell+1$ times
prior to $\tau_n^X$. Thus, the number of left jumps from site $n - (k+1)$
prior to time $\tau_n^X$ is exactly the number of left jumps from site $n -
(k+1)$ before the $(\ell+1)-th$ right jump. So, we have:
\begin{align*}
D_{n, n - (k+1)}
& = \mbox{ \# left jumps from site $n - (k+1)$ prior to time $\tau_n^X$ } \\
& = \mbox{ \# left jumps from site $n - (k+1)$ before $(\ell+1)$-th right jump } \\
& = \mbox{ \# failures in $(\xi_i^{n-(k+1)})_{i \in \N}$ before $(\ell+1)$-th success } \\
& = \mbox{ \# failures in $(\widetilde{\xi}_i^{-(k+1)})_{i \in \N}$ before $(\ell+1)$-th success } \\
& = \mbox{ \# failures in $(\widetilde{\xi}_i^{-(k+1)})_{i \in \N}$ before $(\Vt_k+1)$-th success } \\
& = \Vt_{k+1}.
\end{align*}

This completes the proof that $D_{n,n-k} = \Vt_k$, for each $0 \leq k \leq
n$, using the specific fixed values of $(\xi_i^k)_{k \in \Z, i \in \N}$ and
associated values of $\widetilde{\xi}_i^k = \xi_i^{k+n}$. The lemma now
follows since $(S_k)_{k \in \Z}$ is stationary under $\PB_{\pi}$, so the
stochastic process $(\xi_i^k)_{k \in \Z, i \in \N}$ has the same distribution
as the process $(\xi_i^{k+n})_{k \in \Z, i \in \N}$.
\end{proof}

The importance of the down crossings is their relation to hitting times for
the random walk $(X_n)$. If $X_0 = 0$, then for each $n \in \N$
\begin{align}
\label{eq:RelationHittingTimesAndDownCrossings}
\tau_n^X = n + 2 \sum_{k \leq n} D_{n,k} = n + 2 \sum_{k = 0}^n D_{n,k} + 2 \sum_{k < 0} D_{n,k}.
\end{align}
If $\delta > 1$, so that the random walk is right transient, then $\lim_{n
\to \infty} \sum_{k < 0} D_{n,k}$ is a.s. finite, and thus the asymptotic
distribution of $\tau_n^X$ is determined by the asymptotic distribution of
$\sum_{k = 0}^n D_{n,k}$. By Lemma
\ref{lem:DowncrossingBackwardBPSameDistribution}, the latter sum has the same
distribution as $\sum_{k=0}^n V_k$ (assuming that $\phi = \pi$). The general
proof strategy for Theorems  \ref{thm:Ballisticity} and \ref{thm:LimitLaws}
is to analyze asymptotic properties of $\sum_{k=0}^n V_k$, relate these to
asymptotic properties of the hitting times $\tau_n^X$, and then relate those
to asymptotic properties of the random walk $(X_n)$ itself. This basic
approach has been employed many times before in the study of excited random
walks, e.g. \cite{Kosygina2008}, \cite{Basdevant2008b}, \cite{Kosygina2011},
\cite{Kosygina2017}. (See also
 \cite{Kesten1975}, \cite{Toth1995}, \cite{Toth1996}, and \cite{Pinsky2010} for similar uses of branching processes
in analyzing one dimensional self-interacting random walks and random walk in
random environment.)

In the sequel we will use the following notation for the backward branching
process $V = (V_k)_{k \geq 0}$, similar to that for the forward branching
process $(U_k)_{k \geq 0}$. $P_{v_0}^{V, \omega}$ is the probability measure
for $(V_k)_{k \geq 0}$ started from level $v_0$ in a fixed environment
$\omega$, and, for $s \in \SM$, $P_{v_0,s}^V$ is the probability measure for
the joint process $(V_k,R_k)_{k \geq 0}$ when $V_0 = v_0$ and $R_0 = s$. The
measures $P_{v_0, \pi}^{V}$ and $P_{v_0}^{V}$ are defined by
\begin{align*}
P_{v_0,\pi}^{V}(\cdot) \equiv \sum_{s \in \SM} \pi(s) P_{v_0,s}^V(\cdot) ~~~~\mbox{ and }~~~~ P_{v_0}^{V}(\cdot) \equiv \sum_{s \in \SM} \phi(s) P_{v_0,s}^V(\cdot).
\end{align*}
Under any of these measures $P_{v_0,s}^V$, $P_{v_0,\pi}^{V}$, and
$P_{v_0}^{V}$ the joint process $(V_k, R_k)_{k \geq 0}$ is a time-homogeneous
Markov chain with transition probabilities $p^V_{(v,r)(v',r')} \equiv
\mbox{Prob}(V_{k+1} = v', R_{k+1} = r'|V_k = v, R_k = r)$ given by
\begin{align}
\label{eq:VkRkMarkovChainTransitionProbs}
p^V_{(v,r)(v',r')} = \widetilde{\KM}(r,r')P^{V,\omega_{r'}}_v(V_1 = v'),
\end{align}
where $\widetilde{\KM}$ is the transition matrix for the Markov chain $(R_k)$
given by $\widetilde{\KM}(r,r') = \KM(r',r) \cdot \frac{\pi(r')}{\pi(r)}$ and
$\omega_{r'}$ is the deterministic environment with stack $r'$ at each site:
$\omega_{r'}(x,i) = r'(i)$, for all $x \in \Z$ and $i \in \N$.

\subsection{Related processes}
\label{subsec:RelatedProcesses}

The branching processes $(U_k)$ and $(V_k)$ are difficult to analyze directly
because their transition probabilities depend on the underlying environment
$\omega$, and therefore these processes are not Markovian when $\omega$ is
chosen randomly according to $\PB$ (or $\PB_{\pi}$ or $\PB_s$), and are not
time-homogeneous in a fixed environment $\omega$. In this section we
introduce some simpler related processes, which are both Markovian and
time-homogeneous and, thus, easier to analyze.

\subsubsection{The processes $(\Uh_k)_{k \geq 0}$ and $(\Vh_k)_{k \geq 0}$}
\label{subsubsec:UhkVhk}

Throughout this section and the remainder of the paper $s \in \SM$ is an
arbitrary but fixed stack. We define stopping times $(\tau_k)_{k \geq 0}$ and
$(\tau'_k)_{k \geq 0}$ by
\begin{align}
\label{eq:DefTaukTaukprime}
& \tau_0 = \inf\{j \geq 0: R_j = s\} \mbox{ and } \tau_{k+1} = \inf\{j > \tau_k: R_j = s\} ~,~k \geq 0;\\
& \tau'_0 = \inf\{j \geq 0: S_j = s\} \mbox{ and } \tau'_{k+1} = \inf\{j > \tau'_k: S_j = s\} ~,~k \geq 0.
\end{align}
Then we define processes $(\Vh_k)_{k \geq 0}$ and $(\Uh_k)_{k \geq 0}$ by
\begin{align}
\label{eq:DefUhkVhk}
\Vh_k = V_{\tau_k} ~~\mbox{ and }~~ \Uh_k = U_{\tau'_k}.
\end{align}
In other words, the processes $(\Vh_k)$ and $(\Uh_k)$ are constructed from
$(V_k)$ and $(U_k)$ by observing the latter only at times $j$ when $R_j = s$
or $S_j = s$, respectively.

Since the process $(U_k,S_k)_{k \geq 0}$ is a time-homogeneous Markov chain
(under $P_x^U$, $P_{x,\pi}^U$, and $P_{x,r}^U$, $r \in \SM$) and the process
$(V_k,R_k)_{k \geq 0}$ is a time-homogeneous Markov chain (under $P_x^V$,
$P_{x,\pi}^V$, and $P_{x,r}^V$), the processes $(\Uh_k)_{k \geq 0}$ and
$(\Vh_k)_{k \geq 0}$ are also time homogeneous Markov chains (under these
same measures) with transition probabilities\footnotemark{}
\footnotetext{Note that we extend here the probability measures $P_x^U$,
$P_{x,\pi}^U$, $P_{x,r}^U$ for the joint process $(U_k,S_k)_{k \geq 0}$ to
the process $(\Uh_k)_{k \geq 0}$ derived from it. The initial value $x$ is
still for $U_0$. This is not, in general, the same as the initial value
$\Uh_0$, except under $P_{x,s}^U$ where $S_0 = s$ deterministically. Similar
remarks apply to the process $(\Vh_k)$ with respect to the probability
measures $P_x^V$, $P_{x,\pi}^V$, $P_{x,r}^V$.}
\begin{align}
\label{eq:TransitionProbsUhk}
&P_{x,r}^U(\Uh_{k+1} = y|\Uh_k = z) = P_{x,\pi}^U(\Uh_{k+1} = y|\Uh_k = z) = P_{x}^U(\Uh_{k+1} = y|\Uh_k = z) = P_{z,s}^U(U_{\tau_s^S} = y), \\
\label{eq:TransitionProbsVhk}
&P_{x,r}^V(\Vh_{k+1} = y|\Vh_k = z) = P_{x,\pi}^V(\Vh_{k+1} = y|\Vh_k = z) = P_{x}^V(\Vh_{k+1} = y|\Vh_k = z) = P_{z,s}^V(V_{\tau_s^R} = y)
\end{align}
for $x, y, z, k \in \N_0$, where (in accordance with our conventions in
Section \ref{subsec:Notation})
\begin{align*}
\tau_s^S \equiv \inf\{j > 0: S_j = s\} ~~\mbox{ and } ~~\tau_s^R \equiv \inf\{j > 0: R_j = s\}.
\end{align*}
In words, the probability of transitioning from $z$ to $y$ for the Markov
chain $(\Uh_k)$ is the probability the process $(U_k)$ transitions from level
$z$ to level $y$ during the time period that the process $(S_k)$ makes one
excursion from state $s$. Similarly, the probability of transitioning from
$z$ to $y$ for the Markov chain $(\Vh_k)$ is the probability the process
$(V_k)$ transitions from level $z$ to level $y$ during the time period that
the process $(R_k)$ makes one excursion from state $s$. Although the
transition probabilities for these Markov chains are complicated because they
depend on the random return times $\tau_s^S$ and $\tau_s^R$, we will see in
Section \ref{subsec:ExpectationVarianceConcentrationEstimates} that they can
be analyzed reasonably well. By contrast, trying to analyze the processes
$(U_k)$ and $(V_k)$ directly, under any of the above averaged measures,
appears difficult, because they are not Markovian.

\subsubsection{The dominating processes $(U_k^{\pm})_{k \geq 0}$ and $(V_k^{\pm})_{k \geq 0}$}
\label{subsubsec:DominatingProcessesUkpmVkpm}

To analyze the transition probabilities for the processes $(\Uh_k)$ and
$(\Vh_k)$ it will be helpful to introduce some additional auxiliary
processes, which dominate the processes $(U_k)$ and $(V_k)$, from both above
and below. Recall that the forward branching process $(U_k)_{k \geq 0}$ and
the backward branching process $(V_k)_{k \geq 0}$, started from level $x$,
are defined in terms of the random variables $(\xi_i^k)_{k \in \Z, i \in \N}$
by
\begin{align*}
& U_0 = x ~\mbox{ and }~ U_{k+1} = \inf \Big\{m : \sum_{i = 1}^m \indicator{\{\xi_i^{k+1} = 0\}} = U_k \Big\} - U_k, \\
& V_0 = x ~\mbox{ and }~ V_{k+1} = \inf \Big\{ m : \sum_{i=1}^m \indicator\{\xi_i^{-(k+1)} = 1\} = V_k + 1 \Big\} - (V_k + 1).
\end{align*}
That is, $U_{k+1}$ is the number of successes in the sequence
$(\xi_i^{k+1})_{i \in \N}$ prior to the $U_k$-th failure, and $V_{k+1}$ is
the number of failures in the sequence $(\xi_i^{-(k+1)})_{i \in \N}$ prior to
the $(V_k + 1)$-th success. Let us define modified processes  $(U_k^+)_{k
\geq 0}$, $(V_k^+)_{k \geq 0}$, $(U_k^-)_{k \geq 0}$, and  $(V_k^-)_{k \geq
0}$, all started from level $x$ as follows\footnotemark{}: \footnotetext{Note
that, by our conventions for empty sums, the infimum in the last two
equations is defined to be $M$ if $U_k^- \leq M$ or $V_k^- \leq M$. Thus, in
these cases the whole right hand side is 0.}
\begin{align*}
& U_0^+ \,{=}\, x ~\mbox{ and }~ U^+_{k+1} \,{=}\, \inf \Big\{m \,{\geq}\, M: \sum_{i = M+1}^m \indicator{\{\xi_i^{k+1} = 0\}} = U^+_k+1 \Big\} - (U^+_k+1), \\
& V_0^+ \,{=}\, x ~\mbox{ and }~ V^+_{k+1} \,{=}\, \inf \Big\{m \,{\geq}\, M: \sum_{i=M+1}^m \indicator\{\xi_i^{-(k+1)} = 1\} = V^+_k + 1 \Big\} - (V^+_k + 1), \\
& U_0^- \,{=}\, x ~\mbox{ and }~ U^-_{k+1} \,{=}\, \inf \Big\{m \,{\geq}\, M : \sum_{i = M+1}^m \indicator{\{\xi_i^{k+1} = 0\}} = (U_k^- - M)^+ \Big\} - (U_k^- - M)^+ - M, \\
& V_0^- \,{=}\, x ~\mbox{ and }~ V^-_{k+1} \,{=}\, \inf \Big\{m \,{\geq}\, M : \sum_{i = M+1}^m \indicator{\{\xi_i^{-(k+1)} \,{=}\, 1\}} \,{=}\, (V_k^- \,{-}\, M)^+ \Big\} \,{-}\, (V_k^- \,{-}\, M)^+ \,{-}\, M.
\end{align*}
In words, $U_{k+1}^+$ is the number of successes in the sequence
$(\xi_i^{k+1})_{i \in \N}$ before the $(U_k^+ + 1)$-th failure, when we
condition that $\xi_1^{k+1}, \ldots, \xi_M^{k+1}$ are all successes, and
$U_{k+1}^-$ is the number of successes in the sequence $(\xi_i^{k+1})_{i \in
\N}$ before the $U_k^-$-th failure, when we condition that $\xi_1^{k+1},
\ldots, \xi_M^{k+1}$ are all failures. The interpretations for $V_{k+1}^+$
and $V_{k+1}^-$ are the same with ``success'' replaced by ``failure'' and
``$~\xi_i^{k+1}~$'' replaced by ``$~\xi_i^{-(k+1)}~$''.

By construction we have
\begin{align}
\label{eq:MonotonicityUkVkProcesses}
U_k^- \leq U_k \leq U_k^+ ~,~ \mbox{for all $k$ }~~~&\mbox{ and }~~~V_k^- \leq V_k \leq V_k^+ ~,~\mbox{for all $k$ }
\end{align}
if all processes are started from the same level $x$.  Also, since
$(\xi_i^k)_{k \in \Z, i > M}$ are i.i.d. Ber(1/2) random variables, for any
values of the cookie stacks $(S_k)_{k \in \Z}$, each of the processes
$(U_k^+)$, $(U_k^-)$, $(V_k^+)$, $(V_k^-)$ is a time-homogeneous Markov chain
and
\begin{align}
\label{eq:UkVkIndepedentOfSk}
(U_k^-, U_k^+)_{k \geq 0} \perp (S_k)_{k \in \Z} ~~\mbox{ and }~~ (V_k^-, V_k^+)_{k \geq 0} \perp (S_k)_{k \in \Z}.
\end{align}
These statements hold for any initial values $U_0^-, U_0^+, V_0^-, V_0^+$ and
any marginal distribution $\rho$ on $S_0$ (including $\phi$, $\pi$, or a
point mass at $r \in \SM$). For the same reason (i.e. that $(\xi_i^k)_{k \in
\Z, i > M}$ are i.i.d. Ber(1/2)), we also have
\begin{align}
\label{eq:UkPlusMinusVkPlusMinusEqualInLaw}
(U_k^-, U_k^+)_{k \geq 0} \stackrel{law}{=} (V_k^-, V_k^+)_{k \geq 0}
\end{align}
when all processes are started from the same level $x$ (again for any
marginal distribution on $S_0$).

To analyze the processes $(U_k^{\pm})$ and $(V_k^{\pm})$ it will be helpful
to represent them in a form similar to
\eqref{eq:DefForwardBPInTermsOfGeometrics} and
\eqref{eq:DefBackwardBPInTermsOfGeometrics} for the processes $(U_k)$ and
$(V_k)$. Define $\GM_i^k$ to be the number of successes in the sequence
$(\xi_j^k)_{j > M}$ between the $(i-1)$-th and $i$-th failures, and define
$\HM_i^k$ to be the number of failures in the sequence $(\xi_j^{-k})_{j > M}$
between the $(i-1)$-th and $i$-th successes. Then
\begin{align}
\label{eq:UkPlusMinusGeometricRepresentation}
& U_{k+1}^+ = M + \sum_{i = 1}^{U_k^+ + 1} \GM_i^{k+1}   ~~~~\mbox{ and }~~~~ U_{k+1}^- = \sum_{i = 1}^{(U_k^- -M)^+} \GM_i^{k+1}
\end{align}
where $(\GM_i^k)_{k \in \Z, i \in \N}$ are i.i.d. Geo(1/2) random variables,
and
\begin{align}
\label{eq:VkPlusMinusGeometricRepresentation}
& V_{k+1}^+ = M + \sum_{i = 1}^{V_k^+ + 1} \HM_i^{k+1}   ~~~~\mbox{ and }~~~~ V_{k+1}^- = \sum_{i = 1}^{(V_k^- -M)^+} \HM_i^{k+1}
\end{align}
where $(\HM_i^k)_{k \in \Z, i \in \N}$ are i.i.d. Geo(1/2) random variables.

\subsection{Expectation, variance, and concentration estimates}
\label{subsec:ExpectationVarianceConcentrationEstimates}

In this section we use the dominating processes $(U_k^{\pm})$ and
$(V_k^{\pm})$ to analyze the transition probabilities
\eqref{eq:TransitionProbsUhk} and \eqref{eq:TransitionProbsVhk} for the
processes $(\Uh_k)$ and $(\Vh_k)$. We also prove, slightly more generally,
concentration estimates for the processes $(U_k)$ and $(V_k)$ up to the
random stopping times $\tau_s^S$ and $\tau_s^R$, when $(S_k)$ and $(R_k)$,
respectively, are started from an arbitrary initial state $r \in \SM$, rather
than $s$. Finally, we prove a type of ``overshoot lemma'' for the processes
$(\Uh_k)$ and $(\Vh_k)$ analogous to Lemma 5.1 of \cite{Kosygina2011}.

Throughout it is assumed, when not otherwise specified, that the processes
$(U_k)$, $(U_k^+)$, $(U_k^-)$ are all started from the same level $x$, and
the processes $(V_k)$, $(V_k^+)$, $(V_k^-)$ are all started from the same
level $x$. The probability measure $P_{x,r}^{U}$ will be used for all these
``$U$-processes'' started from level $x$ when $S_0 = r$, and the probability
measure $P_{x,r}^{V}$ will be used for all these ``$V$-processes'' started
from level $x$ when $R_0 = r$. The following general fact will be needed in
our analysis of the ``$U$-processes'' and ``$V$-processes'' below, as well as
in several other parts of the paper.
\begin{lemma}
\label{lem:GeneralLargeDeviationBoundSumsIID} Let $Z$ be a random variable
with mean $\mu$ and exponential tails, and let $Z_1, Z_2,\ldots$ be i.i.d.
random variables distributed as $Z$. Then for any $\epsilon_0 \in (0,
\infty)$ there exist constants $C_1(\epsilon_0), C_2(\epsilon_0) > 0$ such
that the empirical means $\Zb_n \equiv \frac{1}{n} \sum_{i=1}^n Z_i$ satisfy:
\begin{align}
\label{eq:iidSmallEpsBound}
\P( |\Zb_n - \mu| \geq \epsilon ) & \leq C_1 \exp(-C_2 \epsilon^2 n) ~,~ \mbox{for all } 0 < \epsilon \leq \epsilon_0 \mbox{ and } n \in \N. \\
\label{eq:iidLargeEpsBound}
\P( |\Zb_n - \mu| \geq \epsilon ) & \leq C_1 \exp(-C_2 \epsilon n) ~,~ \mbox{for all } \epsilon \geq \epsilon_0 \mbox{ and } n \in \N.
\end{align}
\end{lemma}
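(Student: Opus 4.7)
The plan is to apply a standard Chernoff/Cram\'er argument, splitting into the small-$\epsilon$ and large-$\epsilon$ regimes by choosing the exponential parameter appropriately in each case.

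First I would use the exponential tail assumption to note that the moment generating function $M(t) = \E[e^{t(Z-\mu)}]$ is finite on some open interval $(-t_0, t_0)$ containing $0$, and the cumulant generating function $\Lambda(t) = \log M(t)$ is smooth on this interval with $\Lambda(0) = 0$, $\Lambda'(0) = 0$, and $\Lambda''(0) = \sigma^2 = \mathrm{Var}(Z) < \infty$. By Taylor's theorem there exist constants $0 < t_1 \leq t_0/2$ and $K > 0$ such that $\Lambda(t) \leq K t^2$ for all $|t| \leq t_1$. The standard Chernoff estimate gives, for any $t \in (0, t_0)$,
\begin{align*}
\P(\bar Z_n - \mu \geq \epsilon) \leq \exp\bigl(-n[t\epsilon - \Lambda(t)]\bigr),
\end{align*}
and analogously for the left tail using $-(Z-\mu)$ (whose MGF is also finite on a neighborhood of $0$).

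For the small-$\epsilon$ regime, I would take $t = t(\epsilon) := \min(\epsilon/(2K), t_1)$. When $\epsilon \leq 2K t_1$ this yields $t\epsilon - \Lambda(t) \geq \epsilon^2/(2K) - K(\epsilon/(2K))^2 = \epsilon^2/(4K)$, giving an exponent of order $\epsilon^2 n$. For $\epsilon$ in the gap between $2K t_1$ and $\epsilon_0$ one may simply use a uniform bound since there are only finitely many relevant orders, absorbing constants into $C_1$. This proves \eqref{eq:iidSmallEpsBound}.

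For the large-$\epsilon$ regime, I would fix $t = t_1$. Since $\Lambda(t_1)$ is a finite constant, we get $t\epsilon - \Lambda(t) \geq t_1 \epsilon - \Lambda(t_1) \geq (t_1/2)\epsilon$ whenever $\epsilon \geq 2\Lambda(t_1)/t_1$; by enlarging $\epsilon_0$ or shrinking constants this covers all $\epsilon \geq \epsilon_0$, yielding an exponent linear in $\epsilon n$. Combining both tails (upper and lower) and adjusting $C_1$, $C_2$ to absorb the factor of $2$ establishes \eqref{eq:iidLargeEpsBound}. The main subtlety is just matching constants so that the two regimes fit together continuously across $\epsilon = \epsilon_0$; no single step is genuinely hard, and the only hypothesis actually used is the exponential-tail condition to guarantee $\Lambda$ is smooth near $0$.
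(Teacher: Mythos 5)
Your argument is correct and takes essentially the same route as the paper: both establish a quadratic (sub-Gaussian) bound on the log-moment generating function of $Z-\mu$ near the origin and then deduce the two-regime exponential decay via the Chernoff method. The paper compresses the final optimization step into a citation of Petrov's Theorem III.15, whereas you carry it out explicitly; the slight looseness in your crossover near $\epsilon_0$ (the relevant constants should be absorbed into $C_2$ rather than $C_1$, and ``enlarging $\epsilon_0$'' is not literally available since it is given) is harmless because all the constants involved are strictly positive.
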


\begin{proof}
The exponential tails condition on the random variable $Z$ implies there
exist some positive constants $b,c$ such that for all $\lambda \in [-b,b]$,
$\E( e^{\lambda (Z-\mu)}) \leq e^{c \lambda^2}$ and $\E( e^{\lambda (\mu-Z)})
\leq e^{c \lambda^2}$. Thus, the lemma is a consequence of \cite[Theorem
III.15]{Petrov1975}.
\end{proof}

Using Lemma \ref{lem:GeneralLargeDeviationBoundSumsIID} along with
\eqref{eq:UkPlusMinusGeometricRepresentation} and
\eqref{eq:VkPlusMinusGeometricRepresentation} and a small bit of analysis one
may obtain the following concentration estimates for the differences
$(U_k^{\pm} - U_{k-1}^{\pm})$ and $(V_k^{\pm} - V_{k-1}^{\pm})$.
\begin{lemma}
\label{lem:ConcentrationEstimateUkVkPlusMinusSingleStepDifferences} For each
$\epsilon_0 \in (0,\infty)$, there exist constants $c_1(\epsilon_0)$,
$c_2(\epsilon_0) > 0$ such that the following hold for all $r \in \SM$, $x,y
\in \N_0$, and $k \in \N:$
\begin{align}
\label{eq:UkMinusSmallEps}
& P_{x,r}^U\left(|U_k^- - U_{k-1}^-| \geq \epsilon y \Big| U_{k-1}^- = y \right) \leq c_1 e^{-c_2 \epsilon^2 y} ~,~ \mbox{ for } 0 < \epsilon \leq \epsilon_0. \\
\label{eq:UkMinusBigEps}
& P_{x,r}^U\left(|U_k^- - U_{k-1}^-| \geq \epsilon y \Big| U_{k-1}^- = y \right) \leq c_1 e^{-c_2 \epsilon y} ~,~~ \mbox{ for } \epsilon \geq \epsilon_0. \\
\label{eq:UkPlusSmallEps}
& P_{x,r}^U \left(|U_k^+ - U_{k-1}^+| \geq \epsilon y \Big| U_{k-1}^+ = y \right) \leq c_1 e^{-c_2 \epsilon^2 y} ~,~ \mbox{ for } 0 < \epsilon \leq \epsilon_0. \\
\label{eq:UkPlusBigEps}
& P_{x,r}^U\left(|U_k^+ - U_{k-1}^+| \geq \epsilon y \Big| U_{k-1}^+ = y \right) \leq c_1 e^{-c_2 \epsilon y} ~,~~ \mbox{ for } \epsilon \geq \epsilon_0.
\end{align}
Moreover, the equivalent statements also hold for $(V_k^+)$ and $(V_k^-)$
with the same constants $c_1, c_2$.
\end{lemma}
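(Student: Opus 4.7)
The plan is to exploit the clean geometric representations \eqref{eq:UkPlusMinusGeometricRepresentation} and \eqref{eq:VkPlusMinusGeometricRepresentation}, together with the independence \eqref{eq:UkVkIndepedentOfSk} of the dominating processes from $(S_k)$, in order to reduce each of the four inequalities to a single i.i.d. large deviation statement handled by Lemma~\ref{lem:GeneralLargeDeviationBoundSumsIID}.

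First, since $(U_k^\pm)_{k\ge 0}$ is independent of $(S_k)$ and is a time-homogeneous Markov chain, the conditional probability on the left of each of \eqref{eq:UkMinusSmallEps}--\eqref{eq:UkPlusBigEps} does not depend on $r$, on $x$, or on $k$, and equals the single-step probability computed from the geometric representation. For \eqref{eq:UkMinusSmallEps}--\eqref{eq:UkMinusBigEps}, the representation gives, conditionally on $U_{k-1}^- = y$,
\begin{align*}
U_k^- - U_{k-1}^- \;=\; \sum_{i=1}^{(y-M)^+} \GM_i^{k} \;-\; y
\end{align*}
with $(\GM_i^k)$ i.i.d.\ Geo$(1/2)$ of mean $1$. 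For $y\le M$ the event $\{|U_k^- - y|\ge \epsilon y\}$ either has probability $1$ (trivial, since $U_k^-=0$ means the deviation is exactly $y$) or can be handled by taking $c_1$ large enough to absorb the finitely many bounded cases. For $y>M$, write $n=y-M$ and observe
\begin{align*}
\bigl|U_k^- - y\bigr| \;=\; \bigl|\,n\,\bar\GM_n \;-\; (n+M)\bigr| \;\ge\; \epsilon y
\;\Longrightarrow\; |\bar\GM_n - 1| \;\ge\; \tfrac{\epsilon y - M}{n},
\end{align*}
and the right-hand deviation is $\ge \epsilon$ for large $y$ (and otherwise the probability is bounded trivially). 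Applying \eqref{eq:iidSmallEpsBound} and \eqref{eq:iidLargeEpsBound} from Lemma~\ref{lem:GeneralLargeDeviationBoundSumsIID} then gives \eqref{eq:UkMinusSmallEps} and \eqref{eq:UkMinusBigEps}, after enlarging the constants to absorb the $O(1/y)$ corrections (the truncation at $(y-M)^+$, the $+M$ shift, etc.) and the small-$y$ regime.

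The inequalities \eqref{eq:UkPlusSmallEps}--\eqref{eq:UkPlusBigEps} are entirely analogous: conditionally on $U_{k-1}^+=y$,
\begin{align*}
U_k^+ - U_{k-1}^+ \;=\; M + \sum_{i=1}^{y+1}\GM_i^{k} \;-\; y,
\end{align*}
which has mean $M+1$, and the same reduction to $|\bar\GM_{y+1}-1|\gtrsim \epsilon$ combined with Lemma~\ref{lem:GeneralLargeDeviationBoundSumsIID} yields the desired bounds with the same form of constants; the small-$y$ regime is again absorbed into $c_1$. Finally, the four corresponding estimates for $(V_k^\pm)$ require no separate argument: by \eqref{eq:UkPlusMinusVkPlusMinusEqualInLaw} the joint laws $(U_k^-,U_k^+)_{k\ge 0}$ and $(V_k^-,V_k^+)_{k\ge 0}$ coincide whenever started from the same level, so the one-step Markov transition probabilities of $V_k^\pm$ agree with those of $U_k^\pm$ and the identical concentration estimates hold with the same constants $c_1,c_2$.

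The only genuine technical care lies in bookkeeping the $O(M)$ deterministic shifts and the $(\cdot-M)^+$ truncation so that, for small $y$ or small deviations, the target bound is not vacuous; this is handled uniformly by choosing $c_1(\epsilon_0)$ large enough (depending on $M$ and $\epsilon_0$) to make the inequalities trivial outside a range where the i.i.d.\ large deviation bound directly applies. There is no analytical obstacle beyond this bookkeeping; the heart of the argument is just Lemma~\ref{lem:GeneralLargeDeviationBoundSumsIID} applied to geometric summands.
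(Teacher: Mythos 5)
Your proposal is correct and matches the argument the paper intends: the paper explicitly states that this lemma follows from Lemma~\ref{lem:GeneralLargeDeviationBoundSumsIID}, the geometric representations \eqref{eq:UkPlusMinusGeometricRepresentation}--\eqref{eq:VkPlusMinusGeometricRepresentation}, and ``a small bit of analysis,'' without providing details, and your write-up fills in exactly that analysis. The bookkeeping you flag (absorbing the additive $O(M)$ shifts and the $(y-M)^+$ truncation into $c_1(\epsilon_0)$, splitting into $\epsilon y$ small versus large so that the effective deviation of the empirical mean is comparable to $\epsilon$) is precisely the ``small bit of analysis'' and poses no obstruction.
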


\begin{remark}
Note that since $(U_k^+)$ and $(U_k^-)$ are each time-homogeneous Markov
chains independent of $(S_k)_{k \in \Z}$ the probabilities on the left hand
side of these equations do not depend on $x$, $r$, or $k$.  Similar
statements also apply for the processes $(V_k^+)$ and $(V_k^-)$.
\end{remark}

We wish now to extend these concentration estimates for the single time step
differences in the processes $(U_k^{\pm})$ and $(V_k^{\pm})$ to concentration
estimates for these processes up to the random stopping times $\tau_s^S$ and
$\tau_s^R$. This is where we will need the uniform ergodicity hypothesis on
the Markov chains $(S_k)$ and $(R_k)$. Due to Lemma
\ref{lem:ExponentialTailHittingTimesUniformlyErgodicMCs} below and uniform
ergodicity of $(S_k)$ and $(R_k)$ there exist some constants $c_3, c_4 > 0$
such that:
\begin{align}
\label{eq:tausStausRExponentialTail}
& \PB_r(\tau_s^S \geq t) \leq c_3 e^{-c_4 t} ~,~ \mbox{ for all } r \in \SM \mbox{ and } t \in [0,\infty). \nonumber \\
& \PB_r(\tau_s^R \geq t) \leq c_3 e^{-c_4 t} ~,~ \mbox{ for all } r \in \SM \mbox{ and } t \in [0,\infty).
\end{align}
Here, as described in Section \ref{subsec:Notation}, $\PB_r$ is the
probability measure for the process $(S_k)$ itself (equivalently the process
$(R_k)$) when $S_0 = R_0 = r$.

\begin{lemma}
\label{lem:ExponentialTailHittingTimesUniformlyErgodicMCs} Let $(Z_k)$ be a
uniformly ergodic Markov chain on a countable state space $\ZM$, and let
$\P_x(\cdot)$ be the probability measure for the Markov chain $(Z_k)$ started
from $Z_0 = x$. Then, for each $z \in \ZM$, there exist constants $C > 0$ and
$0 < \alpha < 1$ such that
\begin{align*}
\P_x(\tau^Z_z > n) \leq C \alpha^n ~,~\mbox{ for all } x \in \ZM, n \in \N_0.
\end{align*}
\end{lemma}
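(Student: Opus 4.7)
The plan is to exploit uniform ergodicity of $(Z_k)$ to produce a finite ``regeneration horizon'' $N$ such that from \emph{any} starting state, the chain has a uniformly positive probability of hitting $z$ within $N$ steps. Iterating this bound will yield the desired geometric tail on $\tau_z^Z$.

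First, since $(Z_k)$ is uniformly ergodic, in particular it is irreducible and positive recurrent, so the stationary distribution $\pi$ satisfies $\pi(y) > 0$ for every $y \in \ZM$; in particular $\pi(z) > 0$. By the definition of uniform ergodicity \eqref{eq:UniformlyErgodicMCDef}, I may choose $N \in \N$ large enough that
\begin{align*}
\sup_{x \in \ZM} \norm{\KM^N(x,\cdot) - \pi}_{TV} \leq \pi(z)/2.
\end{align*}
Applying this bound to the singleton $\{z\}$, I get $\KM^N(x,\{z\}) \geq \pi(z)/2 =: p > 0$ for every $x \in \ZM$. Consequently, for every $x \in \ZM$,
\begin{align*}
\P_x(\tau_z^Z > N) \leq \P_x(Z_N \neq z) \leq 1 - p,
\end{align*}
where the first inequality uses the inclusion $\{\tau_z^Z > N\} \subseteq \{Z_N \neq z\}$.

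Next, I would iterate using the Markov property. For any $k \in \N$, decompose
\begin{align*}
\{\tau_z^Z > kN\} \subseteq \{\tau_z^Z > (k-1)N\} \cap \{Z_{(k-1)N+j} \neq z,\ 1 \leq j \leq N\},
\end{align*}
and, conditioning on $Z_{(k-1)N}$, apply the uniform one-step bound above to obtain
\begin{align*}
\P_x(\tau_z^Z > kN) \leq (1-p)\, \P_x(\tau_z^Z > (k-1)N).
\end{align*}
Induction gives $\P_x(\tau_z^Z > kN) \leq (1-p)^k$ uniformly in $x$. For a general $n \in \N_0$, write $n = kN + r$ with $0 \leq r < N$, and use monotonicity of $\P_x(\tau_z^Z > \cdot)$ to bound
\begin{align*}
\P_x(\tau_z^Z > n) \leq \P_x(\tau_z^Z > kN) \leq (1-p)^k \leq \frac{1}{1-p}\,(1-p)^{n/N}.
\end{align*}
Setting $\alpha = (1-p)^{1/N} \in (0,1)$ and $C = 1/(1-p)$ gives the claim.

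The only real subtlety is the first step: producing the uniform lower bound $\KM^N(x,\{z\}) \geq p > 0$. This is where uniform ergodicity (as opposed to mere ergodicity) is essential, since it allows the choice of $N$ to be made independently of the starting state $x$; everything else is standard Markov chain iteration. Note that the constants $C, \alpha$ produced this way depend on $z$ through $\pi(z)$ and the rate of convergence in \eqref{eq:UniformlyErgodicMCDef}, which is consistent with the statement, where $z$ is fixed before $C$ and $\alpha$ are chosen.
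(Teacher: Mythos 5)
Your proof is correct and takes essentially the same route as the paper's: both use uniform ergodicity to get a uniform lower bound $p = \pi(z)/2$ on $\KM^N(x,\{z\})$ over all $x$, then iterate via the Markov property to obtain $\P_x(\tau_z^Z > kN) \leq (1-p)^k$, and finally interpolate to general $n$; even the resulting constants $\alpha$ and $C$ coincide.
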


\begin{proof}
Fix $z \in \ZM$. Let $\rho = (\rho(x))_{x \in \ZM}$ denote the stationary
distribution of the Markov chain $(Z_k)$, and let $\MM = \{\MM(x,y)\}_{x,y
\in \ZM}$ be its transition matrix. Also, let $\epsilon = \rho(z)$. By
uniform ergodicity of $(Z_k)$, there is some $\ell \in \N$ such that $\norm{
\MM^{\ell}(y, \cdot) - \rho(\cdot)}_{TV} \leq \epsilon/2$, for all $y \in
\ZM$. This implies $\P_y(Z_{\ell} = z) = \MM^{\ell}(y,z) \geq \epsilon/2$,
for all $y \in \ZM$. Thus, starting from any initial state $x$ we have
\begin{align*}
\P_x(\tau^Z_z > n \cdot \ell) = \prod_{m = 0}^{n-1} \P_x(\tau^Z_z > (m+1)\ell | \tau^Z_z > m \ell) \leq (1 - \epsilon/2)^n = \alpha^{\ell n},
\end{align*}
where $\alpha \equiv (1 - \epsilon/2)^{1/\ell} \in (0, 1)$. It follows that
$\P_x(\tau^Z_z > n) \leq C \alpha^n$, for all $n \geq 0$ and $x \in \ZM$,
with $C \equiv 1/\alpha^{\ell}$.
\end{proof}

\begin{lemma}
\label{lem:ConcentrationEstimateForUkVkPlusMinus} There exist constants $c_5,
c_6 > 0$ such that the following hold for each $r \in \SM:$
\begin{align}
\label{eq:UkMinusSmallEpsBound}
& P_{x,r}^U\left(\max_{0 \leq k \leq \tau_s^S} |U_k^- - x| \geq \epsilon x \right) \leq c_5 (1 + \epsilon^{2/3} x^{1/3}) e^{-c_6 \epsilon^{2/3} x^{1/3}} ~,~\mbox{for all $x \in \N_0$ and $0 < \epsilon \leq 1$.} \\
\label{eq:UkMinusLargeEpsBound}
& P_{x,r}^U\left(\max_{0 \leq k \leq \tau_s^S} |U_k^- - x| \geq \epsilon x \right) \leq c_5 (1 + \epsilon^{1/3} x^{1/3}) e^{-c_6 \epsilon^{1/3} x^{1/3}} ~,~\mbox{for all $x \in \N_0$ and $\epsilon \geq 1$.} \\
\label{eq:UkPlusSmallEpsBound}
& P_{x,r}^U\left(\max_{0 \leq k \leq \tau_s^S} |U_k^+ - x| \geq \epsilon x\right) \leq c_5 (1 + \epsilon^{2/3} x^{1/3}) e^{-c_6 \epsilon^{2/3} x^{1/3}} ~,~\mbox{for all $x \in \N_0$ and $0 < \epsilon \leq 1$.} \\
\label{eq:UkPlusLargeEpsBound}
& P_{x,r}^U\left(\max_{0 \leq k \leq \tau_s^S} |U_k^+ - x| \geq \epsilon x \right) \leq c_5 (1 + \epsilon^{1/3} x^{1/3}) e^{-c_6 \epsilon^{1/3} x^{1/3}} ~,~\mbox{for all $x \in \N_0$ and $\epsilon \geq 1$.}
\end{align}
Moreover, the equivalent statements (with $\tau_s^S$ replaced by $\tau_s^R$)
also hold for the processes $(V_k^+)$ and $(V_k^-)$ with the same constants
$c_5, c_6$.
\end{lemma}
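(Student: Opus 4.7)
The plan is to truncate at the random hitting time. For a cutoff $T \in \N$ to be chosen, the trivial bound
\begin{align*}
P_{x,r}^U\!\left(\max_{0 \leq k \leq \tau_s^S} |U_k^- - x| \geq \epsilon x \right) \leq \PB_r(\tau_s^S > T) + P_{x,r}^U\!\left(\max_{0 \leq k \leq T} |U_k^- - x| \geq \epsilon x \right)
\end{align*}
together with \eqref{eq:tausStausRExponentialTail} reduces the task to controlling the excursion of the Markov chain $(U_k^-)$ over a deterministic window of $T$ steps; the advertised exponents will then be produced by optimizing $T$.

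For the small-$\epsilon$ bound \eqref{eq:UkMinusSmallEpsBound}, I would fix a per-step relative tolerance $a = c \epsilon / T$ (with $c$ a small absolute constant) and introduce the ``bad step'' events $B_k = \{|U_k^- - U_{k-1}^-| > a U_{k-1}^-\}$. A telescoping induction shows that on $\bigcap_{j \leq k} B_j^c$ one has $U_k^- \in [(1-a)^k x, (1+a)^k x]$; for $aT \leq c \epsilon$ small, this forces the inclusion $\{\max_{k \leq T} |U_k^- - x| \geq \epsilon x\} \subseteq \bigcup_{k \leq T} B_k$ and also $U_{k-1}^- \geq x/2$ on $\bigcap_{j < k} B_j^c$. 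The small-$\epsilon$ half of Lemma \ref{lem:ConcentrationEstimateUkVkPlusMinusSingleStepDifferences} then gives $P_{x,r}^U(B_k, B_1^c, \ldots, B_{k-1}^c) \leq c_1 e^{-c_2 a^2 x/2}$, and summing over $k \leq T$ yields the overall estimate $c_3 e^{-c_4 T} + T c_1 \exp(-c_2' (\epsilon/T)^2 x)$. Choosing $T$ of order $(\epsilon^2 x)^{1/3}$ balances both exponents at $\epsilon^{2/3} x^{1/3}$; the residual $T$ prefactor from the union bound is what produces the $(1 + \epsilon^{2/3} x^{1/3})$ factor in \eqref{eq:UkMinusSmallEpsBound}.

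The large-$\epsilon$ bound \eqref{eq:UkMinusLargeEpsBound} requires replacing the multiplicative tolerance (which breaks down once $a$ is of order one) by an additive one $\Delta = \epsilon x / T$: absence of any bad step up to time $T$ now forces $\max_{k \leq T} |U_k^- - x| \leq T \Delta = \epsilon x$. The single-step probability $P(|U_k^- - U_{k-1}^-| > \Delta \mid U_{k-1}^- = y)$ must be controlled uniformly over $y \in [0, (1+\epsilon) x]$; the binding case is $y$ of order $\epsilon x$, where $\Delta / y$ is small and the small-$\epsilon$ half of Lemma \ref{lem:ConcentrationEstimateUkVkPlusMinusSingleStepDifferences} yields $c_1 \exp(-c_2'' \Delta^2 / y) \leq c_1 \exp(-c_2''' \epsilon x / T^2)$. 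Balancing $T$ against $\epsilon x / T^2$ forces $T$ of order $(\epsilon x)^{1/3}$ and produces the exponent $\epsilon^{1/3} x^{1/3}$. The estimates \eqref{eq:UkPlusSmallEpsBound}, \eqref{eq:UkPlusLargeEpsBound} for $(U_k^+)$ follow by the identical argument applied to \eqref{eq:UkPlusMinusGeometricRepresentation}, and the $V_k^{\pm}$ bounds are immediate consequences of the distributional identity \eqref{eq:UkPlusMinusVkPlusMinusEqualInLaw} together with the fact that \eqref{eq:tausStausRExponentialTail} applies equally to $\tau_s^R$ by uniform ergodicity of $(R_k)$. The main technical obstacle throughout is the level-dependence of the single-step concentration, which is precisely what the stopping-time/induction scheme is designed to neutralize by keeping $U_{k-1}^-$ in a tightly controlled range along the entire path.
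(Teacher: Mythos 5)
Your proposal is correct and follows essentially the same approach as the paper. The paper conditions on $\tau_s^S = n$ for $n$ below a cutoff and takes the max over $n$ of the resulting conditional probabilities, which (by independence of $(U_k^\pm)$ from $(S_k)$) is literally the same bound as your direct truncation $P(\tau_s^S > T) + P(\max_{k \leq T} |U_k^- - x| \geq \epsilon x)$; and the paper uses an absolute per-step tolerance of $\epsilon^{1/3}x^{2/3}$ for small $\epsilon$ rather than your multiplicative one $a U_{k-1}^-$, but with $U_{k-1}^-$ pinned to $[x/2, 2x]$ on the good event these produce the same exponents, so the two schemes are interchangeable. The remaining minor details you gloss over (the trivial $x=0$ case, the fact that $U_{k-1}^+ \geq M > 0$ for $k \geq 1$ when $U_0^+ = x \geq 1$ so that $y=0$ never arises for the plus process, and the need to take $a$ below the threshold $\epsilon_0$ in Lemma \ref{lem:ConcentrationEstimateUkVkPlusMinusSingleStepDifferences}) are all handled in the paper and pose no real obstacle to your outline. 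Your reduction of the $V^\pm$ case is also clean and arguably a bit tidier than the paper's ``same proof, same constants'' remark: by first truncating $\tau_s^R$ at a deterministic $T$, the residual event $\{\max_{k \leq T}|V_k^- - x| \geq \epsilon x\}$ depends only on the finite prefix $(V_k^\pm)_{k \leq T}$, so \eqref{eq:UkPlusMinusVkPlusMinusEqualInLaw} transfers it to the already-proved $U$ estimate, and the exponential tail \eqref{eq:tausStausRExponentialTail} for $\tau_s^R$ supplies the other term with the same constants.
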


All statements are trivially true if $x = 0$ (taking any $c_5 \geq 1$ and any
$c_6 > 0$), so we will assume $x \geq 1$. We will prove
\eqref{eq:UkMinusSmallEpsBound} and \eqref{eq:UkMinusLargeEpsBound}.  The
proof of \eqref{eq:UkPlusSmallEpsBound} is identical to that of
\eqref{eq:UkMinusSmallEpsBound} with $U_k^-$ replaced by $U_k^+$ line by
line, and the proof of \eqref{eq:UkPlusLargeEpsBound} is almost identical to
that of \eqref{eq:UkMinusLargeEpsBound} with $U_k^-$ replaced by $U_k^+$ line
by line\footnotemark{}. The equivalent statements to
\eqref{eq:UkMinusSmallEpsBound}-\eqref{eq:UkPlusLargeEpsBound} for the
processes $(V_k^-)$ and $(V_k^+)$ are also proved exactly the same way; all
the proofs use is Lemma
\ref{lem:ConcentrationEstimateUkVkPlusMinusSingleStepDifferences} and
\eqref{eq:tausStausRExponentialTail}, and these estimates are the same for
$(V_k^{\pm})$ and $(U_k^{\pm})$ and for $\tau_s^R$ and $\tau_s^S$. The proofs
of \eqref{eq:UkMinusSmallEpsBound} and \eqref{eq:UkMinusLargeEpsBound} will
be given separately and the constants $c_5$, $c_6$ obtained in the two cases
will not be the same.  To find a single $c_5$ and $c_6$ that hold in both
cases simply take $c_5$ to be the maximum of the $c_5$'s from the two proofs
and $c_6$ to be the minimum of the $c_6$'s from the two proofs.

\footnotetext{In the derivation of \eqref{eq:IfUkMinus1Small} the case $z =
0$ must be considered separately for both proofs. For the process $(U_k^-)$
we have $U_k^- = 0$ (deterministically) if $U_{k-1}^- = z = 0$, as noted in
the proof of \eqref{eq:UkMinusLargeEpsBound} below. For the process $(U_k^+)$
this is not the case. However, if we start with $U_0^+ = x \geq 1$, as we
assume, then it is actually impossible that $U_k^+$ is 0 for any $k \geq 0$
(indeed, $U_k^+ \geq M$ for all $k \geq 1$). So we do not have this problem
to deal with. }

\begin{proof}[Proof of Lemma \ref{lem:ConcentrationEstimateForUkVkPlusMinus}, Equation \eqref{eq:UkMinusSmallEpsBound}, with $x \geq 1$.]
Fix $\epsilon \leq 1$ and denote $\MM =\break \max_{0 \leq k \leq \tau_s^S}
\big|U_k^- - x\big|$. Then
\begin{align}
\label{eq:MGreaterEpsilonxSplittingSmallEps}
P_{x,r}^U(\MM \geq \epsilon x)
& = P_{x,r}^U\Big(\tau_s^S \leq \frac{1}{2} \epsilon^{2/3} x^{1/3}\Big) P_{x,r}^U\Big(\MM \geq \epsilon x \Big| \tau_s^S \leq \frac{1}{2} \epsilon^{2/3} x^{1/3}\Big) \nonumber \\
& ~~~~+ P_{x,r}^U\Big(\tau_s^S > \frac{1}{2} \epsilon^{2/3} x^{1/3}\Big) P_{x,r}^U\Big(\MM \geq \epsilon x \Big| \tau_s^S > \frac{1}{2} \epsilon^{2/3} x^{1/3}\Big) \nonumber \\
& \leq \max_{0 \leq n \leq \frac{1}{2} \epsilon^{2/3} x^{1/3}} P_{x,r}^U\Big(\MM \geq \epsilon x \Big| \tau_s^S = n \Big) ~+~ P_{x,r}^U\Big(\tau_s^S > \frac{1}{2} \epsilon^{2/3} x^{1/3}\Big).
\end{align}
By \eqref{eq:tausStausRExponentialTail},
\begin{align}
\label{eq:TausSNotTooBigSmallEps}
P_{x,r}^U\Big(\tau_s^S > \frac{1}{2} \epsilon^{2/3} x^{1/3}\Big) = \PB_r\Big(\tau_s^S > \frac{1}{2} \epsilon^{2/3} x^{1/3}\Big) \leq c_3 e^{-c_4 \cdot \frac{1}{2} \epsilon^{2/3} x^{1/3} }.
\end{align}
So, we need only bound $P_{x,r}^U(\MM \geq \epsilon x| \tau_s^S = n)$, for $n
\leq \frac{1}{2} \epsilon^{2/3} x^{1/3}$. Fix such an $n$ and define events
$A_k$, $k \geq 1$, by $A_k = \left\{\left|U_k^- - U_{k-1}^-\right| \leq
\epsilon^{1/3} x^{2/3} \right\}$. Then, for $0 \leq k \leq n$, we have
\begin{align}
\label{eq:MaxUkPlusBoundSmallEps}
U_k^- \leq x + n \cdot \epsilon^{1/3} x^{2/3} \leq x + x/2 < 2x  ~,~ \mbox{ on the event } A_1 \cap \ldots \cap A_k
\end{align}
and
\begin{align}
\label{eq:MinUkPlusBoundSmallEps}
U_k^- \geq x - n \cdot \epsilon^{1/3} x^{2/3} \geq x - x/2 = x/2  ~,~ \mbox{ on the event } A_1 \cap \ldots \cap A_k.
\end{align}
Also, by Lemma
\ref{lem:ConcentrationEstimateUkVkPlusMinusSingleStepDifferences}, there
exist some constants $c_1, c_2 > 0$ such that
\begin{align}
\label{eq:UkPlusOneStepDiffBoundSmallEps}
P_{x,r}^U\Big(|U_k^- - U_{k-1}^-| > \epsilont z \Big| U_{k-1}^- = z\Big)
\leq c_1 \exp(-c_2 \cdot \epsilont^2 \cdot z) ~,~\mbox{ for each } k,z \mbox{ and } 0 < \epsilont \leq 2.
\end{align}
Since $\epsilon \leq 1$ and $x \geq 1$, $\epsilon^{1/3}x^{2/3}/z \leq 2$ for
all $z \geq x/2$. Thus, for each $x/2 \leq z \leq 2x$, we have
\begin{align}
\label{eq:UkMinus1Conditional}
& P_{x,r}^U(A_k^c | U_{k-1}^- = z)
=  P_{x,r}^U\Big( |U_k^- - U_{k-1}^-| > \Big(\frac{\epsilon^{1/3} x^{2/3}}{z} \Big) \cdot z  \Big| U_{k-1}^- = z \Big) \nonumber \\
& \leq c_1 \exp\Big( -c_2 \frac{(\epsilon^{1/3} x^{2/3})^2}{2x} \Big) = c_1 \exp\Big(-\frac{c_2}{2} \epsilon^{2/3} x^{1/3} \Big).
\end{align}
Combining \eqref{eq:MaxUkPlusBoundSmallEps},
\eqref{eq:MinUkPlusBoundSmallEps}, and \eqref{eq:UkMinus1Conditional} and
using the fact that $(U_k^-)$ is a Markov chain shows
\begin{align*}
P_{x,r}^U(A_k^c |A_1, \ldots, A_{k-1}) \leq c_1 \exp\left(-\frac{c_2}{2} \epsilon^{2/3} x^{1/3} \right) ~,~\mbox{ for } 1 \leq k \leq n.
\end{align*}
Hence,
\begin{align*}
P_{x,r}^U\left(\bigcup_{k=1}^n A_k^c \right)
\leq n \cdot c_1 \exp\left( -\frac{c_2}{2} \epsilon^{2/3} x^{1/3} \right)
\leq \frac{1}{2} \epsilon^{2/3} x^{1/3} \cdot c_1 \exp\left( -\frac{c_2}{2} \epsilon^{2/3} x^{1/3} \right).
\end{align*}
Now, when $\tau_s^S = n$, $\MM \leq n \cdot \epsilon^{1/3} x^{2/3} \leq
\frac{1}{2} \epsilon x$ on the event $\cap_{k=1}^n A_k$. So,
\begin{align}
\label{eq:MBoundTausSEqualnSmallEps}
P_{x,r}^U(\MM \geq \epsilon x|\tau_s^S=n) \leq \frac{1}{2} \epsilon^{2/3} x^{1/3} \cdot c_1 \exp\left( -\frac{c_2}{2} \epsilon^{2/3} x^{1/3} \right).
\end{align}
Since, \eqref{eq:MBoundTausSEqualnSmallEps} is valid for each $n \leq
\frac{1}{2} \epsilon^{2/3} x^{1/3}$ it follows from
\eqref{eq:MGreaterEpsilonxSplittingSmallEps} and
\eqref{eq:TausSNotTooBigSmallEps} that
\begin{align*}
P_{x,r}^U(\MM \geq \epsilon x) \leq c_5(1 + \epsilon^{2/3} x^{1/3}) \exp(-c_6 \epsilon^{2/3} x^{1/3})
\end{align*}
where $c_5 = \max\{\frac{c_1}{2},c_3\}$ and $c_6 = \min\{\frac{c_2}{2},
\frac{c_4}{2}\}$.
\end{proof}

\begin{remark}
If we define the deterministic time $t_{\epsilon,x} = \floor{\frac{1}{2}
\epsilon^{2/3}x^{1/3}}$, for $0 < \epsilon \leq 1$ and $x \in \N$, then the
same exact steps used in the derivation of
\eqref{eq:MBoundTausSEqualnSmallEps} for an arbitrary $n \leq t_{\epsilon,x}$
show that
\begin{align}
\label{eq:MBoundFixedSmallEnoughTime}
P_{x,r}^U\left(\max_{0 \leq k \leq t_{\epsilon,x}} |U_k^- - x| \geq \epsilon x \right) \leq c_1 t_{\epsilon,x} \exp \left( -c_2 t_{\epsilon,x} \right).
\end{align}
We isolate this observation, as it will be needed later in the proof of Lemma
\ref{lem:ExpectationUkVkStoppedAtTausS}.
\end{remark}

\begin{proof}[Proof of Lemma  \ref{lem:ConcentrationEstimateForUkVkPlusMinus}, Equation \eqref{eq:UkMinusLargeEpsBound}, with $x \geq 1$.]
As above, let $\MM = \max_{0 \leq k \leq \tau_s^S} \big|U_k^- - x\big|$. We
will assume that $\epsilon > 1$, as the case $\epsilon = 1$ follows from
\eqref{eq:UkMinusSmallEpsBound}. Then
\begin{align}
\label{eq:MGreaterEpsilonxSplittingLargeEps}
P_{x,r}^U(\MM \geq \epsilon x)
& = P_{x,r}^U\Big(\tau_s^S < \epsilon^{1/3} x^{1/3} \Big) P_{x,r}^U\Big(\MM \geq \epsilon x \Big| \tau_s^S < \epsilon^{1/3} x^{1/3} \Big) \nonumber \\
   & ~~~~~+ P_{x,r}^U\Big(\tau_s^S \geq \epsilon^{1/3} x^{1/3} \Big) P_{x,r}^U\Big(\MM \geq \epsilon x \Big| \tau_s^S \geq \epsilon^{1/3} x^{1/3} \Big) \nonumber \\
& \leq \max_{0 \leq n < \epsilon^{1/3} x^{1/3}} P_{x,r}^U\left(\MM \geq \epsilon x \Big| \tau_s^S = n \right) ~+~ P_{x,r}^U\left(\tau_s^S \geq \epsilon^{1/3} x^{1/3} \right).
\end{align}
By \eqref{eq:tausStausRExponentialTail},
\begin{align}
\label{eq:TausSNotTooBigLargeEps}
P_{x,r}^U(\tau_s^S \geq \epsilon^{1/3} x^{1/3}) = \PB_r(\tau_s^S \geq \epsilon^{1/3} x^{1/3}) \leq c_3 e^{-c_4 \epsilon^{1/3} x^{1/3} }.
\end{align}
So, we need only bound $P_{x,r}^U(\MM \geq \epsilon x| \tau_s^S = n)$, for $n
< \epsilon^{1/3} x^{1/3}$. Fix such an $n$ and define events $A_k$, $k \geq
1$, by $A_k = \left\{ \left|U_k^- - U_{k-1}^- \right| \leq \epsilon^{2/3}
x^{2/3} \right\}$. Then, for $0 \leq k \leq n$, we have
\begin{align}
\label{eq:MaxUkMinusBoundLargeEps}
U_k^- \leq x + n \cdot \epsilon^{2/3} x^{2/3} \leq x + \epsilon x \leq 2\epsilon x  ~,~ \mbox{ on the event } A_1 \cap \ldots \cap A_k.
\end{align}
Also, by Lemma
\ref{lem:ConcentrationEstimateUkVkPlusMinusSingleStepDifferences}, there
exist some constants $c_1, c_2 > 0$ such that:
\begin{align}
\label{eq:UkUkminus1DiffBound}
& P_{x,r}^U\Big(|U_k^- - U_{k-1}^-| > \epsilont z \Big| U_{k-1}^- = z \Big) \leq c_1 \exp(-c_2 \cdot \epsilont^2 \cdot z) ~,~\mbox{ for each } k,z \mbox{ and } 0 < \epsilont \leq 1. \nonumber \\
& P_{x,r}^U\Big(|U_k^- - U_{k-1}^-| > \epsilont z \Big| U_{k-1}^- = z\Big) \leq c_1 \exp(-c_2 \cdot \epsilont \cdot z) ~,~\mbox{ for each } k,z \mbox{ and } \epsilont \geq 1.
\end{align}
Thus, for each $\epsilon^{2/3} x^{2/3} \leq z \leq 2\epsilon x$ and $1 \leq k
\leq n$,
\begin{align}
\label{eq:IfUkMinus1Large}
& P_{x,r}^U(A_k^c | U_{k-1}^- = z)
=  P_{x,r}^U\Big( |U_k^- - U_{k-1}^-| > \Big(\frac{\epsilon^{2/3} x^{2/3}}{z}\Big) \cdot z  \Big| U_{k-1}^- = z \Big) \nonumber \\
& \leq c_1 \exp\Big( -c_2 \frac{(\epsilon^{2/3} x^{2/3})^2}{2\epsilon x} \Big) = c_1 \exp\Big(-\frac{c_2}{2} \epsilon^{1/3} x^{1/3} \Big)
\end{align}
and, for each $z < \epsilon^{2/3} x^{2/3}$ and $1 \leq k \leq n$,
\begin{align}
\label{eq:IfUkMinus1Small}
P_{x,r}^U(A_k^c | U_{k-1}^- = z)
=  P_{x,r}^U\Big( |U_k^- - U_{k-1}^-| > \Big(\frac{\epsilon^{2/3} x^{2/3}}{z}\Big) \cdot z  \Big|U_{k-1}^- = z \Big)
\leq c_1 \exp\Big( -c_2 \epsilon^{2/3} x^{2/3} \Big).
\end{align}
Note that the inequality \eqref{eq:IfUkMinus1Small} remains valid when $z =
0$ (even though the derivation above has an issue dividing by 0), since in
this case $U_k^- = U_{k-1}^- = 0$, deterministically. Now, since $\epsilon, x
\geq 1$, by assumption, and the process $(U_k^-)$ is a Markov chain, it
follows from \eqref{eq:MaxUkMinusBoundLargeEps}, \eqref{eq:IfUkMinus1Large},
and \eqref{eq:IfUkMinus1Small} that
\begin{align*}
P_{x,r}^U(A_k^c |A_1, \ldots, A_{k-1}) \leq c_1 \exp\left(-\frac{c_2}{2} \epsilon^{1/3} x^{1/3} \right) ~,~\mbox{ for } 1 \leq k \leq n.
\end{align*}
Thus,
\begin{align*}
P_{x,r}^U\left(\bigcup_{k=1}^n A_k^c \right)
\leq n \cdot c_1 \exp\left(-\frac{c_2}{2} \epsilon^{1/3} x^{1/3} \right)
< \epsilon^{1/3} x^{1/3} \cdot c_1 \exp\left(-\frac{c_2}{2} \epsilon^{1/3} x^{1/3} \right).
\end{align*}
Now, when $\tau_s^S = n$, $\MM \leq n \cdot \epsilon^{2/3} x^{2/3} <
\epsilon^{1/3} x^{1/3} \cdot \epsilon^{2/3} x^{2/3} = \epsilon x $ on the
event $\cap_{k=1}^n A_k$. So,
\begin{align}
\label{eq:MBoundTausSequalnLargeEps}
P_{x,r}^U(\MM \geq \epsilon x|\tau_s^S=n) < \epsilon^{1/3} x^{1/3} \cdot c_1 \exp\left(-\frac{c_2}{2} \epsilon^{1/3} x^{1/3} \right).
\end{align}
Since \eqref{eq:MBoundTausSequalnLargeEps} is valid for each $n <
\epsilon^{1/3} x^{1/3}$ it follows from
\eqref{eq:MGreaterEpsilonxSplittingLargeEps} and
\eqref{eq:TausSNotTooBigLargeEps} that
\begin{align*}
P_{x,r}^U(\MM \geq \epsilon x) \leq c_5(1 + \epsilon^{1/3} x^{1/3}) \exp(-c_6 \epsilon^{1/3} x^{1/3})
\end{align*}
where $c_5 = \max\{c_1, c_3\}$ and $c_6 = \min\{\frac{c_2}{2},c_4\}$.
\end{proof}

Using \eqref{eq:MonotonicityUkVkProcesses} the concentration estimates for
the processes $(U_k^{\pm})$ and $(V_k^{\pm})$ proven above in Lemma
\ref{lem:ConcentrationEstimateForUkVkPlusMinus} yield the following
concentration estimates for the processes $(U_k)$ and $(V_k)$.

\begin{lemma}
\label{lem:ConcentrationEstimateUkVk} Let $c_7 = 2 c_5$. Then the following
hold for each $r \in \SM:$
\begin{align}
\label{eq:UkSmallEpsBound}
& P_{x,r}^U \left(\max_{0 \leq k \leq \tau_s^S} |U_k - x| \geq \epsilon x\right) \leq c_7 (1 + \epsilon^{2/3} x^{1/3}) e^{-c_6 \epsilon^{2/3} x^{1/3}} ~,~\mbox{for all $x \in \N_0$ and $0 < \epsilon \leq 1$.} \\
\label{eq:UkLargeEpsBound}
& P_{x,r}^U \left(\max_{0 \leq k \leq \tau_s^S} |U_k - x| \geq \epsilon x\right) \leq c_7 (1 + \epsilon^{1/3} x^{1/3}) e^{-c_6 \epsilon^{1/3} x^{1/3}} ~,~\mbox{for all $x \in \N_0$ and $\epsilon \geq 1$.} \\
\label{eq:VkSmallEpsBound}
& P_{x,r}^V \left(\max_{0 \leq k \leq \tau_s^R} |V_k - x| \geq \epsilon x\right) \leq c_7 (1 + \epsilon^{2/3} x^{1/3}) e^{-c_6 \epsilon^{2/3} x^{1/3}} ~,~\mbox{for all $x \in \N_0$ and $0 < \epsilon \leq 1$.} \\
\label{eq:VkLargeEpsBound}
& P_{x,r}^V \left(\max_{0 \leq k \leq \tau_s^R} |V_k - x| \geq \epsilon x\right) \leq c_7 (1 + \epsilon^{1/3} x^{1/3}) e^{-c_6 \epsilon^{1/3} x^{1/3}} ~,~\mbox{for all $x \in \N_0$ and $\epsilon \geq 1$.}
\end{align}
\end{lemma}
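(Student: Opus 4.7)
The plan is to deduce the four concentration estimates directly from the corresponding estimates for the dominating processes in Lemma~\ref{lem:ConcentrationEstimateForUkVkPlusMinus}, using the pointwise sandwich bounds
\begin{align*}
U_k^- \leq U_k \leq U_k^+ \quad \text{and} \quad V_k^- \leq V_k \leq V_k^+
\end{align*}
from \eqref{eq:MonotonicityUkVkProcesses}, which hold when all three processes in each family start from the same level $x$. No new probabilistic input is needed beyond Lemma~\ref{lem:ConcentrationEstimateForUkVkPlusMinus}; everything reduces to a deterministic inclusion of events plus a union bound, which is why the constant $c_6$ is unchanged and $c_7 = 2c_5$.

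The key observation is that for a fixed realization of the $\xi$'s and any $k$, the event $\{U_k - x \geq \epsilon x\}$ forces $U_k^+ - x \geq U_k - x \geq \epsilon x$, and the event $\{U_k - x \leq -\epsilon x\}$ forces $U_k^- - x \leq U_k - x \leq -\epsilon x$. Taking a maximum over $0 \leq k \leq \tau_s^S$ then gives the deterministic inclusion
\begin{align*}
\Big\{\max_{0 \leq k \leq \tau_s^S} |U_k - x| \geq \epsilon x\Big\} \subseteq \Big\{\max_{0 \leq k \leq \tau_s^S} |U_k^+ - x| \geq \epsilon x\Big\} \cup \Big\{\max_{0 \leq k \leq \tau_s^S} |U_k^- - x| \geq \epsilon x\Big\}.
\end{align*}
Applying $P_{x,r}^U$, using the union bound, and invoking \eqref{eq:UkMinusSmallEpsBound} and \eqref{eq:UkPlusSmallEpsBound} for $0 < \epsilon \leq 1$ (respectively \eqref{eq:UkMinusLargeEpsBound} and \eqref{eq:UkPlusLargeEpsBound} for $\epsilon \geq 1$), one obtains \eqref{eq:UkSmallEpsBound} and \eqref{eq:UkLargeEpsBound} with $c_7 = 2 c_5$. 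The estimates \eqref{eq:VkSmallEpsBound} and \eqref{eq:VkLargeEpsBound} for $(V_k)$ follow by exactly the same argument, using the $V$-side monotonicity and the $V$-side statements in Lemma~\ref{lem:ConcentrationEstimateForUkVkPlusMinus}.

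There is no real obstacle here; the only thing to be careful about is that the sandwich \eqref{eq:MonotonicityUkVkProcesses} is asserted for common starting level $x$, which matches the hypothesis of the present lemma, and that the same stopping time $\tau_s^S$ (resp.\ $\tau_s^R$) appears on both sides of the inclusion, which is automatic since $\tau_s^S$ depends only on $(S_k)$ and not on the branching processes. The proof is thus at most a few lines.
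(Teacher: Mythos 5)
Your proof is correct and follows exactly the approach the paper intends: the paper itself states only that Lemma~\ref{lem:ConcentrationEstimateUkVk} follows from Lemma~\ref{lem:ConcentrationEstimateForUkVkPlusMinus} via the monotone sandwich \eqref{eq:MonotonicityUkVkProcesses}, which is precisely your event inclusion plus the union bound. The resulting constant $c_7 = 2c_5$ and unchanged $c_6$ are exactly as you derive them.
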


The next two lemmas give estimates for the expectation and variance of
$U_{\tau_s^S}$ and $V_{\tau_s^R}$ in the case that $S_0 = R_0 = s$. For these
lemmas, and the remainder of this section, $E_{x,s}^U$, $\Var_{x,s}^U$, and
$\Cov_{x,s}^U$ are used to denote, respectively, expectation, variance, and
covariance under the probability measure $P_{x,s}^U$. Similarly, $E_{x,s}^V$,
$\Var_{x,s}^V$, and $\Cov_{x,s}^V$ are used to denote expectation, variance,
and covariance with respect to $P_{x,s}^V$. Also, we define
\begin{align*}
\mu_s \equiv \EB_s(\tau_s^S) = \EB_s(\tau_s^R)
\end{align*}
to be the mean return time to state $s$ for the Markov chain $(S_k)$, or
equivalently for the Markov chain $(R_k)$. Note that $E_{x,s}^U(\tau_s^S) =
E_{x,s}^V(\tau_s^R) = \mu_s$, for any $x$.

\begin{lemma}
\label{lem:ExpectationUkVkStoppedAtTausS} As $x \rightarrow \infty$,
\begin{align*}
E_{x,s}^U(U_{\tau_s^S}) = x ~+~ \delta \cdot \mu_s ~+~ O(e^{-x^{1/4}}) ~~\mbox{ and }~~
E_{x,s}^V(V_{\tau_s^R}) = x ~+~ (1-\delta) \cdot \mu_s ~+~ O(e^{-x^{1/4}}).
\end{align*}
\end{lemma}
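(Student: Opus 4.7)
The plan is to prove both identities by a common martingale-plus-Kac's-lemma argument applied to the joint Markov chains $(U_k,S_k)_{k\geq 0}$ and $(V_k,R_k)_{k\geq 0}$, using the concentration bounds of Lemma~\ref{lem:ConcentrationEstimateUkVk} to control deviations. I focus on the $U$-identity; the $V$-identity is parallel.

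Set $\tau:=\tau_s^S$ and let $\FM_n:=\sigma(U_0,\dots,U_n,S_0,\dots,S_{n+1})$, so that $S_k$ is $\FM_{k-1}$-measurable. Given $S_k=r$ and $U_{k-1}=u$, the key observation is that one may write $U_k=N^+(r)+Y$, where $N^+(r)$ is the number of successes among $\xi_1^k,\ldots,\xi_M^k$ and $Y$ is the number of successes before the $(u-N^-(r))^+$-th failure in the independent $\mathrm{Ber}(1/2)$ tail $(\xi_i^k)_{i>M}$. On $\{U_{k-1}\geq M\}$ one has $N^-(r)\leq u$ deterministically, so $E[Y\mid N^-(r)]=u-N^-(r)$, and taking expectation over $N^\pm(r)$ gives
\[
 E_{x,s}^U\!\left[U_k-U_{k-1}\mid\FM_{k-1}\right]\;=\;\sum_{i=1}^M\bigl(2S_k(i)-1\bigr)\;=\;\delta(S_k).
\]
When $U_{k-1}<M$, the conditional drift $D_k:=E_{x,s}^U[U_k-U_{k-1}\mid\FM_{k-1}]$ still satisfies a crude deterministic bound $|D_k|\leq M$, since $E[U_k\mid U_{k-1}=u,S_k]\leq M+u$ (at most $M$ of the needed $u<M$ failures can come from positions $1,\ldots,M$, and subsequent failures arrive in the $\mathrm{Ber}(1/2)$ tail at expected rate $1/2$).

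Next, $M_n:=U_n-x-\sum_{k=1}^n D_k$ is an $\FM$-martingale and $\tau$ is an $\FM$-stopping time. I would establish $E_{x,s}^U[M_\tau]=0$ by optional stopping, the requisite uniform integrability of $\{M_{\tau\wedge n}\}$ following from $|M_{\tau\wedge n}|\leq \sup_{k\leq\tau}U_k+x+2M\tau$, where the right-hand side is $P_{x,s}^U$-integrable because Lemma~\ref{lem:ConcentrationEstimateUkVk} gives moments of all orders to $\sup_{k\leq\tau}U_k$ and \eqref{eq:tausStausRExponentialTail} gives exponential tails to $\tau$. Splitting $D_k=\delta(S_k)\indicator_{\{U_{k-1}\geq M\}}+D_k\indicator_{\{U_{k-1}<M\}}$ and applying Kac's lemma to $(S_k)$ (stationary distribution $\pi$, mean return time $\mu_s$),
\[
 \EB_{s}\!\left[\sum_{k=1}^\tau\delta(S_k)\right]\;=\;\mu_s\sum_{r\in\SM}\pi(r)\delta(r)\;=\;\mu_s\,\delta.
\]
It only remains to bound the residual $\mathrm{Err}(x):=E_{x,s}^U\bigl[\sum_{k=1}^\tau(D_k-\delta(S_k))\indicator_{\{U_{k-1}<M\}}\bigr]$. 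Since $|D_k-\delta(S_k)|\leq 2M$, Cauchy--Schwarz together with $\EB_{s}[\tau^{2}]<\infty$ (from Lemma~\ref{lem:ExponentialTailHittingTimesUniformlyErgodicMCs}) gives
\[
 |\mathrm{Err}(x)|\;\leq\;2M\,\EB_{s}[\tau^{2}]^{1/2}\,P_{x,s}^{U}\!\Bigl(\min_{0\leq k\leq\tau}U_k<M\Bigr)^{1/2}.
\]
For $x>2M$ the event $\{\min_{k\leq\tau}U_k<M\}$ forces $\max_{k\leq\tau}|U_k-x|\geq x/2$, which \eqref{eq:UkSmallEpsBound} with $\epsilon=1/2$ bounds by $c_7\bigl(1+(x/2)^{1/3}\bigr)e^{-c_6(x/2)^{1/3}}$. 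Taking the square root swallows the polynomial prefactor and yields $|\mathrm{Err}(x)|=O(e^{-x^{1/4}})$, completing the proof of the $U$-identity.

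For the $V$-identity the same scheme applies; the only change is that the ``$+1$'' in \eqref{eq:DefBackwardBP} modifies the first-step computation to $E_{x,s}^V[V_k-V_{k-1}\mid\FM_{k-1}]=1-\delta(R_k)$ on $\{V_{k-1}\geq M\}$, after which Kac's lemma applied to $(R_k)$ with $g(r)=1-\delta(r)$ produces $\mu_s(1-\delta)$, and the error control is identical via \eqref{eq:VkSmallEpsBound}. The main obstacle throughout is the simultaneous handling of two technicalities: justifying optional stopping despite the branching-process increments having only exponential (not bounded) tails, and extracting a stretched-exponential decay for $\mathrm{Err}(x)$. Both hinge on Lemma~\ref{lem:ConcentrationEstimateUkVk}: it delivers the moments required for uniform integrability, and its stretched-exponential bound on $\{\min_{k\leq\tau}U_k<M\}$, once square-rooted against Cauchy--Schwarz, still absorbs the polynomial prefactor from $\EB_{s}[\tau^{2}]^{1/2}$ and leaves the claimed $O(e^{-x^{1/4}})$ rate.
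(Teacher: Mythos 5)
Your argument is correct and rests on the same three ideas as the paper's proof: the one-step conditional drift is $\delta(S_k)$ on $\{U_{k-1}\geq M\}$ and bounded by $M$ otherwise; Kac's cycle identity converts $\EB_s\big[\sum_{k=1}^{\tau}\delta(S_k)\big]$ into $\mu_s\delta$; and the boundary contribution from $\{U_{k-1}<M\}$ is negligible by the stretched-exponential concentration of Lemma~\ref{lem:ConcentrationEstimateUkVk}. The packaging differs in two respects worth noting. First, the paper performs the telescoping for a fixed environment $\omega$, obtaining a deterministic bound $\big|E_x^{U,\omega}(U_{t_s^S}) - x - \sum_{k\le t_s^S}\delta_k\big|\le 2M\sum_{k\le t_s^S}P_x^{U,\omega}(U_{k-1}<M)$, and only then integrates over $\PB_s$ (with a Fubini step for the cycle identity); you instead work directly under $P_{x,s}^U$, build the martingale $M_n = U_n - x - \sum_{k\le n}D_k$, and invoke optional stopping. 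The two are interchangeable here; your route does require the uniform integrability check you supply, which goes through since $\sup_{k\le\tau}U_k$ has all moments under $P_{x,s}^U$ and $\tau_s^S$ has exponential tails. Second, the paper controls the error term by splitting at the deterministic time $n_0\approx x^{1/3}$ and combining the fixed-time bound \eqref{eq:MBoundFixedSmallEnoughTime} with the tail of $\tau_s^S$, whereas you apply Cauchy--Schwarz directly to $E\big[\tau\,\indicator\{\min_{k\le\tau}U_k<M\}\big]$ against the running-maximum bound \eqref{eq:UkSmallEpsBound}; the square root costs a factor of $2$ in the exponent, which is harmless since $e^{-cx^{1/3}/2}$ is still $O(e^{-x^{1/4}})$. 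Your version is somewhat cleaner in dispensing with the explicit $n_0$-splitting, at the price of a slightly worse (but still adequate) exponent; both are sound and of comparable complexity.
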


\begin{proof}
We will prove the statement about the expectation of $U_{\tau_s^S}$. The
proof of the analogous claim for the expectation of $V_{\tau_s^R}$ is very
similar.

Fix a realization $(s_k)_{k \in \Z}$ of the random variables $(S_k)_{k \in
\Z}$ with $s_0 = s$, and let $\omega = (\omega(k,i))_{k \in \Z, i \in \N}$ be
the corresponding cookie environment defined by $\omega(k,i) = s_k(i)$. Also,
let $t_s^S = \inf\{k > 0: s_k = s\}$ be the corresponding realization of the
random variable $\tau_s^S$. We will consider first the process $(U_k)_{k \geq
0}$ started from level $x$ in this fixed environment $\omega$.  For $k \in
\Z$, we define $\delta_k = \sum_{i=1}^M (2\omega(k,i) - 1)$ to be the net
drift induced by consuming all cookies in stack $s_k$. Also, we denote by
$E_x^{U, \omega}$ expectation with respect to the probability measure
$P_x^{U, \omega}$ for the process $(U_k)_{k \geq 0}$ in this fixed
environment $\omega$.

We decompose $E_x^{U, \omega}(U_{t_s^S})$ as
\begin{align}
\label{eq:ExOmegaUtsSDecompostion}
E_x^{U, \omega}(U_{t_s^S}) = E_x^{U, \omega}(U_0) + \sum_{k=1}^{t_s^S} E_x^{U, \omega}(U_k - U_{k-1}) = x + \sum_{k=1}^{t_s^S} E_x^{U, \omega}(U_k - U_{k-1}).
\end{align}
Straightforward calculations show that
\begin{align*}
& ~E_x^{U, \omega}(U_k - U_{k-1}| U_{k-1} = m) = \delta_k ~,~\mbox{for each $k \geq 1$ and $m \geq M$, and} \\
& \left|E_x^{U, \omega}(U_k - U_{k-1}| U_{k-1} = m)\right| \leq M ~,~ \mbox{for each $k \geq 1$ and $m \geq 0$}.
\end{align*}
Thus,
\begin{align*}
\left|E_x^{U, \omega}(U_k - U_{k-1}) - \delta_k \right|
& = \Big| P_x^{U, \omega}(U_{k-1} \geq M) \cdot E_x^{U, \omega}(U_k - U_{k-1}|U_{k-1} \geq M)  \\
&~~~~~~~+~ P_x^{U, \omega}(U_{k-1} < M) \cdot E_x^{U, \omega}(U_k - U_{k-1}|U_{k-1} < M) - \delta_k \Big| \\
& \leq \left| P_x^{U, \omega}(U_{k-1} \geq M) \cdot \delta_k ~-~\delta_k \right| + P_x^{U, \omega}(U_{k-1} < M) \cdot M \\
& \leq 2M \cdot P_x^{U, \omega} (U_{k-1} < M).
\end{align*}
Plugging into \eqref{eq:ExOmegaUtsSDecompostion} gives
\begin{align}
\label{eq:ExUtsOmegaDiffUpperBound}
\Big| E_x^{U, \omega}(U_{t_s^S}) - x - \sum_{k = 1}^{t_s^S} \delta_k \Big|
\leq 2M \sum_{k=1}^{t_s^S} P_x^{U, \omega}(U_{k-1} < M).
\end{align}

So far our analysis has been for a fixed environment $\omega$. To prove the
lemma we will need to take expectations with respect to the probability
measure $\PB_s$ on environments. Recall that, for $r \in \SM$, $\delta(r)
\equiv \sum_{i=1}^M (2 r(i) - 1)$ is the net drift induced by all cookies in
stack $r$. Taking expectation of the random variable $g(\omega) =
\sum_{k=1}^{t_s^S(\omega)} \delta_k(\omega)$ with respect to $\PB_s$ gives
\begin{align}
\label{eq:SumExpectationExchangeSeries}
& \EB_s\left[ \sum_{k=1}^{t_s^S} \delta_k \right]
= \EB_s \left[ \sum_{k=1}^{t_s^S}  \sum_{r \in \SM} \delta(r) \cdot \indicator\{s_k = r\} \right]
= \EB_s \left[ \sum_{r \in \SM} \sum_{k=1}^{t_s^S} \delta(r) \cdot \indicator\{s_k = r\} \right] \nonumber \\
& \stackrel{(*)}{=} \sum_{r \in \SM} \EB_s \left[ \sum_{k=1}^{t_s^S} \delta(r) \indicator\{s_k = r\} \right]
=  \sum_{r \in \SM} \delta(r) \cdot \big[ \pi(r) \cdot \mu_s \big]
= \delta \cdot \mu_s.
\end{align}
In step (*) we have used Fubini's Theorem to interchange the sum with the
expectation; this is applicable since $\EB_s \left( \sum_{r \in \SM} \left|
\sum_{k=1}^{t_s^S} \delta(r) \cdot \indicator\{s_k = r\} \right| \right) \leq
\EB_s(M \cdot t_s^S) = M \mu_s < \infty$. Combining
\eqref{eq:ExUtsOmegaDiffUpperBound} and
\eqref{eq:SumExpectationExchangeSeries}, and using the fact that $\EB_s[
E_x^{U, \omega}(U_{t_s^S}) ] = E_{x,s}^U(U_{\tau_s^S})$, gives
\begin{align}
\label{eq:ExUtsAveragedDiffUpperBoundPreliminary}
\Big| E_{x,s}^U(U_{\tau_s^S}) \,{-}\, x \,{-}\, \delta \cdot \mu_s \Big|
\,{=}\, \left| \EB_s \left( E_x^{U, \omega}(U_{t_s^S}) \,{-}\, x \,{-}\,  \sum_{k=1}^{t_s^S} \delta_k \right) \right|
\,{\leq}\, 2M \,{\cdot}\, \EB_s \left( \sum_{k=1}^{t_s^S} P_x^{U, \omega}(U_{k-1} \,{<}\, M) \right).
\end{align}
Denote $p_{x,k} = P_{x,s}^U(U_k^- < M)$ and $q_n = \PB_s(\tau_s^S = n)$. By
construction of the process $(U_k^-)_{k \geq 0}$, $P_x^{U, \omega}(U_{k-1} <
M) \leq P_x^{U, \omega}(U_{k-1}^- < M) = p_{x,k-1}$,  for $\PB_s$ a.e.
$\omega$. Thus, it follows from
\eqref{eq:ExUtsAveragedDiffUpperBoundPreliminary} that
\begin{align}
\label{eq:ExUtsAveragedDiffUpperBound}
\Big| E_{x,s}^U(U_{\tau_s^S}) - x - \delta \cdot \mu_s \Big|
\leq 2M \sum_{n=1}^{\infty} q_n \sum_{k=1}^n p_{x,k-1}.
\end{align}
We define $n_0 = \floor{({1/2})^{5/3} x^{1/3}}$. Splitting the sum at $n_0$,
the right hand side of \eqref{eq:ExUtsAveragedDiffUpperBound} may be bounded
as follows:
\begin{align}
\label{eq:ExUtsAveragedDiffUpperBoundSplitAtn0}
2M \sum_{n=1}^{\infty} q_n \sum_{k=1}^n p_{x,k-1}
& \leq 2M \cdot \left[ \sum_{n=1}^{n_0} q_n \sum_{k=1}^{n_0} p_{x,k-1} ~+~ \sum_{n > n_0} q_n \sum_{k=1}^n p_{x,k-1} \right] \nonumber \\
& \leq 2M \cdot \left[ \sum_{k=1}^{n_0} p_{x,k-1} ~+~ \sum_{n > n_0} q_n \cdot n\right].
\end{align}
By \eqref{eq:tausStausRExponentialTail},
\begin{align}
\label{eq:BoundSumtausSTailExpectation}
\sum_{n > n_0} q_n \cdot n \leq \sum_{n > n_0} c_3 e^{-c_4 n}  \cdot n = O(e^{-x^{1/4}}).
\end{align}
Also, using \eqref{eq:MBoundFixedSmallEnoughTime} with $\epsilon = 1/2$ shows
that, for all $x > 2M$ and $0 \leq k \leq n_0$,
\begin{align*}
p_{x,k}
\leq P_{x,s}^U\left(|U_k^- - x| > \frac{1}{2} x\right)
\leq P_{x,s}^U\left(\max_{0 \leq j \leq n_0} |U_j^- - x| > \frac{1}{2} x\right)
\leq c_1 n_0 e^{-c_2 n_0},
\end{align*}
for some constants $c_1, c_2 > 0$. Hence,
\begin{align}
\label{eq:BoundSumpxk}
\sum_{k=1}^{n_0} p_{x,k-1} \leq n_0 \cdot c_1 n_0 e^{-c_2 n_0} = O(e^{-x^{1/4}}).
\end{align}
Combining \eqref{eq:ExUtsAveragedDiffUpperBound}-\eqref{eq:BoundSumpxk} shows
that $\Big| E_{x,s}^U(U_{\tau_s^S}) - x - \delta \cdot \mu_s \Big| =
O(e^{-x^{1/4}})$, which proves the lemma.
\end{proof}

\begin{lemma} As $x \rightarrow \infty$,
\label{lem:VarianceUkVkStoppedAtTaus}
\begin{align*}
\Var_{x,s}^U(U_{\tau_s^S}) = 2x \cdot \mu_s + O(x^{1/2}) ~~\mbox{ and }~~
\Var_{x,s}^V(V_{\tau_s^R}) = 2x \cdot \mu_s + O(x^{1/2}).
\end{align*}
\end{lemma}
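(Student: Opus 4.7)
The plan is to mirror the strategy of Lemma~\ref{lem:ExpectationUkVkStoppedAtTausS}: work first in a fixed environment $\omega$ (so that $t = t_s^S$ is deterministic), apply a martingale decomposition to compute the conditional variance there, and then average over $\omega$ using the law of total variance. I will sketch the argument for $U_{\tau_s^S}$; the $V$-case is identical up to swapping successes/failures and $(S_k)\leftrightarrow(R_k)$.

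For a fixed realization $\omega$, set $\Delta_k = E_x^{U,\omega}[U_k - U_{k-1}\,|\,U_{k-1}]$ and $M_k = (U_k - U_{k-1}) - \Delta_k$, so that $(M_k)$ is a martingale difference sequence under $P_x^{U,\omega}$. Using the geometric representation $U_k = \sum_{i=1}^{U_{k-1}} G_i^k$, with $G_i^k \sim \mathrm{Geo}(1/2)$ (hence variance $2$) for $i > M$, a direct computation gives
\begin{align*}
\Var_x^{U,\omega}\bigl(U_k \,\big|\, U_{k-1} = m\bigr) = 2m + O(1),\qquad m \ge M,
\end{align*}
with the $O(1)$ bounded by a constant depending only on $M$. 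On the event $\{U_{k-1} \ge M\ \forall\, k \le t\}$, which has probability $1 - O(e^{-c x^{1/3}})$ by Lemma~\ref{lem:ConcentrationEstimateUkVk}, the drift $\Delta_k = \delta_k$ is deterministic given $\omega$, so $\sum_{k=1}^t \Delta_k$ contributes nothing to $\Var_x^{U,\omega}(U_t)$ on this event. By martingale orthogonality,
\begin{align*}
\Var_x^{U,\omega}(U_t) = \sum_{k=1}^t E_x^{U,\omega}[M_k^2] + O(e^{-c x^{1/3}}) = 2\sum_{k=1}^t E_x^{U,\omega}[U_{k-1}] + O(t) + O(e^{-c x^{1/3}}).
\end{align*}
Lemma~\ref{lem:ExpectationUkVkStoppedAtTausS} (applied to the partial sums, or re-deriving the same bound step by step in $\omega$) gives $E_x^{U,\omega}[U_{k-1}] = x + O(k)$ uniformly, so $\sum_{k=1}^t E_x^{U,\omega}[U_{k-1}] = xt + O(t^2)$, yielding
\begin{align*}
\Var_x^{U,\omega}(U_t) = 2xt + O(t^2) + O(e^{-c x^{1/3}}).
\end{align*}

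Now invoke the law of total variance
$\Var_{x,s}^U(U_{\tau_s^S}) = \EB_s\bigl[\Var_x^{U,\omega}(U_{t_s^S})\bigr] + \Var_{\PB_s}\bigl(E_x^{U,\omega}(U_{t_s^S})\bigr).$
For the first term, substitute the bound above and use the exponential tails of $\tau_s^S$ (Lemma~\ref{lem:ExponentialTailHittingTimesUniformlyErgodicMCs}), which imply $\EB_s[(\tau_s^S)^2] = O(1)$, to get $\EB_s[\Var_x^{U,\omega}(U_{t_s^S})] = 2x\mu_s + O(1)$. For the second term, Lemma~\ref{lem:ExpectationUkVkStoppedAtTausS} gives $E_x^{U,\omega}(U_{t_s^S}) = x + \sum_{k=1}^{t_s^S} \delta_k + O(e^{-x^{1/4}})$, so the variance is dominated by $\Var_{\PB_s}(\sum_{k=1}^{\tau_s^S} \delta_k) \le M^2 \EB_s[(\tau_s^S)^2] = O(1)$. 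Adding the two pieces yields $\Var_{x,s}^U(U_{\tau_s^S}) = 2x\mu_s + O(1)$, which is certainly $2x\mu_s + O(x^{1/2})$.

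The main technical obstacle is controlling the contribution of the bad event $\{\min_{k \le t_s^S} U_{k-1} < M\}$, on which $\Delta_k$ is not deterministic and the formula $\Var(U_k|U_{k-1}=m) = 2m + O(1)$ must be replaced by a trivial bound. To absorb this term one combines the exponential bound $P_{x,s}^U(\min U_{k-1} < M) = O(e^{-c x^{1/3}})$ from Lemma~\ref{lem:ConcentrationEstimateUkVk} with a crude moment bound $E_{x,s}^U[U_{\tau_s^S}^2 \cdot \mathbf{1}_{\text{bad}}] = O(e^{-c' x^{1/3}})$, obtained via the large-deviation tails in Lemma~\ref{lem:ConcentrationEstimateUkVk} and Cauchy--Schwarz. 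The proof for $V_{\tau_s^R}$ proceeds identically, using the representation \eqref{eq:VkPlusMinusGeometricRepresentation}, the $V$-side concentration and expectation estimates, and noting that $\Var(H_i^k) = 2$ for $i > M$ as well, so the leading coefficient $2x\mu_s$ is the same.
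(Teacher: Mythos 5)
Your approach is correct but genuinely different from the paper's. You decompose $U_t - x = \sum_k M_k + \sum_k \Delta_k$ in a fixed environment (martingale part plus predictable drift), compute the bracket $\sum_k E^{U,\omega}_x[M_k^2] = 2\sum_k E^{U,\omega}_x[U_{k-1}] + O(t) = 2xt + O(t^2)$ from $\Var(U_k \mid U_{k-1}=m) = 2m + O(1)$, and then invoke the law of total variance to pass to $P^U_{x,s}$, using the exponential tails of $\tau_s^S$ to turn $2xt + O(t^2)$ into $2x\mu_s + O(1)$ and to kill both the variance of the drift and the residual covariance. The paper instead sandwiches $U_k^- \le U_k^* \le U_k^+$, where $U_k^*$ is the unbiased $\mathrm{Geo}(1/2)$ Galton--Watson process, and proves the key structural fact that $U^+_{\tau_s^S} - U^-_{\tau_s^S}$ is stochastically dominated, \emph{uniformly in $x$}, by a finite-variance random variable $\Delta$; since $\tau_s^S \perp (U_k^*)$, $\Var^U_{x,s}(U^*_{\tau_s^S}) = 2x\mu_s$ exactly, and the cross term is conceded to Cauchy--Schwarz, which is where the $O(x^{1/2})$ comes from. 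Your route actually yields the sharper $O(1)$ error (the lemma only needs $O(x^{1/2})$), at the cost of more bad-event bookkeeping: as you note, the fixed-$\omega$ statement ``$\Var^{U,\omega}_x(U_t) = 2xt + O(t^2) + O(e^{-cx^{1/3}})$'' is not literally true uniformly in $\omega$ (the exponential error depends on $t_s^S$ not being too large), and one must average over $\omega$ using $\PB_s(\tau_s^S > n_0) = O(e^{-cn_0})$ together with $P^{U,\omega}_x(U_{k-1}<M) \le p_{x,k-1}$ (environment-independent, via $U^-_k$) before the estimate becomes clean; your final paragraph indicates exactly this, so the gap is cosmetic rather than substantive. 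The paper's coupling isolates a reusable structural lemma (the bounded-gap claim), while yours is a more self-contained, computation-driven argument with a tighter constant.
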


\begin{proof}
We will prove the statement about the variance of $U_{\tau_s^S}$. The proof
of the analogous statement for the variance of $V_{\tau_s^R}$ is again very
similar.
A central element of the proof is the following claim. \\

\noindent \emph{Claim:} There exists a non-negative random variable $\Delta$
with finite variance (defined on some outside probability space, separate
from the $(U_k^+)$ and $(U_k^-)$ processes) such that
\begin{align}
\label{eq:UkPlusUkMinusDiffStochLessDelta}
U_{\tau_s^S}^+ - U_{\tau_s^S}^- \stackrel{stoch}{\leq} \Delta ~,~\mbox{ under $P_{x,s}^U$, for any $x \in \N_0$.}
\end{align}
Note that although the distributions of $U_{\tau_s^S}^+$ and $U_{\tau_s^S}^-$
do depend
on the initial value $U_0^+  = U_0^- = x$, the random variable $\Delta$ does not. \\

The proof of the claim will be given after the main proof of the lemma. The
basic idea for the proof of the lemma is to approximate the process $(U_k)_{k
\geq 0}$ by the process $(U_k^*)_{k \geq 0}$ defined by
\begin{align*}
U_0^* = U_0 = x ~~\mbox{ and }~~ U_{k+1}^* = \sum_{i=1}^{U_k^*} \GM_i^{k+1} ,~k \geq 0
\end{align*}
where the random variables $\GM_i^k$ are as in
$\eqref{eq:UkPlusMinusGeometricRepresentation}$. This process $(U_k^*)_{k
\geq 0}$ is a standard Galton-Watson branching process with Geo(1/2)
offspring distribution, independent of $(S_k)$, and satisfies
\begin{align}
\label{eq:MonotonicityUkVkProcesses2}
U_k^- \leq U_k^* \leq U_k^+ ~,~\mbox{ for all } k \geq 0.
\end{align}
We express $U_{\tau_s^S}$ as
\begin{align}
\label{eq:UTausSDecomposition}
U_{\tau_s^S} = U_{\tau_s^S}^* + \widetilde{\Delta},~\mbox{ where }~~\widetilde{\Delta} \equiv U_{\tau_s^S} - U_{\tau_s^S}^*.
\end{align}
By \eqref{eq:MonotonicityUkVkProcesses} and
\eqref{eq:MonotonicityUkVkProcesses2}, along with the claim
\eqref{eq:UkPlusUkMinusDiffStochLessDelta}, $|\widetilde{\Delta}|
\stackrel{stoch}{\leq} \Delta$, for a random variable $\Delta$ with finite
variance (not depending on $x$). The first term $U_{\tau_s^S}^*$ can be
analyzed exactly. Since $(U_k^*)_{k \geq 0}$ is a standard Geo(1/2)
Galton-Watson branching processes started from level $x$, we have
\begin{align*}
E_{x,s}^U (U_k^*) = x ~\mbox{ and }~\Var_{x,s}^U(U_k^*) = 2xk
\end{align*}
for any fixed $k > 0$. Thus, since the stopping time $\tau_s^S$ is
independent of the process $(U_k^*)$ and a.s. finite,
\begin{align}
\label{eq:VarUTausS0}
\Var_{x,s}^U(U_{\tau_s^S}^*) & = E_{x,s}^U[ \Var_{x,s}^U(U_{\tau_s^S}^*|\tau_s^S)] + \Var_{x,s}^U[E_{x,s}^U(U_{\tau_s^S}^*|\tau_s^S)] \\
& = E_{x,s}^U(2x \cdot \tau_s^S) + \Var_{x,s}^U(x) = 2x \mu_s.
\end{align}
Expanding \eqref{eq:UTausSDecomposition} gives,
\begin{align}
\label{eq:VarxsUTausSDecomposition}
\Var_{x,s}^U(U_{\tau_s^S})
= \Var_{x,s}^U(U_{\tau_s^S}^*) + \Var_{x,s}^U(\widetilde{\Delta}) + 2 \Cov_{x,s}^U(U_{\tau_s^S}^*,\widetilde{\Delta}).
\end{align}
By the calculation above, the first term on the right hand side of
\eqref{eq:VarxsUTausSDecomposition} is exactly equal to $2x \mu_s$. The
second two terms may be bounded as follows:
\begin{align}
\label{eq:BoundVarDeltaTilde}
& |\Var_{x,s}^U(\widetilde{\Delta})| \leq E_{x,s}^U(\widetilde{\Delta}^2) \leq \E(\Delta^2) \equiv C < \infty. \\
\label{eq:BoundCovUTausS0DeltaTilde}
& |2 \Cov_{x,s}^U(U_{\tau_s^S}^*,\widetilde{\Delta})| \leq 2 \left[\Var_{x,s}^U( U_{\tau_s^S}^*)\right]^{1/2} \left[\Var_{x,s}^U(\widetilde{\Delta}) \right]^{1/2}
\leq 2 \cdot (2x \mu_s)^{1/2} \cdot C^{1/2}.
\end{align}
Combining \eqref{eq:VarUTausS0}-\eqref{eq:BoundCovUTausS0DeltaTilde} shows
that $\Var_{x,s}^U(U_{\tau_s^S}) = 2x \mu_s + O(x^{1/2})$.
Thus, it remains only to prove the claim \eqref{eq:UkPlusUkMinusDiffStochLessDelta} \\

\noindent \emph{Proof of Claim:} Fix any $x \in \N_0$ and assume throughout
that $U_0^+ = U_0^- = x$. Let $(B_k)_{k \geq 0}$ be a standard Galton-Watson
branching processes with Geo(1/2) offspring distribution, started from $B_0 =
1$. Also, let $(\beta_{k,i})_{k \geq 0, i \in \N}$ and
$(\tilde{\beta}_{k,i})_{k \geq 0, i \in \N}$ all be independent random
variables such that $\beta_{k,i} \stackrel{law}{=} \tilde{\beta}_{k,i}
\stackrel{law}{=} B_k$.  Finally, let $T$ be a random time with the same
distribution as $\tau_s^S$ (under $P_{x,s}^U$) which is defined on the same
probability space as the $\beta$ and $\tilde{\beta}$ random variables, but
independently of them. We will denote the probability measure for this
probability space by $\P$, and the corresponding expectation operator by
$\E$. We claim that
\begin{align}
\label{eq:StochasticDominationUnPlusUnMinusDiff}
U_n^+ - U_n^- \stackrel{stoch}{\leq} \Delta_n \equiv \sum_{k=1}^n \sum_{i=1}^{M+1} \beta_{k,i} + \sum_{k=0}^{n-1} \sum_{i=1}^M \tilde{\beta}_{k,i},\mbox{ for all } n \in \N.
\end{align}
Since $(U_n^+, U_n^-)_{n \in \N} \perp \tau_s^S$, $(\Delta_n)_{n \in \N}
\perp T$, and $\tau_s^S \stackrel{law}{=} T$ it follows from this that
\begin{align*}
U_{\tau_s^S}^+ - U_{\tau_s^S}^- \stackrel{stoch}{\leq} \Delta \equiv \sum_{n=1}^{\infty} \Delta_n \cdot \indicator\{T=n\}.
\end{align*}
Direct computations using $\E(\beta_{k,i}) = \E(\tilde{\beta}_{k,i}) = 1$ and
$\Var(\beta_{k,i}) = \Var(\tilde{\beta}_{k,i}) = 2k$, along with
independence, give
\begin{align*}
\E(\Delta_n) = (2M+1)n ~~,~~ \Var(\Delta_n) = 2Mn^2 + n^2 + n ~~,~~ \E(\Delta_n^2) = (2M+1)(2M+2)n^2 + n.
\end{align*}
Thus, since $T$ has an exponential tail and the random variables $\Delta_n$
are independent of $T$,
\begin{align*}
\E(\Delta^2) = \sum_{n=1}^{\infty} \P(T=n) \E(\Delta^2|T=n) = \sum_{n=1}^{\infty} \P(T=n) \cdot  \E(\Delta_n^2) < \infty.
\end{align*}
So, it remains only to show \eqref{eq:StochasticDominationUnPlusUnMinusDiff}.

To this end, recall again that the processes  $(U_k^+)_{k \geq 0}$ and
$(U_k^-)_{k \geq 0}$ may be represented in terms of the independent Geo(1/2)
random variables $\GM_i^k$ according to
\eqref{eq:UkPlusMinusGeometricRepresentation}. Let us define a new process
$(Z_k)_{k \geq 0}$ as follows:
\begin{align*}
Z_0 = x ~~\mbox{ and }~~ Z_{k+1} = M + \sum_{i=1}^{\ell_k+1} \GM_i^{k+1} ~~~ \mbox{ where }~~~ \ell_k = (U_k^- - M)^+ ~+~ M ~+~ (Z_k - U_k^-).
\end{align*}
We claim that $Z_k \geq U_k^+$, for all $k \geq 0$. The proof is by induction
on $k$. For $k = 0$, we have $U_0^+ = Z_0 = x$, by assumption. Now, assume
that $Z_k \geq U_k^+$. By the definition $\ell_k \geq Z_k$, so it follows
that $\ell_k \geq U_k^+$, and therefore $Z_{k+1} = M + \sum_{i=1}^{\ell_k +
1} \GM_i^{k+1} \geq M + \sum_{i=1}^{U_k^+ + 1} \GM_i^{k+1} = U_{k+1}^+.$ This
shows that $Z_k \geq U_k^+$, for all $k$. So, $U_k^+ - U_k^- \leq Z_k - U_k^-
\equiv W_k$, for all $k$. So, to establish
\eqref{eq:StochasticDominationUnPlusUnMinusDiff} it will suffice to show that
\begin{align}
\label{eq:WnDeltanSameDist}
W_n \stackrel{law}{=} \Delta_n ~,~\mbox{ for all $n$}.
\end{align}
To prove \eqref{eq:WnDeltanSameDist} observe that $W_0 = 0$ and, for all $k
\geq 0$,
\begin{align*}
W_{k+1}
~=~ \left[ M + \sum_{i=1}^{\ell_k + 1} \GM_i^{k+1} \right ] - \left[ \sum_{i=1}^{(U_k^- - M)^+} \GM_i^{k+1} \right]
~=~ M ~+~ \sum_{i = (U_k^- - M)^+ ~+~ 1}^{(U_k^- - M)^+ ~+~ W_k ~+~ (M+1)} \GM_i^{k+1}.
\end{align*}
Thus, the process $(W_k)_{k \geq 0}$ has the same distribution as the process
$(\Wt_k)_{k \geq 0}$ defined by
\begin{align}
\label{eq:DefWtildeProcess}
\Wt_0 = 0 ~~\mbox{ and }~~ \Wt_{k+1} = M ~+ \sum_{i=1}^{\Wt_k + (M + 1)} \widetilde{\GM}_i^{k+1}
\end{align}
where $(\widetilde{\GM}_i^k)_{i,k \in \N}$ are i.i.d. Geo(1/2) random
variables (living on some probability space). To see this note that, for each
$\vec{u} = (u_0, \ldots, u_n) \in \N_0^{n+1}$, occurrence of the event
$E(\vec{u}) \equiv \{U_0^- = u_0, \ldots, U_n^- = u_n\}$ is a deterministic
function of the random variables $\GM_i^k$ such that $1 \leq k \leq n$ and,
for each such $k$, $1 \leq i \leq (u_{k-1} - M)^+$. Thus, the conditional law
of the random variables $\GM_i^k$ such that $1 \leq k \leq n$ and $i >
(u_{k-1} - M)^+$ on the event $E(\vec{u})$ is still i.i.d. Geo(1/2). Thus,
conditional on the event $E(\vec{u})$, $(W_k)_{k=0}^n$ has the same law as
$(\Wt_k)_{k = 0}^n$. And since that holds for all $n \in \N$ and $\vec{u} =
(u_0, \ldots, u_n)$ with $P_{x,s}^U(E(\vec{u})) > 0$, it follows that the
entire process $(W_k)_{k \geq 0}$ has the same (unconditional) law as
$(\Wt_k)_{k \geq 0}$.

Now, the process $(\Wt_k)$ defined by \eqref{eq:DefWtildeProcess} may be
understood as a type of branching process with migration where, at each step
$k$, in addition to all the ``regular children'' created by the branching
mechanism we also introduce $M+1$ immigrants prior to the reproduction stage
(the $M+1$ in the upper limit of the sum) along with $M$ additional
immigrants after the reproduction stage (the $M$ in front of the sum). It
follows that, for each $n$,
\begin{align}
\label{eq:WnTildeDeltanSameDist}
\Wt_n \stackrel{law}{=} \Delta_n \equiv \sum_{k=1}^n \sum_{i=1}^{M+1} \beta_{k,i} + \sum_{k=0}^{n-1} \sum_{i=1}^{M} \tilde{\beta}_{k,i}.
\end{align}
The $\tilde{\beta}_{k,i}$ sum corresponds to the descendants of the $M$
immigrants coming in after the reproduction stage in each generation that are
alive at time $n$, and the $\beta_{k,i}$ sum corresponds to the descendants
of the $(M+1)$ immigrants coming in prior to the reproduction stage in each
generation that are alive at time $n$. Since $W_n \stackrel{law}{=}
\widetilde{W}_n$ for all $n$, \eqref{eq:WnTildeDeltanSameDist} implies
\eqref{eq:WnDeltanSameDist}.
\end{proof}

Our final result of this section is an ``overshoot lemma'' which gives
concentration estimates for $(\Vh_k)$ when exiting certain intervals.
Analogous statements also hold for the process $(\Uh_k)$, but we will not
need these.

\begin{lemma}
\label{lem:ModifiedLemma51FromLimitLawsERW} Let $\tau^{\Vh}_{x^+} = \inf\{k >
0: \Vh_k \geq x\}$ and $\tau^{\Vh}_{x^-} = \inf\{k > 0: \Vh_k \leq x\}$. Then
there exist constants $c_8, c_9 > 0$ and $N \in \N$ such that for all $x \geq
N$ the following hold:
\begin{itemize}
\item[(i)] $\sup_{0 \leq z < x} P_{z,s}^V (\Vh_{\tau_{x^+}^{\Vh}} \,{>}\, x
    \,{+}\, y| \tau_{x^+}^{\Vh} \,{<}\, \tau_0^{\Vh}) \,{\leq}\,
    \begin{cases}
c_8(1 \,{+}\, y^{2/3}x^{-1/3}) e^{-c_9 y^{2/3} x^{-1/3}},~ \mbox{ for } 0 \leq y \leq x \\
c_8(1 + y^{1/3}) e^{-c_9 y^{1/3}},~ \mbox{ for } y \geq x.
\end{cases}$
\item[(ii)] $\sup_{x < z < 4x} P_{z,s}^V (\Vh_{\tau_{x^-}^{\Vh} \wedge
    \tau_{(4x)^+}^{\Vh}} < x - y)
\leq c_8(1 + y^{2/3}x^{-1/3}) e^{-c_9 y^{2/3} x^{-1/3}} ~,~ \mbox{ for } 0 \leq y \leq x$. \\
\end{itemize}
\end{lemma}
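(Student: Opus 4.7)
The essential ingredient is that $(\Vh_k)$ is a Markov chain whose one-step law, given $\Vh_k = z$, is that of $V_{\tau_s^R}$ under $P^V_{z,s}$; hence every increment $\Vh_{k+1} - \Vh_k$ inherits from Lemma \ref{lem:ConcentrationEstimateUkVk} a tail of order $(1 + w^{2/3} z^{-1/3}) e^{-c\, w^{2/3} z^{-1/3}}$ for deviations $w \leq z$ and $(1 + w^{1/3}) e^{-c\, w^{1/3}}$ for $w \geq z$.

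For part (ii), I apply the strong Markov property at time $T'' - 1$, where $T'' := \tau_{x^-}^{\Vh} \wedge \tau_{(4x)^+}^{\Vh}$. Since $\Vh_{T''-1}$ lies in $(x, 4x)$ and $y \leq x$, the event $\{\Vh_{T''} < x - y\}$ requires a single downward step of magnitude at least $y$, which falls in the small-deviation regime. The resulting bound $c\,(1 + y^{2/3} (\Vh_{T''-1})^{-1/3}) e^{-c' y^{2/3} (\Vh_{T''-1})^{-1/3}}$, once $(\Vh_{T''-1})^{-1/3}$ is replaced by constant multiples of $x^{-1/3}$ valid throughout $(x, 4x)$, gives the claim after averaging over the law of $\Vh_{T''-1}$.

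For part (i), set $T := \tau_{x^+}^{\Vh} \wedge \tau_0^{\Vh}$ and let $\mu_z$ denote the law of $\Vh_{T-1}$ under $P^V_{z,s}$ (supported in $(0,x)$). Strong Markov at $T - 1$ yields
\begin{align*}
P^V_{z,s}(\Vh_T \geq x + y) = \sum_{z' \in (0,x)} \mu_z(z') \, P^V_{z',s}(V_{\tau_s^R} \geq x + y),
\end{align*}
with an analogous identity for the denominator $P^V_{z,s}(\Vh_T \geq x) = P^V_{z,s}(\tau_{x^+}^{\Vh} < \tau_0^{\Vh})$. I split the numerator by the range of $z'$: for $z' \in [x/2, x)$ the required jump of size $\geq y$ lies in (or adjacent to) the small-deviation regime and directly produces the target $(1 + y^{2/3} x^{-1/3}) e^{-c\, y^{2/3} x^{-1/3}}$ (after replacing $(z')^{-1/3}$ by a constant multiple of $x^{-1/3}$); for $z' \in (0, x/2]$ the jump must exceed $x/2 + y$, and the large-deviation tail $e^{-c(x/2+y)^{1/3}}$ is, for $x \geq N$ sufficiently large, uniformly dominated by the target in both the regime $y \leq x$ and its counterpart $y \geq x$.

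The main obstacle is converting the unconditional numerator bound into the stated conditional bound, since $P^V_{z,s}(\tau_{x^+}^{\Vh} < \tau_0^{\Vh})$ can vanish as $z$ approaches $0$. My plan is to apply the strong Markov property at the first entry time $\sigma$ of $\Vh$ into $[x/2, \infty)$: on $\{\sigma < \tau_0^{\Vh}\}$ the walk restarts at some $\Vh_\sigma \geq x/2$, and using Lemmas \ref{lem:ExpectationUkVkStoppedAtTausS} and \ref{lem:VarianceUkVkStoppedAtTaus} (mean $\sim z'$, variance $\sim z'$) together with a Paley--Zygmund anti-concentration at scale $\sqrt{x}$, the probability $P^V_{\Vh_\sigma, s}(\Vh_T \geq x)$ should be bounded below by a positive universal constant. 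This would allow the $z$-dependent factor $P^V_{z,s}(\sigma < \tau_0^{\Vh})$ to cancel between numerator and denominator, reducing the conditional bound to the uniform numerator estimates established above. Establishing this anti-concentration lower bound uniformly for $\Vh_\sigma \in [x/2, x)$ is the technical heart of the argument and the step I expect to require the most care.
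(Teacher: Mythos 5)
There is a genuine gap in your argument: the step labeled ``strong Markov at $T-1$'' (and likewise at $T''-1$ for part (ii)) is not valid, and the identity it is supposed to produce is false. The time $T-1$ is not a stopping time, and more to the point, conditional on $\{\Vh_{T-1} = z'\}$ the next step of $\Vh$ is \emph{not} distributed according to the unconditional one-step transition kernel from $z'$: the conditioning event already constrains the next step to be the exit step, which biases it. If one decomposes correctly using the Markov property at each deterministic time $k-1$, one obtains
\[
P_{z,s}^V(\Vh_T \geq x+y) = \sum_{z'} G_z(z') \, P^V_{z',s}(V_{\tau_s^R} \geq x+y),
\]
where $G_z(z')$ is the Green's function (expected occupation time of $z'$ before $T$), not the law of $\Vh_{T-1}$. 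Since the total mass $\sum_{z'} G_z(z') = E_{z,s}^V[T]$ can be large, the bound you would obtain is off by this factor. One can also see the problem with a toy two-state example: the ``last-visit law $\mu_z$'' average gives a number that differs from the actual exit-overshoot probability, and in fact underestimates it, so your planned chain of inequalities would establish a bound that is false.

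The paper circumvents this by conditioning not on $\Vh_{T-1}$ but on the first time $k_0$ at which the \emph{underlying} process $(V_k)$ (rather than $(\Vh_k)$) reaches level $x$, together with $(V_{k_0-1}, R_{k_0-1})=(w,r)$. Given $k_0 = k$, the time $k-1$ is deterministic, so the Markov property of $(V_k,R_k)$ applies cleanly, and the conditional overshoot law becomes the law of $V_{\tau_s^R}$ under $P^V_{w,r}$ conditioned on $\{V_1 \geq x,\ V_{\tau_s^R}\geq x\}$. Working throughout with the \emph{conditional} probability $P(\cdot\,|\,\tau_{x^+}^{\Vh}<\tau_0^{\Vh})$ is what keeps the normalization correct; your plan of bounding an unconditional numerator and then separately lower-bounding a denominator does not reassemble into the conditional statement. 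Two further ingredients you omit: the paper needs the one-step overshoot estimate of Lemma \ref{lem:NotToBigJumpOnExitingIntervalOneStepVk} (imported from \cite{Kosygina2011}) to control $P_{w,r}^V(V_1>x+y/2\,|\,V_1\geq x)$ when the crossing jump of $(V_k)$ itself is large — the concentration estimates of Lemma \ref{lem:ConcentrationEstimateUkVk} alone do not control this conditional quantity; and the constant lower bound (Claim~2 in the paper) is proved by a short argument using the dominating process $(V_k^-)$ and the uniform exponential tail of $\tau_s^R$, which is quite different from, and more elementary than, the Paley--Zygmund anti-concentration you sketch.
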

Before proceeding to the main proof we isolate an important piece as its own
lemma. For the statement of this lemma recall that $P_{v_0}^{V,\omega}$ is
the probability measure for the backward branching process $(V_k)_{k \geq 0}$
defined by \eqref{eq:DefBackwardBP} in the particular environment $\omega \in
\Omega$, started from $V_0 = v_0$.

\begin{lemma}
\label{lem:NotToBigJumpOnExitingIntervalOneStepVk} There exist constants $C,c
> 0$ such that for any environment $\omega$ satisfying $\omega(k,i) = 1/2$
for all $k \in \Z$ and $i > M$,
\begin{align}
\label{eq:NotToBigJumpOnExitingIntervalOneStepVk}
P^{V,\omega}_z(V_1 > x + y|V_1 \geq x) \leq C(e^{-cy^2/x} + e^{-cy})
\end{align}
for each $y \geq 6M$, $x \geq 2M +1 $, and $0 \leq z < x$.
\end{lemma}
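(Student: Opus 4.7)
The plan is to reduce to a fair-coin random walk computation and then apply Hoeffding-type estimates. For $i \in \N$ set $Y_i := \xi_i^{-1}$; by the hypothesis on $\omega$, the $Y_i$ for $i \leq M$ are independent Bernoullis with biases $\omega(-1, i)$, and $Y_i \sim \mathrm{Ber}(1/2)$ i.i.d.\ for $i > M$. Let $S_n := \sum_{i=1}^n Y_i$. Since $S_n$ is nondecreasing, the definition \eqref{eq:DefBackwardBP} of $V_1$ with $V_0 = z$ yields the deterministic identity $\{V_1 \geq v\} = \{S_{v + z} \leq z\}$ for each integer $v \geq 1$. Since $x \geq 2M + 1$, the increment $\Delta S := S_{x + y + z + 1} - S_{x + z}$ is a sum of $y + 1$ fair tosses independent of $S_{x+z}$, so $\Delta S \sim \mathrm{Bin}(y + 1, 1/2)$. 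Writing $D := z - S_{x + z}$, one obtains
\begin{align*}
P^{V,\omega}_z(V_1 > x + y \mid V_1 \geq x)
& = \frac{P(S_{x + y + z + 1} \leq z)}{P(S_{x + z} \leq z)} \\
& = E\bigl[\, P\bigl(\mathrm{Bin}(y + 1, 1/2) \leq D\bigr) \,\big|\, D \geq 0 \,\bigr].
\end{align*}

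The crux is then to control the conditional law of $D$ on $\{D \geq 0\}$. Decomposing $S_{x+z} = W + B$ with $W := \sum_{i \leq M} Y_i \in [0, M]$ independent of $B \sim \mathrm{Bin}(x + z - M, 1/2)$, a direct computation of consecutive probability ratios shows that, for $M \leq j \leq x/2$,
\begin{align*}
\frac{P(D = j + 1)}{P(D = j)} \leq 1 - \frac{c_0 j}{x},
\end{align*}
using $z \leq x - 1$ and absorbing the bounded shift $W$ into the constant $c_0$. Telescoping this bound from $j = M$ upward (and controlling the initial block $0 \leq j < M$ by a constant depending only on $M$) gives
\begin{align*}
P(D = j \mid D \geq 0) \leq C_0 \exp\bigl(-c_1 j^2 / x\bigr) \qquad \text{for } j \leq c_2 x,
\end{align*}
together with a super-exponential tail $\leq C_0\, 2^{-j}$ for $j \geq c_2 x$ from an analogous ratio argument.

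Finally, Hoeffding's inequality gives $P(\mathrm{Bin}(y + 1, 1/2) \leq k) \leq \exp\bigl(-(y + 1 - 2k)^2/(2(y + 1))\bigr)$ for $0 \leq k \leq (y+1)/2$, and is trivially $\leq 1$ otherwise. I would then split the conditional expectation over the three ranges $D \in [0, y/4]$, $D \in (y/4, (y+1)/2]$, and $D > (y+1)/2$: the first range contributes $\leq C e^{-c y}$ from the Hoeffding bound alone, while the latter two are controlled by $P(D > y/4 \mid D \geq 0) \leq C \exp(-c y^2 / x)$ from the previous step. Combining yields the desired bound $C(e^{-c y^2 / x} + e^{-c y})$; polynomial prefactors in $y$ can be absorbed into the exponentials by slightly reducing $c$. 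The main obstacle is the precise bookkeeping around the first $M$ biased tosses in the telescoping argument, which is where the hypothesis $y \geq 6M$ is calibrated to ensure the bounded-$j$ correction does not spoil the asymptotics.
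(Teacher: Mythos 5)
The paper does not actually prove this lemma: its ``proof'' is a one-line citation to Lemma~5.1 of Kosygina and Mountford (2011), asserting that the required estimate is implicit there. Your argument is therefore a self-contained reconstruction, and the overall structure is sound. The reduction is correct: $\{V_1 \geq v\} = \{S_{z+v} \leq z\}$ holds for every integer $v \geq 0$; the increment $\Delta S := S_{x+y+z+1} - S_{x+z}$ is $\mathrm{Bin}(y+1,1/2)$ and independent of $S_{x+z}$ precisely because $x + z \geq 2M+1 > M$; and the conditional-expectation representation follows. For the ratio estimate, conditioning on $W := \sum_{i\leq M} Y_i = w$ gives exactly
\begin{align*}
\frac{P(D=j+1\mid W=w)}{P(D=j\mid W=w)} = \frac{z-w-j}{x - M + w + j + 1},
\end{align*}
and for $j \geq M$ this is at most $1 - j/(3x)$ uniformly in $w \in [0,M]$. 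This uniformity is what lets the bound survive averaging over $W$: a ratio of nonnegative sums is bounded by the maximum of the termwise ratios. That is worth stating explicitly rather than ``absorbing the bounded shift $W$ into $c_0$,'' since the mixture ratio is not controlled by the individual ratios in general.

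The one genuine gap is the closing remark that ``polynomial prefactors in $y$ can be absorbed into the exponentials by slightly reducing $c$.'' The prefactor arising when you sum the pointwise bound $P(D=j\mid D\geq 0)\leq C_0 e^{-c_1 j^2/x}$ over $j > y/4$ is of order $x/y$ (or $x/y^2$ if you also use $P(D\geq 0)\geq (t+1)P(D=t)$); this is not a polynomial in $y$ alone, and for fixed $y$ it diverges as $x\to\infty$, so reducing $c$ cannot absorb it. What actually closes the argument is a case split: when $y^2/x \leq A$ for a fixed threshold $A$, the target $C e^{-cy^2/x}$ is at least $Ce^{-cA}$, so the inequality is trivially satisfied once $C \geq e^{cA}$; when $y^2/x > A$ the factor $x/y^2$ is at most $1/A$ and is absorbed into $C$. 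Alternatively, you can avoid the prefactor entirely by writing $P(D\geq t)/P(D\geq 0)$ as the weighted average $\bigl(\sum_j P(D=j)\,b_j\bigr)\big/\bigl(\sum_j P(D=j)\bigr)$ where $b_j := P(D=j+t)/P(D=j)$; each $b_j$ telescopes into a product of $t$ single-step ratios and is therefore $\leq e^{-ct^2/x}$ directly, for every $j$ in the support and all $t \geq 2M$, so the weighted average is $\leq \max_j b_j \leq e^{-ct^2/x}$ with no tail sum and no prefactor. With either fix your proof is complete.
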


\begin{proof}
This is implicit in the proof of Lemma 5.1 in \cite{Kosygina2011}. That lemma
is stated for the process $(V_k)$ under the averaged measure on environments
in an (IID) and (BD) setting, rather than for a fixed environment $\omega$.
But the proof of the equivalent statement to
\eqref{eq:NotToBigJumpOnExitingIntervalOneStepVk} in this setting (given
within the proof of Lemma 5.1 in \cite{Kosygina2011}) uses only the fact that
all cookies stacks are of bounded height $M$.
\end{proof}

\begin{proof}[Proof of Lemma \ref{lem:ModifiedLemma51FromLimitLawsERW}]
We will prove (i) under the assumption that $y \geq 12M$ and $x \geq N \equiv
2M + 1$.  By increasing the constant $c_8$ if necessary, the desired
inequalities in (i) will then hold for all $y \geq 0$. A similar proof gives
(ii) with some different values of the constants.

Under the measure $P_{z,s}^V$ we have $\Vh_0 = V_0 = z < x$, so
$\tau_{x^+}^{\Vh} > 0$. Define a random variable $k_0$ as follows: Given that
$\tau_{x^+}^{\Vh} = j + 1$, for some $j \in \N_0$, let $k_0 = \inf\{k >
\tau_j : V_{k} \geq x \}$. In other words, $(j + 1)$ is the first time $\ell$
that the process $(\Vh_{\ell})$ reaches level $x$ or above, and $k_0$ is the
first time $k > \tau_j$ that the process $(V_k)$ reaches level $x$ or above.
Since the process $(V_k,R_k)$ is Markovian under $P_{z,s}^V$ it follows that,
for each $y \in \N$,
\begin{align*}
P&_{z,s}^V(\Vh_{\tau_{x^+}^{\Vh}} > x + y| \tau_{x^+}^{\Vh} < \tau_0^{\Vh}) \\
&\leq \sup_{k \in \N, 0 \leq w < x, r \in \SM} P_{z,s}^V (\Vh_{\tau_{x^+}^{\Vh}} > x + y| \tau_{x^+}^{\Vh} < \tau_0^{\Vh}, k_0 = k, R_{k-1} = r, V_{k-1} = w) \\
&= \sup_{0 \leq w < x, r \in \SM} P_{w,r}^V(V_{\tau_s^R} > x + y| V_1 \geq x, V_{\tau_s^R} \geq x) \\
& = \sup_{0 \leq w < x, r \in \SM} \frac{P_{w,r}^V(V_{\tau_s^R} > x + y | V_1 \geq x)}{P_{w,r}^V(V_{\tau_s^R} \geq x|V_1 \geq x)}.
\end{align*}
Thus, it will suffice to show the following two claims to establish the lemma. \\

\noindent \emph{Claim 1:} There exist $C_1, C_2 > 0$ such that for all $y
\geq 12M$, $x \geq 2M + 1$, $0 \leq w < x$, and $r \in \SM$
\begin{align*}
P_{w,r}^V(V_{\tau_s^R} > x + y | V_1 \geq x) \leq
\begin{cases}
C_1(1 + y^{2/3}x^{-1/3}) e^{-C_2 y^{2/3} x^{-1/3}} ~,~ \mbox{ if } y \leq x \\
C_1(1 + y^{1/3}) e^{-C_2 y^{1/3}} ~,~ \mbox{ if } y \geq x.
\end{cases}
\end{align*}

\noindent \emph{Claim 2:} There exists $C_3 > 0$ such that for each $x \geq
2M + 1$, $0 \leq w < x$, and $r \in \SM$
\begin{align*}
P_{w,r}^V(V_{\tau_s^R} \geq x|V_1 \geq x) \geq C_3.
\end{align*}

\noindent \emph{Proof of Claim 1.} Fix any $x \geq 2M + 1$, $0 \leq w < x$,
and $r \in \SM$. For each $y \geq 12M$ we have
\begin{align}
\label{eq:DecompositionForClaim1}
& P_{w,r}^V(V_{\tau_s^R} > x + y | V_1 \geq x)
= \frac{P_{w,r}^V(V_{\tau_s^R} > x + y, V_1 \geq x)}{P_{w,r}^V(V_1 \geq x)} \nonumber \\
&~~ = \frac{P_{w,r}^V(V_{\tau_s^R} > x + y, x \leq V_1 \leq x + y/2)}{P_{w,r}^V(V_1 \geq x)} + \frac{P_{w,r}^V(V_{\tau_s^R} > x + y, V_1 > x + y/2)}{P_{w,r}^V(V_1 \geq x)} \nonumber \\
&~~ \leq \frac{P_{w,r}^V(V_{\tau_s^R} > x + y, x \leq V_1 \leq x + y/2)}{P_{w,r}^V(x \leq V_1 \leq x + y/2)} + \frac{P_{w,r}^V(V_1 > x + y/2)}{P_{w,r}^V(V_1 \geq x)} \nonumber \\
&~~= P_{w,r}^V(V_{\tau_s^R} > x + y| x \leq V_1 \leq x + y/2) ~+~P_{w,r}^V(V_1 > x + y/2|V_1 \geq x).
\end{align}
By Lemma \ref{lem:NotToBigJumpOnExitingIntervalOneStepVk},
\begin{align}
\label{eq:BoundProbOfLargeCrossingJump}
P_{w,r}^V(V_1 > x + y/2|V_1 \geq x) \leq C(e^{-c\floor{y/2}^2/x} + e^{-c\floor{y/2}})
\end{align}
for some $C,c > 0$. So, we need only bound $P_{w,r}^V(V_{\tau_s^R} > x + y| x
\leq V_1 \leq x + y/2)$. Define $w_0 = \floor{x + y/2}$. By monotonicity of
$(V_k)$ with respect to its initial condition and Lemma
\ref{lem:ConcentrationEstimateUkVk},
\begin{align*}
P_{w,r}^V&(V_{\tau_s^R} > x + y| x \leq V_1 \leq x + y/2)
\leq \sup_{r' \in \SM} P^V_{w_0,r'}(V_{\tau_s^R} > x + y)\\
& \leq \sup_{r' \in \SM} P^V_{w_0,r'}(|V_{\tau_s^R} - w_0| > y/2)
\leq c_7(1 + (y/2)^{2/3}w_0^{-1/3})e^{-c_6 (y/2)^{2/3} w_0^{-1/3}}.
\end{align*}
Now, for $x \leq y$ we have $\frac{1}{8}y < (y/2)^2 w_0^{-1} < y$, and for $x
\geq y$ we have $\frac{1}{8}y^2/x < (y/2)^2 w_0^{-1} < y^2/x$. So, it follows
that
\begin{align}
\label{eq:BoundProbIfSmallCrossingJump}
P_{w,r}^V&(V_{\tau_s^R} > x + y| x \leq V_1 \leq x + y/2) \leq
\begin{cases}
c_7(1 + y^{2/3}x^{-1/3}) e^{-\frac{1}{2} c_6 y^{2/3} x^{-1/3}} ~,~ \mbox{ if } y \leq x \\
c_7(1 + y^{1/3}) e^{-\frac{1}{2} c_6 y^{1/3}} ~,~ \mbox{ if } y \geq x.
\end{cases}
\end{align}
Combining \eqref{eq:DecompositionForClaim1}, \eqref{eq:BoundProbOfLargeCrossingJump}, and \eqref{eq:BoundProbIfSmallCrossingJump} proves the claim. \\

\noindent \emph{Proof of Claim 2.} Let $(\widetilde{\HM}_i)_{i \in \N}$ be
i.i.d. Geo(1/2) random variables, and let $q =\break \inf_{y \geq 2M + 1}
\P\left(\sum_{i = 1}^{y - M} \widetilde{\HM}_i \geq y\right)$. By the central
limit theorem, $\lim_{y \to \infty} \P\left(\sum_{i = 1}^{y - M}
\widetilde{\HM}_i \geq y\right) = 1/2$, so $q > 0$. Further, since
$(V_k^-)_{k \geq 0}$ is a Markov chain independent of $(R_k)_{k \geq 0}$ it
follows from \eqref{eq:MonotonicityUkVkProcesses} and
\eqref{eq:VkPlusMinusGeometricRepresentation} that, for any $(r_0, \ldots,
r_m) \in \SM^{m+1}$ and $n \geq x \geq 2M + 1$,
\begin{align*}
&P_{n,r_0}^V(V_m \geq x|R_0 = r_0, \ldots, R_m = r_m) \geq P_{n,r_0}^V(V_m^- \geq x|R_0 = r_0, \ldots, R_m = r_m) \\
& = P_{n,r_0}^V(V_m^- \geq x) \geq \prod_{j=1}^{m} P_{n,r_0}^V(V_j^- \geq V_{j-1}^-|V_{j-1}^- \geq \ldots \geq V_1^- \geq V_0^-) \geq q^m.
\end{align*}
To complete the proof we note that by \eqref{eq:tausStausRExponentialTail}
there exist some $m_0 \in \N$ and $p > 0$ such that $\PB_{r}(\tau_s^R \leq
m_0) \geq p$, for each $r \in \SM$. Therefore, for any $x \geq 2M + 1$, $0
\leq w < x$, and $r' \in \SM$,
\begin{align*}
P_{w,r'}^V(V_{\tau_s^R} \geq x|V_1 \geq x)
\geq \inf_{n \geq x, r \in \SM} P_{n,r}^V(V_{\tau_s^R} \geq x)
\geq p \cdot q^{m_0} \equiv C_3 > 0.\tag*{\qedhere}
\end{align*}
\end{proof}

\section{Proof of Theorem \texorpdfstring{\ref{thm:TransienceRecurrence}}{1.11}}
\label{sec:ProofTheoremTransienceRecurrence}

The proof of Theorem \ref{thm:TransienceRecurrence} relies on the following
lemma concerning transience vs. recurrence of integer-valued Markov chains,
which will be proved in Appendix
\ref{app:ProofLemmaTransienceRecurrenceConditionsForMC}.

\begin{lemma}
\label{lem:TransienceRecurrenceConditionsForMC} Let $(Z_k)_{k \geq 0}$ be an
irreducible, time-homogeneous Markov chain on state space $\N_0$. Let $\P_x$
be the probability measure for $(Z_k)$ started from $Z_0 = x$, and let $\E_x$
be the corresponding expectation operator. Assume that there exist constants
$C_1, C_2 > 0$ and $x_0 \in \N$ such that:
\begin{align}
\label{eq:MCConcentrationCondition}
& \P_x( |Z_1 - x| \geq \epsilon x) \leq C_1(1 + \epsilon^{2/3}x^{1/3})e^{-C_2 \epsilon^{2/3} x^{1/3}} ~,~ \mbox{ for all }~ 0 < \epsilon \leq 1 \mbox{ and } x \geq x_0. \nonumber \\
& \P_x( |Z_1 - x| \geq \epsilon x) \leq C_1(1 + \epsilon^{1/3}x^{1/3})e^{-C_2 \epsilon^{1/3} x^{1/3}} ~,~ \mbox{ for all }~ \epsilon \geq 1 \mbox{ and } x \geq x_0.
\end{align}
Define $\rho(x)$, $\nu(x)$, and $\theta(x)$ by
\begin{align}
\label{eq:DefRhoNuTheta}
\rho(x) = \E_x(Z_1- x) ~~,~~ \nu(x) = \frac{\E_x[(Z_1 - x)^2]}{x} ~~,~~ \theta(x) = \dfrac{2 \rho(x)}{\nu(x)}
\end{align}
and assume also that $\liminf_{x \to \infty} \nu(x) > 0$. Then the following
hold.
\begin{itemize}
\item[(i)] If there exists some $a \in (1,\infty)$ such that $\theta(x)
    \leq 1 + \dfrac{1}{a \ln(x)}$ for all sufficiently large $x$, then the
    Markov chain $(Z_k)$ is recurrent.
\item[(ii)]  If there exists some $a \in (1,\infty)$ such that $\theta(x)
    \geq 1 + \dfrac{2a}{\ln(x)}$ for all sufficiently large $x$, then the
    Markov chain $(Z_k)$ is transient.
\end{itemize}
\end{lemma}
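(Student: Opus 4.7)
My plan is to apply the classical Foster--Lyapunov criteria using test functions built from powers of logarithms, in the spirit of the borderline Lamperti approach. For (i), I seek a function $V : \N_0 \to [0,\infty)$ with $V(x) \to \infty$ and $\E_x[V(Z_1)] \leq V(x)$ for all sufficiently large $x$; together with irreducibility this implies recurrence. For (ii), I seek a bounded function $W : \N_0 \to [0,\infty)$ with $W(x) \to 0$ as $x \to \infty$ and $\E_x[W(Z_1)] \leq W(x)$ for all sufficiently large $x$; then optional stopping applied to the non-negative bounded supermartingale $W(Z_n)$ shows that the probability of ever entering any fixed finite set tends to zero as the starting point grows, which is transience.

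For (i), take $V(x) = (\ln(x \vee e))^p$ with a fixed $p \in (0, 1 - 1/a)$; this interval is non-empty since $a > 1$. Writing $u = \ln Z_1 - \ln x$ and Taylor expanding first $\ln(Z_1/x)$ in $(Z_1 - x)/x$ and then $(\ln x + u)^p$ in $u/\ln x$ on the event $\{|Z_1 - x| \leq x/2\}$, using $\E_x[Z_1 - x] = \rho(x)$ and $\E_x[(Z_1 - x)^2] = x\nu(x)$, leads to the leading-order expression
\[
\E_x[V(Z_1) - V(x)] \approx \frac{p\,(\ln x)^{p-2}\,\nu(x)}{2x}\,\bigl[(\ln x)(\theta(x) - 1) + (p - 1)\bigr].
\]
The hypothesis $\theta(x) - 1 \leq 1/(a \ln x)$ makes the bracket eventually at most $p - 1 + 1/a < 0$, and combined with $\liminf_{x \to \infty} \nu(x) > 0$ this produces a strictly negative drift of order $(\ln x)^{p-2}/x$. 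For (ii), the mirror construction $W(x) = (\ln(x \vee e))^{-q}$ with a fixed $q \in (0, 2a - 1)$ yields
\[
\E_x[W(Z_1) - W(x)] \approx -\frac{q\,(\ln x)^{-q-2}\,\nu(x)}{2x}\,\bigl[(\ln x)(\theta(x) - 1) - (q + 1)\bigr],
\]
and the hypothesis $\theta(x) - 1 \geq 2a/\ln x$ makes the bracket eventually at least $2a - (q + 1) > 0$, again producing a strictly negative leading-order drift.

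The main obstacle will be controlling the various error terms, since the main drifts themselves decay only like $(\ln x)^{\mathrm{const}}/x$ and must dominate the Taylor remainders. The cubic remainder from the two Taylor expansions contributes at most $O((\ln x)^{p-1}\,\E_x[|Z_1 - x|^3]/x^3)$, and integrating the concentration bound \eqref{eq:MCConcentrationCondition} in polar form shows $\E_x[|Z_1 - x|^k] = O(x^{k/2})$ for each fixed $k$, so this remainder is $O((\ln x)^{p-1}/x^{3/2})$, smaller than the main term by a factor of order $x^{-1/2} \ln x$. The contribution from the tail event $\{|Z_1 - x| > x/2\}$ is bounded by $\E_x[V(Z_1)\,\indicator\{|Z_1 - x| > x/2\}] + V(x)\cdot\P_x(|Z_1 - x| > x/2)$, both of which are superpolynomially small in $x$ by the logarithmic growth of $V$ and the tail bound $\leq e^{-C x^{1/3}}$ from \eqref{eq:MCConcentrationCondition}. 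The same estimates apply to $W$. Using $\ln(x \vee e)$ handles the values at small $x$ (making $V,W$ non-negative on all of $\N_0$), and these values only enlarge the finite exceptional set in the Foster criterion. Once these estimates are assembled, Foster's recurrence criterion yields (i), and the supermartingale plus optional-stopping argument yields (ii).
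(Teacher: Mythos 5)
Your proposal is correct and takes essentially the same route as the paper: the paper applies the Lyapunov criterion of Kozma, Orenshtein, and Shinkar (stated as Lemma~\ref{lem:GeneralTransienceRecurrenceConditionsMC}) with test functions $F(x)=\ln\ln x$ for recurrence and $F(x)=1/\ln x$ for transience, Taylor expands to third order, and controls the remainder via the concentration bound exactly as you describe. Your choices $(\ln x)^{p}$ and $(\ln x)^{-q}$ lie in the same family (indeed $(\ln x)^{-1}$ is the paper's transience function, and $(\ln x)^{p}$ with $p\to 0$ degenerates to $\ln\ln x$), the drift computation and the estimate $\E_x|Z_1-x|^3=O(x^{3/2})$ match the paper's, and tuning $p<1-1/a$ (resp. $q<2a-1$) plays the role the paper's factor $1/a$ (resp. $2a$) of slack plays there.
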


\begin{corollary}
\label{cor:TransienceRecurrenceConditionsForMC} Let $(Z_k)_{k \geq 0}$ be a
time-homogeneous Markov chain on state space $\N_0$ and define $\rho(x)$,
$\nu(x)$, and $\theta(x)$ as in Lemma
\ref{lem:TransienceRecurrenceConditionsForMC}. Assume that the concentration
condition \eqref{eq:MCConcentrationCondition} is satisfied and $\liminf_{x
\to \infty} \nu(x) > 0$.  Assume also that
\begin{align}
\label{eq:ZkChain0MaybeAbsorbing}
\P_x(Z_1 = y) > 0, \mbox{ for all $x \geq 1$ and $y \geq 0$}.
\end{align}
Then the following hold.
\begin{itemize}
\item[(i)] If there exists some $a \in (1,\infty)$ such that $\theta(x)
    \leq 1 + \frac{1}{a \ln(x)}$ for all sufficiently large $x$, then
    $\P_z(Z_k > 0, \forall k > 0) = 0$, for each $z \geq 1$.
\item[(ii)] If there exists some $a \in (1,\infty)$ such that $\theta(x)
    \geq 1 + \frac{2a}{ \ln(x)}$ for all sufficiently large $x$, then
    $\P_z(Z_k > 0, \forall k > 0) > 0$, for each $z \geq 1$.
\end{itemize}
\end{corollary}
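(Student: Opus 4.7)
The plan is to reduce Corollary \ref{cor:TransienceRecurrenceConditionsForMC} to \Lemref{lem:TransienceRecurrenceConditionsForMC} by modifying $(Z_k)$ only at the state $0$, where irreducibility might fail because nothing has been assumed about the transitions out of $0$. Define a time-homogeneous Markov chain $(\Zt_k)$ on $\N_0$ whose transitions from every $x \geq 1$ coincide with those of $(Z_k)$, and for which $\P_0(\Zt_1 = y) := \P_1(Z_1 = y)$ for each $y \in \N_0$. Then \eqref{eq:ZkChain0MaybeAbsorbing} gives $\P_x(\Zt_1 = y) > 0$ for all $x, y \in \N_0$, so $\Zt$ is irreducible. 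Since the transitions of $\Zt$ and $Z$ coincide on $\{x \geq 1\} \supset \{x \geq x_0\}$, both the concentration bound \eqref{eq:MCConcentrationCondition} and the functions $\rho(x), \nu(x), \theta(x)$ remain unchanged on the range where the hypotheses of \Lemref{lem:TransienceRecurrenceConditionsForMC} are imposed, and those hypotheses are therefore satisfied for $\Zt$.

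A straightforward coupling of $(Z_k)$ and $(\Zt_k)$ that keeps the two chains equal until their first visit to $0$ yields
\begin{align*}
\P_z(Z_k > 0, \forall\, k > 0) = \P_z(\Zt_k > 0, \forall\, k > 0), \qquad z \geq 1,
\end{align*}
so it suffices to prove the two claims with $Z$ replaced by $\Zt$.

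For (i), \Lemref{lem:TransienceRecurrenceConditionsForMC}(i) gives that $\Zt$ is recurrent; by irreducibility, $\P_z(\Zt \text{ hits } 0) = 1$ for every $z \geq 1$, which proves the equality to zero. For (ii), \Lemref{lem:TransienceRecurrenceConditionsForMC}(ii) gives that $\Zt$ is transient, so $\P_0(\Zt \text{ returns to } 0) < 1$; decomposing by the value of $\Zt_1$ and applying the Markov property forces the existence of some $y \geq 1$ with $\P_y(\Zt \text{ never visits } 0) > 0$. For an arbitrary $z \geq 1$, the Markov property combined with $\P_z(\Zt_1 = y) = \P_z(Z_1 = y) > 0$, which holds by \eqref{eq:ZkChain0MaybeAbsorbing}, then yields
\begin{align*}
\P_z(\Zt_k > 0, \forall\, k > 0) \geq \P_z(\Zt_1 = y) \cdot \P_y(\Zt_k > 0, \forall\, k > 0) > 0.
\end{align*}

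The proof is essentially mechanical once \Lemref{lem:TransienceRecurrenceConditionsForMC} is in hand; the only mildly delicate point is case (ii), where transience plus irreducibility a priori only supply positive survival probability from \emph{some} state of $\N_0$, and hypothesis \eqref{eq:ZkChain0MaybeAbsorbing} is precisely what promotes this to positive survival probability from \emph{every} $z \geq 1$.
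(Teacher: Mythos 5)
Your proof is correct and follows essentially the same route as the paper: modify the chain only at state $0$ so it becomes irreducible, apply Lemma \ref{lem:TransienceRecurrenceConditionsForMC} to the modified chain $(\Zt_k)$, and transfer the conclusion back via agreement of the two chains on all nonzero states. The only differences are cosmetic: the paper sets $\widetilde{\P}_0(\Zt_1 = 1) = 1$ rather than copying the transitions of state $1$, and you spell out the case-(ii) step (irreducibility plus transience propagate positive escape probability from one state to every $z \geq 1$) which the paper states more tersely.
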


\begin{remark}
Note that we do not assume in the corollary that the Markov chain $(Z_k)$ is
irreducible. State $0$ may be absorbing, and in fact when we apply the
corollary in the proof of Theorem \ref{thm:TransienceRecurrence} below it
will be for the Markov chain $(\Uh_k)$, where state $0$ is absorbing.
\end{remark}

\begin{proof}[Proof of Corollary \ref{cor:TransienceRecurrenceConditionsForMC}]
The Markov chain $(Z_k)_{k \geq 0}$ is not necessarily irreducible since
state $0$ may be absorbing, but the modified Markov chain $(\Zt_k)_{k \geq
0}$ with transition probabilities
\begin{align*}
\mbox{ $\widetilde{\P}_0(\Zt_1 = 1) = 1$ ~~and~~ $\widetilde{\P}_x(\Zt_1 = y) = \P_x(Z_1 = y)$, $x \geq 1$ and $y \geq 0$}
\end{align*}
is certainly irreducible. Due to \eqref{eq:ZkChain0MaybeAbsorbing}, this
modified chain $(\Zt_k)$, like the original chain $(Z_k)$, has positive
transition probabilities from each state $x \geq 1$ to each state $y \geq 0$.
Thus, if $(\Zt_k)$ is transient it has positive probability never to hit
state $0$ started from any state $z \geq 1$. Applying Lemma
\ref{lem:TransienceRecurrenceConditionsForMC} shows that (i) and (ii) of the
corollary hold for the Markov chain $(\Zt_k)$.  This implies (i) and (ii)
also hold for $(Z_k)$, since $(Z_k)$ and $(\Zt_k)$ have the same transition
probabilities from all non-zero states.
\end{proof}

Before proceeding to the proof of Theorem \ref{thm:TransienceRecurrence} we
isolate one important observation in the following remark. This observation
will also be necessary for the proof of Theorem \ref{thm:Ballisticity} in
Section \ref{subsec:ProofTheoremBallisticity}.

\begin{remark}[Invariance of Assumption (A) under interchange of spatial directions] ~\\
Assume that Assumption (A) is satisfied for the probability measure $\PB$,
and let $(S_k)$ be the associated stack sequence and $R_k = S_{-k}$, $k \in
\Z$. Construct the \emph{spatially reversed probability measure}
$\widetilde{\PB}$ on environments $\widetilde{\omega}$ by the following
coupling: $\widetilde{\omega}(k,i) = \St_k(i)$, for $k \in \Z$ and $i \in
\N$, where the process $(\St_k)$ is given by $\St_k(i) = 1 - S_{-k}(i) = 1 -
R_k(i)$. Thus, the probability of jumping right (left) on the $i$-th visit to
site $k$ for the random walk $(\Xt_n)$ in the spatially reversed environment
$\widetilde{\omega}$ is the same as the probability of jumping left (right)
on the $i$-th visit to site $-k$ for the original random walk $(X_n)$. With
this construction there is also a natural coupling between the random walks
$(\Xt_n)$ and $(X_n)$, when both are started from site $0$, such that
$(\Xt_n)$ jumps right whenever $(X_n)$ jumps left, and vice versa. This
implies $\Xt_n = - X_n$, for all $n$, and so we may consider $(\Xt_n)$ as the
spatially reversed version of the random walk $(X_n)$.

Moreover, since the original model with probability measure $\PB$ is assumed
to satisfy Assumption (A), we know that $(R_k)$ is a uniformly ergodic Markov
chain, which implies $(\St_k)$ is also a uniformly ergodic Markov chain.
Similarly, since the original model is assumed to satisfy Assumption (A), we
know that $(S_k)$ is a uniformly ergodic Markov chain, which implies
$(\Rt_k)$ is a uniformly ergodic Markov chain (where $\Rt_k \equiv
\St_{-k}$). Thus, the spatially reversed probability measure
$\widetilde{\PB}$ also satisfies Assumption (A). Finally, note that
$\widetilde{\delta} = - \delta$, where $\widetilde{\delta}$ and $\delta$ are
the drift parameters for the probability measures $\widetilde{\PB}$ and $\PB$
on environments.
\label{rem:InvarianceAssumptionAWhenInterchangeSpatialDirections}
\end{remark}

\begin{proof}[Proof of Theorem \ref{thm:TransienceRecurrence}]
Since $\omega(k,i) = 1/2$ for each $i > M$ and $k \in \Z$, $\PB_{\pi}$ a.s.,
we have
\begin{align*}
P_{0,\pi}(\liminf_{n \to \infty} X_n = k) = P_{0,\pi}(\limsup_{n \to \infty} X_n = k) = 0 ~,~ \mbox{ for each } k \in \Z.
\end{align*}
It follows from this and Theorem \ref{thm:ZeroOneLawSE} that one of the
following must hold:
\begin{align*}
\mbox{ (a) } P_{0,\pi}(X_n \rightarrow +\infty) = 1,~ \mbox{ (b) } P_{0,\pi}(X_n \rightarrow -\infty) = 1,~ \mbox{or (c) } P_{0,\pi}(X_n = 0, i.o.) = 1.
\end{align*}
Since the probability measure $\pi$ places positive probability on each
states $r \in \SM$ we must have the same trichotomy under the measure $P_0$.
That is, either
\begin{align*}
\mbox{ (a') } P_0(X_n \rightarrow +\infty) = 1,~ \mbox{ (b') } P_0(X_n \rightarrow -\infty) = 1,~ \mbox{or (c') } P_0(X_n = 0, i.o.) = 1
\end{align*}
where (a) holds if and only if (a') holds, (b) holds if and only if (b')
holds, and (c) holds if and only if (c') holds.  We will show below that (a')
holds if $\delta > 1$, and (a') does not hold if $\delta \leq 1$. It follows
from this by interchanging spatial directions of the model, as described in
Remark \ref{rem:InvarianceAssumptionAWhenInterchangeSpatialDirections}, that
(b') holds if $\delta < -1$, and (b') does not hold if $\delta \geq -1$.
Together these facts imply (c') must hold when $\delta \in [-1,1]$. Thus, it
remains only to show the claim about (a') to establish the lemma.

To do this we will apply Corollary
\ref{cor:TransienceRecurrenceConditionsForMC} to the Markov chain $(\Uh_k)_{k
\geq 0}$ with transition probabilities given by
\eqref{eq:TransitionProbsUhk}. We consider this Markov chain under the family
of measures $P_{x,s}^U$, $x \in \N_0$, so that $\Uh_0 = U_0 = x$
deterministically. Thus, the measure $P_{x,s}^U$ for the Markov chain
$(\Uh_k)$ is the equivalent of the measure $\P_x$ for the Markov chain
$(Z_k)$ considered in the corollary.  Condition
\eqref{eq:ZkChain0MaybeAbsorbing} of the corollary is satisfied since the
cookie stacks $r \in \SM$ are all elliptic. Also, the concentration condition
\eqref{eq:MCConcentrationCondition} is satisfied due to Lemma
\ref{lem:ConcentrationEstimateUkVk}. Finally, by Lemmas
\ref{lem:ExpectationUkVkStoppedAtTausS} and
\ref{lem:VarianceUkVkStoppedAtTaus},
\begin{align*}
\rho(x) = \delta \cdot \mu_s + O(e^{-x^{1/4}}) ~~\mbox{ and }~~\nu(x) = 2 \mu_s + O(x^{-1/2}).
\end{align*}
Thus, $\liminf_{x \to \infty} \nu(x) > 0$, as required by the corollary, and
$\theta(x) = 2 \rho(x)/\nu(x) = \delta + O(x^{-1/2})$. Applying the corollary
with $z = 1$ we have
\begin{align*}
P_{1,s}^U(\Uh_k > 0, \forall k > 0) > 0~, \mbox{ if } \delta > 1 ~~~~\mbox{ and }~~~~
P_{1,s}^U(\Uh_k > 0, \forall k > 0) = 0~, \mbox{ if } \delta \leq 1.
\end{align*}
Since $0$ is an absorbing state for the process $(U_k)_{k \geq 0}$ it follows
also that
\begin{align*}
P_{1,s}^U(U_k > 0, \forall k > 0) > 0~, \mbox{ if } \delta > 1 ~~~~\mbox{ and }~~~~
P_{1,s}^U(U_k > 0, \forall k > 0) = 0~, \mbox{ if } \delta \leq 1.
\end{align*}
Hence, by Lemma \ref{lem:TransienceRecurrenceInTermsOfSurvivalUk},
\begin{align*}
P_0(X_n \rightarrow +\infty) = 1~, \mbox{ if } \delta > 1 ~~~~\mbox{ and }~~~~
P_0(X_n \rightarrow +\infty) = 0~, \mbox{ if } \delta \leq 1
\end{align*}
which establishes the claim about (a').
 \end{proof}

\section{Proof of Theorems \texorpdfstring{\ref{thm:Ballisticity}}{1.12} and \texorpdfstring{\ref{thm:LimitLaws}}{1.13}}
\label{sec:ProofTheoremsBallisticityAndLimitLaws}

The proofs of Theorems \ref{thm:Ballisticity} and \ref{thm:LimitLaws} are
based on an analysis of the backward branching process $(V_k)_{k \geq 0}$,
and follow the general strategy used in \cite{Kosygina2011}. However, there
is an additional complication due to our different probability measure $\PB$
on environments. If $\PB$ is (IID), as considered in \cite{Kosygina2011},
then the process $(V_k)_{k \geq 0}$ is Markovian. However, if $\PB$ is
Markovian, as we consider, then the process $(V_k)_{k \geq 0}$ is not. Thus,
we will instead analyze the modified process $(\Vh_k)_{k \geq 0}$, which is
Markovian, and then translate the results from the
$(\Vh_k)$ process back to the $(V_k)$ process. The outline for our proofs and the remainder of this section is described below. \\

\noindent \underline{\emph{Step 1}}: In Appendix
\ref{app:ProofOfPropsDecayAndAccumulatedSumTimeTau0Vh} we will establish the
following two propositions which are analogous to Theorems 2.1 and 2.2 of
\cite{Kosygina2011}. The general methodology is very similar to that in
\cite{Kosygina2011}, so we will provide only a general outline and reprove a
few key lemmas from which everything else follows just as in
\cite{Kosygina2011}.

\begin{proposition} If $\delta > 1$ then there exists some $c_{10} > 0$ such that \\ $P_{0,s}^{V}\left(\tau_0^{\Vh} > x\right) \sim c_{10} \cdot x^{-\delta}$.
\label{prop:DecayOfHittingTimeTau0Vh}
\end{proposition}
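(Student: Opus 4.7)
The plan is to adapt the proof of Theorem 2.1 of \cite{Kosygina2011}, replacing the backward branching process $(V_k)$ (which is Markovian in the IID setting) with the subsampled process $(\Vh_k)$, which is always Markovian under $P_{0,s}^V$ by \eqref{eq:TransitionProbsVhk}. Lemmas \ref{lem:ExpectationUkVkStoppedAtTausS}, \ref{lem:VarianceUkVkStoppedAtTaus}, and \ref{lem:ConcentrationEstimateUkVk} supply precisely the drift, variance, and concentration estimates needed to put $(\Vh_k)$ into Lamperti form: $E_{x,s}^V(\Vh_1) = x + (1-\delta)\mu_s + O(e^{-x^{1/4}})$, $\Var_{x,s}^V(\Vh_1) = 2\mu_s x + O(x^{1/2})$, and the overshoot of $(\Vh_k)$ at large levels is controlled by Lemma \ref{lem:ModifiedLemma51FromLimitLawsERW}.

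The first step is to establish a diffusion approximation: under the scaling $Y_n(t) = \Vh_{\lfloor nt\rfloor}/n$ with $\Vh_0 = \lfloor y_0 n\rfloor$ for a fixed $y_0 > 0$, I would show $Y_n \Rightarrow Y$, where $Y$ solves
\begin{align*}
dY_t = \sqrt{2\mu_s Y_t}\, dB_t + (1-\delta)\mu_s\, dt, \quad Y \text{ absorbed at } 0.
\end{align*}
Tightness and identification of the infinitesimal characteristics follow by standard martingale methods, mirroring Lemmas 3.1--3.3 of \cite{Kosygina2011}, from the estimates above; the overshoot bound in Lemma \ref{lem:ModifiedLemma51FromLimitLawsERW}(i) rules out macroscopic jumps, so the limit is a continuous diffusion. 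After the linear rescaling $X_t = (2/\mu_s) Y_t$, $X$ becomes a squared Bessel process of dimension $d = 2(1-\delta) < 0$, for which the classical hitting-time calculation gives $P(\tau_0^X > t \mid X_0 = x) \sim c\, x^{\delta}\, t^{-\delta}$ as $t \to \infty$, with exponent $1 - d/2 = \delta$.

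The second step transfers the diffusion tail back to the chain $(\Vh_k)$ starting at $\Vh_0 = 0$. Since we cannot rescale by the initial value, I would follow \cite{Kosygina2011} and decompose $\{\tau_0^{\Vh} > x\}$ using the strong Markov property at the first time the excursion of $(\Vh_k)$ from $0$ reaches a large level $L$. The probability, from a fixed starting value $z \geq 1$, of reaching level $L$ before returning to $0$ is a Lamperti-escape probability of order $L^{-\delta}$ (the escape function of the chain is asymptotically $z \mapsto z^{\delta}$, which one reads off from the scale function of the limit diffusion); once at height $\sim L$, the diffusion approximation yields a remaining-survival probability $\sim c\, L^{\delta}\, x^{-\delta}$. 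The two $L^{\delta}$-factors cancel, leaving the desired $c_{10}\, x^{-\delta}$ leading order.

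The principal obstacle is producing matching upper and lower bounds with the \emph{same} constant $c_{10}$. This requires a careful renewal argument over the successive excursions of $(\Vh_k)$ from $0$, together with a uniform-in-$L$ control on the error in the diffusion approximation and on the overshoot past $L$ (provided by Lemma \ref{lem:ModifiedLemma51FromLimitLawsERW}(i)); both are handled in \cite{Kosygina2011} using only the Markov property of the chain and the Lamperti-type estimates. Since $(\Vh_k)$ is Markov and satisfies analogous estimates, this renewal/coupling argument transfers with only cosmetic changes in notation, yielding the stated asymptotic.
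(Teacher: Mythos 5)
Your proposal takes essentially the same route as the paper, which also reduces to a diffusion approximation of $(\Vh_k)$ by a squared Bessel process of generalized dimension $2(1-\delta)$, verifies Lamperti-type drift, variance, concentration, and overshoot conditions using exactly the lemmas you cite, and then follows the renewal/escape-probability machinery of \cite{Kosygina2011}. The only structural difference is cosmetic: the paper packages the Kosygina--Mountford argument as a stand-alone Proposition about abstract Markov chains on $\N_0$ satisfying monotonicity plus those four estimates (which it then applies to $(\Vh_k)$ with $\alpha=\mu_s$, $\beta=1-\delta$), whereas you adapt the argument directly to $(\Vh_k)$; this also means you should note the monotonicity of $\Vh_1$ in its initial value (immediate from the definition), which the paper lists as an explicit hypothesis.
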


\begin{proposition} If $\delta > 1$ then there exists some $c_{11} > 0$ such that \\ $P_{0,s}^V\left( \sum_{k=0}^{\tau_0^{\Vh}} \Vh_k > x \right) \sim c_{11} \cdot x^{-\delta/2}$.
\label{prop:DecayOfAccumulatedSumTillTimeTau0Vh}
\end{proposition}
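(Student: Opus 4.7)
The plan is to follow the approach of Theorem 2.2 in \cite{Kosygina2011}, adapted to our Markov chain $(\Vh_k)_{k \geq 0}$ in place of the Markov chain $(V_k)_{k \geq 0}$ from the (IID) setting. The underlying heuristic is a scaling identity: starting from a large level $L$, the return time $\tau_0^{\Vh}$ of $(\Vh_k)$ to $0$ is of order $L$ while the accumulated sum $\sum_{k=0}^{\tau_0^{\Vh}} \Vh_k$ is of order $L^2$. Consequently the event $\{\sum_{k=0}^{\tau_0^{\Vh}} \Vh_k > x\}$ is essentially driven by $(\Vh_k)$ reaching a level of order $\sqrt{x}$, an event whose probability can be extracted from Proposition \ref{prop:DecayOfHittingTimeTau0Vh}.

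The first step is to establish a diffusion approximation for $(\Vh_k)$. By Lemmas \ref{lem:ExpectationUkVkStoppedAtTausS} and \ref{lem:VarianceUkVkStoppedAtTaus}, the one-step increment $\Vh_1 - \Vh_0$ under $P_{x,s}^V$ has mean $(1-\delta)\mu_s + o(1)$ and variance $2 x \mu_s + o(x)$. These are precisely the local characteristics of the squared-Bessel-type diffusion
\begin{align*}
dY_t = (1-\delta)\mu_s\, dt + \sqrt{2 Y_t \mu_s}\, dW_t,
\end{align*}
which for $\delta > 1$ has negative drift and is absorbed at $0$ in finite time. Combined with the maximal inequalities of Lemma \ref{lem:ConcentrationEstimateUkVk}, a standard functional limit theorem argument upgrades this to joint convergence of $L^{-1} \tau_0^{\Vh}$ and $L^{-2} \sum_{k=0}^{\tau_0^{\Vh}} \Vh_k$ (under $P_{L,s}^V$) to the corresponding functionals $\tau_0^Y$ and $\Xi \equiv \int_0^{\tau_0^Y} Y_s\, ds$ of the diffusion started at $Y_0 = 1$, yielding in particular a constant $\lambda_* \equiv \E[\Xi] \in (0, \infty)$.

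With this in hand, the second step is a strong Markov argument at the first time $(\Vh_k)$ exceeds level $L_x \equiv \floor{\sqrt{x/\lambda_*}}$. Three ingredients must be verified: (i) the sum accumulated before that hitting time is $o(x)$ with probability $1 - o(x^{-\delta/2})$; (ii) the overshoot $\Vh_{\tau_{L_x^+}^{\Vh}} - L_x$ is of order smaller than $L_x$ with high probability, which is provided by Lemma \ref{lem:ModifiedLemma51FromLimitLawsERW}; and (iii) conditional on reaching level $L_x$, the sum from there exceeds $x(1+o(1))$ with asymptotically full probability, which follows from Step~1. Pairing these with the asymptotic $P_{0,s}^V(\tau_{L_x^+}^{\Vh} < \tau_0^{\Vh}) \sim c \cdot L_x^{-\delta}$, itself deduced from Proposition \ref{prop:DecayOfHittingTimeTau0Vh} together with the fact (also coming from Step~1) that $\tau_0^{\Vh} = \Theta(L)$ when started from level $L$, yields the desired tail $\sim c_{11} x^{-\delta/2}$.

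The main obstacle is the diffusion approximation of Step~1. In the (IID) setting of \cite{Kosygina2011} the process $(V_k)$ is itself Markov, and one may directly invoke classical functional limit theorems for Markov chains with affine mean and linear variance increments. In our setting only the subsampled chain $(\Vh_k)$ is Markov; each of its jumps aggregates a random number of $(V_k)$-steps governed by the excursion length $\tau_s^R$. The uniform exponential tail \eqref{eq:tausStausRExponentialTail} on $\tau_s^R$ coming from uniform ergodicity of $(R_k)$, together with Lemmas \ref{lem:ExpectationUkVkStoppedAtTausS}, \ref{lem:VarianceUkVkStoppedAtTaus}, and \ref{lem:ConcentrationEstimateUkVk}, are exactly what is needed to absorb these excursions into negligible corrections and make the limit theorem go through; once Step~1 is secured, the remainder of the argument follows \cite{Kosygina2011} essentially verbatim.
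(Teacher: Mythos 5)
Your high-level strategy—diffusion approximation for $(\Vh_k)$ combined with a strong Markov decomposition at a first hitting time and the overshoot estimate of Lemma \ref{lem:ModifiedLemma51FromLimitLawsERW}—is indeed the strategy the paper uses. The paper packages this as a general result (Proposition \ref{prop:GeneralizedMachineryFromLimitLawsERW}) proved by porting the arguments of \cite{Kosygina2011} after verifying conditions (A)--(D) for the chain $(\Vh_k)$, and both Propositions \ref{prop:DecayOfHittingTimeTau0Vh} and \ref{prop:DecayOfAccumulatedSumTillTimeTau0Vh} fall out of that machinery together.

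There is, however, a genuine gap in step (iii). You set $L_x = \floor{\sqrt{x/\lambda_*}}$ with $\lambda_* = \E[\Xi]$ and claim that, conditional on $(\Vh_k)$ reaching level $L_x$, the accumulated sum exceeds $x(1+o(1))$ with asymptotically full probability. This is false for that choice of $L_x$: by your own Step~1, from level $\approx L_x$ the sum scales like $L_x^2 \cdot \Xi = (x/\lambda_*)\Xi$, which exceeds $x$ precisely when $\Xi > \E[\Xi]$ — an event of some fixed intermediate probability, not one of probability tending to $1$. As written, your argument would give $P_{0,s}^V(\sum \Vh_k > x) \approx P_{0,s}^V(\tau_{L_x^+}^{\Vh}<\tau_0^{\Vh})$, and the resulting constant would depend on the arbitrary choice $\lambda_*$. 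To extract the sharp constant in a statement of the form $\sim c_{11} x^{-\delta/2}$, one cannot use a single fixed threshold; one must integrate over the (asymptotically Pareto-distributed) level reached before returning to $0$, which brings in the full law of $\Xi$ rather than just its mean (the constant ends up involving a $(\delta/2)$-th moment of the diffusion functional). This integration is exactly what the more involved sections of \cite{Kosygina2011}, and hence the proof of Proposition \ref{prop:GeneralizedMachineryFromLimitLawsERW}, carry out via the exit-probability Lemmas \ref{lem:ModifiedLemma52FromLimitLawsERW}--\ref{lem:ModifiedCorollary55FromLimitLawsERW}, none of which appear in your outline.

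A secondary, related issue: you propose to deduce $P_{0,s}^V(\tau_{L^+}^{\Vh} < \tau_0^{\Vh}) \sim c L^{-\delta}$ from the tail of $\tau_0^{\Vh}$ in Proposition \ref{prop:DecayOfHittingTimeTau0Vh} together with ``$\tau_0^{\Vh} = \Theta(L)$ from level $L$.'' Converting the tail of the return time into the tail of the running maximum is itself nontrivial (it requires ruling out slow meandering below $L$ and controlling the distribution, not merely the order, of $\tau_0^{\Vh}$ from level $L$), and in \cite{Kosygina2011} the logical flow runs in the opposite direction: the exit-probability asymptotic is established first and both tail results are derived from it. So even if you repair step (iii), this part of the argument still needs the machinery you omit.
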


\noindent \underline{\emph{Step 2}}: Define $\sigma_0^V = \inf \{k > 0: V_k =
0 \mbox{ and } R_k = s\}$. In Appendix
\ref{app:ProofOfPropsDecayAndAccumulatedSumTimeSigma0V} we will translate the
above results for the process $(\Vh_k)$ to the process $(V_k)$ proving the
following propositions.

\begin{proposition} If $\delta > 1$ then $P_{0,s}^V\left(\sigma_0^V > x\right) \sim c_{12} \cdot x^{-\delta}$, where $c_{12} \equiv c_{10} \cdot \mu_s^{\delta}$.
\label{prop:DecayOfHittingTimeSigma0V}
\end{proposition}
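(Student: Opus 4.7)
The plan is to reduce the claim to Proposition \ref{prop:DecayOfHittingTimeTau0Vh} by a random time-change argument, via the identity $\sigma_0^V = \tau_{\tau_0^{\Vh}}$. Indeed, under $P_{0,s}^V$ we have $R_0 = s$ and hence $\tau_0 = 0$, so $\Vh_k = V_{\tau_k}$ is $V$ sampled at the successive visits of $R$ to $s$; thus $\tau_0^{\Vh} = \inf\{k > 0 : \Vh_k = 0\}$ is the index of the first such visit at which $V = 0$, and its cumulative real time is exactly $\sigma_0^V$. Writing $N(x) := \max\{n \geq 0 : \tau_n \leq x\}$ for the associated renewal counting process, the identity rephrases as $\{\sigma_0^V > x\} = \{N(x) < \tau_0^{\Vh}\}$.

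The key auxiliary ingredient is a renewal-type concentration estimate for $N(x)$. I would show that under $P_{0,s}^V$ the inter-arrival times $L_i := \tau_i - \tau_{i-1}$, $i \geq 1$, are i.i.d.\ with the same law as $\tau_s^R$ under $\PB_s$, so in particular they have mean $\mu_s$ and, by \eqref{eq:tausStausRExponentialTail}, exponential tails. The i.i.d.\ property follows from the strong Markov property of the $R$-chain alone, which evolves autonomously---the $V$-chain does not feed back into $R$---so the $R$-excursions between successive visits to $s$ are i.i.d. Applying Lemma \ref{lem:GeneralLargeDeviationBoundSumsIID} to the empirical means of the $L_i$, for every sufficiently small $\epsilon > 0$ one obtains constants $C, c > 0$ such that
\begin{align*}
P_{0,s}^V\!\left( N(x) < (1-\epsilon) \tfrac{x}{\mu_s} \right) + P_{0,s}^V\!\left( N(x) > (1+\epsilon) \tfrac{x}{\mu_s} \right) \leq C e^{-c x}, \quad x \geq 1.
\end{align*}

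Combining these two ingredients with a sandwiching of $\{N(x) < \tau_0^{\Vh}\}$ gives
\begin{align*}
P_{0,s}^V\!\left(\tau_0^{\Vh} > (1+\epsilon) \tfrac{x}{\mu_s}\right) - C e^{-cx} \leq P_{0,s}^V(\sigma_0^V > x) \leq P_{0,s}^V\!\left(\tau_0^{\Vh} > (1-\epsilon) \tfrac{x}{\mu_s}\right) + C e^{-cx}.
\end{align*}
By Proposition \ref{prop:DecayOfHittingTimeTau0Vh}, each outer probability is asymptotic to $c_{10} \mu_s^{\delta} x^{-\delta}$ up to a factor of $(1 \pm \epsilon)^{-\delta}$, while the error terms decay exponentially. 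Taking $\liminf$ and $\limsup$ and sending $\epsilon \downarrow 0$ therefore yields $P_{0,s}^V(\sigma_0^V > x) \sim c_{10} \mu_s^{\delta} x^{-\delta}$, i.e.\ the proposition with $c_{12} = c_{10} \mu_s^{\delta}$. There is no real obstacle here; the only subtle point is the i.i.d.\ claim for $(L_i)$, which is \emph{not} a statement of independence between $(L_i)$ and $\tau_0^{\Vh}$ (which fails, since both depend on the $R$-chain) but only the marginal i.i.d.\ property of the $R$-excursion lengths---and that marginal property is all that is required for the concentration of $N(x)$ invoked in the sandwich.
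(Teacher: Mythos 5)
Your proof is correct and takes essentially the same route as the paper: both reduce the claim to Proposition~\ref{prop:DecayOfHittingTimeTau0Vh} via the identity $\sigma_0^V = \tau_{\tau_0^{\Vh}}$, use the i.i.d.\ exponential-tail structure of the inter-return times $\tau_i - \tau_{i-1}$ together with Lemma~\ref{lem:GeneralLargeDeviationBoundSumsIID} to discard an exponentially small error by a union bound, and then squeeze with a two-sided $(1\pm\epsilon)$-sandwich and send $\epsilon\downarrow 0$. The only difference is presentational: you phrase the comparison through the renewal counting process $N(x)$ at a fixed real time $x$, while the paper fixes a renewal count $n$ and compares $\sigma_0^V$ against $m = (1\pm\rho)\mu_s n$; these are equivalent parametrizations of the same renewal estimate.
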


\begin{proposition} If $\delta > 1$ then $P_{0,s}^V\left( \sum_{k=0}^{\sigma_0^V} V_k > x \right) \sim c_{13} \cdot x^{-\delta/2}$, where  $c_{13} \equiv c_{11} \cdot \mu_s^{\delta/2}$.
\label{prop:DecayOfAccumulatedSumTillTimeSigma0V}
\end{proposition}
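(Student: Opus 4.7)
The plan is to reduce to Proposition \ref{prop:DecayOfAccumulatedSumTillTimeTau0Vh} by an excursion decomposition. Under $P_{0,s}^V$ the chain $(R_k)$ starts at $s$, so $\tau_0 = 0$ and $\sigma_0^V = \tau_{\tau_0^{\Vh}}$; set $T = \tau_0^{\Vh}$ and $L_j = \tau_{j+1} - \tau_j$. Since $V_{\sigma_0^V} = \Vh_T = 0$,
\begin{align*}
\Sigma \;\equiv\; \sum_{k=0}^{\sigma_0^V} V_k \;=\; \sum_{j=0}^{T-1} W_j, \qquad W_j \;\equiv\; \sum_{k=\tau_j}^{\tau_{j+1} - 1} V_k.
\end{align*}
By the strong Markov property of $(V_k, R_k)$, conditional on the sequence $(\Vh_j)_{j\ge 0}$ the blocks $W_j$ are independent, with the $j$-th block distributed like $\sum_{k=0}^{\tau_s^R - 1} V_k$ under $P_{\Vh_j, s}^V$; in particular $E[L_j \mid \Vh_j] = \mu_s$. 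When $\Vh_j$ is large, Lemma \ref{lem:ConcentrationEstimateUkVk} forces $\max_{\tau_j \leq k < \tau_{j+1}} |V_k - \Vh_j| = o(\Vh_j)$ with very high probability, so the guiding heuristic is $W_j \approx L_j \Vh_j \approx \mu_s \Vh_j$, whence $\Sigma \approx \mu_s \sum_{j=0}^{T} \Vh_j$. Proposition \ref{prop:DecayOfAccumulatedSumTillTimeTau0Vh} then gives
\begin{align*}
P_{0,s}^V\Bigl(\mu_s \sum_{j=0}^{T} \Vh_j > x\Bigr) \;\sim\; c_{11} (x/\mu_s)^{-\delta/2} \;=\; c_{11} \mu_s^{\delta/2} x^{-\delta/2} \;=\; c_{13} x^{-\delta/2},
\end{align*}
matching the claim.

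To make this rigorous I would prove matching upper and lower bounds. Fix small $\epsilon > 0$ and a threshold $y_0$ large. Split the excursions into \emph{large} ones ($\Vh_j \geq y_0$) and \emph{small} ones ($\Vh_j < y_0$). On each large excursion, Lemma \ref{lem:ConcentrationEstimateUkVk} combined with the exponential tail \eqref{eq:tausStausRExponentialTail} of $L_j$ yields a stretched-exponential (in $\Vh_j^{1/3}$) bound on $P(|W_j - L_j \Vh_j| > \epsilon \mu_s \Vh_j \mid \Vh_j)$, and an independent Chebyshev/exponential bound on $L_j$ controls $|L_j - \mu_s|\,\Vh_j$, giving the same type of bound for $P(|W_j - \mu_s \Vh_j| > 2\epsilon \mu_s \Vh_j \mid \Vh_j)$. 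A union bound over the at most $T$ large excursions, together with the polynomial tail $P(T > t) \leq C t^{-\delta}$ from Proposition \ref{prop:DecayOfHittingTimeTau0Vh}, shows that the probability that \emph{any} large excursion is bad is $o(x^{-\delta/2})$. The contribution of the small excursions to $\Sigma$ is at most $y_0 \sum_{j < T} L_j \indicator\{\Vh_j < y_0\}$, and a crude bound using the exponential tail of $L_j$ plus the polynomial tail of $T$ shows this is at most of order $x^{-\delta}$, hence $o(x^{-\delta/2})$ since $\delta > 1$. Putting these pieces together,
\begin{align*}
P_{0,s}^V(\Sigma > x) \;=\; P_{0,s}^V\Bigl(\mu_s \sum_{j=0}^{T} \Vh_j > x (1 + O(\epsilon))\Bigr) + o(x^{-\delta/2}),
\end{align*}
and applying Proposition \ref{prop:DecayOfAccumulatedSumTillTimeTau0Vh} followed by $\epsilon \downarrow 0$ completes the argument.

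The main obstacle is verifying that the error terms are genuinely $o(x^{-\delta/2})$ uniformly in $x$. Although each individual excursion enjoys stretched-exponential tail control, one must aggregate over a random number of excursions $T$ with only a polynomial tail and check that the convolution does not inflate the overall error to order $x^{-\delta/2}$. The most delicate regime is $\delta \in (1,2)$, in which $E[\sum_{j=0}^T \Vh_j] = \infty$ and the typical size of the sum is dominated by $\max_{j < T} \Vh_j$; one must then argue that the bad-excursion probability remains negligible even when the maximum $\Vh_j$ is of order $x$, which is precisely where the stretched-exponential bound of Lemma \ref{lem:ConcentrationEstimateUkVk} (rather than merely a polynomial one) becomes essential.
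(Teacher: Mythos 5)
Your excursion decomposition $\Sigma = \sum_{j=0}^{T-1} W_j$ with $W_j \approx L_j\Vh_j \approx \mu_s\Vh_j$ is the right heuristic and your treatment of the small excursions and of $|W_j - L_j\Vh_j|$ via Lemma \ref{lem:ConcentrationEstimateUkVk} is sound, but the step that replaces $L_j\Vh_j$ by $\mu_s\Vh_j$ excursion-by-excursion fails. The event $\{|L_j - \mu_s|\,\Vh_j > \epsilon\mu_s\Vh_j\}$ is just $\{|L_j-\mu_s|>\epsilon\mu_s\}$, whose probability is a positive constant independent of $\Vh_j$ (the return time $L_j=\tau_{j+1}-\tau_j$ is a single integer-valued random variable with mean $\mu_s$ and $O(1)$ spread; nothing makes it concentrate). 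So no ``Chebyshev/exponential bound'' gives a stretched-exponential tail for $P(|W_j - \mu_s\Vh_j| > 2\epsilon\mu_s\Vh_j \mid \Vh_j)$, and a union bound over individual excursions collapses. What is true is only that $\sum_j(L_j-\mu_s)\Vh_j$ is small \emph{after averaging}, because the $L_j-\mu_s$ are i.i.d. centered; but that cancellation cannot be seen one excursion at a time, and the $\Vh_j$'s are highly dependent across $j$, so you cannot just invoke a CLT for the weighted sum either.

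The paper resolves exactly this issue by coarse-graining: it groups excursions into blocks $K_i$ of length $\approx n^{\epsilon_1}$ via the stopping times $T_i$, controls $\max_{k\in K_i}|V_k - V_{T_{i-1}}|$ so that $\Vh_j$ is essentially constant $\approx V_{T_{i-1}}$ across a block (event $A_3$), and then applies a large-deviation bound to $\sum_{j\in J_i^0}(\mu_s - \Delta_{\tau,j+1})$, a sum of $\approx n^{\epsilon_1}$ i.i.d. centered variables, to show the block error is $\lesssim V_{T_{i-1}}\cdot n^{\epsilon_1/2+\epsilon_2} \ll V_{T_{i-1}}\cdot|K_i|$ (event $A_4$, Claim 3). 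The block length must grow with $n$ precisely so that this averaging has room to kick in; with blocks of size $1$ (i.e.\ your per-excursion scheme) it does not. To repair your argument you would need to introduce a similar blocking and prove the analog of events $A_3$ and $A_4$, at which point you would essentially reproduce the paper's proof.

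A smaller remark: your closing paragraph locates the delicacy in $\delta\in(1,2)$, but the gap above is present for every $\delta>1$; it is about the nonconcentration of a single $L_j$, not about the heavy tail of $\sum\Vh_j$.
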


\noindent \underline{\emph{Step 3}}: In Section
\ref{subsec:ProofTheoremBallisticity} we will use Propositions
\ref{prop:DecayOfHittingTimeSigma0V}
and \ref{prop:DecayOfAccumulatedSumTillTimeSigma0V} to prove Theorem \ref{thm:Ballisticity}. \\

\noindent \underline{\emph{Step 4}}: In Section
\ref{subsec:ProofTheoremLimitLaws} we will use Propositions
\ref{prop:DecayOfHittingTimeSigma0V}
and \ref{prop:DecayOfAccumulatedSumTillTimeSigma0V} to prove Theorem \ref{thm:LimitLaws}. \\

For future reference we observe the following simple corollary of Proposition
\ref{prop:DecayOfHittingTimeSigma0V}.

\begin{corollary}
\label{cor:HittingTime0sIsFinite} If $\delta > 1$ then, for each $x \in \N_0$
and $r \in \SM$, $P_{x,r}^V \left(\sigma_0^V < \infty \right) = 1$.
\end{corollary}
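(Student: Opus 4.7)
The plan is to bootstrap Proposition~\ref{prop:DecayOfHittingTimeSigma0V}, which already gives $P_{0,s}^V(\sigma_0^V < \infty) = 1$, to all initial states $(x, r) \in \N_0 \times \SM$. The joint process $(V_k, R_k)_{k \ge 0}$ is a time-homogeneous Markov chain under each $P_{x, r}^V$ with transitions~\eqref{eq:VkRkMarkovChainTransitionProbs}, so I would argue that this joint chain is irreducible on $\N_0 \times \SM$ and then invoke a strong Markov argument.

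For irreducibility, fix a starting state $(x_0, r_0)$ and a target $(y, r)$. Uniform ergodicity of $(R_k)$ in Assumption~(A) supplies a positive-probability trajectory $r_0, r_1', \ldots, r_n' = r$ for the underlying chain. Conditional on this trajectory, the representation $V_{k+1} = \sum_{i=1}^{V_k + 1} H_i^{k+1}$ from~\eqref{eq:DefBackwardBPInTermsOfGeometrics} together with ellipticity of every stack in $\SM \subset \SM^*_M$ lets me force any prescribed sequence of $V$-values with positive probability: at each step the $H_i^{k+1}$ can be made to equal any nonnegative integer with positive conditional probability. Concretely, I would collapse $V$ to $0$ in the first step by requiring the first toss at stack $r_1'$ to be a success, keep $V \equiv 0$ along the intermediate stacks, and then produce $V_n = y$ at the final step by requiring exactly $y$ failures before the first success at stack $r$.

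With irreducibility in hand, the case $(x, r) = (0, s)$ is just Proposition~\ref{prop:DecayOfHittingTimeSigma0V}. For $(x, r) \ne (0, s)$, suppose for contradiction that $P_{x,r}^V(\sigma_0^V = \infty) = p > 0$. Pick a positive-probability path of the joint chain from $(0, s)$ to $(x, r)$ of minimal length; minimality forces this path to avoid $(0, s)$ at all intermediate times, so under $P_{0,s}^V$ there is positive probability $q > 0$ that the chain reaches $(x, r)$ strictly before returning to $(0, s)$. Applying the strong Markov property at the first hitting time of $(x, r)$ then yields $P_{0,s}^V(\sigma_0^V = \infty) \ge pq > 0$, contradicting Proposition~\ref{prop:DecayOfHittingTimeSigma0V}. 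The only mildly technical step is the irreducibility verification, which is a routine unfolding of ellipticity and irreducibility of $(R_k)$; I do not foresee any substantive obstacle.
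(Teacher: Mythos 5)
Your proposal is correct and follows essentially the same route as the paper: establish irreducibility of the joint chain $(V_k,R_k)$ from ellipticity of the stacks and irreducibility of $(R_k)$, then combine with Proposition~\ref{prop:DecayOfHittingTimeSigma0V} to conclude. The only cosmetic difference is that the paper invokes the abstract fact that an irreducible Markov chain with one recurrent state is recurrent, whereas you unpack that fact by hand with a strong Markov contradiction argument; both are valid.
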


\begin{proof}
The process $(V_k, R_k)_{k \geq 0}$ is a time-homogeneous Markov chain under
$P_{x,r}^V$, for any $x,r$. Furthermore since the Markov chain $(R_k)$ is
irreducible and the cookie stacks in $\SM$ are elliptic this Markov chain on
pairs $(V_k, R_k)$ is also irreducible. By Propostion
\ref{prop:DecayOfHittingTimeSigma0V} the pair $(0, s)$ is a recurrent state
for the Markov chain $(V_k, R_k)$, so the Markov chain itself is recurrent,
so the hitting time of state $(0,s)$ is a.s. finite starting from any initial
state $(x,r)$.
\end{proof}

\subsection{Proof of Theorem \texorpdfstring{\ref{thm:Ballisticity}}{1.12}}
\label{subsec:ProofTheoremBallisticity} Define stopping times $(\sigma_i)_{i
\geq 0}$ by
\begin{align*}
\sigma_0 = \inf \{k \geq 0: V_k = 0, R_k = s\} ~~\mbox{ and }~~ \sigma_{i+1} = \inf\{k > \sigma_i: V_k = 0, R_k = s\} ~,~ i \geq 0.
\end{align*}
Also, define $\Delta_{\sigma, i} = \sigma_i - \sigma_{i-1}$, for $i \geq 1$,
and let
\begin{align*}
Q_0 = \sum_{k = 0}^{\sigma_0} V_k ~~\mbox{ and }~~ Q_i = \sum_{k = \sigma_{i-1} + 1}^{\sigma_i} V_k ~,~ i \geq 1.
\end{align*}
Since $(V_k, R_k)_{k \geq 0}$ is a time-homogeneous Markov chain (under any
of the measures $P_x^V$, $P_{x,\pi}^V$, or $P_{x,r}^V$), it follows from
Corollary \ref{cor:HittingTime0sIsFinite} that if $\delta > 1$ then (under
any of these same measures)
\begin{align}
\label{eq:SigmaiQiAreIID}
&\mbox{The times $\sigma_i$, $i \geq 0$, are all a.s. finite and} \nonumber \\
&(Q_i)_{i \geq 1} \mbox{ and } (\Delta_{\sigma,i})_{i \geq 1} \mbox{ are each i.i.d sequences. }
\end{align}
We denote the mean of the $Q_i$'s by $\mu_Q$ and the mean of the
$\Delta_{\sigma,i}$'s by $\mu_{\sigma}$.

\begin{proof}[Proof of Theorem \ref{thm:Ballisticity}]
By Theorem \ref{thm:LLNSE} there exists some deterministic $v \in [-1,1]$
such that $X_n/n \rightarrow v$, $P_{0,\pi}$ a.s. Since the probability
measure $\pi$ places positive probability on each states $r \in \SM$, it
follows that $X_n/n \rightarrow v$, $P_{0,r}$ a.s., for each $r \in \SM$.
Hence, also $X_n/n \rightarrow v$, $P_0$ a.s. By Theorem
\ref{thm:TransienceRecurrence}, the walk is recurrent for $\delta \in
[-1,1]$, so we must have $v = 0$ in this case. We will show that
\begin{align}
\label{eq:SpeedEquationDeltaGreater1}
v > 0 ~\mbox{ if }~ \delta > 2 ~~~~\mbox{ and }~~~~ v = 0 ~\mbox{ if }~ \delta \in (1,2].
\end{align}
It follows from this by interchanging spatial directions of the model (see
Remark \ref{rem:InvarianceAssumptionAWhenInterchangeSpatialDirections}) that
$v < 0$ if $\delta < -2$ and $v = 0$ if $\delta \in [-2,-1)$.

For the remainder of the proof we assume $\delta > 1$. For $n \in \N$, define
$i_n$ by
\begin{align*}
i_n = \begin{cases}
\sup \{i \geq 0:\sigma_i \leq n \} ~,~\mbox{ if } n \geq \sigma_0 \\
-1 ~,~\mbox{ if } n < \sigma_0.
\end{cases}
\end{align*}
Observe that (with the convention $\sigma_{-1} \equiv 0$)
\begin{align}
\label{eq:inVersusn}
\sigma_{i_n} \leq n < \sigma_{i_n + 1} ~~~\mbox{ and }~~~ \sum_{j = 1}^{i_n} Q_j \leq \sum_{k = 0}^n V_k \leq \sum_{j = 0}^{i_n + 1} Q_j ~,~~ \mbox{for each $n \in \N$.}
\end{align}
Thus, by \eqref{eq:SigmaiQiAreIID} and the strong law of large numbers,
\begin{align*}
\limsup_{n \to \infty} \frac{1}{n} \sum_{k = 0}^n V_k
\leq \limsup_{n \to \infty} \frac{i_n}{\sigma_{i_n}} \cdot \left[ \frac{1}{i_n} \sum_{j = 0}^{i_n + 1} Q_j \right]
= \limsup_{i \to \infty} \frac{i}{\sigma_i} \cdot \left[ \frac{1}{i} \sum_{j = 0}^{i + 1} Q_j \right]
= \frac{\mu_{Q}}{\mu_{\sigma}}
~,~ P_{0,\pi}^V ~ \mbox{a.s.}
\end{align*}
and
\begin{align*}
\liminf_{n \to \infty} \frac{1}{n} \sum_{k = 0}^n V_k
&\geq \liminf_{n \to \infty} \frac{i_n + 1}{\sigma_{i_n + 1}} \cdot \left[ \frac{1}{i_n + 1} \sum_{j = 1}^{i_n} Q_j \right]\\
&= \liminf_{i \to \infty} \frac{i+1}{\sigma_{i+1}} \cdot \left[ \frac{1}{i+1} \sum_{j = 1}^{i} Q_j \right]
= \frac{\mu_{Q}}{\mu_{\sigma}}
~,~ P_{0,\pi}^V ~ \mbox{a.s.}
\end{align*}
So, $\frac{1}{n} \sum_{k=0}^n V_k \rightarrow \frac{\mu_{Q}}{\mu_{\sigma}}$
a.s. (and in probability) under $P_{0,\pi}^V$. Hence, by Lemma
\ref{lem:DowncrossingBackwardBPSameDistribution}, $\frac{1}{n} \sum_{k=0}^n
D_{n,k} \rightarrow \frac{\mu_{Q}}{\mu_{\sigma}}$, in probability under
$P_{0,\pi}$. Now, since the random walk $(X_n)$ is right transient with
$\delta > 1$, $\limsup_{n \to \infty} \sum_{k < 0} D_{n,k} < \infty$,
$P_{0,\pi}$ a.s. So, it follows from
\eqref{eq:RelationHittingTimesAndDownCrossings} that $\tau_n^X/n \rightarrow
1 + \frac{2\mu_Q}{\mu_\sigma}$ in probability under $P_{0,\pi}$. Since we
know a priori that $X_n/n \rightarrow v$, $P_{0,\pi}$ a.s. (for some unknown
$v \in [-1,1]$), this in fact implies $\tau_n^X/n \rightarrow 1 + \frac{2
\mu_Q}{\mu_{\sigma}}$, $P_{0,\pi}$ a.s., and
\begin{align}
\label{eq:VelocityFormula}
v = \frac{1}{1 + 2 \frac{\mu_Q}{\mu_{\sigma}}}.
\end{align}
Now, by Proposition \ref{prop:DecayOfHittingTimeSigma0V}, $\mu_{\sigma}$ is
finite for all $\delta > 1$, and by Proposition
\ref{prop:DecayOfAccumulatedSumTillTimeSigma0V}, $\mu_Q$ is finite for
$\delta > 2$ but infinite for $\delta \in (1,2]$. So, it follows that
\eqref{eq:SpeedEquationDeltaGreater1} holds.
\end{proof}

\subsection{Proof of Theorem \texorpdfstring{\ref{thm:LimitLaws}}{1.13}}
\label{subsec:ProofTheoremLimitLaws}

Throughout Section \ref{subsec:ProofTheoremLimitLaws} the random variables
$Z_{\alpha, b}$ are as in \eqref{eq:CharacteristicFunctionStableLaw}, the
random variables $Q_i$, $\sigma_i$, $\Delta_{\sigma,i}$ and $i_n$ are as in
Section \ref{subsec:ProofTheoremBallisticity}, and $m_n \equiv
\floor{n/\mu_{\sigma}}$. The general proof strategy for Theorem
\ref{thm:LimitLaws} will be to first prove a limiting distribution for
$\sum_{k=0}^n V_k$, then translate to a limiting distribution for the hitting
times $\tau_n^X$ using  \eqref{eq:RelationHittingTimesAndDownCrossings} and
Lemma \ref{lem:DowncrossingBackwardBPSameDistribution}, then translate to a
limiting distribution for the walk $(X_n)$ itself. This basic approach has
been used before in \cite{Basdevant2008b, Kosygina2011, Kosygina2017}, and
our methods will be quite similar to these works, but the details differ a
bit because the process $(V_k)$ is not Markovian when the environment is not
(IID). Thus, we must consider renewal times $(\sigma_i)_{i \geq 0}$, rather
than simply successive times at which $V_k = 0$. Also, we have the additional
minor complication of the $Q_0$ and $\sigma_0$ terms to deal with (which
would be 0 in the (IID) case).

Unless other specified it is assumed throughout that $V_0 = 0$ and the
probability measure on environments is $\PB_{\pi}$ (rather than $\PB$).
Everything will be proved initially under the stationary measure $\PB_{\pi}$,
and then at the very end after proving Theorem \ref{thm:LimitLaws} under the
stationary measure $\PB_{\pi}$ we will translate the result to the given
measure $\PB$.

To state our first lemma for the limiting distribution of $\sum_{k=0}^n V_k$
we first need to introduce a little notation. Let $\mu_Q(t)$ be the truncated
expectation of the random variables $Q_i$: $\mu_Q(t) \equiv E_{0,\pi}^V[Q_i
\cdot \indicator_{\{Q_i \leq t\}}]$, $i \geq 1$, where $E_{0,\pi}^V(\cdot)$
is expectation with respect to the probability measure $P_{0,\pi}^V$. Also,
let $Z_{1, b, c}$ be a random variable with characteristic function
\begin{align}
\label{eq:CharacteristicFunctionZ1bc}
E[ e^{itZ_{1,b,c}}] = \exp\Big[  i t c -b|t| \Big(1 + \frac{2i}{\pi} \log|t| \mathrm{sgn}(t) \Big) \Big],
\end{align}
for $b > 0$ and $c \in \R$. For future reference we note the following
scaling relations hold for the stable random variables $Z_{\alpha,b}$ and
$Z_{1,b,c}$:
\begin{align}
\label{ZalphabScaling}
& a Z_{\alpha, b} \stackrel{d.}{=} Z_{\alpha, ba^{\alpha}} ~,~\mbox{ for all $\alpha \in (0,1) \cup (1,2], b > 0, a > 0$}. \\
\label{Z1bcScaling}
& a_1 Z_{1,b,c} + a_2 \stackrel{d.}{=} Z_{1, ba_1, [c a_1 + a_2 - \frac{2}{\pi} b a_1 \log(a_1)]} ~,~\mbox{ for all $b > 0$, $c \in \R$, $a_1 > 0$, $a_2 \in \R$}.
\end{align}
Also, we note that $\mu_{\sigma}$ is finite for all $\delta > 1$ by
Propositions \ref{prop:DecayOfHittingTimeSigma0V}, and $\mu_Q$ is finite for
all $\delta > 2$ by Proposition
\ref{prop:DecayOfAccumulatedSumTillTimeSigma0V}.

\begin{lemma}
\label{eq:LemDistributionSumVk} Under the probability measure $P_{0,\pi}^V$
for the process $(V_k)_{k \geq 0}$ the following hold:
\begin{itemize}
\item[(i)] If $\delta \in (1,2)$ then there is some $b > 0$ such that
    $\dfrac{ \sum_{k=0}^n V_k }{n^{2/\delta}}
    \stackrel{d.}{\longrightarrow} Z_{\delta/2, b}$, as $n \rightarrow
    \infty$.
\item[(ii)] If $\delta = 2$ then there are constants $b > 0$ and $c \in \R$
    such that $\dfrac{ \sum_{k=0}^n V_k -
    \frac{\mu_Q(n/\mu_{\sigma})}{\mu_{\sigma}} n}{n}
    \stackrel{d.}{\longrightarrow} Z_{1, b, c}$, as $n \rightarrow \infty$.
\item[(iii)] If $\delta \in (2,4)$ then there is some $b > 0$ such that
    $\dfrac{ \sum_{k=0}^n V_k - \frac{\mu_Q}{\mu_{\sigma}} n}{n^{2/\delta}}
    \stackrel{d.}{\longrightarrow} Z_{\delta/2, b}$, as $n \rightarrow
    \infty$.
\item[(iv)] If $\delta =4$ then there is some $b > 0$ such that $\dfrac{
    \sum_{k=0}^n V_k - \frac{\mu_Q}{\mu_{\sigma}} n }{ [n \log(n)]^{1/2}}
    \stackrel{d.}{\longrightarrow} Z_{2, b}$, as $n \rightarrow \infty$.
\item[(v)] If $\delta >4$ then there is some $b > 0$ such that $\dfrac{
    \sum_{k=0}^n V_k - \frac{\mu_Q}{\mu_{\sigma}} n }{n^{1/2}}
    \stackrel{d.}{\longrightarrow} Z_{2, b}$, as $n \rightarrow \infty$.
\end{itemize}
\end{lemma}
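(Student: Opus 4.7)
The plan is to reduce the claim to classical stable or normal limit theorems for the i.i.d.\ partial sums $S_N := \sum_{j=1}^N Q_j$, and then transfer the result to $\sum_{k=0}^n V_k$ via the renewal structure of the $\sigma_i$. Since $V_{\sigma_i} = 0$ and $R_{\sigma_i} = s$, the strong Markov property gives that under $P_{0,\pi}^V$ the sequences $(Q_j)_{j \geq 1}$ and $(\Delta_{\sigma,j})_{j \geq 1}$ are i.i.d., with common laws those of $\sum_{k=0}^{\sigma_0^V} V_k$ and $\sigma_0^V$ under $P_{0,s}^V$. Propositions~\ref{prop:DecayOfHittingTimeSigma0V} and~\ref{prop:DecayOfAccumulatedSumTillTimeSigma0V} then provide the right-tail asymptotics of these i.i.d.\ blocks: regularly varying of index $\delta$ for $\Delta_{\sigma,j}$, and of index $\delta/2$ for $Q_j$. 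The initial block $(Q_0,\sigma_0)$ is a.s.\ finite by Corollary~\ref{cor:HittingTime0sIsFinite} and, not depending on $n$, will be negligible at every normalizer considered below.

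First I would apply the appropriate classical limit theorem to $S_{m_n}$ at the deterministic index $m_n = \lfloor n/\mu_\sigma\rfloor$: (i) a stable CLT with $\alpha = \delta/2 \in (1/2,1)$ (no centering needed); (ii) the $\alpha = 1$ stable CLT with truncated centering $m_n \mu_Q(m_n)$, producing the $Z_{1,b,c}$ law; (iii) a stable CLT with $\alpha = \delta/2 \in (1,2)$ and centering $m_n \mu_Q$; (iv) a non-standard Gaussian CLT for variables with logarithmically diverging truncated second moment, giving the $\sqrt{n\log n}$ normalizer; (v) the ordinary finite-variance CLT. In each case the scaling relations~(\ref{ZalphabScaling})--(\ref{Z1bcScaling}) absorb the factor $\mu_\sigma$ arising from $m_n = n/\mu_\sigma + O(1)$ into the parameters of the limiting random variable, recasting the normalizers in terms of $n$.

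Next I would pass from the deterministic index $m_n$ to the random index $i_n$ using the sandwich
\[
\sum_{j=1}^{i_n} Q_j \;\leq\; \sum_{k=0}^n V_k \;\leq\; \sum_{j=0}^{i_n+1} Q_j
\]
from~(\ref{eq:inVersusn}). The LLN for $(\Delta_{\sigma,j})$ gives $i_n/n \to 1/\mu_\sigma$ a.s., and a fluctuation estimate for $\sigma_k - k\mu_\sigma$ puts $|i_n - m_n|$ on the scale $\sqrt{n}$ (cases (iv)--(v)) or $n^{1/\delta}$ (cases (i)--(iii)), in every case strictly smaller than the normalizer for $S_{m_n}$. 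An Anscombe-type argument, together with its stable analogue in cases (i)--(iii), then gives $(S_{i_n} - S_{m_n})/(\mathrm{normalizer}) \to 0$ in probability. The right-hand boundary term $Q_{i_n+1}$ is controlled using $\sigma_{i_n+1}-n \leq \Delta_{\sigma,i_n+1}$ together with the tail of $Q_j$, and is negligible at every relevant scale.

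The main obstacle will be case (ii), $\delta = 2$, where the centering $n\,\mu_Q(n/\mu_\sigma)/\mu_\sigma$ is slowly (in fact logarithmically) varying in $n$ rather than constant. There I must show that substituting $\mu_Q(m_n)$ (the natural centering from the stable CLT) by $\mu_Q(n/\mu_\sigma)$ shifts the centering by only $o(n)$, using the logarithmic behaviour $\mu_Q(t) = O(\log t)$ forced by Proposition~\ref{prop:DecayOfAccumulatedSumTillTimeSigma0V}, and that the Anscombe error $S_{i_n}-S_{m_n}$ together with the boundary contributions $Q_0$ and $Q_{i_n+1}$ is genuinely $o_P(n)$ despite the heavy tails. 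A secondary subtlety is verifying the stable-law analogue of Anscombe in case~(i), where the normalizer $n^{2/\delta}$ is sublinear; this goes through because $|i_n - m_n|$ sits on the scale $n^{1/\delta} \ll n^{2/\delta}$ for $\delta > 1$, so the renewal fluctuations do not interfere with the stable limit. Finally the result under $\PB_\pi$ is transferred to $\PB$ by absolute continuity of the initial distribution $\phi$ with respect to $\pi$, modulo a routine argument that the finite transient contribution from $(S_k)$ reaching stationarity is negligible.
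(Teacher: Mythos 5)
Your outline for parts (i)--(iv) is essentially the paper's route: prove a stable/normal limit for $S_{m_n}=\sum_{i=1}^{m_n}Q_i$ at the deterministic index $m_n=\lfloor n/\mu_\sigma\rfloor$, then show the passage from $m_n$ to the random index (equivalently, the discrepancy $\big|\sum_{k=0}^n V_k-\sum_{i=1}^{m_n}Q_i\big|$) is negligible at the relevant scale, which works because the renewal fluctuations $\sigma_{m_n}-n$ live on a strictly smaller scale than the normalizer whenever $\delta\le 4$ (scale $n^{1/\delta}$ or $\sqrt{n\log n}$ for $\delta\in(1,2]$, scale $\sqrt n$ for $\delta\in(2,4]$, all dominated by $n^{2/\delta}$, $n$, or $\sqrt{n\log n}$). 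Your treatment of the $\delta=2$ centering via $\mu_Q(t)=O(\log t)$ is also the right idea.

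However, the proposal has a genuine gap in case (v), $\delta>4$. There the claim that $|i_n-m_n|$ is ``in every case strictly smaller than the normalizer for $S_{m_n}$'' is false: when $\delta>4$ the normalizer is $\sqrt n$ and the renewal fluctuation $\sigma_{m_n}-n$ (hence $i_n-m_n$) is also exactly on scale $\sqrt n$, since $\Delta_{\sigma,i}$ has finite variance. Consequently $S_{i_n}-S_{m_n}\approx \mu_Q(i_n-m_n)$ is order $\sqrt n$ and does not vanish after division by $\sqrt n$; worse, it is correlated with $S_{m_n}-m_n\mu_Q$, so a naive Anscombe argument cannot even be patched by an independence argument. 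If you tried to force the computation through you would obtain a limit variance depending only on $\mathrm{Var}(Q_i)$, which is wrong: the true limit involves the joint law of $(Q_i,\Delta_{\sigma,i})$. The paper circumvents this by a different decomposition specific to case (v): it re-centers each block as $Z_i = Q_i - \frac{\mu_Q}{\mu_\sigma}\Delta_{\sigma,i}$ (zero mean, finite variance by Propositions \ref{prop:DecayOfHittingTimeSigma0V} and \ref{prop:DecayOfAccumulatedSumTillTimeSigma0V}), sandwiches $\sum_{k=0}^n V_k-\frac{\mu_Q}{\mu_\sigma}n$ between $\sum_{i=1}^{i_n}Z_i$ and $\sum_{i=1}^{i_n+1}Z_i$ plus boundary terms controlled by Lemma \ref{lem:RenewalTheoryLemmaForin}, and then applies Gut's random-index CLT (Theorem \ref{the:ConvergenceInDistbitutionRandomNumberTermsIID}) directly at $i_n$; the resulting variance parameter is $b=\mathrm{Var}(Z_i)/(2\mu_\sigma)$, which cannot be recovered from your scheme. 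You need this cancellation trick for part (v), not an Anscombe argument.

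Two minor points: for $\delta\in(2,4)$ the renewal fluctuations are on scale $\sqrt n$, not $n^{1/\delta}$ as your grouping suggests, though this does not affect the conclusion there. Also, the lemma is stated under $P_{0,\pi}^V$, so the final paragraph about transferring from $\PB_\pi$ to $\PB$ is out of scope here; that step belongs to the proof of Theorem \ref{thm:LimitLaws}.
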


To prove Lemma \ref{eq:LemDistributionSumVk} we will need two general results
about the limiting distributions of sums of i.i.d. random variables. The
first result is a particular case of convergence to stable distributions for
sums of i.i.d. random variables with regularly varying tails. The second
result concerns sums of a random number of i.i.d. random variables with
finite variance.

\begin{theorem}[Special case of Theorem 1, page 172 (for $\alpha \geq 2$) and Theorem 2, page 175 (for $\alpha < 2$) in \cite{Gnedenko1968}]
Let $Z$ be a random variable with distribution such that:
\label{the:ConvergenceInDistributionStableLaws}
\begin{align*}
& \P(Z > x) \sim C x^{-\alpha}, \mbox{ as $x \rightarrow \infty$, for some constants $C > 0$ and $\alpha > 0$}.  \\
& \P(Z < x_0) = 0, \mbox{ for some $x_0 \in (-\infty,0]$}.
\end{align*}
Also, let $Z_1, Z_2, \ldots$ be i.i.d. random variables distributed as $Z$.
Then the following hold:
\begin{itemize}
\item[(i)] If $\alpha \in (0,1)$, $\dfrac{ \sum_{k=1}^n Z_k}{n^{1/\alpha}}
    \stackrel{d.}{\longrightarrow} Z_{\alpha,b}$, for some $b > 0$.
\item[(ii)] If $\alpha = 1$, $\dfrac{ \sum_{k=1}^n Z_k - n \cdot \E[Z \cdot
    \indicator_{\{Z \leq n\}}]}{n} \stackrel{d.}{\longrightarrow}
    Z_{1,b,c}$, for some $b > 0$ and $c \in \R$.
\item[(iii)] If $\alpha \in (1,2)$, $\dfrac{ \sum_{k=1}^n Z_k - n \cdot
    \E(Z)}{n^{1/\alpha}} \stackrel{d.}{\longrightarrow} Z_{\alpha,b}$, for
    some $b > 0$.
\item[(iv)] If $\alpha = 2$, $\dfrac{ \sum_{k=1}^n Z_k - n \cdot \E(Z)}{[n
    \log(n)]^{1/2}} \stackrel{d.}{\longrightarrow} Z_{2,b}$, for some $b >
    0$.
\item[(v)] If $\alpha > 2$, $\dfrac{ \sum_{k=1}^n Z_k - n \cdot
    \E(Z)}{n^{1/2}} \stackrel{d.}{\longrightarrow} Z_{2,b}$, for some $b >
    0$.
\end{itemize}
\end{theorem}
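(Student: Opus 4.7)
The plan is to prove the theorem by the method of characteristic functions, invoking L\'evy's continuity theorem. Let $\phi(t) \equiv \E[e^{itZ}]$ and $F(x) \equiv \P(Z \leq x)$. Because $Z$ is bounded below, the only source of heavy tails is on the right, and the hypothesis $1 - F(x) \sim C x^{-\alpha}$ controls the small-$t$ behavior of $\phi$. The engine of the proof is the following asymptotic expansion, derived from
\begin{align*}
\phi(t) - 1 = \int_{x_0}^{\infty} (e^{itx} - 1) \, dF(x)
\end{align*}
by integration by parts, the change of variables $u = |t| x$, and the classical evaluation $\int_0^{\infty} (1 - e^{iu}) u^{-\alpha - 1} du = -\Gamma(-\alpha) e^{-i \pi \alpha/2}$ for $\alpha \in (0,2) \setminus \{1\}$ (with the analogous expression involving a logarithm for $\alpha = 1$). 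For $\alpha \in (0,2) \setminus \{1\}$ one obtains
\begin{align*}
\phi(t) = 1 + i \mu^\ast(t)\, t - b |t|^\alpha \bigl(1 - i \tan(\tfrac{\pi \alpha}{2}) \mathrm{sgn}(t) \bigr) + o(|t|^\alpha),
\end{align*}
and for $\alpha = 1$,
\begin{align*}
\phi(t) = 1 + i \mu^\ast(t)\, t - b |t| \bigl(1 + \tfrac{2i}{\pi} \log|t|\, \mathrm{sgn}(t) \bigr) + o(|t|),
\end{align*}
where $b$ is explicit in terms of $C$ and $\alpha$, and $\mu^\ast(t)$ is either a true mean (when $\alpha > 1$) or a suitable truncated mean capturing the non-integrable drift (when $\alpha \leq 1$).

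From here each of (i)--(iii) follows by taking $a_n$ and $b_n$ as in the statement, writing $n \log \phi(t/a_n) - it b_n/a_n = n[\phi(t/a_n) - 1] - it b_n/a_n + O(n|\phi(t/a_n) - 1|^2)$, noting that the remainder is $o(1)$ since $a_n \to \infty$, and substituting the expansion. For $\alpha \in (0,1)$ the linear piece $n \mu^\ast(t/a_n)(t/a_n)$ is negligible and no centering is needed; for $\alpha \in (1,2)$ the centering $n \E Z$ cancels the linear piece; for $\alpha = 1$ the truncated centering $n \E[Z \indicator\{Z \leq n\}]$ is precisely calibrated to absorb the slowly varying $\mu^\ast(t/n) t$ contribution, yielding the characteristic function \eqref{eq:CharacteristicFunctionZ1bc} of $Z_{1,b,c}$. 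In each case the resulting limit matches the $\phi_{\alpha,b}$ of \eqref{eq:CharacteristicFunctionStableLaw}.

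The boundary case $\alpha = 2$ requires a different route since the expansion above degenerates. The tail assumption and the identity $\E[Z^2 \indicator\{|Z| \leq x\}] = -x^2 \P(|Z| > x) + 2 \int_0^x y\, \P(|Z| > y)\, dy$ give the key asymptotic $\sigma^2(x) \equiv \E[Z^2 \indicator\{|Z| \leq x\}] \sim 2C \log x$. Truncating $Z_k$ at level $a_n = \sqrt{n \log n}$ produces a triangular array whose truncated variance satisfies $n \sigma^2(a_n)/a_n^2 \to 2b$, and Lindeberg's condition is verified using the regular variation of the tail; the Lindeberg--Feller CLT then yields the $Z_{2,b}$ limit, and standard tail estimates show the truncation error is asymptotically negligible. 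The case $\alpha > 2$ is just the classical Lindeberg--L\'evy CLT applied to $Z - \E Z$, with variance $\sigma^2 < \infty$ guaranteed by the tail bound and $b = \sigma^2/2$.

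The main obstacles will be case (ii) and case (iv). In (ii) the logarithmic slow variation appearing in the characteristic function expansion must be aligned exactly with the slow variation of the truncated centering $\E[Z \indicator\{Z \leq n\}]$, and the explicit form of $c$ in $Z_{1,b,c}$ emerges only after careful bookkeeping of both; the $\log$ in the exponent of $\phi_{1,b}$ is a feature of the $\alpha = 1$ regime that is not scale-invariant, so the centering is not translation-free. In (iv) one must verify Lindeberg's condition despite $Z$ having infinite variance, which is where the precise asymptotic $\sigma^2(x) \sim 2C \log x$ is essential. The remaining cases are then routine applications of the expansion together with L\'evy's continuity theorem.
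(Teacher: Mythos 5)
The paper does not prove this theorem: it is stated with an explicit citation to Gnedenko and Kolmogorov's monograph (Theorem~1, p.~172 for $\alpha \geq 2$; Theorem~2, p.~175 for $\alpha < 2$), so there is no in-paper argument to compare against. Your characteristic-function sketch is nevertheless the canonical route to this result, and it is essentially the proof given in that reference (and in Feller, Vol.~2, Ch.~XVII): expand $\phi(t) - 1$ as a tail integral against the regularly varying distribution function, scale out $|t|$, identify a stable characteristic exponent, and conclude via L\'evy's continuity theorem; handle $\alpha = 2$ by truncation and Lindeberg--Feller; handle $\alpha > 2$ by the classical CLT. Your identification of $\sigma^2(x) \sim 2C \log x$ for $\alpha = 2$ and the choice $a_n = \sqrt{n \log n}$ are correct, and you correctly isolate (ii) and (iv) as the delicate cases.

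One technical wrinkle you should repair if you flesh this out: the identity $\int_0^\infty (1 - e^{iu}) u^{-\alpha-1}\,du = -\Gamma(-\alpha) e^{-i\pi\alpha/2}$ holds only for $\alpha \in (0,1)$. For $\alpha \in (1,2)$ that integral diverges at the origin (the integrand behaves like $-iu \cdot u^{-\alpha-1} = -i u^{-\alpha}$, which is non-integrable near $0$ once $\alpha \geq 1$); the correct building block is the compensated integral $\int_0^\infty (1 - e^{iu} + iu)\, u^{-\alpha-1}\, du$, which converges for $\alpha \in (1,2)$ and is again expressible via $\Gamma(-\alpha)$. This is in fact exactly why the centering $n \E Z$ appears in case (iii): the subtracted linear term in the compensated integral is the Fourier-side shadow of centering by the mean. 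Your bookkeeping of the $\alpha = 1$ truncated centering (case (ii)) and the emergence of the shift parameter $c$ in $Z_{1,b,c}$ is the other place where the details genuinely earn their keep, and your outline there is sound.
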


\begin{theorem}[Theorem 3.1, page 17 in \cite{Gut2009}]
\label{the:ConvergenceInDistbitutionRandomNumberTermsIID} Let $(Z_k)_{k \geq
1}$ be i.i.d. random variables with $\E(Z_k) = 0$ and $\Var(Z_k) =
{\sigma}_Z^2 \in (0, \infty)$. Also, let $(N_n)_{n \geq 1}$ be a sequence of
random variables such that $N_n/n \rightarrow \theta$, in probability, for
some $\theta \in (0,\infty)$. Then $\big(\sum_{k=1}^{N_n}
Z_k\big)\big/\big(\sigma_Z \sqrt{n \theta }\big)
\stackrel{d.}{\longrightarrow} N(0,1)$, as $n \rightarrow \infty$.
\end{theorem}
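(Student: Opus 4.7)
The plan is to reduce to the classical central limit theorem via an Anscombe-type replacement of the random index $N_n$ by the deterministic index $m_n := \lfloor n\theta \rfloor$. Setting $S_m := \sum_{k=1}^m Z_k$, the classical CLT for i.i.d.\ centered random variables of finite variance (the content of Theorem~\ref{thm:ConvergenceInDistributionStableLaws}(v) specialized to $\E(Z_k) = 0$) gives $S_{m_n}/(\sigma_Z \sqrt{m_n}) \stackrel{d.}{\longrightarrow} N(0,1)$. Since $m_n/(n\theta) \to 1$, Slutsky's theorem yields $S_{m_n}/(\sigma_Z \sqrt{n\theta}) \stackrel{d.}{\longrightarrow} N(0,1)$ as well.

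The main step is the Anscombe replacement: $(S_{N_n} - S_{m_n})/\sqrt{n} \to 0$ in probability. For arbitrary $\epsilon, \eta > 0$, I would use the three-term decomposition
\begin{align*}
\P\big(|S_{N_n} - S_{m_n}| > \epsilon\sqrt{n}\big)
&\leq \P(|N_n - m_n| > \eta n) \\
&\quad + \P\Big( \max_{1 \leq j \leq \eta n} \Big| \textstyle\sum_{k=m_n+1}^{m_n+j} Z_k \Big| > \epsilon \sqrt{n} \Big) \\
&\quad + \P\Big( \max_{1 \leq j \leq \eta n} \Big| \textstyle\sum_{k=m_n-j+1}^{m_n} Z_k \Big| > \epsilon \sqrt{n} \Big).
\end{align*}
The first summand tends to $0$ by the hypothesis $N_n/n \to \theta$ in probability. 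The other two summands each control a maximum of partial sums of i.i.d.\ mean-zero random variables of variance $\sigma_Z^2$ (shifted to start at a deterministic index), so Kolmogorov's maximal inequality bounds each by $\sigma_Z^2 \cdot \eta n/(\epsilon^2 n) = \sigma_Z^2 \eta/\epsilon^2$. Sending $n \to \infty$ first and then $\eta \to 0$ establishes the claim.

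A final application of Slutsky's theorem combines the two pieces to give $S_{N_n}/(\sigma_Z \sqrt{n\theta}) \stackrel{d.}{\longrightarrow} N(0,1)$. The principal obstacle is carrying out the Anscombe replacement without any independence assumption between $(N_n)$ and the $Z_k$'s; this is resolved cleanly by splitting the maximum at the deterministic boundary $m_n$ into a forward piece and a backward piece, each of which is a maximum of partial sums of i.i.d.\ variables starting fresh at $m_n$, and thus amenable to Kolmogorov's inequality without needing to couple to the possibly dependent index sequence $(N_n)$.
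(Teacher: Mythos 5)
The paper does not prove this statement itself; it cites it directly as Theorem~3.1 of Gut's \emph{Stopped Random Walks} \cite{Gut2009}. Your argument is the standard Anscombe-type proof of the random-index CLT, and it is correct: the classical CLT handles the deterministic index $m_n = \lfloor n\theta\rfloor$, the three-term decomposition isolates the event $\{|N_n - m_n| > \eta n\}$ (which vanishes by the hypothesis on $N_n$), and Kolmogorov's maximal inequality controls the forward and backward maxima over a window of length $\eta n$ by $\sigma_Z^2\eta/\epsilon^2$ each — without needing any independence between $(N_n)$ and $(Z_k)$, exactly because the maximum is anchored at the deterministic index $m_n$. Taking $\limsup_n$ before $\eta \to 0$ and then applying Slutsky twice completes the proof. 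This is essentially the same proof that appears in Gut's book. Two small remarks: the CLT you invoke is just the ordinary Lindeberg--L\'evy CLT, not Theorem~\ref{the:ConvergenceInDistributionStableLaws}(v) of the paper, which is stated under regularly-varying-tail and one-sided-boundedness hypotheses that your $Z_k$ need not satisfy (and your \texttt{\textbackslash ref} key is also mislabeled); and the sums in your two maxima should start at $j = 0$ rather than $j = 1$ to cover the case $N_n = m_n$, though this changes nothing substantive.
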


In addition to these two theorems we will also need the following lemma for
the proof of part (v) of Lemma \ref{eq:LemDistributionSumVk}.

\begin{lemma}
\label{lem:RenewalTheoryLemmaForin} For any $\delta > 1$ the following hold:
\begin{itemize}
\item[(i)] $i_n/n \rightarrow 1/\mu_{\sigma}$ a.s. under $P_{0,\pi}^V$.

\item[(ii)] There exists some constant $c_{14} > 0$ such that:
\begin{align*}
& \lim_{n \to \infty} P_{0,\pi}^V(n - \sigma_{i_n} > k) \leq c_{14} \cdot k^{1 - \delta} ~,~\mbox{ for each $k \in \N$.} \\
& \lim_{n \to \infty} P_{0,\pi}^V(\sigma_{i_n + 1} - n > k) \leq c_{14} \cdot k^{1 - \delta} ~,~\mbox{ for each $k \in \N$.}
\end{align*}
\end{itemize}
\end{lemma}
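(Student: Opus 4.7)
Both parts are classical renewal-theoretic facts for the renewal sequence $(\sigma_i)_{i \geq 0}$. The necessary inputs from earlier in the paper are: (a) by Corollary~\ref{cor:HittingTime0sIsFinite}, $\sigma_0 < \infty$ a.s.\ under $P_{0,\pi}^V$; (b) by the strong Markov property for $(V_k, R_k)$ applied at $\sigma_0, \sigma_1, \ldots$, the increments $(\Delta_{\sigma,i})_{i \geq 1}$ are i.i.d.\ with common distribution equal to the law of $\sigma_0^V$ under $P_{0,s}^V$; (c) by Proposition~\ref{prop:DecayOfHittingTimeSigma0V}, $\overline F(x) := P(\Delta_{\sigma,1} > x) \sim c_{12}\, x^{-\delta}$, so $\mu_\sigma < \infty$ (since $\delta > 1$) and the integrated tail satisfies $\sum_{y \geq k} \overline F(y) \leq C\, k^{1-\delta}$ for some $C > 0$ and all $k \geq 1$.

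For part (i), I would apply the strong law of large numbers to $(\Delta_{\sigma,i})_{i \geq 1}$. Writing $\sigma_n = \sigma_0 + \sum_{i=1}^n \Delta_{\sigma,i}$ and using $\sigma_0/n \to 0$ a.s., one obtains $\sigma_n/n \to \mu_\sigma$ a.s. Since every $\sigma_i$ is finite, $i_n \to \infty$ a.s., and the sandwich $\sigma_{i_n} \leq n < \sigma_{i_n+1}$ forces $n/i_n \to \mu_\sigma$ a.s., whence $i_n/n \to 1/\mu_\sigma$ a.s.

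For part (ii), I would interpret $A_n := n - \sigma_{i_n}$ and $R_n := \sigma_{i_n+1} - n$ as the age and residual life at time $n$ of this delayed renewal process. The classical Key Renewal Theorem, applied in the aperiodic, finite-mean, lattice setting, yields for each fixed $k \in \N$
\begin{align*}
\lim_{n \to \infty} P_{0,\pi}^V(A_n \geq k) \;=\; \lim_{n \to \infty} P_{0,\pi}^V(R_n > k) \;=\; \frac{1}{\mu_\sigma} \sum_{y \geq k} \overline F(y),
\end{align*}
and combining with the tail bound in (c) gives the stated estimate with $c_{14} := C/\mu_\sigma$. The main obstacle is verifying the Key Renewal Theorem's aperiodicity hypothesis: this follows from aperiodicity of the uniformly ergodic chain $(R_k)$ combined with ellipticity of the stack $s$, since ellipticity allows the $V$-coordinate to remain at $0$ throughout any $R$-excursion from $s$ with positive probability, so the return-time distribution of $(V_k, R_k)$ to $(0,s)$ inherits the $\gcd$-$1$ property of the return distribution of $(R_k)$ to $s$. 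That the initial delay $\sigma_0$ is immaterial to the limit is routine given $P(\sigma_0 > n) \to 0$: one truncates $\sigma_0$ at any deterministic $N$, takes $n \to \infty$, then $N \to \infty$.
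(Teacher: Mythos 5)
Your argument is correct and follows essentially the same route as the paper: part (i) is the elementary SLLN-plus-sandwich argument (the paper cites it from Ross, Proposition 3.5.1), and part (ii) rests on aperiodicity of the delayed renewal process (via uniform ergodicity of $(R_k)$ and ellipticity) together with the limiting age/residual-life distribution, which the paper invokes in the equivalent form of the stationary no-renewal-in-an-interval probability $\E[(\Delta_\sigma - k)^+]/\E[\Delta_\sigma] = \frac{1}{\mu_\sigma}\sum_{y \geq k} \overline{F}(y)$, then bounds using the tail from Proposition~\ref{prop:DecayOfHittingTimeSigma0V}. The only cosmetic differences are your phrasing in terms of the Key Renewal Theorem rather than Ross's Example 4.3(C), and mild off-by-one slack between $A_n \geq k$ and $n - \sigma_{i_n} > k$, which is harmless since only an upper bound is claimed.
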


\begin{proof}[Proof of Lemma \ref{lem:RenewalTheoryLemmaForin} (i)]
Since $\Delta_{\sigma,i}$, $i \geq 1$, are i.i.d., the times $(\sigma_i)_{i
\geq 0}$ are the renewal times for a (delayed) renewal process. By definition
$i_n$ is the number of renewals up to time $n$ (excluding $\sigma_0)$. So, by
\cite[Proposition 3.5.1]{Ross1996} $i_n/n \rightarrow 1/\mu_\sigma$ a.s.
\end{proof}

\begin{proof}[Proof of Lemma \ref{lem:RenewalTheoryLemmaForin} (ii)]
Since the Markov chain $(R_k)$ is uniformly ergodic it is aperiodic, which
implies the Markov chain on pairs $(V_k, R_k)$ is aperiodic (due to
ellipticity of the cookie stacks), which implies the discrete renewal process
with renewal times $(\sigma_i)_{i \geq 0} $ is itself aperiodic. Let
$A_{m,k}$ be the event that there are no renewal times $\sigma_i$ in $\{m,
m+1, \ldots, m + k - 1\}$ and let $\Delta_{\sigma}$ be a random variable (on
some probability space) with the common distribution of the random variables
$\Delta_{\sigma,i}$, $i \geq 1$. It follows from aperiodicity and
\cite[Example 4.3 (C)]{Ross1996} that $P_{0,\pi}^V(A_{m,k}) \rightarrow
\E[(\Delta_{\sigma}-k)^+]/\E[\Delta_{\sigma}]$ as $m \rightarrow \infty$, for
each fixed $k \in \N$. Moreover, by Proposition
\ref{prop:DecayOfHittingTimeSigma0V},
$\E[(\Delta_{\sigma}-k)^+]/\E[\Delta_{\sigma}] \leq c_{14} \cdot k^{1 -
\delta}$, for some $c_{14} > 0$ and all $k \in \N$. Thus,
\begin{align*}
\lim_{n \to \infty} P_{0,\pi}^V(n - \sigma_{i_n} > k)
& = \lim_{n \to \infty} P_{0,\pi}^V(\mbox{no renewals times in } \{n-k, n-k+1,\ldots, n\}) \\
& = \lim_{n \to \infty} P_{0,\pi}^V(A_{n-k, k+1}) \leq \lim_{n \to \infty} P_{0,\pi}^V(A_{n-k, k}) \leq c_{14} \cdot k^{1-\delta}.
\end{align*}
and
\begin{align*}
\lim_{n \to \infty} P_{0,\pi}^V(\sigma_{i_n + 1} - n > k)
& = \lim_{n \to \infty} P_{0,\pi}^V(\mbox{no renewals times in } \{n+1, \ldots, n+k \}) \\
& = \lim_{n \to \infty} P_{0,\pi}^V(A_{n+1, k}) \leq c_{14} \cdot k^{1-\delta}.\qedhere
\end{align*}
\end{proof}

\begin{proof}[Proof of Lemma \ref{eq:LemDistributionSumVk}]
The proofs of the various parts of the lemma will be given separately, but we
begin with one important general observation that is necessary for the proof
of several parts. If $1 \leq \ell \leq m_n$ and $|\sigma_{m_n} - n| \leq
\ell$, then
\begin{align*}
\left| \sum_{k=0}^n V_k - \sum_{i=1}^{m_n} Q_i \right|
~~\leq~~ Q_0 +\hspace{-2 mm} \sum_{k = \sigma_{m_n} - \ell}^{\sigma_{m_n} + \ell} V_k
~~\leq~~Q_0 + \hspace{-2 mm}\sum_{k = \sigma_{(m_n - \ell)}}^{\sigma_{(m_n + \ell)}} V_k
~~=~~ Q_0 + \hspace{-2 mm} \sum_{i = m_n - \ell+1}^{m_n + \ell} Q_i
\end{align*}
since the random variables $V_k$ are nonnegative and $\sigma_{m_n - \ell}
\leq \sigma_{m_n} - \ell < \sigma_{m_n} + \ell \leq \sigma_{m_n + \ell}$.
Thus, if $(a_n)$ and $(b_n)$ are any sequences of positive real numbers such
that $b_n \rightarrow \infty$ and $a_n \rightarrow \infty$ with $a_n = o(n)$,
then
\begin{align}
\label{eq:VkQiSumsDifference}
\limsup_{n \to \infty} ~& P_{0,\pi}^V\Big( \Big| \sum_{k=0}^n V_k - \sum_{i=1}^{m_n} Q_i \Big| > 2 b_n \Big) \nonumber \\
& \leq \limsup_{n \to \infty} \left[ P_{0,\pi}^V(|\sigma_{m_n} - n| > a_n) + P_{0,\pi}^V(Q_0 > b_n) + P_{0,\pi}^V\Big(\sum_{i = m_n - \floor{a_n}+1}^{m_n + \floor{a_n}} Q_i > b_n\Big) \right] \nonumber \\
& \leq \limsup_{n \to \infty} P_{0,\pi}^V(|\sigma_{m_n} - n| > a_n) + \limsup_{n \to \infty} P_{0,\pi}^V\Big(\sum_{i = 1}^{2 \floor{a_n}} Q_i > b_n\Big).
\end{align}
In the last line we have used the fact that the random variables $(Q_i)_{i \geq 1}$ are i.i.d., and also the fact that $Q_0$ is $P_{0,\pi}^V$ a.s. finite (due to Corollary \ref{cor:HittingTime0sIsFinite}), which implies $P_{0,\pi}^V(Q_0 > b_n) \rightarrow 0$, if $b_n \rightarrow \infty$. \\

\noindent
\emph{Proof of (i):} \\
With $\delta \in (1,2)$ it follows from Proposition
\ref{prop:DecayOfHittingTimeSigma0V} and Theorem
\ref{the:ConvergenceInDistributionStableLaws}-(iii) that $\frac{\sum_{i =
1}^m \Delta_{\sigma,i} - m \mu_{\sigma}}{m^{1/\delta}}
\stackrel{d.}{\longrightarrow} Z_{\delta,B_1}$, as $m \rightarrow \infty$,
for some $B_1 > 0$. Also, since $\sigma_0$ is a.s. finite $\frac{\sigma_0 +
(m_n \mu_{\sigma} - n)}{m_n^{1/\delta}} \rightarrow 0$ a.s, as $n \rightarrow
\infty$. Hence,
\begin{align}
\label{eq:sigmamnLimitDistParti}
\frac{\sigma_{m_n} - n}{m_n^{1/\delta}}
= \left[ \frac{\sigma_0 + (m_n \mu_{\sigma} - n)}{m_n^{1/\delta}} + \frac{\sum_{i = 1}^{m_n} \Delta_{\sigma,i} - m_n \mu_{\sigma}}{m_n^{1/\delta}} \right]
\stackrel{d.}{\longrightarrow} Z_{\delta,B_1}, \mbox{ as $n \rightarrow \infty$}.
\end{align}
Further, by Proposition \ref{prop:DecayOfAccumulatedSumTillTimeSigma0V} and
Theorem \ref{the:ConvergenceInDistributionStableLaws}-(i), there is some $B_2
> 0$ such that
\begin{align}
\label{eq:QiSumLimitDistParti}
\frac{\sum_{i=1}^m Q_i }{m^{2/\delta}} \stackrel{d.}{\longrightarrow} Z_{\delta/2, B_2}, \mbox{ as $m \rightarrow \infty$.}
\end{align}
Now,
\begin{align*}
\frac{\sum_{k=0}^n V_k}{n^{2/\delta}}
= \left( \frac{m_n^{2/\delta}}{n^{2/\delta}} \cdot \frac{\sum_{i=1}^{m_n} Q_i}{m_n^{2/\delta}} \right) + \frac{\sum_{k=0}^n V_k - \sum_{i=1}^{m_n} Q_i}{n^{2/\delta}}
\equiv (I) + (II).
\end{align*}
Since $\frac{m_n^{2/\delta}}{n^{2/\delta}} \rightarrow
(1/\mu_{\sigma})^{2/\delta}$ it follows from \eqref{eq:QiSumLimitDistParti}
and \eqref{ZalphabScaling} that $(I) \stackrel{d.}{\longrightarrow}
Z_{\delta/2, b}$, for some $b > 0$. So it will suffice to show that $(II)
\stackrel{p.}{\longrightarrow} 0$. To do this, fix an arbitrary $\epsilon >
0$. Applying \eqref{eq:VkQiSumsDifference} with $a_n = n^p$, $p \in
(1/\delta, 1)$, and $b_n = \epsilon n^{2/\delta}$ gives
\begin{align*}
\limsup_{n \to \infty}&~ P_{0,\pi}^V\Big( \Big| \sum_{k=0}^n V_k - \sum_{i=1}^{m_n} Q_i \Big| > 2 \epsilon n^{2/\delta} \Big) \\
& \leq \limsup_{n \to \infty} P_{0,\pi}^V(|\sigma_{m_n} - n| > n^p) + \limsup_{n \to \infty} P_{0,\pi}^V\Big(\sum_{i = 1}^{2 \floor{n^p}} Q_i > \epsilon n^{2/\delta}\Big).
\end{align*}
The first term on the right hand side is 0 by
\eqref{eq:sigmamnLimitDistParti}, and the second term on the right hand side
is 0 by \eqref{eq:QiSumLimitDistParti}.
Since $\epsilon > 0$ is arbitrary, it follows that $(II) \stackrel{p.}{\longrightarrow} 0$. \\

\noindent
\emph{Proof of (ii):} \\
With $\delta = 2$ it follows from Proposition
\ref{prop:DecayOfAccumulatedSumTillTimeSigma0V} that $P_{0,\pi}^V(Q_i > x)
\sim c_{13} x^{-1}$, for $i \geq 1$, which implies
\begin{align}
\label{eq:muQnScaling}
\mu_Q(t) \sim c_{13} \log(t), \mbox{ as $t \rightarrow \infty$} ~~\mbox{ and }~~ \lim_{n \to \infty} \left[ \mu_Q(n/\mu_{\sigma}) - \mu_Q(m_n) \right] = 0.
\end{align}
Also, similar arguments as in the proof of part (i) of the lemma using
Propositions \ref{prop:DecayOfHittingTimeSigma0V} and
\ref{prop:DecayOfAccumulatedSumTillTimeSigma0V} along with parts (ii) and
(iv) of Theorem \ref{the:ConvergenceInDistributionStableLaws} show that there
are constants $B_1, B_2 > 0$ and $C \in \R$ such that:
\begin{align}
\label{eq:sigmamnLimitDistPartii}
&\frac{\sigma_{m_n} - n}{\sqrt{m_n \log(m_n)}} \stackrel{d.}{\longrightarrow} Z_{2, B_1}, \mbox{ as $n \rightarrow \infty$.} \\
\label{eq:QiSumLimitDistPartii}
& \frac{\sum_{i=1}^{m} Q_i - m \mu_Q(m)}{m} \stackrel{d.}{\longrightarrow} Z_{1, B_2, C}, \mbox{ as $m \rightarrow \infty$.}
\end{align}
Now,
\begin{align*}
& \frac{\sum_{k=0}^n V_k - \frac{\mu_Q(n/\mu_{\sigma})}{\mu_{\sigma}} n}{n} \\
~~& = \left( \frac{m_n}{n} \cdot \frac{\sum_{i=1}^{m_n} Q_i - m_n \mu_Q(m_n)}{m_n} \right)
+ \frac{m_n \mu_Q(m_n) - \frac{n}{\mu_{\sigma}} \mu_Q(n/\mu_{\sigma})}{n}
+ \frac{\sum_{k=0}^n V_k - \sum_{i=1}^{m_n} Q_i }{n} \\
~~ & \equiv (I) + (II) + (III).
\end{align*}
Since $m_n/n \rightarrow 1/\mu_{\sigma}$ it follows from
\eqref{eq:QiSumLimitDistPartii} and \eqref{Z1bcScaling} that $(I)
\stackrel{d.}{\longrightarrow} Z_{1, b, c}$, for some $b > 0$ and $c \in \R$.
Also, by \eqref{eq:muQnScaling}, $(II) \rightarrow 0$ (deterministically).
So, it will suffice to show that $(III)  \stackrel{p.}{\longrightarrow} 0$.
To do this, fix an arbitrary $\epsilon > 0$. Applying
\eqref{eq:VkQiSumsDifference} with $a_n = n^p$, $p \in (1/2, 1)$, and $b_n =
\epsilon n$ gives
\begin{align*}
\limsup_{n \to \infty}~ &P_{0,\pi}^V \Big( \Big| \sum_{k=0}^n V_k - \sum_{i=1}^{m_n} Q_i \Big| > 2 \epsilon n \Big) \\
& \leq \limsup_{n \to \infty} P_{0,\pi}^V(|\sigma_{m_n} - n| > n^p) + \limsup_{n \to \infty} P_{0,\pi}^V\Big(\sum_{i = 1}^{2 \floor{n^p}} Q_i > \epsilon n\Big).
\end{align*}
The first term on the right hand side is 0 by \eqref{eq:sigmamnLimitDistPartii}, and the second term on the right hand side is 0 by \eqref{eq:muQnScaling} and \eqref{eq:QiSumLimitDistPartii}. Since $\epsilon > 0$ is arbitrary, it follows that $(III) \stackrel{p.}{\longrightarrow} 0$. \\

\noindent
\emph{Proof of (iii) and (iv):} \\
Define $d(n) = n^{2/\delta}$ if $\delta \in (2,4)$ and $d(n) = [n
\log(n)]^{1/2}$ if $\delta = 4$. We wish to show that
\begin{align*}
\frac{\sum_{k=0}^n V_k - \frac{\mu_Q}{\mu_{\sigma}} n}{d(n)} \stackrel{d.}{\longrightarrow} Z_{\delta/2, b} ~,~\mbox{ for some $b > 0$}.
\end{align*}
Similar arguments as in the proof of part (i) of the lemma using Propositions
\ref{prop:DecayOfHittingTimeSigma0V} and
\ref{prop:DecayOfAccumulatedSumTillTimeSigma0V} along with parts (iii), (iv),
and (v) of Theorem \ref{the:ConvergenceInDistributionStableLaws} show that
there are constants $B_1, B_2 > 0$ such that:
\begin{align}
\label{eq:sigmamnLimitDistPartiiiAndiv}
&\frac{\sigma_{m_n} - n}{m_n^{1/2}} \stackrel{d.}{\longrightarrow} Z_{2, B_1}, \mbox{ as $n \rightarrow \infty$.} \\
\label{eq:QiSumLimitDistPartiiiAndiv}
& \frac{\sum_{i=1}^{m} Q_i - m \mu_Q}{d(m)} \stackrel{d.}{\longrightarrow} Z_{\delta/2, B_2}, \mbox{ as $m \rightarrow \infty$.}
\end{align}
Now,
\begin{align*}
\frac{\sum_{k=0}^n V_k - \frac{\mu_Q}{\mu_{\sigma}} n}{d(n)}
& = \left( \frac{d(m_n)}{d(n)} \cdot \frac{ \sum_{i=1}^{m_n} Q_i - m_n \mu_Q}{d(m_n)} \right)
+ \frac{\mu_Q(m_n - n/\mu_{\sigma})}{d(n)}
+ \frac{\sum_{k=0}^n V_k - \sum_{i=1}^{m_n} Q_i }{d(n)} \\
& \equiv (I) + (II) + (III).
\end{align*}
Since $d(m_n)/d(n) \rightarrow (1/\mu_{\sigma})^{2/\delta}$ it follows from
\eqref{eq:QiSumLimitDistPartiiiAndiv} and \eqref{ZalphabScaling} that $(I)
\stackrel{d.}{\longrightarrow} Z_{\delta/2, b}$, for some $b > 0$. Also,
$(II) \rightarrow 0$ (deterministically). So, it will suffice to show that
$(III) \stackrel{p.}{\longrightarrow} 0$. To do this, fix an arbitrary
$\epsilon > 0$. Applying \eqref{eq:VkQiSumsDifference} with $a_n = n^{1/2}
(\log n)^{1/4}$ and $b_n = \epsilon d(n)$ gives
\begin{align*}
& \limsup_{n \to \infty} P_{0,\pi}^V\Big( \Big| \sum_{k=0}^n V_k - \sum_{i=1}^{m_n} Q_i \Big| > 2 \epsilon d(n) \Big) \\
&~~ \leq \limsup_{n \to \infty} P_{0,\pi}^V\Big(|\sigma_{m_n} - n| > n^{1/2} (\log n)^{1/4} \Big)
+ \limsup_{n \to \infty} P_{0,\pi}^V\Big(\sum_{i = 1}^{2 \floor{n^{1/2} (\log n)^{1/4}}} Q_i > \epsilon d(n) \Big).
\end{align*}
The first term on the right hand side is 0 by
\eqref{eq:sigmamnLimitDistPartiiiAndiv}, and the second term on the right
hand side is 0 by \eqref{eq:QiSumLimitDistPartiiiAndiv}.
Since $\epsilon > 0$ is arbitrary, it follows that $(III) \stackrel{p.}{\longrightarrow} 0$. \\

\noindent
\emph{Proof of (v):} \\
Let $Z_i = Q_i - \frac{\mu_Q}{\mu_{\sigma}} \Delta_{\sigma,i}$, $i \geq 1$.
Note that with $\delta > 4$ the random variables $(Z_i)_{i \geq 1}$ are
i.i.d. under $P_{0,\pi}^V$ with mean $0$ and finite variance, due to
Propositions \ref{prop:DecayOfHittingTimeSigma0V} and
\ref{prop:DecayOfAccumulatedSumTillTimeSigma0V}. By \eqref{eq:inVersusn},
\begin{align*}
\frac{ \sum_{k=0}^n V_k - \frac{\mu_Q}{\mu_{\sigma}} n}{n^{1/2}}
\geq \frac{ \sum_{i = 1}^{i_n} Z_i}{n^{1/2}} + \frac{ \frac{\mu_Q}{\mu_{\sigma}}(\sigma_{i_n} - \sigma_0 - n)}{n^{1/2}}
\equiv (I) + (II)
\end{align*}
and
\begin{align*}
\frac{ \sum_{k=0}^n V_k - \frac{\mu_Q}{\mu_{\sigma}} n}{n^{1/2}}
\leq \frac{ \sum_{i = 1}^{i_n+1} Z_i}{n^{1/2}} + \frac{ \frac{\mu_Q}{\mu_{\sigma}}(\sigma_{i_n+1} - \sigma_0 - n) + Q_0}{n^{1/2}}
\equiv (I') + (II').
\end{align*}
By Lemma \ref{lem:RenewalTheoryLemmaForin}-(i), $i_n/n
\stackrel{p.}{\longrightarrow} 1/\mu_{\sigma}$. So, it follows from Theorem
\ref{the:ConvergenceInDistbitutionRandomNumberTermsIID} that $(I)
\stackrel{d.}{\longrightarrow} Z_{2,b}$ and $(I')
\stackrel{d.}{\longrightarrow} Z_{2,b}$ where $b \equiv \Var(Z_i)/(2
\mu_{\sigma})$. Furthermore, by Corollary \ref{cor:HittingTime0sIsFinite} and
Lemma \ref{lem:RenewalTheoryLemmaForin}-(ii), $(II)
\stackrel{p.}{\longrightarrow} 0$ and $(II') \stackrel{p.}{\longrightarrow}
0$. This completes the proof.
\end{proof}

The next lemma gives the limiting distribution of the hitting times
$\tau_n^X$.

\begin{lemma}
\label{eq:LemDistributionTaunX} Let $v = 1/(1 + 2\frac{\mu_Q}{\mu_{\sigma}})$
be the velocity of the ERW $(X_n)_{n \geq 0}$ from
\eqref{eq:VelocityFormula}. Then under the probability measure $P_{0,\pi}$
for the ERW $(X_n)$ the following hold:
\begin{itemize}
\item[(i)] If $\delta \in (1,2)$ then there is some $b > 0$ such that
    $\dfrac{\tau_n^X}{n^{2/\delta}} \stackrel{d.}{\longrightarrow}
    Z_{\delta/2,b}$, as $n \rightarrow \infty$.
\item[(ii)] If $\delta = 2$ then there are constants $b > 0$ and $c \in \R$
    such that $\dfrac{\tau_n^X - \left[1 + \frac{2
    \mu_Q(n/\mu_{\sigma})}{\mu_{\sigma}} \right]n}{n}
    \stackrel{d.}{\longrightarrow} Z_{1,b,c}$, as $n \rightarrow \infty$.
\item[(iii)] If $\delta \in (2,4)$ then there is some $b > 0$ such that
    $\dfrac{\tau_n^X - \frac{1}{v} n}{n^{2/\delta}}
    \stackrel{d.}{\longrightarrow} Z_{\delta/2, b}$, as $n \rightarrow
    \infty$.
\item[(iv)] If $\delta =4$ then there is some $b > 0$ such that
    $\dfrac{\tau_n^X - \frac{1}{v} n}{[n \log(n)]^{1/2}}
    \stackrel{d.}{\longrightarrow} Z_{2, b}$, as $n \rightarrow \infty$.
\item[(v)] If $\delta >4$ then there is some $b > 0$ such that $\dfrac{
    \tau_n^X - \frac{1}{v} n}{n^{1/2}} \stackrel{d.}{\longrightarrow} Z_{2,
    b}$, as $n \rightarrow \infty$.
\end{itemize}
\end{lemma}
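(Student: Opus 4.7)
The plan is to reduce the limit laws for $\tau_n^X$ to the limit laws for $\sum_{k=0}^n V_k$ of Lemma~\ref{eq:LemDistributionSumVk}, using the hitting-time identity \eqref{eq:RelationHittingTimesAndDownCrossings} together with the duality in Lemma~\ref{lem:DowncrossingBackwardBPSameDistribution}. Since $\delta > 1$ in every case, Lemma~\ref{lem:DowncrossingBackwardBPSameDistribution} applies and gives
\[
\sum_{k=0}^n D_{n,k} \;\stackrel{d}{=}\; \sum_{k=0}^n V_k
\]
(left-hand side under $P_{0,\pi}$, right-hand side under $P_{0,\pi}^V$ with $V_0=0$). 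Writing $R_n \equiv \sum_{k<0} D_{n,k}$, \eqref{eq:RelationHittingTimesAndDownCrossings} becomes $\tau_n^X = n + 2 \sum_{k=0}^n D_{n,k} + 2 R_n$, which I would take as the starting point for all five assertions.

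The remainder $R_n$ is easy to control: since $\delta > 1$, Theorem~\ref{thm:TransienceRecurrence} gives $P_{0,\pi}(X_n \to +\infty) = 1$, so the walk eventually stays positive forever; hence $R_n$ is monotone in $n$ and increases to the $P_{0,\pi}$-a.s.\ finite limit $R_\infty = \sum_{k<0} D_{\infty,k}$. In particular $R_n/d_n \to 0$ a.s.\ for any normalization $d_n \to \infty$ appearing in the theorem. Combining this with the distributional identity above and Slutsky's theorem, every limit in distribution established for $(2\sum V_k - c_n)/d_n$ transfers to a limit for $(\tau_n^X - n - c_n)/d_n$.

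Given these two inputs the rest is bookkeeping. In case~(i), $d_n = n^{2/\delta}$ with $2/\delta \in (1,2)$, so both $n/d_n$ and $R_n/d_n$ vanish; Lemma~\ref{eq:LemDistributionSumVk}(i) plus the stable-scaling relation \eqref{ZalphabScaling} (applied with $a=2$ to absorb the factor~$2$ into the parameter $b$) yields the claim. For cases~(iii)--(v), the centering $\tfrac{1}{v}\, n = (1 + 2\mu_Q/\mu_\sigma)\, n$ exactly accounts for the $n$ and $\tfrac{\mu_Q}{\mu_\sigma} n$ terms, reducing the claim to $2$ times the corresponding assertion of Lemma~\ref{eq:LemDistributionSumVk}, rescaled via \eqref{ZalphabScaling}. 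For the borderline case~(ii) with $\delta = 2$ one writes
\[
\frac{\tau_n^X \,-\, \bigl[ 1 + \tfrac{2\,\mu_Q(n/\mu_\sigma)}{\mu_\sigma} \bigr] n}{n}
\;=\; 2\cdot \frac{\sum_{k=0}^n V_k \,-\, \tfrac{\mu_Q(n/\mu_\sigma)}{\mu_\sigma}\, n}{n} \;+\; \frac{2 R_n}{n},
\]
applies Lemma~\ref{eq:LemDistributionSumVk}(ii) to the main term, and invokes the $Z_{1,b,c}$-scaling identity \eqref{Z1bcScaling} with $a_1 = 2$, $a_2 = 0$ to absorb the factor~$2$ and its accompanying logarithmic constant into updated parameters $(b', c')$. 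The only step requiring any thought is the tightness of $R_n$, which is immediate from right transience and ellipticity; I do not expect a substantial obstacle beyond assembling these pieces.
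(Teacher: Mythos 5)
Your proposal is correct and follows exactly the same route as the paper: express $\tau_n^X$ via \eqref{eq:RelationHittingTimesAndDownCrossings}, dispose of $\sum_{k<0}D_{n,k}$ using right transience, transfer the distributional statements from $\sum_{k=0}^n D_{n,k}$ to $\sum_{k=0}^n V_k$ via Lemma~\ref{lem:DowncrossingBackwardBPSameDistribution}, and absorb the factor of $2$ through the stable-scaling identities \eqref{ZalphabScaling} and \eqref{Z1bcScaling}. The only cosmetic difference is that you make explicit the monotone convergence of $R_n$ to an a.s.\ finite limit, where the paper simply cites finiteness of the limsup; both observations are equivalent and immediate.
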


\begin{proof}
Since $(X_n)$ is $P_{0,\pi}$ a.s. right transient with any $\delta > 1$,
$\limsup_{n \to \infty} \sum_{k < 0} D_{n,k}$ is $P_{0,\pi}$ a.s. finite. So,
$\left(\sum_{k < 0} D_{n,k}\right)/n^{\alpha} \stackrel{p.}{\longrightarrow}
0$, for any $\alpha > 0$. Thus, parts (i)-(v) of this lemma follow directly
from (i)-(v) of Lemma \ref{eq:LemDistributionSumVk} using
\eqref{eq:RelationHittingTimesAndDownCrossings} and Lemma
\ref{lem:DowncrossingBackwardBPSameDistribution}, along with
\eqref{Z1bcScaling} for part (ii) and \eqref{ZalphabScaling} for the other
parts. (Note that the constants $b$, $c$ are modified from Lemma
\ref{eq:LemDistributionSumVk}.)
\end{proof}

The proof of Theorem \ref{thm:LimitLaws} below will be based on Lemma
\ref{eq:LemDistributionTaunX}, but first we will need one final lemma about
backtracking probabilities. The same statement has been given in \cite[Lemma
6.1]{Peterson2012} for the case of (IID) environments, and our proof will be
very similar, but must be adjusted slightly to use the renewal times
$(\sigma_i)_{i \geq 0}$, instead of the successive times at which $V_k = 0$.

\begin{lemma}
\label{lem:BacktrackingProbabilities} Assume that $\delta > 1$ and let
$c_{14}$ be as in Lemma \ref{lem:RenewalTheoryLemmaForin}. Then
\begin{align*}
P_{0,\pi}\Big( \inf_{m \geq \tau_{n + k}^X} X_m \leq n \Big) \leq c_{14} \cdot k^{1 - \delta} ~,~ \mbox{ for all $k, n \in \N$.}
\end{align*}
In particular, $P_{0,\pi}\Big( \inf\limits_{m \geq \tau_{n + k}^X} X_m \leq n
\Big) \rightarrow 0$, as $k \rightarrow \infty$, uniformly in $n$.
\end{lemma}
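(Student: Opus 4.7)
My plan is to bound the backtracking probability by the probability that the backward branching process $(V_k, R_k)$ has no renewal time $\sigma_i$ in a length-$k$ window, and then invoke the tail decay from Proposition \ref{prop:DecayOfHittingTimeSigma0V}.

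Since $\delta > 1$, the walker is $P_{0,\pi}$-a.s.\ right transient by Theorem \ref{thm:TransienceRecurrence}. For each $j \in \N$, let $\bar D_j$ denote the total number of downcrossings of the edge $(j, j-1)$ by $(X_m)$ over all time, and let $L_j \equiv |\{m \geq 0 : X_m = j\}|$ be the walker's total local time at $j$; both are $P_{0,\pi}$-a.s.\ finite. A direct counting of the coin tosses $\xi_i^j$ from the coin-tossing construction yields $L_j = 1 + \bar D_j + \bar D_{j+1}$ together with the backward recursion
\[
\bar D_j \;=\; \#\big\{\text{failures in } (\xi_i^j)_{i \in \N} \text{ before the } (\bar D_{j+1}+1)\text{-th success}\big\},
\]
which exactly matches the defining recursion \eqref{eq:DefBackwardBP} of the backward branching process. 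Combining this with spatial stationarity of $(S_k)$ under $\PB_{\pi}$ and a monotone limit $N \uparrow \infty$ in Lemma \ref{lem:DowncrossingBackwardBPSameDistribution} (using $D^*_{N, j} \uparrow \bar D_j$ for each $j$, where $D^*_{N, j}$ denotes downcrossings before $\tau_N^X$), I would couple $((\bar D_j, S_j))_{j \geq 1}$ pathwise to a realization of the Markov chain $(V_k, R_k)$ under which the sites $j^*$ with $\bar D_{j^*} = 0$ and $S_{j^*} = s$ correspond precisely to the renewal times $\sigma_i$.

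Under this coupling, if $\bar D_{j^*} = 0$ for some $j^* \in (n, n+k]$, then every visit of the walker to $j^*$ is followed by a right jump, so the walker stays in $[j^*, \infty) \subset (n, \infty)$ after first reaching $j^*$ (which occurs no later than $\tau_{n+k}^X$). Hence the backtracking event $\{\inf_{m \geq \tau_{n+k}^X} X_m \leq n\}$ forces $\bar D_{j^*} \geq 1$ for every $j^* \in (n, n+k]$ and, a fortiori, forbids any renewal in that window, yielding
\[
P_{0,\pi}\!\left(\inf_{m \geq \tau_{n+k}^X} X_m \leq n\right) \;\leq\; P^{V}_{0,\pi}\!\left(\text{no } \sigma_i \text{ lies in } (n, n+k]\right).
\]
Uniformly in $n$, the right-hand side is bounded by the stationary renewal probability $\E[(\Delta_\sigma - k)^+]/\mu_\sigma$, which by Proposition \ref{prop:DecayOfHittingTimeSigma0V} together with integration by parts is at most $c_{14} \cdot k^{1-\delta}$ (using the trivial bound of $1$ when $c_{14} \cdot k^{1-\delta} \geq 1$ to absorb any initial transient).

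The main obstacle is the pathwise coupling in the first step: Lemma \ref{lem:DowncrossingBackwardBPSameDistribution} supplies only an equality in distribution on a finite window, so upgrading it to the two-sided coupling needed here requires a careful monotone limit $N \uparrow \infty$ combined with spatial shift-invariance under $\PB_\pi$, so that the renewal indices of the backward branching process are realized precisely at the walker's regeneration sites.
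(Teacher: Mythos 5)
Your core idea matches the paper's: the backtracking event $\{\inf_{m \geq \tau_{n+k}^X} X_m \leq n\}$ forces a downcrossing of every edge $(j, j-1)$ for $j \in \{n+1,\dots,n+k\}$, and in the backward-branching-process picture this forbids any renewal time $\sigma_i$ in the corresponding length-$k$ window, whose probability is then controlled by the renewal-theory bound $c_{14} k^{1-\delta}$ from Lemma~\ref{lem:RenewalTheoryLemmaForin}-(ii). Where you diverge from the paper is the implementation. You work with the \emph{total} downcrossings $\bar D_j$ and propose to construct a pathwise coupling of $(\bar D_j, S_j)_{j\geq 1}$ with $(V_k, R_k)_{k\geq 0}$ — a step you correctly flag as delicate, since Lemma~\ref{lem:DowncrossingBackwardBPSameDistribution} gives only a distributional equality over a finite window with a reversed index map $D_{N,j} \leftrightarrow V_{N-j}$, so as $N\to\infty$ the fixed site $j$ maps to an increasingly late $V$-time, not to a single limiting object. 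The paper sidesteps the coupling entirely: it writes the backtracking event as the monotone increasing limit (in $N$) of $\{\inf_{\tau_{n+k}^X \leq m < \tau_N^X} X_m \leq n\}$, which for each finite $N$ is contained in $\{D_{N,j}\geq 1, \forall j \in \{n+1,\dots,n+k\}\}$; the distributional equality then converts this to $P_{0,\pi}^V(V_\ell \geq 1,\ \forall \ell \in \{N-n-k,\dots,N-n-1\})$, and the limit in $N$ pushes the $V$-window to infinity (uniformly in $n$, since smaller $n$ only pushes the window further out) so that Lemma~\ref{lem:RenewalTheoryLemmaForin}-(ii) applies directly. This is worth noting for your write-up: the phrase ``uniformly in $n$, the right-hand side is bounded by the stationary renewal probability'' needs the observation that the index reversal sends $n$ to a $V$-window receding to infinity — it is not a uniform-in-starting-time renewal estimate, which Lemma~\ref{lem:RenewalTheoryLemmaForin}-(ii) does not actually provide. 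Using $D_{N,j}$ and limits of probabilities rather than $\bar D_j$ and a pathwise coupling gives the same bound with far less machinery, and I would recommend that route.
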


\begin{proof}
As in the proof of Lemma \ref{lem:RenewalTheoryLemmaForin} we consider the
renewal times $\sigma_i$ and note that, for each fixed $k$,
\begin{align*}
\lim_{N \to \infty} P_{0,\pi}^V\Big( \mbox{no renewals in times } \{N, N + 1, \ldots, N + k - 1\} \Big) \leq c_{14} \cdot k^{1-\delta}.
\end{align*}
Using this along with Lemma \ref{lem:DowncrossingBackwardBPSameDistribution}
gives
\begin{align*}
& P_{0,\pi}\Big( \inf_{m \geq \tau_{n + k}^X} X_m \leq n \Big)
= \lim_{N \to \infty} P_{0,\pi}\Big( \inf_{\tau_{n + k}^X \leq m < \tau_N^X} X_m \leq n \Big) \\
& \leq \lim_{N \to \infty} P_{0,\pi}\Big( D_{N,j} \geq 1, \forall j \in \{n+1, n+2, \ldots, n+k\} \Big) \\
& = \lim_{N \to \infty} P_{0,\pi}^V\Big(V_j \geq 1, \forall j \in \{N - n - k, \ldots, N - n - 1\}\Big) \\
& \leq \lim_{N \to \infty} P_{0,\pi}^V\Big(\mbox{no renewals in times } \{N - n - k, \ldots, N - n - 1\}\Big) \\
& \leq c_{14} \cdot k^{1-\delta}.\qedhere
\end{align*}
\end{proof}

\begin{proof}[Proof of Theorem \ref{thm:LimitLaws}]
Recall that for the probability measure $\PB$ on environments, the marginal
distribution of $S_0$ is $\phi$.
We will first prove the theorem in the case that $\phi = \pi$ is the stationary distribution. We will then extend to the case of general $\phi$. \\

\noindent
\underline{Case 1}: $\phi = \pi$.\\
Let $X_n^+ = \sup_{i \leq n} X_i$ and $X_n^- = \inf_{i \geq n} X_i$. Since
$X_n^- \leq X_n \leq X_n^+$, for all $n$, it will suffice to show that
\begin{align*}
& \mbox{(a) Parts (i)-(v) of the theorem all hold with $X_n$ replaced with $X_n^+$, and } \\
& \mbox{(b) Parts (i)-(v) of the theorem all hold with $X_n$ replaced with $X_n^-$. }
\end{align*}
Now observe that, for any $n,m,k \in \N$,
\begin{align}
\label{eq:XnPlusLessThanm}
\{X_n^+ < m\} = \{\tau_m^X > n\}
\end{align}
and
\begin{align}
\label{eq:XnMinusLessThanm}
\{X_n^+ < m\} \subset \{X_n^- < m\} \subset \Big( \{X_n^+ < m + k\} \cup \big\{ \inf_{i \geq \tau_{m+k}^X} X_i < m\big\} \Big).
\end{align}

Standard computations using \eqref{eq:XnPlusLessThanm} and Lemma
\ref{eq:LemDistributionTaunX} give (a) for parts (i), (iii), (iv), and (v) of
the theorem (with a modified value of the constant $b$ in parts (iii)-(v)).
The proof of (a) for part (ii) of the theorem is a bit more subtle and will
be given below, following closely the method in \cite[Appendix
B]{Kosygina2017}. The second statement (b) follows from (a) using
\eqref{eq:XnMinusLessThanm} and Lemma \ref{lem:BacktrackingProbabilities}.

We proceed now to the proof of (a) for part (ii) of the theorem. Assume
$\delta = 2$ and define $D(t) = c + 1 +  \frac{2
\mu_Q(t/\mu_{\sigma})}{\mu_{\sigma}}$ and $a = \mu_{\sigma}/(2c_{13})$, where
$c$ is the same constant from Lemma \ref{eq:LemDistributionTaunX}-(ii). Then,
by Lemma \ref{eq:LemDistributionTaunX}-(ii) and \eqref{Z1bcScaling},
\begin{align}
\label{eq:LimitTaunDelta2withDn}
\frac{\tau_n^X - D(n) n}{n} \stackrel{d.}{\longrightarrow} Z_{1,b}, \mbox { for some $b > 0$}.
\end{align}
Also, since $P_{0,\pi}^V(Q_i > x) \sim c_{13} x^{-1}$, $i \geq 1$, it follows
from the definition of $\mu_Q(t)$ that
\begin{align}
\label{eq:AsymptoticsDt}
D(t) \sim \frac{1}{a} \log(t), \mbox{ as $t \rightarrow \infty$} ~~\mbox{ and }~~ \lim_{n \to \infty} D(k_n) - D(n) = 0, \mbox{ if $k_n \sim n$}.
\end{align}
For $t > 0$, let $\Gamma(t) = \inf\{s > 0: sD(s) \geq t\}$. Note that $D(t)
\sim \frac{1}{a} \log(t) \implies \Gamma(t) \sim a t /\log(t)$. Further we
claim that
\begin{align}
\label{eq:AsymptoticsGammatDt}
\Gamma(t) D(\Gamma(t)) = t + o(\Gamma(t)), \mbox{ as $t \rightarrow \infty$}.
\end{align}
To see this note that the function $g(s) = s D(s)$ is right continuous and
strictly increasing for all sufficiently large $s$. Moreover, jump
discontinuities in this function $g(s)$ can occur only at $s = k
\mu_{\sigma}$ for integer $k$, and at such an $s$ the size of the jump
discontinuity is $s [\frac{2}{\mu_{\sigma}} \cdot \frac{s}{\mu_{\sigma}}
P_{0,\pi}^V(Q_i = \frac{s}{\mu_{\sigma}})] = 2(\frac{s}{\mu_{\sigma}})^2
P_{0,\pi}^V(Q_i = \frac{s}{\mu_{\sigma}})$. It follows from these
observations and the definition of $\Gamma(t)$ that
\begin{align}
\label{eq:GammaDGammatBound}
|\Gamma(t) D(\Gamma(t)) - t| \leq 2(\Gamma(t)/\mu_{\sigma})^2 P_{0,\pi}^V(Q_i = \Gamma(t)/\mu_{\sigma}),
\end{align}
for all sufficiently large $t$. Now, since $P_{0,\pi}^V(Q_i > x) \sim c_{13}
x^{-1}$, we have $x P_{0,\pi}^V(Q_i  = x) \rightarrow 0$, as $x \rightarrow
\infty$. So, the right hand side of \eqref{eq:GammaDGammatBound} is
$o(\Gamma(t))$, which proves \eqref{eq:AsymptoticsGammatDt}.

Now, for $x \in \R$ and $n \in \N$, let $k_{n,x} = \max\{\lceil{\Gamma(n) +
\frac{xn}{(\log n)^2}\rceil}, 0\}$. Note that $k_{n,x} \sim \Gamma(n)$ as $n
\rightarrow \infty$, since $\Gamma(n) \sim an/\log(n)$. Using this fact along
with \eqref{eq:AsymptoticsDt} and \eqref{eq:AsymptoticsGammatDt} and, again,
the tail asymptotics for $\Gamma(n)$ shows that, for any fixed $x$,
\begin{align}
\label{eq:negxa2Limit}
\lim_{n \to \infty} \frac{n - k_{n,x} D(k_{n,x})}{k_{n,x}}
= \lim_{n \to \infty} \frac{n - \left[\Gamma(n) + \frac{xn}{(\log n)^2}\right] D(\Gamma(n))}{\Gamma(n) + \frac{xn}{(\log n)^2}}
= \frac{-x}{a^2}.
\end{align}
Further, since $X_n^+$ takes only integer values it follows from
\eqref{eq:XnPlusLessThanm} that, for all sufficiently large $n$,
\begin{align*}
P_{0,\pi}^V \left(\frac{X_n^+ - \Gamma(n)}{n/(\log n)^2} < x \right)
= P_{0,\pi}^V\left(\frac{\tau^X_{k_{n,x}} - k_{n,x} D(k_{n,x})}{k_{n,x}} > \frac{n - k_{n,x} D(k_{n,x})}{k_{n,x}} \right).
\end{align*}
Taking the limit of both sides as $n \rightarrow \infty$ and using
\eqref{eq:LimitTaunDelta2withDn} and \eqref{eq:negxa2Limit} shows that
$\lim_{n \to \infty} P_{0,\pi}^V \left(\frac{X_n^+ - \Gamma(n)}{n/(\log n)^2}
< x \right) = \P(Z_{1,b} > -x/a^2)$, which implies
$\frac{X_n^+ - \Gamma(n)}{a^2n/(\log n)^2} \stackrel{d.}{\longrightarrow} - Z_{1,b}$. \\

\noindent
\underline{Case 2}: General $\phi$. \\
We will extend from Case 1 using a coupling argument. Let $(S_k^{\phi})_{k
\in \Z}$ and $(S_k^{\pi})_{k \in \Z}$ denote the stack sequences when $S_0$
has marginal distribution $\phi$ and $\pi$ respectively. Couple these
processes as follows. First sample $(S_k^{\phi})_{k \leq 0}$ and
$(S_k^{\pi})_{k \leq 0}$ independently. Then run the Markov chains
$(S_k^{\phi})$ and $(S_k^{\pi})$ forward in time independently, starting from
the given values of $S_0^{\phi}$ and $S_0^{\pi}$, until the first time $N >
0$ such that $S_N^{\phi} = S_N^{\pi}$ (due to the uniform ergodicity
hypothesis $N$ is a.s. finite). After the chains first meet at time $N$, run
them forward together, so that $S_k^{\phi} = S_k^{\pi}$, for all $k \geq N$.

Now, let $\omega^{\phi}$ and $\omega^{\pi}$ be the corresponding environments
given by $\omega^{\phi}(k,i) = S_k^{\phi}(i)$ and $\omega^{\pi}(k,i) =
S_k^{\pi}(i)$, as in \eqref{eq:DefineEnvironmentBySk}, and couple the random
walks $(X_n^{\phi})$ and $(X_n^{\pi})$ in these environments as follows:
\begin{itemize}
\item Let $(\theta_{k,i})_{k \in \Z, i \in \N}$ be i.i.d. Uniform([0,1])
    random variables.
\item Set $X_0^{\phi} = X_0^{\pi} = 0$. Then define inductively:
\begin{align*}
X_{n+1}^{\phi} &= \begin{cases}
X_{n}^{\phi} + 1 ~,~ \mbox{ if } \theta_{X_{n}^{\phi}, I_n^{\phi}} < \omega^{\phi}(X_{n}^{\phi}, I_n^{\phi}) \\
X_{n}^{\phi} - 1 ~,~ \mbox{ else }
\end{cases}
\mbox{ and } \\
X_{n+1}^{\pi} &= \begin{cases}
X_{n}^{\pi} + 1 ~,~ \mbox{ if } \theta_{X_{n}^{\pi}, I_n^{\pi}} < \omega^{\pi}(X_{n}^{\pi}, I_n^{\pi}) \\
X_{n}^{\pi} - 1 ~,~ \mbox{ else }
\end{cases}
\end{align*}
where $I_n^{\phi} = |\{0 \leq m \leq n: X_m^{\phi} = X_n^{\phi}\}|$ and
$I_n^{\pi} = |\{0 \leq m \leq n: X_m^{\pi} = X_n^{\pi}\}|$.
\end{itemize}
In words, the walk $(X_n^{\phi})$ jumps right on its $i$-th visit to site $k$
if $\theta_{k,i} < \omega^{\phi}(k,i)$, and left otherwise. Similarly, the
walk $(X_n^{\pi})$ jumps right on its $i$-th visit to site $k$ if
$\theta_{k,i} < \omega^{\pi}(k,i)$, and left otherwise.

With this construction both walks $(X_n^{\pi})$ and $(X_n^{\phi})$ have the
correct averaged laws. Moreover, due to the coupling between environments
$\omega^{\pi}(k,i) = \omega^{\phi}(k,i)$, for all $k \geq N$ and $i \in \N$.
So, both walks $(X_n^{\pi})$ and $(X_n^{\phi})$ have the same theoretical
``jump sequence'' at each site $k \geq N$. That is, both walks will jump the
same direction from any site $k \geq N$ on their $i$-th visit to that site,
if such an $i$-th visit occurs.

Now, since it is assumed that $\delta > 1$ in all cases of the theorem, we
know both walks are right transient. So, with probability 1, each walk
eventually reaches site $N$ and also returns to this site after any leftward
excursion from it. Combining this with the previous observation about
matching jump sequences at all sites $k \geq N$ shows that
\begin{align*}
\limsup_{n \to \infty} |X_n^{\phi} - X_n^{\pi}| < \infty \mbox{ a.s. }
\end{align*}
Since \eqref{eq:LimitLawDeltaIn12}-\eqref{eq:LimitLawDeltaGreater4} hold with
$X_n = X_n^{\pi}$, by Case 1, it follows that
\eqref{eq:LimitLawDeltaIn12}-\eqref{eq:LimitLawDeltaGreater4} also hold with
$X_n = X_n^{\phi}$.
\end{proof}

\appendix

\section{Proof of Lemma \texorpdfstring{\ref{lem:FiniteModificationArgument}}{2.2}}
\label{app:ProofLemmaFiniteModificationArgument}

\begin{proof}[Proof of Lemma \ref{lem:FiniteModificationArgument}]
Fix $x \in \N$. Clearly, $P_{x,s}(A^+) = 0$ if $P_{x,s}(X_n \rightarrow +
\infty) = 0$, so we need only show that $P_{x,s}(A^+) > 0$ if $P_{x,s}(X_n
\rightarrow + \infty) > 0$. By definition, $P_{x,s}(A^+) =
\EB_s[P_x^{\omega}(A^+)]$ and $P_{x,s}(X_n \rightarrow + \infty) =  \EB_s[
P_x^{\omega}(X_n \rightarrow + \infty) ]$, so
it will suffice to show the following claim. \\

\noindent \emph{Claim:} Let $\omega \in \Omega$ be any environment satisfying
$\omega(k,i) \in (0,1)$, for all $k \in \Z$ and $i \in \N$. Then
$P_x^{\omega}(A^+) > 0$ if $P_x^{\omega}(X_n \rightarrow + \infty) > 0$. \\

\noindent \emph{Proof of Claim:} For $m \in \N$ and any nearest neighbor path
$\zeta = (x_0, \ldots, x_m) \in \Z^{m+1}$ with $x_0 = x$, define the events
$A_m$ and $A_{\zeta}$ by
\begin{align*}
A_m = \{X_n > x, \forall n \geq m ~\mbox{ and }~ X_n \rightarrow +\infty\} ~~\mbox{ and }~~A_{\zeta} = A_m \cap \{X_0 = x_0, \ldots, X_m = x_m\}.
\end{align*}
If $P_x^{\omega}(X_n \rightarrow + \infty) > 0$, then there must be some
finite path $\zeta = (x_0,\ldots,x_m)$ such that $x_ 0 = x$, $x_m = x + 1$,
and $P_x^{\omega}(A_{\zeta}) > 0$. This, of course, implies
$P_x^{\omega}(A_m|X_0 = x_0, \ldots, X_m = x_m) > 0$.

We construct from $\zeta = (x_0,\ldots,x_m)$ the reduced path $\zetat =
(\xt_0,\ldots,\xt_{\mt})$ by setting $\xt_0 = x_0 = x$, and then removing
from the tail $(x_1,\ldots,x_m)$ all steps in any leftward excursions from
site $x$. For example,
\begin{align*}
\mbox{ if } \zeta & = (x, \textbf{x}-\textbf{1}, \textbf{x}-\textbf{2}, \textbf{x}-\textbf{1}, \textbf{x}, x+1, x, \textbf{x}-\textbf{1}, \textbf{x}, \textbf{x}-\textbf{1}, \textbf{x}, x+1, x+2, x+3, x+2, x+1) \\
\mbox{ then } \zetat & = (x, x+1, x, x+1, x+2, x+3, x+2, x+1)
\end{align*}
(where we denote the removed steps in bold for visual clarity). By
construction, $\xt_{\mt} = x_m = x+1$ and the number of visits to each site
$k \geq x+1$ up to time $m$ if $(X_0, \ldots, X_m) = \zeta$ is exactly the
same as the number of visits to each site $k \geq x+1$ up to time $\mt$ if
$(X_0, \ldots, X_{\mt}) =  \zetat$. Thus,
\begin{align*}
P_x^{\omega}(A_{\mt}|X_0 = \xt_0, \ldots, X_{\mt} = \xt_{\mt}) = P_x^{\omega}(A_m|X_0 = x_0, \ldots, X_m = x_m) > 0,
\end{align*}
which implies
\begin{align*}
P_x^{\omega}(A^+)
& \geq P_x^{\omega} (X_0 = \xt_0,\ldots,X_{\mt} = \xt_{\mt}) \cdot P_x^{\omega}(A^+ |X_0 = \xt_0,\ldots,X_{\mt} = \xt_{\mt}) \\
& \geq P_x^{\omega} (X_0 = \xt_0,\ldots,X_{\mt} = \xt_{\mt}) \cdot P_x^{\omega}(A_{\mt} |X_0 = \xt_0,\ldots,X_{\mt} = \xt_{\mt}) > 0.
\end{align*}
(Note that $P_x^{\omega} (X_0 = \xt_0,\ldots,X_{\mt} = \xt_{\mt}) > 0$, since
we assume $\omega(k,i) \in (0,1)$, $\forall k \in \Z$, $i \in \N$.)
\end{proof}

\section{Proof of Lemma \texorpdfstring{\ref{lem:TransienceRecurrenceConditionsForMC}}{3.1}}
\label{app:ProofLemmaTransienceRecurrenceConditionsForMC}

The proof of Lemma \ref{lem:TransienceRecurrenceConditionsForMC} is based on
the following much more general, but less explicit, condition for transience
vs. recurrence of Markov chains on the nonnegative integers given in
\cite{Kozma2016}.

\begin{lemma}[Theorem A.1 of \cite{Kozma2016}]
\label{lem:GeneralTransienceRecurrenceConditionsMC} Let $(Z_k)_{k \geq 0}$ be
an irreducible, time-homogenous Markov chain on state space $\N_0$. Let
$\P_x(\cdot)$ be the probability measure for the Markov chain started from
$Z_0 = x$, and let $\E_x(\cdot)$ be the corresponding expectation operator.
\begin{itemize}
\item[(i)] If there exists a function $F: \N_0 \rightarrow (0,\infty)$ such
    that $\lim_{x \to \infty} F(x) = \infty$ and $\E_x[F(Z_1)] \leq F(x)$
    for all sufficiently large $x$, then the Markov chain $(Z_k)$ is
    recurrent.
\item[(ii)] If there exists a function $F: \N_0 \rightarrow (0,\infty)$
    such that $\lim_{x \to \infty} F(x) = 0$ and $\E_x[F(Z_1)] \leq F(x)$
    for all sufficiently large $x$, then the Markov chain $(Z_k)$ is
    transient.
\end{itemize}
\end{lemma}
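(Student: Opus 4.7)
The statement is a classical Foster--Lyapunov criterion for Markov chains on $\N_0$, so my plan is to prove it via a direct supermartingale argument, applying optional stopping to $F(Z_k)$ with suitably localized stopping times.

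For part (i), I would fix a threshold $N_0$ large enough that $\E_x[F(Z_1)] \leq F(x)$ for all $x > N_0$, and define $\tau_N = \inf\{k \geq 0 : Z_k \leq N_0\}$ and $\tau_M = \inf\{k \geq 0 : Z_k \geq M\}$ for any $M > N_0$. For a starting state $x_0 \in (N_0, M)$, the stopped process $F(Z_{k \wedge \tau_N \wedge \tau_M})$ is a bounded nonnegative supermartingale under $\P_{x_0}$, because prior to the minimum stopping time the chain lives in the finite set $(N_0, M)$, on which $F$ is bounded and the Lyapunov inequality is in force. Irreducibility easily implies $\tau_N \wedge \tau_M < \infty$ almost surely, since the chain cannot remain in a finite set forever. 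Optional stopping then gives
$$F(x_0) \geq \E_{x_0}\!\bigl[F(Z_{\tau_N \wedge \tau_M})\bigr] \geq \Bigl(\inf_{y \geq M} F(y)\Bigr) \cdot \P_{x_0}(\tau_M < \tau_N).$$
Sending $M \to \infty$ and using $F(y) \to \infty$ forces $\P_{x_0}(\tau_M < \tau_N) \to 0$. Since the events $\{\tau_M < \tau_N\}$ decrease as $M \to \infty$ to $\{\tau_N = \infty\}$, this gives $\P_{x_0}(\tau_N < \infty) = 1$; so the chain hits the finite set $[0, N_0]$ in finite time from every starting state, and irreducibility upgrades this to recurrence.

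For part (ii), I would run essentially the same argument with the roles of large and small values of $F$ reversed. Again fix $N_0$ so that the supermartingale inequality holds on $\{x > N_0\}$, and set $m \equiv \min_{0 \leq y \leq N_0} F(y) > 0$. Since $F(x) \to 0$, one can choose $x_0 > N_0$ with $F(x_0) < m$. Using the same stopping times $\tau_N \wedge \tau_M$, optional stopping applied to the localized supermartingale yields
$$F(x_0) \geq \E_{x_0}\!\bigl[F(Z_{\tau_N \wedge \tau_M})\bigr] \geq m \cdot \P_{x_0}(\tau_N < \tau_M),$$
and letting $M \to \infty$ (using monotonicity of the events $\{\tau_N < \tau_M\}$ in $M$) gives $\P_{x_0}(\tau_N < \infty) \leq F(x_0)/m < 1$. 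Thus from $x_0$ the chain has positive probability of never entering $[0, N_0]$, which for an irreducible chain is equivalent to transience.

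The main technical point I expect to handle carefully is the justification of the optional stopping step. The hypothesis only provides the supermartingale inequality on $\{x > N_0\}$, so one must first apply optional stopping at the doubly-truncated time $\tau_N \wedge \tau_M$, where the process is bounded and the inequality is active throughout, and only afterwards pass $M \to \infty$ via monotone convergence. Once this localization is set up, both parts of the lemma follow from the same skeleton argument, distinguished only by which tail of $F$ is being exploited to extract the desired hitting probability bound.
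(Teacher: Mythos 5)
The paper does not prove this lemma; it is quoted from \cite{Kozma2016} (Theorem A.1), so there is no internal proof to compare against. Your proposal is the standard Foster--Lyapunov proof via supermartingales and optional stopping, and it is correct. The one inaccuracy is the word ``bounded'': prior to $\tau_N \wedge \tau_M$ the chain does live in the finite window $(N_0,M)$, but at the exit time $\tau_M$ the chain can overshoot to an arbitrary value $\geq M$, and $\lim_{x\to\infty}F(x)=\infty$ gives no control on the size of $F$ at that single random point, so the stopped process $F(Z_{k\wedge\tau_N\wedge\tau_M})$ need not be bounded. This does not damage the argument, because boundedness is not needed: for a \emph{nonnegative} supermartingale one has $\E_{x_0}[F(Z_{k\wedge\tau_N\wedge\tau_M})] \leq F(x_0)$ for every $k$, and since $\tau_N\wedge\tau_M<\infty$ a.s.\ (your dichotomy argument for irreducible chains is sound) Fatou's lemma gives $\E_{x_0}[F(Z_{\tau_N\wedge\tau_M})]\leq F(x_0)$ directly. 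You should drop the boundedness claim, or else restrict it to the pre-stopping-time values. The remaining steps --- lower-bounding the expectation by $\inf_{y\geq M}F(y)\cdot\P_{x_0}(\tau_M<\tau_N)$ in (i) and by $m\cdot\P_{x_0}(\tau_N<\tau_M)$ in (ii), the monotone passage $M\to\infty$, and the final upgrade from ``hits a finite set a.s.\ from everywhere'' to recurrence, respectively from ``positive escape probability'' to transience --- are all correct for an irreducible chain.
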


The function $F(x)$ is called a Lyapunov function. Our proof of Lemma
\ref{lem:TransienceRecurrenceConditionsForMC} using Lemma
\ref{lem:GeneralTransienceRecurrenceConditionsMC} follows closely the proof
of Theorem 1.3 in \cite{Kozma2016}. In particular, we will use the same
choice of Lyapunov functions $F(x)$ and Taylor expand in the same fashion.
However, controlling the error term in the Taylor expansion becomes somewhat
more involved, because of the weaker concentration condition we assume for
the transition probabilities.

\begin{proof}[Proof of Lemma \ref{lem:TransienceRecurrenceConditionsForMC} (i)]
Let $F(x) : [0, \infty) \rightarrow (0, \infty)$ be a smooth function such
that $F(x) = \ln \ln (x)$, for $x > 3$. Then for all $x > 3$
\begin{align}
\label{eq:Derivativesloglogx}
& F'(x) = \frac{1}{x \ln(x)}, \nonumber \\
& F''(x) = - \frac{1}{x^2 \ln(x)} - \frac{1}{x^2 \ln^2(x)}, \nonumber \\
& F'''(x) = \frac{2}{x^3 \ln(x)} + \frac{3}{x^3 \ln^2(x)} + \frac{2}{x^3 \ln^3(x)}.
\end{align}
By Taylor's Theorem with remainder
\begin{align*}
F(Z_1) = F(x) + F'(x)(Z_1 - x) + \frac{1}{2} F''(x)(Z_1 - x)^2 + \frac{1}{6}F'''(\xi)(Z_1 - x)^3, \mbox{ for each } x \in \N,
\end{align*}
where $\xi$ is some (random, depending on $Z_1$) number between $Z_1$ and
$x$. Thus, for all positive integer $x > 3$,
\begin{align*}
&\E_x[F(Z_1)] \\
& = F(x) + \frac{\E_x[Z_1 - x]}{x \ln(x)} + \frac{1}{2} \left[- \frac{1}{x^2 \ln(x)} - \frac{1}{x^2 \ln^2(x)} \right]  \E_x[(Z_1 - x)^2] ~+~ \frac{1}{6} \E_x[F'''(\xi) (Z_1 - x)^3] \\
& = F(x) + \frac{1}{x \ln(x)} \left[ \rho(x) - \frac{1}{2}\left(1 + \frac{1}{\ln(x)}\right)\nu(x) \right] ~+~ \frac{1}{6} \E_x[F'''(\xi) (Z_1 - x)^3].
\end{align*}
So,
\begin{align*}
\E_x[F(Z_1)] \leq F(x)
& ~\iff~ \frac{1}{x \ln(x)} \left[ \rho(x) - \frac{1}{2}\left(1 + \frac{1}{\ln(x)}\right)\nu(x) \right] + \frac{1}{6} \E_x[F'''(\xi) (Z_1 - x)^3] \leq 0 \\
& ~\iff~ \theta(x) \leq 1 + \frac{1}{\ln(x)} - \frac{1}{3} \frac{x \ln(x)}{\nu(x)} \E_x[F'''(\xi) (Z_1 - x)^3].
\end{align*}
Therefore, in light of Lemma
\ref{lem:GeneralTransienceRecurrenceConditionsMC} and the assumption that
$\liminf_{x \to \infty} \nu(x) > 0$, it will suffice to show
\begin{align}
\label{eq:BoundOnTaylorErrorTermWanted1}
\E_x[F'''(\xi) (Z_1 - x)^3] = o\Big(\frac{1}{x \ln^2(x)}\Big).
\end{align}
Now,
\begin{align}
\label{eq:Term123Breakdown}
& \big| \E_x[F'''(\xi) (Z_1- x)^3] \big| \leq \E_x \left| F'''(\xi) (Z_1 - x)^3 \right| \nonumber \\
& = \begin{cases} \E_x \left| F'''(\xi) (Z_1 - x)^3 \indicator\{Z_1 \leq x/2\} \right| + \E_x \left| F'''(\xi) (Z_1 - x)^3 \indicator\{x/2 < Z_1 \leq 2x\} \right| \\
       +~ \E_x \left| F'''(\xi) (Z_1 - x)^3 \indicator\{Z_1 > 2x\} \right| \end{cases} \nonumber \\
& \equiv (I) + (II) + (III).
\end{align}
Below we will show that $(I) = O\left(e^{-x^{1/4}}\right)$, $(II) =
O\left(\dfrac{1}{ \ln(x) \cdot x^{4/3} } \right)$, and $(III) =
O\left(e^{-x^{1/4}}\right)$,
which establishes \eqref{eq:BoundOnTaylorErrorTermWanted1}. \\

\noindent \emph{Bound on (I)}: Define $C = \max_{x \in [0, \infty)} |F'''(x)|
< \infty$. Using \eqref{eq:MCConcentrationCondition} gives,
\begin{align*}
(I) & \equiv \E_x \left| F'''(\xi) (Z_1 - x)^3 \indicator\{Z_1 \leq x/2\} \right|
\leq C x^3 \cdot \P_x(Z_1 \leq x/2)
\leq C x^3 \cdot \P_x(|Z_1 - x| \geq x/2) \\
& \leq C x^3 \cdot C_1\big[1 + (1/2)^{2/3}x^{1/3}\big]e^{-C_2 (1/2)^{2/3} x^{1/3}} = O\left(e^{-x^{1/4}}\right).
\end{align*}

\noindent \emph{Bound on (II)}: By \eqref{eq:Derivativesloglogx}, for all $x
> 6$,
\begin{align}
\label{eq:BoundOnTermII}
(II) & \equiv \E_x \left| F'''(\xi) (Z_1 - x)^3 \indicator\{x/2 < Z_1 \leq 2x\} \right| \nonumber \\
& \leq \frac{7}{(x/2)^3 \ln(x/2)} \E_x \Big( |Z_1 - x|^3 \indicator\{x/2 < Z_1 \leq 2x\} \Big).
\end{align}
We define $\widetilde{Z}_1 =  Z_1 \cdot \indicator_{\{x/2 < Z_1 \leq 2x \}} +
x \cdot \indicator_{\{ Z_1 \not\in (x/2, 2x]\}}$, and bound the expectation
on the right hand side as follows:
\begin{align}
\label{eq:TermIIMainEstimate}
\E_x  \Big( |Z_1 - x|^3 \indicator\{&x/2 < Z_1 \leq 2x\} \Big)
 = \E_x\left( |\widetilde{Z}_1 - x|^3 \right) \nonumber \\
& = \int_0^x \P_x(|\widetilde{Z}_1 - x | > y) \cdot 3y^2 dy  \nonumber \\
& \leq 3 \int_0^x \P_x(|Z_1 - x| > y) \cdot y^2 dy  \nonumber \\
& = 3 \int_0^1 \P_x(|Z_1 - x| > \epsilon x) \cdot x^3 \epsilon^2 d\epsilon ~~~~~\mbox{(substitute $y = \epsilon x$)}  \nonumber \\
& \leq 3x^3 \int_0^1 \epsilon^2 \cdot C_1(1 + \epsilon^{2/3}x^{1/3})e^{-C_2 \epsilon^{2/3} x^{1/3}} d\epsilon  ~~~~~\mbox{(by \eqref{eq:MCConcentrationCondition})}  \nonumber \\
& = \frac{9}{2} x^3 \int_0^1 C_1 (t^{7/2} + t^{9/2} x^{1/3}) e^{-C_2 x^{1/3} t} dt  ~~~~~\mbox{(substitute $t = \epsilon^{2/3}$)}  \nonumber \\
& \leq \frac{9}{2} x^3 \int_0^1 C_1 (t^3 + t^4 x^{1/3}) e^{-C_2 x^{1/3} t} dt  \nonumber \\
& = O\left(x^{5/3} \right) ~~~~~~\mbox{(repeated integration by parts)}.
\end{align}
Combining \eqref{eq:BoundOnTermII} and \eqref{eq:TermIIMainEstimate} shows that $(II) = O\left(\dfrac{1}{ \ln(x) \cdot x^{4/3} } \right)$. \\

\noindent \emph{Bound on (III)}: By \eqref{eq:Derivativesloglogx}, for all $x
> 3$,
\begin{align}
\label{eq:BoundOnTermIII}
(III) \equiv \E_x \left| F'''(\xi) (Z_1 - x)^3 \indicator\{Z_1 > 2x\} \right|
\leq \frac{7}{x^3 \ln(x)} \E_x \Big( |Z_1 - x|^3 \indicator\{Z_1 > 2x\} \Big).
\end{align}
We define $\widetilde{Z}_1 =  Z_1 \cdot \indicator_{\{Z_1 > 2x \}} + x \cdot
\indicator_{\{ Z_1 \leq 2x\}}$ and write the expectation on the right hand
side as
\begin{align}
\label{eq:SplittingIII}
& \E_x \Big( |Z_1 - x|^3 \indicator\{Z_1 > 2x\} \Big)
= \E_x \Big( |\widetilde{Z}_1 - x|^3 \Big)
= \int_0^{\infty} \P_x(|\widetilde{Z}_1 - x| > y) \cdot 3y^2 dy \nonumber \\
& = 3 \left[ \int_0^x \P_x(Z_1 - x > x) y^2 dy ~+~ \int_x^{\infty} \P_x(Z_1 - x > y) y^2 dy \right].
\end{align}
The first integral in the brackets on the right hand side of
\eqref{eq:SplittingIII} is easily bounded using
\eqref{eq:MCConcentrationCondition}:
\begin{align}
\label{eq:FirstIntegralInBrackets}
\int_0^x \P_x(& Z_1 - x > x) y^2 dy
\leq \int_0^x \left[ C_1\left(1 + x^{1/3}\right)e^{-C_2 x^{1/3}} \right] y^2 dy
= O \left( e^{-x^{1/4}} \right).
\end{align}
The second integral in the brackets on the right hand side of
\eqref{eq:SplittingIII} is bounded as follows:
\begin{align}
\label{eq:SecondIntegralInBrackets}
\int_x^{\infty} \P_x(Z_1 - x > y) y^2 dy
& = \int_x^{\infty}   \P_x(|Z_1 - x| > y) y^2 dy \nonumber ~~~~~\mbox{($Z_1$ is nonnegative)}\\
& = \int_1^{\infty} \P_x(|Z_1 - x| > \epsilon x) x^3 \epsilon^2 d\epsilon ~~~~~\mbox{(substitute $y = \epsilon x$)} \nonumber \\
& \leq x^3 \int_1^{\infty} \epsilon^2 \cdot C_1\left(1 + x^{1/3} \epsilon^{1/3}\right) e^{-C_2 x^{1/3} \epsilon^{1/3}} d\epsilon ~~~~~\mbox{(by \eqref{eq:MCConcentrationCondition})} \nonumber \\
& = 3 x^3 \int_1^{\infty} C_1(t^8 + t^9 x^{1/3}) e^{-C_2 x^{1/3} t} dt  ~~~~~\mbox{(substitute $t = \epsilon^{1/3}$)} \nonumber \\
& = O \left( e^{-x^{1/4}} \right) ~~~~~~\mbox{(repeated integration by parts)}.
\end{align}
Combining \eqref{eq:SplittingIII}, \eqref{eq:FirstIntegralInBrackets}, and
\eqref{eq:SecondIntegralInBrackets} shows that
\begin{align}
\label{eq:TermIIIMainEstimate}
\E_x \Big( |Z_1 - x|^3 \indicator\{Z_1 > 2x\} \Big) = O \left( e^{-x^{1/4}} \right).
\end{align}
Hence, by \eqref{eq:BoundOnTermIII}, $(III) = O \left( e^{-x^{1/4}} \right)$.
\end{proof}

\begin{proof}[Proof of Lemma \ref{lem:TransienceRecurrenceConditionsForMC} (ii)]
Let $F(x) : [0, \infty) \rightarrow (0, \infty)$ be a smooth function such
that $F(x) = \dfrac{1}{\ln (x)}$, for $x > 2$. Then for all $x > 2$
\begin{align}
\label{eq:DerivativesLogxInverse}
& F'(x) = - \frac{1}{x \ln^2(x)}, \nonumber \\
& F''(x) = \frac{1}{x^2 \ln^2(x)} + \frac{2}{x^2 \ln^3(x)}, \nonumber \\
& F'''(x) = - \frac{2}{x^3 \ln^2(x)} - \frac{6}{x^3 \ln^3(x)} - \frac{6}{x^3 \ln^4(x)}.
\end{align}
Thus, by Taylor's Theorem with remainder, for all positive integer $x > 2$
\begin{align*}
& \E_x[F(Z_1)] \\
& = F(x) - \frac{\E_x[Z_1 - x]}{x \ln^2(x)} + \frac{1}{2} \left[ \frac{1}{x^2 \ln^2(x)} + \frac{2}{x^2 \ln^3(x)} \right]  \E_x[(Z_1 - x)^2] + \frac{1}{6} \E_x[F'''(\xi) (Z_1 - x)^3] \\
& = F(x) + \frac{1}{x \ln^2(x)} \left[ - \rho(x) + \frac{1}{2}\left(1 + \frac{2}{\ln(x)}\right)\nu(x) \right] ~+~ \frac{1}{6} \E_x[F'''(\xi) (Z_1 - x)^3]
\end{align*}
where $\xi$ is some (random) number between $Z_1$ and $x$. So,
\begin{align*}
\E_x&[F(Z_1)] \leq F(x) \\
& ~\iff~ \frac{1}{x \ln^2(x)} \left[ - \rho(x) + \frac{1}{2}\left(1 + \frac{2}{\ln(x)}\right)\nu(x) \right] + \frac{1}{6} \E_x[F'''(\xi) (Z_1 - x)^3] \leq 0 \\
& ~\iff~ \theta(x) \geq 1 + \frac{2}{\ln(x)} + \frac{1}{3} \frac{x \ln^2(x)}{\nu(x)} \E_x[F'''(\xi) (Z_1 - x)^3].
\end{align*}
Therefore, in light of Lemma
\ref{lem:GeneralTransienceRecurrenceConditionsMC} and the assumption that
$\liminf_{x \to \infty} \nu(x) > 0$, it will suffice to show
\begin{align}
\label{eq:BoundOnTaylorErrorTermWanted2}
 \E_x[F'''(\xi) (Z_1 - x)^3] = o\Big(\frac{1}{x \ln^3(x)}\Big).
\end{align}
Now, $|\E_x[F'''(\xi) (Z_1 - x)^3]| \leq (I) + (II) + (III)$, where terms
(I), (II), and (III) are defined just as in \eqref{eq:Term123Breakdown}, but
with our new Lyapunov function $F(x) = 1/\ln(x)$. The exact same proof as
above for part (i) of the lemma shows that $(I) =
O\left(e^{-x^{1/4}}\right)$. The bounds on (II) and (III) are also quite
similar:
\begin{align*}
& (II) \stackrel{\eqref{eq:DerivativesLogxInverse}}{\leq} \left[ \frac{14}{(x/2)^3 \ln^2(x/2)} \right] \cdot \E_x \Big( |Z_1 - x|^3 \indicator\{x/2 < Z_1 \leq 2x\} \Big)
\stackrel{\eqref{eq:TermIIMainEstimate}}{=} O\left(\frac{1}{\ln^2(x) \cdot x^{4/3}}\right). \\
& (III) \stackrel{\eqref{eq:DerivativesLogxInverse}}{\leq} \left[ \frac{14}{x^3 \ln^2(x)} \right] \cdot \E_x \Big( |Z_1 - x|^3 \indicator\{Z_1 > 2x\} \Big)
\stackrel{\eqref{eq:TermIIIMainEstimate}}{=} O\left(e^{-x^{1/4}}\right).
\end{align*}
Combining these estimates on (I), (II), and (III) shows that $\E_x[F'''(\xi)
(Z_1- x)^3] = O\left(\dfrac{1}{\ln^2(x) \cdot x^{4/3}}\right)$, which implies
\eqref{eq:BoundOnTaylorErrorTermWanted2}.
\end{proof}

\section{Proof of Propositions \texorpdfstring{\ref{prop:DecayOfHittingTimeTau0Vh}}{4.1}
and \texorpdfstring{\ref{prop:DecayOfAccumulatedSumTillTimeTau0Vh}}{4.2}}
\label{app:ProofOfPropsDecayAndAccumulatedSumTimeTau0Vh}

Our proof of Propositions \ref{prop:DecayOfHittingTimeTau0Vh} and
\ref{prop:DecayOfAccumulatedSumTillTimeTau0Vh} is based on the following
slightly more general proposition, which we isolate in this form because it
seems it may be applicable for analyzing other critical branching (or
branching-type) processes.

\begin{proposition}
\label{prop:GeneralizedMachineryFromLimitLawsERW} Let $(Z_k)_{k \geq 0}$ be
an irreducible, time-homogeneous Markov chain on state space $\N_0$, and
denote by $P_x^Z$ the probability measure for the Markov chain $(Z_k)$
started from $Z_0  = x$. Also, let $E_x^Z$ and $\Var_x^Z$ denote,
respectively, expectation and variance with respect to the measure $P_x^Z$.
Assume that the following four conditions are satisfied.
\begin{enumerate}
\item[(A)] \emph{\underline{Monotonicity}:} For any $x, y, z \in \N_0$ with
    $x > y$ \footnotemark{}, \footnotetext{Note that this condition (along
    with the Markov property) implies one can couple together versions of
    the process $(Z_k)$ starting from two different initial conditions $x >
    y$ in such a way that $Z_k^{(x)} \geq Z_k^{(y)}$, for all $k \geq 0$.}
\begin{align}
\label{eq:ConditionA}
P_x^Z(Z_1 \geq z) \geq P_y^Z(Z_1 \geq z).
\end{align}
\item[(B)] \emph{\underline{Expectation and Variance}:} There exist
    constants $\alpha > 0$ and $\beta < 0$ such that
\begin{align}
\label{eq:ConditionB}
E_x^Z(Z_1) = x +  \alpha \beta + O(x^{-1/3}) ~~\mbox{ and }~~ \Var_x^Z(Z_1) = 2\alpha x + O(x^{2/3}).
\end{align}
\item[(C)] \emph{\underline{Single Step Concentration Estimate}:} There
    exist constants $C_1, C_2 > 0$ such that:
\begin{align}
\label{eq:ConditionC}
&P_x^Z(|Z_1 - x| \geq \epsilon x) \leq C_1(1 + \epsilon^{2/3} x^{1/3}) e^{-C_2 \epsilon^{2/3}x^{1/3}} ,~ \mbox{ for all } x \in \N \mbox{ and } 0 < \epsilon \leq 1. \nonumber \\
&P_x^Z(|Z_1 - x| \geq \epsilon x) \leq C_1(1 + \epsilon^{1/3} x^{1/3}) e^{-C_2 \epsilon^{1/3}x^{1/3}} ,~ \mbox{ for all } x \in \N \mbox{ and } \epsilon \geq 1.
\end{align}
\item[(D)] \emph{\underline{Exit Probability Concentration Estimates}:}
    There exist constants $C_3, C_4 > 0$ and $N \in \N$ such that for all
    $x \geq N$ the following hold:
\begin{align}
\label{eq:ConditionD1}
& \sup_{0 \leq z < x} P_z^Z(Z_{\tau_{x^+}^Z} > x + y|\tau_{x^+}^Z < \tau_0^Z) \leq
\begin{cases}
C_3(1 + y^{2/3}x^{-1/3})e^{-C_4 y^{2/3} x^{-1/3}} ,~ \mbox{ for } 0 \leq y \leq x  \\
C_3(1 + y^{1/3})e^{-C_4 y^{1/3}} ,~~~~~~~~~~~~~~~~~ \mbox{ for } y \geq x
\end{cases}
\end{align}
\begin{align}
\label{eq:ConditionD2}
& \sup_{x < z < 4x} P_z^Z(Z_{\tau_{x^-}^Z \wedge {\tau_{(4x)^+}^Z}} < x - y) \leq C_3(1 + y^{2/3}x^{-1/3})e^{-C_4 y^{2/3} x^{-1/3}},~ \mbox{ for } 0 \leq y \leq x.
\end{align}
Here, as usual, $\tau_x^Z = \inf\{k > 0: Z_k = x\}$, and $\tau_{x^+}^Z$,
$\tau_{x^-}^Z$ are defined by $\tau_{x^+}^Z = \inf\{k > 0: Z_k \geq x\}$,
$\tau_{x^-}^Z = \inf\{k > 0: Z_k \leq x\}$.
\end{enumerate}
Then there exist constants $C_5, C_6 > 0$ such that, as $t \rightarrow
\infty$,
\begin{align}
\label{eq:ScalingOfTau0ZAndAccumulatedSum}
P_0^Z(\tau_0^Z > t) \sim C_5 t^{(\beta-1)} ~~\mbox{ and }~~~ P_0^Z\Big(\sum_{k=0}^{\tau_0^Z} Z_k > t \Big) \sim C_6 t^{(\beta-1)/2}.
\end{align}
\end{proposition}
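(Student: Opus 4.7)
The plan is to follow the diffusion-approximation strategy of \cite{Kosygina2011}. The starting point is to show that, under $P_{\lfloor nx\rfloor}^Z$, the rescaled processes $Y^n_t := Z_{\lfloor nt\rfloor}/n$ converge weakly on Skorokhod space to the unique strong solution of
\begin{equation*}
dY_t = \alpha\beta\, dt + \sqrt{2\alpha Y_t}\, dB_t, \quad Y_0 = x,
\end{equation*}
killed upon hitting $0$. The drift and diffusion coefficients are read off from (B), the moment control in (C) yields tightness via standard Aldous--Rebolledo criteria, and pathwise uniqueness holds because the coefficients are $\tfrac{1}{2}$-Hölder. Up to a linear rescaling this diffusion is a squared Bessel process of dimension $2\beta<0$, for which the density of $\tau_0^Y$ under $P_x^Y$ is explicit; integrating gives
\begin{equation*}
P_x^Y(\tau_0^Y > t) \sim C\, x^{1-\beta}\, t^{\beta-1}, \qquad P_x^Y\Big(\textstyle\int_0^{\tau_0^Y}\! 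Y_s\,ds > t\Big) \sim C'\, x^{1-\beta}\, t^{(\beta-1)/2}.
\end{equation*}

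To transfer these tail asymptotics to the discrete chain and then from a large starting level down to level $0$, I would use the nearly-harmonic function $h(x) = x^{1-\beta}$. A Taylor expansion, combined with the expectation and variance estimates in (B), gives $E_x^Z[h(Z_1)] - h(x) = O(x^{-\beta - 1/3})$, so $h(Z_{k \wedge \tau_0^Z \wedge \tau_{M^+}^Z})$ is an approximate martingale. Optional stopping together with the overshoot control at $\tau_{M^+}^Z$ supplied by (D) would then give, for $1 \leq x \ll M$,
\begin{equation*}
P_x^Z(\tau_{M^+}^Z < \tau_0^Z) = (1+o(1))\, x^{1-\beta}/M^{1-\beta}.
\end{equation*}
Since $Z_1$ has all moments finite under $P_0^Z$ by (C), averaging over the first step gives $P_0^Z(\max_{0 \leq k < \tau_0^Z} Z_k \geq M) \sim c_*\, M^{\beta-1}$ for an explicit constant $c_*$.

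To assemble the two ingredients I would apply the strong Markov property at $\tau_{M^+}^Z$ together with the diffusion limit started at $Z_{\tau_{M^+}^Z} \approx M$: conditional on reaching height $M$, the remaining excursion length has tail of order $M^{1-\beta} t^{\beta-1}$, and the remaining integrated area has tail of order $M^{1-\beta} t^{(\beta-1)/2}$. Integrating over the distribution of the maximum, the dominant contribution comes from $M$ of order $t$ in the first case and of order $\sqrt{t}$ in the second, yielding the stated asymptotics
\begin{equation*}
P_0^Z(\tau_0^Z > t) \sim C_5\, t^{\beta-1}, \qquad P_0^Z\Big(\sum_{k=0}^{\tau_0^Z} Z_k > t\Big) \sim C_6\, t^{(\beta-1)/2}.
\end{equation*}
The main obstacle will be pinning down the precise leading constants in the optional-stopping argument: $h$ is only \emph{approximately} harmonic, and $Z_{\tau_{M^+}^Z}$ only \emph{approximately} equals $M$. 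Controlling both the accumulated error from the Taylor remainder over excursions of length of order $M$ and the overshoot using (D), without losing the leading-order constant, is the technically delicate step. Monotonicity (A) helps here by giving stochastic sandwich bounds between $Z$ started from different levels, allowing comparison with cleaner reference chains in the spirit of \cite{Kosygina2011}.
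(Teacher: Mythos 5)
Your proposal follows essentially the same diffusion-approximation strategy as the paper, which itself adapts the proof of Theorems 2.1 and 2.2 from Kosygina--Mountford \cite{Kosygina2011}: diffusion limit to a squared Bessel process of dimension $2\beta$, tail asymptotics for the diffusion hitting time and area, exit probabilities via the nearly-harmonic function $x^{1-\beta}$, and decomposition of the excursion by its maximum. You also correctly pinpoint the delicate step (propagating the constant through the approximate-martingale error and the overshoot), which the paper handles by working on multiplicative scales $a^n$ as in Lemmas \ref{lem:ModifiedLemma52FromLimitLawsERW}--\ref{lem:ModifiedCorollary55FromLimitLawsERW}.
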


\begin{proof}[Proof of Propositions \ref{prop:DecayOfHittingTimeTau0Vh} and \ref{prop:DecayOfAccumulatedSumTillTimeTau0Vh}]
We simply apply Proposition \ref{prop:GeneralizedMachineryFromLimitLawsERW}
to the (irreducible, time-homogeneous) Markov chain $(\Vh_k)_{k \geq 0}$ with
transition probabilities give by \eqref{eq:TransitionProbsVhk}. We consider
this Markov chain under the family of measure $P_{x,s}^V$, $x \in \N_0$, so
that $\Vh_0 = V_0 = x$ deterministically. Thus, the measure $P_{x,s}^V$ for
$(\Vh_k)$ is the equivalent of the measure $P_x^Z$ for the Markov chain
$(Z_k)$ in Proposition \ref{prop:GeneralizedMachineryFromLimitLawsERW}.

By construction the Markov chain $(\Vh_k)$ satisfies the monotonicity
condition (A) of Proposition \ref{prop:GeneralizedMachineryFromLimitLawsERW}.
Also, by Lemmas \ref{lem:ExpectationUkVkStoppedAtTausS} and
\ref{lem:VarianceUkVkStoppedAtTaus}, condition (B) is satisfied with $\alpha
= \mu_s > 0$ and $\beta = (1 - \delta) < 0$. Finally, condition (C) is
satisfied due to Lemma \ref{lem:ConcentrationEstimateUkVk} and condition (D)
is satisfied due to Lemma \ref{lem:ModifiedLemma51FromLimitLawsERW}. Thus,
the proposition is applicable and we have $P_{0,s}^V(\tau_0^{\Vh} > t) \sim
c_{10} t^{-\delta}$ and $P_{0,s}^V\Big(\sum_{k=0}^{\tau_0^{\Vh}} \Vh_k > t
\Big) \sim c_{11} t^{-\delta/2}$ for some constants $c_{10}, c_{11} > 0$.
\end{proof}

\begin{proof}[Proof of Proposition \ref{prop:GeneralizedMachineryFromLimitLawsERW} (Sketch)]
Our proof of Proposition \ref{prop:GeneralizedMachineryFromLimitLawsERW}
follows very closely the approach used in \cite{Kosygina2011} to prove
\emph{Theorems 2.1} and \emph{2.2}, so we will provide only a rough sketch.
Throughout we will use \emph{italics} when referring to all theorems, lemmas,
sections... etc. from \cite{Kosygina2011}, to distinguish from the
corresponding items in our paper. For the sake of comparison we restate
\emph{Theorems 2.1} and \emph{2.2} below explicitly.
\begin{theorem}[\emph{Theorems 2.1 and 2.2} of \cite{Kosygina2011}]
Let $\widetilde{\PB}$ be a probability measure on cookie environments
satisfying (IID), (BD), and (ELL), and let $(\Vt_k)_{k \geq 0}$ be the
associated backward branching process for ERW in this environment. Assume
$\delta > 0$ (where $\delta$ is given by \eqref{eq:OriginalDeltaDef}). Then
there exists constants $\Ct_1, \Ct_2 > 0$ such that
\begin{align}
\label{eq:ScalingOfTau0VtAndAccumulatedSum}
P_0^{\Vt}(\tau_0^{\Vt} > x) \sim \Ct_1x^{-\delta} ~~~\mbox{ and }~~~P_0^{\Vt}\Big(\sum_{k=0}^{\tau_0^{\Vt}} \Vt_k > x\Big) \sim \Ct_2x^{-\delta/2}.
\end{align}
\end{theorem}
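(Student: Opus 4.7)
The plan is to mirror the diffusion-approximation argument used by Kosygina and Mountford in \cite{Kosygina2011} to prove their \emph{Theorems 2.1} and \emph{2.2}, verifying along the way that every step actually uses only the abstract hypotheses (A)--(D) rather than the specific (IID)+(BD)+(ELL) structure invoked there.

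First, I would establish a diffusion limit for the rescaled process. Starting from $Z_0 = \lfloor nx_0 \rfloor$ with $x_0 > 0$ fixed, set $Y^{(n)}_t \equiv Z_{\lfloor nt \rfloor}/n$. By (B), the one-step mean and variance of $Z_{k+1} - Z_k$ conditioned on $Z_k = y$ are $\alpha\beta + O(y^{-1/3})$ and $2\alpha y + O(y^{2/3})$, respectively, so the rescaled generator converges to $L f(y) = \alpha\beta f'(y) + \alpha y f''(y)$. This is the generator of a squared-Bessel-type diffusion $Y$ with drift $\alpha\beta < 0$ and local variance $2\alpha Y$; since $\beta < 0$, $Y$ is absorbed at $0$ in finite time, with scaling $\tau_0^Y(x) \stackrel{d.}{=} x \cdot \tau_0^Y(1)$ and $\int_0^{\tau_0^Y(x)} Y_s \, ds \stackrel{d.}{=} x^2 \int_0^{\tau_0^Y(1)} Y^{(1)}_s \, ds$. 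The concentration bound (C) provides tightness of $(Y^{(n)})$, while monotonicity (A) and the overshoot bound (D) handle the boundary behavior at $0$ and at upper level sets. Together these pieces yield $\tau_0^Z / n \stackrel{d.}{\longrightarrow} \tau_0^Y(x_0)$ and $n^{-2} \sum_{k=0}^{\tau_0^Z} Z_k \stackrel{d.}{\longrightarrow} \int_0^{\tau_0^Y(x_0)} Y_s \, ds$ when starting from $Z_0 = \lfloor n x_0 \rfloor$.

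Second, I would convert these ``started high'' limits into tails of the full excursion from $0$. The function $h(x) = x^{1-\beta}$ is asymptotically harmonic for the chain: a short computation using (B) together with the Taylor remainder bound supplied by (C) shows $E_x^Z[h(Z_1)] - h(x) = O(x^{-\beta - 1/3})$, which is of lower order than $h(x) = x^{1-\beta}$ for large $x$. Combined with the overshoot control (D), an optional-stopping argument on $h$ gives that, starting from any fixed positive level, the probability of reaching level $n$ before returning to $0$ is $\sim C_7 / n^{1-\beta}$. Decomposing excursions of $(Z_k)$ from $0$ by their maximum $M^\ast$ yields $P_0^Z(M^\ast \geq n) \sim C_7' / n^{1-\beta}$; conditioned on $M^\ast \in [n, 2n]$, the diffusion limit from the previous step fixes the joint asymptotic law of $(\tau_0^Z/n, \, n^{-2}\sum_k Z_k)$. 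Integrating against the distribution of $M^\ast$ produces the claimed tails $P_0^Z(\tau_0^Z > t) \sim C_5 \, t^{\beta - 1}$ and $P_0^Z(\sum_k Z_k > t) \sim C_6 \, t^{(\beta-1)/2}$, where the exponent $(\beta-1)/2$ reflects the $x^2$ scaling of the accumulated area.

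The main obstacle is the careful bookkeeping required to recheck that each lemma in Sections~3--5 of \cite{Kosygina2011}, which is stated and proved under (IID)+(BD)+(ELL), actually depends only on (A)--(D). The two delicate points are: (i) the overshoot estimates at level crossings, which in \cite{Kosygina2011} use boundedness of cookie stacks explicitly, but which for us are packaged into hypothesis (D) and established separately in Lemma \ref{lem:ModifiedLemma51FromLimitLawsERW}; and (ii) the single-step tail bounds, which in \cite{Kosygina2011} are of essentially Gaussian type, whereas (C) gives only stretched-exponential tails with exponents $2/3$ and $1/3$. These weaker tails are nevertheless still plenty to control all the error terms (Taylor remainders, second-moment truncations, and so on) that appear in the Kosygina--Mountford argument. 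Accordingly, the appendix will reproduce only those lemmas whose adaptation is non-routine, and for the remainder it will suffice to cite \cite{Kosygina2011} and point out where (A)--(D) are used in place of the original hypotheses.
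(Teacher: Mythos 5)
Your proposal is correct and mirrors the paper's Proposition~\ref{prop:GeneralizedMachineryFromLimitLawsERW}: diffusion approximation, the asymptotically harmonic scale function $h(x) = x^{1-\beta}$ for exit/escape probabilities, and then the Kosygina--Mountford excursion machinery, verifying that conditions (A)--(D) suffice at each step; you correctly flag the two delicate points (overshoot bounds and the weaker stretched-exponential tails). One small precision: the useful form of the harmonicity defect is not that $O(x^{-\beta - 1/3})$ is lower order than $h(x)$, but that multiplied by the expected exit time $O(x)$ from a comparable interval it gives $O(x^{2/3 - \beta}) = o(x^{1 - \beta})$, which is the form exploited in Lemma~\ref{lem:ModifiedLemma52FromLimitLawsERW}.
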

The Markov chain $(Z_k)_{k \geq 0}$ of our proposition is the equivalent of
the process $(\Vt_k)_{k \geq 0}$, under the correspondence $\beta = 1 -
\delta$. More precisely, if $\delta > 1$ then $(\Vt_k)$ satisfies conditions
(A)-(D) of the proposition with $\beta = 1 - \delta$ and $\alpha = 1$, and
the decay rates in \eqref{eq:ScalingOfTau0ZAndAccumulatedSum} and
\eqref{eq:ScalingOfTau0VtAndAccumulatedSum} are the same with $\beta = 1 -
\delta$. The value of $\alpha$ does not effect the decay rates in
\eqref{eq:ScalingOfTau0ZAndAccumulatedSum}, only the constants $C_5$ and
$C_6$.

The main elements of the proof of \emph{Theorems 2.1} and \emph{2.2} in
\cite{Kosygina2011} are \emph{Lemmas 3.1-3.3} and \emph{3.5} in \emph{Section
3}, and \emph{Lemmas 5.1-5.3} and \emph{Corollary 5.5} in \emph{Section 5}.
\emph{Lemma 3.1} establishes convergence of the discrete process $(\Vt_k)$ to
a limiting diffusion $(\Yt_t)$, and the other lemmas in \emph{Section 3} give
properties of the limiting diffusion. The lemmas of \emph{Section 5} then
give tight estimates on exit probabilities of the process $(\Vt_k)$ from
certain intervals. The entire series of additional lemmas and propositions
used to establish \emph{Theorems 2.1} and \emph{2.2} in \emph{Sections 4, 6,
7} and \emph{8} use only the results of \emph{Sections 3} and \emph{5}, along
with the fact that the process $(\Vt_k)$ is an irreducible Markov chain on
state space $\N_0$ that is monotonic in the sense of \eqref{eq:ConditionA}.
Essentially the goal of these other sections is to show that the discrete
process $(\Vt_k)$ has the same type of scaling properties as the limiting
diffusion $(\Yt_t)$, and to do this requires some technical work, using the
estimates of \emph{Section 5}.

Now, let us compare to our situation for the process $(Z_k)$. Lemma
\ref{Lem:DiffusionApproximationLimit}, given below in Section
\ref{app:ProofLemmaDiffusionApproximationLimit}, is the analog of \emph{Lemma
3.1}, and Lemma \ref{Lem:DiffusionApproximationProperties} in Section
\ref{app:ProofLemmaDiffusionApproximationLimit} is the analog of \emph{Lemmas
3.2, 3.3,} and \emph{3.5}. Also, Condition (D), which we assume to hold for
the process $(Z_k)$, is the analog of \emph{Lemma 5.1} in
\cite{Kosygina2011}, where the corresponding property of the process
$(\Vt_k)$ is proven. Finally, Lemmas
\ref{lem:ModifiedLemma52FromLimitLawsERW},
\ref{lem:ModifiedLemma53FromLimitLawsERW}, and
\ref{lem:ModifiedCorollary55FromLimitLawsERW} in Section
\ref{app:ProofModifiedLemma52FromLimitLawsERW} are, respectively, the analogs
of \emph{Lemma 5.2}, \emph{Lemma 5.3}, and \emph{Corollary 5.5} in
\cite{Kosygina2011}.

With the analogous results to the lemmas in \emph{Section 3} and
\emph{Section 5} of \cite{Kosygina2011} established, the proof or our
Proposition \ref{prop:GeneralizedMachineryFromLimitLawsERW} for the process
$(Z_k)$ proceeds almost the same way, line by line, as the proof of
\emph{Theorems 2.1} and \emph{2.2} for the process $(\Vt_k)$. So, we will not
repeat it. However, for the sake of completeness, let us point out the few
small differences in our analog lemmas from the originals.
\begin{enumerate}
\item The bound of $\exp(-a^{n/10})$ in our Lemma
    \ref{lem:ModifiedLemma52FromLimitLawsERW}-(i) is instead
    $\exp(-a^{n/4})$ in \emph{Lemma 5.2}-(i). This is irrelevant for how
    the lemma is applied; a bound of $\exp(-a^{cn})$, for any $c > 0$,
    would be sufficient.
\item The concentration bounds \eqref{eq:ConditionD1} and
    \eqref{eq:ConditionD2} in our condition (D) are instead the following
    in \emph{Lemma 5.1}:
\begin{align}
\label{eq:AnalogOfD4}
& \sup_{0 \leq z < x} P_z^{\Vt}(\Vt_{\tau_{x^+}^{\Vt}} > x + y|\tau_{x^+}^{\Vt} < \tau_0^{\Vt}) \leq C(e^{-c y^2/x} + e^{-cy}) ~,~y \geq 0.\\
\label{eq:AnalogOfD5}
& \sup_{x < z < 4x} P_z^{\Vt}(\Vt_{\tau_{x^-}^{\Vt} \wedge \tau_{(4x)^+}^{\Vt}} < x - y) \leq Ce^{-c y^2/x} ~,~0 \leq y \leq x.
\end{align}
\end{enumerate}
The latter bounds are slightly stronger than ours. However, when $x$ is
large, with either \eqref{eq:AnalogOfD4} and \eqref{eq:AnalogOfD5} or with
\eqref{eq:ConditionD1} and \eqref{eq:ConditionD2}, the right hand sides
become small only when $y \gg x^{1/2}$. In \cite{Kosygina2011} the
inequalities \eqref{eq:AnalogOfD4} and \eqref{eq:AnalogOfD5} are applied
either when $y$ is of order $x$ or larger, or in other instances when $y$ is
of order $x^{2/3}$, giving respectively bounds on the right hand side which
are $O(e^{-cy})$ or $O(e^{-cx^{1/3}})$. If instead \eqref{eq:ConditionD1} and
\eqref{eq:ConditionD2} are used these bounds reduce to, respectively,
$O(e^{-y^{1/4}})$ and $O(e^{-x^{1/10}})$. But, again, this is irrelevant for
how the estimates are applied; any sort of stretched exponential decay would
be sufficient. With our weaker estimates slightly larger error terms arise in
the proofs, but they always remain negligible in comparison with all other
terms.
\end{proof}

\subsection{Diffusion approximation lemmas}
\label{app:ProofLemmaDiffusionApproximationLimit}

In the statement of the following lemma $(Z_k)_{k \geq 0}$ is an irreducible,
time-homogeneous Markov chain on state space $\N_0$ satisfying (A)-(D), as in
Proposition \ref{prop:GeneralizedMachineryFromLimitLawsERW}.

\begin{lemma}[Diffusion Approximation, Analog of Lemma 3.1 from \cite{Kosygina2011}]
\label{Lem:DiffusionApproximationLimit} Fix any $0 < \epsilon < y < \infty$,
and let $(Y(t))_{t \geq 0}$ be the solution of
\begin{align}
\label{eq:DiffusionApproximationProcessY}
dY(t) = \alpha \beta dt + \sqrt{2 \alpha  Y(t)^+} dB(t)  ~,~Y(0) = y
\end{align}
where $(B(t))_{t \geq 0}$ is a standard 1-dimensional Brownian
motion\footnotemark{}. \footnotetext{Note that the process $(\Yh(t))_{t \geq
0}$ defined by $\Yh(t) = 2 Y(t)/ \alpha$ is a squared Bessel process of
generalized dimension $2\beta$, since it satisfies $d\Yh(t) = 2\beta dt + 2
\sqrt{\Yh(t)^+} dB(t)$.} Also, let $Y_{\epsilon}(t) = Y(t \wedge
\tau_{\epsilon}^Y)$. For each $n \in \N$, let $(Z_{n,k})_{k \geq 0}$ be a
process with the distribution of $(Z_k)_{k \geq 0}$ when $Z_0 = \floor{yn}$,
and define $Y_{\epsilon,n}(t) =  \dfrac{Z_{n, \floor{nt} \wedge
\kappa_{\epsilon,n}}}{n}$, where $\kappa_{\epsilon,n} = \inf \{k \geq 0:
Z_{n,k} \leq \epsilon n\}$. In addition, let $\tau_{\epsilon,n} =
\kappa_{\epsilon,n}/n$. Then:
\begin{itemize}
\item[(i)] $(Y_{\epsilon,n}(t))_{t \geq 0} \stackrel{J_1}{\longrightarrow}
    (Y_{\epsilon}(t))_{t \geq 0}$, as $n \rightarrow \infty$, where
    $\stackrel{J_1}{\longrightarrow}$ denotes convergence in distribution
    with respect to the Skorokhod $J_1$ topology.
\item[(ii)] $\tau_{\epsilon,n} \stackrel{d.}{\longrightarrow}
    \tau_{\epsilon}^Y$, as $n \rightarrow \infty$.
\item[(iii)] $\int_0^{\tau_{\epsilon,n}} Y_{\epsilon,n}(t) dt
    \stackrel{d.}{\longrightarrow} \int_0^{\tau_{\epsilon}^Y}
    Y_{\epsilon}(t) dt$, as $n \rightarrow \infty$.
\end{itemize}
\end{lemma}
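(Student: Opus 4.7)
The plan is to prove this as a standard diffusion approximation for a Markov chain whose infinitesimal drift and variance are controlled by condition (B), invoking the martingale problem formulation (as in Ethier--Kurtz, Chapter 7) after stopping at level $\epsilon > 0$ to sidestep the degeneracy of the SDE coefficient at $0$. Part (i) is the main content; parts (ii) and (iii) will follow from (i) by the continuous mapping theorem together with standard facts about continuity of exit times and path integrals at a non-degenerate diffusion limit.

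First I would compute the rescaled one-step conditional moments. For $y \geq \epsilon$ and $Y_{\epsilon,n}(t) = y$ with $k = \lfloor yn \rfloor$, condition (B) gives
\begin{align*}
n \cdot E\!\left[Y_{\epsilon,n}(t{+}\tfrac{1}{n}) - Y_{\epsilon,n}(t) \,\Big|\, Y_{\epsilon,n}(t) = y\right] &= E_{k}^Z(Z_1) - k = \alpha\beta + O(n^{-1/3}), \\
n \cdot \mathrm{Var}\!\left[Y_{\epsilon,n}(t{+}\tfrac{1}{n}) - Y_{\epsilon,n}(t) \,\Big|\, Y_{\epsilon,n}(t) = y\right] &= \tfrac{1}{n}\mathrm{Var}_{k}^Z(Z_1) = 2\alpha y + O(n^{-1/3}),
\end{align*}
uniformly on compact subsets of $\{y \geq \epsilon\}$. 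These are precisely the coefficients of the generator $Lf(y) = \alpha\beta f'(y) + \alpha y f''(y)$ of the SDE \eqref{eq:DiffusionApproximationProcessY} on $[\epsilon,\infty)$. Next I would establish tightness of $\{Y_{\epsilon,n}\}$ in the Skorokhod $J_1$ topology. Compact containment comes from the negative drift $\alpha\beta < 0$ together with the one-step concentration estimate (C), which a standard Doob--type argument turns into a polynomial-tail bound on $\sup_{t\leq T}Y_{\epsilon,n}(t)$. Aldous's modulus-of-continuity criterion then reduces to controlling increments over small time intervals; writing $Z_k - Z_0 = M_k + A_k$ with $M_k$ the Doob martingale and $A_k$ the compensator, the variance bound in (B) controls the quadratic variation of $M_k$ up to $\kappa_{\epsilon,n}$ by $O(n \cdot y n) = O(n^2 y)$, so the rescaled martingale increments are $O(\sqrt{\Delta t})$ in $L^2$, while $A_k$ contributes only a Lipschitz piece of slope $O(1)$.

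Once tightness is in hand, identification of the limit proceeds through the martingale problem. For $f \in C_c^2((\epsilon/2,\infty))$ the sequence $f(Y_{\epsilon,n}(t)) - \int_0^t (L_n f)(Y_{\epsilon,n}(s))\, ds$ is an $\mathcal F_t^{n}$--martingale, where $L_n f \to Lf$ uniformly on compacts by the moment expansions above (using (C) to bound the third-moment remainder in a Taylor expansion of $f$, exactly as in the bounds on terms (I)--(III) in Appendix B). Any subsequential limit therefore solves the martingale problem for $L$ stopped at $\tau_\epsilon^Y$, and since $L$ has Lipschitz coefficients on $[\epsilon,\infty)$ the solution is unique in law and equal to $Y_\epsilon$. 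This gives (i). For (ii), since $Y$ is a genuine diffusion with non-degenerate noise above $\epsilon$, a.s.\ $Y$ crosses $\epsilon$ transversally, so the functional $\omega \mapsto \inf\{t: \omega(t) \leq \epsilon\}$ is a.s.\ continuous at $Y_\epsilon$ in the $J_1$ topology, and the continuous mapping theorem delivers $\tau_{\epsilon,n} \stackrel{d}{\to} \tau_\epsilon^Y$. Part (iii) follows by the same principle applied to $\omega \mapsto \int_0^{\tau(\omega)} \omega(t)\,dt$, where joint continuity in the path and the stopping time is standard once uniform integrability is verified via the compact containment bound.

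The most delicate step will be tightness: since the diffusion coefficient $\sqrt{2\alpha y}$ is unbounded in $y$ and only the weak error $O(x^{2/3})$ is assumed for the variance in (B), one must be careful that the variance does not blow up before the negative drift brings the process back down. This is where the upper-tail exit probability bound in (D) becomes essential --- it supplies the polynomial decay of $P^Z_x(\sup_{k\leq cn} Z_k \geq Mn)$ uniformly in $x \leq yn$ that closes the compact containment argument; without (D) the conditions (B) and (C) alone would not be enough. Everything else is routine once the framework is set up.
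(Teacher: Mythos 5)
Your plan to prove (i) via the Ethier--Kurtz martingale-problem framework is conceptually reasonable, and parts (ii)--(iii) via continuity of the exit time and the path integral at the nondegenerate limit do line up with the paper, which invokes Lemma~\ref{lem:ContinuousMappingTheorem} and the continuous mapping theorem for exactly that purpose. But the way you plan to close tightness contains a genuine gap.

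The paper does not redo the general diffusion-approximation machinery. It cites Lemma 7.1 of \cite{Kosygina2017} (restated as Lemma~\ref{lem:GeneralDiffusionApproximation}) and applies it to a regularized chain $(\ZM_{n,k})$ that agrees with $(Z_{n,k})$ above $N_n = \lfloor n^{1/2}\rfloor$ and whose increments below $N_n$ are frozen at the law of the increment from level $N_n$; this makes the moment conditions (E) and (V) hold uniformly, and the two chains can be coupled until they drop below $\epsilon n$, which is all the stopped process ever sees. The crucial structural feature of Lemma~\ref{lem:GeneralDiffusionApproximation} is that its hypothesis (ii) bounds the maximal squared increment only up to the stopping time $t_n = \inf\{k : \ZM_{n,k}\ge an\}$, for each fixed $a$. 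Because the increments need only be controlled while the process stays below $an$, no a priori supremum bound is required; condition (C) alone supplies the needed estimate, and the compact-containment question is absorbed into the internal proof of the cited lemma. As a result, the paper's proof of this lemma uses only conditions (B) and (C) --- condition (D) is not used here at all.

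Your argument, in contrast, proposes to obtain compact containment from (D), claiming it ``supplies the polynomial decay of $P^Z_x(\sup_{k\le cn}Z_k\ge Mn)$'' and that ``without (D) the conditions (B) and (C) alone would not be enough.'' Both claims are wrong, and the second is contradicted by the paper's own proof. Condition (D) controls only the \emph{overshoot} of the process on first exiting a prescribed interval; it says nothing about the probability of the process ever reaching a high level, which is what a $\sup$-tail bound requires. The estimate you actually want is essentially $P_x^Z(\tau_{n^+}^Z < \tau_0^Z) \le C n^{\beta-1}$, which is the content of Lemma~\ref{lem:ModifiedCorollary55FromLimitLawsERW}. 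But that lemma is derived from Lemma~\ref{lem:ModifiedLemma53FromLimitLawsERW}, which is derived from Lemma~\ref{lem:ModifiedLemma52FromLimitLawsERW}, whose proof invokes the present Lemma~\ref{Lem:DiffusionApproximationLimit}(ii) in an essential way (to show the process exits a fixed dyadic window in time $O(a^n)$ with probability bounded away from zero). Using any of that chain here would be circular. If you want to prove tightness from scratch rather than cite \cite{Kosygina2017}, the honest route is either to build the stopping time at level $an$ into the tightness argument directly (mirroring how Lemma~\ref{lem:GeneralDiffusionApproximation} is set up), or to prove a separate quantitative Lyapunov-type escape estimate from (B) and (C) alone by Taylor-expanding $x^{1-\beta}$ --- but in neither case is (D) the missing ingredient.
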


The following properties of the diffusion $Y(t)$ are established in
\cite{Kosygina2011} for the case $\alpha = 1$, with $\beta \equiv 1 -
\delta$, but generalize to any $\alpha > 0$. Indeed note that if $Y^{\alpha}$
is the solution to \eqref{eq:DiffusionApproximationProcessY} with a given
value $\alpha$ and $Y^1$ is the solution to
\eqref{eq:DiffusionApproximationProcessY} with $\alpha = 1$ (both with the
same value of $\beta$ and initial value $y$) then $(Y^{\alpha}(t))_{t \geq 0}
\stackrel{d.}{=} (Y^1(\alpha t))_{t \geq 0}$. All properties of $Y^{\alpha}$
follow from the corresponding properties for $Y^1$ and this relation.

\begin{lemma}[Properties of Limiting Diffusion, Analog of Lemmas 3.2, 3.3, and 3.5 from \cite{Kosygina2011}]
\label{Lem:DiffusionApproximationProperties} Let $P_y^Y(\cdot)$ be the
probability measure for the process $(Y(t))_{t \geq 0}$ in
\eqref{eq:DiffusionApproximationProcessY} with given initial value $Y(0)= y >
0$.
\begin{itemize}
\item[(i)] $\exists ~K_1, K_2 > 0$ such that $P_1^Y(\tau_0^Y > x) \sim K_1
    x^{\beta - 1}$ and $P_1^Y(\int_0^{\tau_0^Y} Y(t) dt > x^2) \sim K_2
    x^{\beta - 1}$, as $x \rightarrow \infty$.
\item[(ii)] For $0 \leq a < y < b$, $P_y^Y(\tau_a^Y < \tau_b^Y) = (b^{1 -
    \beta} - y^{1 - \beta})/ (b^{1 - \beta} - a^{1 - \beta})$.
\item[(iii)] The process $(Y(t))_{t \geq 0}$ when $Y(0) = 1$ has the same
    law as the process $(\frac{Y(ty)}{y})_{t \geq 0}$ when $Y(0) = y$.
\end{itemize}
\end{lemma}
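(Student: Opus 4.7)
Since these three properties are proven in \cite{Kosygina2011} for $\alpha = 1$ and the special value $\beta = 1-\delta$, the plan is to reduce to that case by a deterministic time change and then invoke the Kosygina--Mountford results; I would also include short direct proofs of (ii) and (iii). Specifically, if $Y^1$ solves \eqref{eq:DiffusionApproximationProcessY} with $\alpha = 1$ and initial value $y$, then setting $Y^\alpha(t) := Y^1(\alpha t)$ and using the Brownian scaling $B(\alpha t) \stackrel{d}{=} \sqrt{\alpha}\,\tilde{B}(t)$ shows that $Y^\alpha$ solves \eqref{eq:DiffusionApproximationProcessY} for general $\alpha > 0$ with the same initial value. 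Under this identification $\tau_0^{Y^\alpha} = \tau_0^{Y^1}/\alpha$ and $\int_0^{\tau_0^{Y^\alpha}} Y^\alpha(t)\,dt = \alpha^{-1}\int_0^{\tau_0^{Y^1}} Y^1(t)\,dt$, so the asymptotics in (i) transfer from $\alpha = 1$ to general $\alpha > 0$ with updated constants $K_1, K_2$; the scale function in (ii) is independent of the time speed, and (iii) is preserved by the deterministic time change.

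\textbf{Direct proofs of (iii) and (ii).} For (iii), with $Y$ starting at $y$, set $\tilde{Y}(t) := Y(ty)/y$ and compute its SDE using $dB(ty) \stackrel{d}{=} \sqrt{y}\,d\tilde{B}(t)$; the result is \eqref{eq:DiffusionApproximationProcessY} with $\tilde{Y}(0) = 1$, and strong uniqueness on $(0,\infty)$ (where the coefficients are locally Lipschitz) completes the proof. For (ii), the generator $Lf(y) = \alpha\beta f'(y) + \alpha y f''(y)$ admits $s'(y) = y^{-\beta}$ as a solution of $Ls = 0$, hence $s(y) = y^{1-\beta}/(1-\beta)$; the standard exit formula $P_y^Y(\tau_a^Y < \tau_b^Y) = (s(b) - s(y))/(s(b) - s(a))$ for 1-dimensional diffusions then yields the stated identity.

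\textbf{Part (i).} After reducing to $\alpha = 1$, the change of variables $\hat{Y}(t) := 2Y(t)$ makes $\hat{Y}$ a squared Bessel process $\mathrm{BESQ}(2\beta)$ of negative dimension started at $2$. By classical results on $\mathrm{BESQ}$ of dimension $d < 2$, the lifetime started from $x > 0$ has inverse-Gamma-type density $\propto t^{d/2 - 2} e^{-x/(2t)}$ and tail $\sim C_d\, x^{1-d/2}\, t^{d/2 - 1}$; with $d = 2\beta$ this gives $P_1^Y(\tau_0^Y > t) \sim K_1 t^{\beta - 1}$. For the integrated functional, apply the Lamperti-type time change $T(u) := \inf\{t : \int_0^t Y\,ds = u\}$ so that $W(u) := Y(T(u))$ satisfies $dW = (\beta/W)\,du + \sqrt{2}\,dB^*$; by It\^{o}'s formula $d(W^2) = 2(\beta + 1)\,du + 2\sqrt{2 W^2}\,dB^*$, and the further deterministic time change $s = 2u$ converts $W^2$ into $\mathrm{BESQ}(\beta + 1)$ started at $1$. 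Since the lifetime of $W$ equals $A := \int_0^{\tau_0^Y} Y\,dt$, applying the same tail formula in dimension $\beta + 1 < 2$ yields $P_1^Y(A > t) \sim C\,t^{(\beta-1)/2}$, and substituting $t = x^2$ gives $P_1^Y(A > x^2) \sim K_2 x^{\beta - 1}$.

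There is no serious obstacle. Parts (ii) and (iii) are short computations once the generator is in hand, and (i) is classical after the Bessel identification; the only care needed is tracking the two time changes in the second asymptotic of (i) so that the exponents combine to yield precisely $\beta - 1$ (rather than, say, $2\beta$). Since \cite[Lemmas 3.2, 3.3, 3.5]{Kosygina2011} contain full details for $\alpha = 1$, one may alternatively cite those results and simply justify the $\alpha$-reduction above.
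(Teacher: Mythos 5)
Your proposal is correct and takes essentially the same route as the paper: both reduce the general $\alpha$ to the case $\alpha = 1$ via the deterministic time change $(Y^{\alpha}(t))_{t \geq 0} \stackrel{d.}{=} (Y^{1}(\alpha t))_{t \geq 0}$ and then invoke Lemmas 3.2, 3.3, and 3.5 of \cite{Kosygina2011}. The direct verifications you append for (ii) and (iii) via the scale function and SDE scaling, and for (i) via the $\mathrm{BESQ}(2\beta)$ and Lamperti-type identifications, are correct, but they reproduce the content underlying the cited lemmas rather than constituting a different approach.
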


In the remainder of Section \ref{app:ProofLemmaDiffusionApproximationLimit}
we will use the generic probability measure $\P$ and corresponding
expectation operator $\E$ for all random variables, including the Markov
chains $(Z_{n,k})_{k \geq 0}$ of Lemma \ref{Lem:DiffusionApproximationLimit}.
The proof of Lemma \ref{Lem:DiffusionApproximationLimit} is based on the
following result from \cite{Kosygina2017}.

\begin{lemma}(\cite[Lemma 7.1]{Kosygina2017}) Let $b \in \R$ and $D > 0$, and let $(\YM(t))_{t \geq 0}$ be the solution of
\label{lem:GeneralDiffusionApproximation}
\begin{align}
\label{eq:GeneralDiffusionEquation}
d\YM(t) = b \hspace{0.7 mm} dt + \sqrt{D \YM(t)^+} dB(t) ~,~\YM(0) = y > 0
\end{align}
where $(B(t))_{t \geq 0}$ is a standard 1-dimensional Brownian motion. Let
$(\ZM_{n,k})_{k \geq 0}$, $n \in \N$, be integer-valued (time-homogenous)
Markov chains such that $\ZM_{n,0} = \floor{n y_n}$, where $y_n \rightarrow
y$ as $n \rightarrow \infty$, and such that conditions (i) and (ii) below are
satisfied.
\begin{itemize}
\item[(i)] There is a sequence of positive integers $(N_n)_{n \geq 1}$ such
    that $N_n \rightarrow \infty$ with $N_n = o(n)$, a function $f:\N
    \rightarrow [0,\infty)$ with $f(x) \rightarrow 0$ as $x \rightarrow
    \infty$, and a function $g:\N \rightarrow [0,\infty)$ with $g(x)
    \searrow 0$ as $x \rightarrow \infty$, such that:
\begin{align*}
& (E) ~~ |\E(\ZM_{n,k+1} - \ZM_{n,k}|\ZM_{n,k} = m) - b| \leq f(m \vee N_n). \\
& (V) ~~ \left| \frac{\Var(\ZM_{n,k+1}|\ZM_{n,k} = m)}{m \vee N_n} - D \right| \leq g(m \vee N_n).
\end{align*}
\item[(ii)] For each $T, a > 0$
\begin{align*}
\E \left[ \max_{1 \leq k \leq (Tn) \wedge t_n} (\ZM_{n,k} - \ZM_{n,k-1})^2 \right] = o(n^2) ~,~ \mbox{ as } n \rightarrow \infty
\end{align*}
where $t_n \equiv \inf \{k \geq 0:\ZM_{n,k} \geq an\}$.
\end{itemize}
Then $(\YM_n(t))_{t \geq 0} \stackrel{J_1}{\longrightarrow} (\YM(t))_{t \geq
0}$, as $n \rightarrow \infty$, where $\YM_n(t) \equiv \ZM_{n,\floor{nt}}/n$.
\end{lemma}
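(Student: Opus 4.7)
The plan is to prove this as a quantitative functional central limit theorem via the Stroock--Varadhan martingale problem formulation, in the style of Ethier--Kurtz. This is essentially the classical strategy for proving convergence of rescaled Markov chains to diffusions: (a) show tightness of $\{\YM_n\}$ in the Skorokhod $J_1$ topology; (b) show that any subsequential weak limit solves the martingale problem for the generator $Lf(y) = b f'(y) + \tfrac{1}{2} Dy^+ f''(y)$; (c) invoke uniqueness of that martingale problem to conclude convergence in law to $\YM$.

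For tightness I would apply Aldous's criterion. Condition (ii) immediately controls the jump sizes $\sup_{k \leq Tn} |\YM_n(k/n)-\YM_n((k-1)/n)|$ in $L^2$ on the set where $\YM_n$ stays bounded by $a$. For the modulus-of-continuity piece, conditions (E) and (V) give $\E[(\YM_{n,k+1}-\YM_{n,k})^2 \mid \YM_{n,k} = m] \leq D(m\vee N_n) + g(N_n)(m\vee N_n) + b^2 + f(N_n)^2$, which on the event $\{\YM_n \leq a\}$ is $O(n)$, so that $\E[(\YM_n(t+h)-\YM_n(t))^2] = O(h)$ uniformly. A standard localization at stopping times $t_n(a) = \inf\{k:\YM_{n,k}\geq an\}/n$ reduces everything to the bounded regime, and the tightness is then assembled in the standard way.

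For identification of the limit, fix $f \in C_c^\infty(\R)$ and Taylor expand
\[
f(\YM_{n,k+1}) - f(\YM_{n,k}) = f'(\YM_{n,k})(\YM_{n,k+1}-\YM_{n,k}) + \tfrac{1}{2}f''(\YM_{n,k})(\YM_{n,k+1}-\YM_{n,k})^2 + R_{n,k}.
\]
Summing and taking conditional expectations, condition (E) identifies $\sum f'(\YM_{n,k})\cdot(1/n)\cdot nb \approx \int_0^t b f'(\YM_n(s))\,ds$, condition (V) gives $\sum f''(\YM_{n,k})\cdot\tfrac{1}{2}\cdot\tfrac{1}{n}\cdot D(\YM_n(s)\vee N_n/n)\cdot n \approx \int_0^t \tfrac{1}{2}D\YM_n(s)^+ f''(\YM_n(s))\,ds$ (using $N_n/n \to 0$), and condition (ii) bounds the cubic remainder $\sum R_{n,k}$ in $L^1$ uniformly on $[0,T]$. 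Thus $M_n^f(t) := f(\YM_n(t)) - f(\YM_n(0)) - \int_0^t Lf(\YM_n(s))\,ds$ is an approximate martingale, and any weak subsequential limit $\YM^*$ solves the $(L,\delta_y)$-martingale problem.

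The main obstacle is uniqueness for that martingale problem, since the diffusion coefficient $\sqrt{Dy^+}$ is only H\"older-$1/2$ at the origin and the drift $b$ may be negative (allowing the process to reach $0$). I would handle this by Yamada--Watanabe: pathwise uniqueness for one-dimensional SDEs holds because $|\sqrt{Dx^+}-\sqrt{Dy^+}|^2 \leq D|x-y|$ and $\int_{0^+} du/u = \infty$, giving weak uniqueness; alternatively, the footnote's observation that $2\YM/D$ is a squared Bessel process of dimension $2b/D$ identifies the law explicitly from the well-known squared Bessel family. With uniqueness in hand, every subsequential limit agrees with $\YM$ in law, which together with tightness gives the claimed $J_1$-convergence.
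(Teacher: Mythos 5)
The paper does not prove this lemma itself: it is quoted verbatim from Kosygina--Peterson \cite[Lemma 7.1]{Kosygina2017}, so there is no in-paper proof to compare against. Your Stroock--Varadhan / Ethier--Kurtz route (tightness via Aldous plus localization, identification of the generator $Lf = bf' + \tfrac12 D y^+ f''$ by Taylor expansion, and well-posedness of the martingale problem via Yamada--Watanabe or the squared-Bessel identification) is the canonical method for this kind of discrete-to-diffusion limit and is, to the best of my knowledge, the strategy used in the cited reference; the overall structure of your argument is sound.

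One technical point deserves more care than your sketch gives it: the claim that ``condition (ii) bounds the cubic remainder $\sum R_{n,k}$ in $L^1$'' is not quite right as stated, because (ii) is an $L^2$ bound on the \emph{maximal} increment and the third-order Taylor remainder involves $|\YM_{n,k+1}-\YM_{n,k}|^3$. The clean argument bounds the remainder sum \emph{in probability} rather than in $L^1$: writing $R_{n,k} = \tfrac12\bigl(f''(\xi_k)-f''(\YM_{n,k})\bigr)(\YM_{n,k+1}-\YM_{n,k})^2$, one has $\sum_k |R_{n,k}| \le \tfrac12\|f'''\|_\infty \bigl(\max_k |\YM_{n,k+1}-\YM_{n,k}|\bigr)\sum_k(\YM_{n,k+1}-\YM_{n,k})^2$, and then one splits on the event $\{\max_k|\YM_{n,k+1}-\YM_{n,k}| \le \eta\}$, whose complement is handled by (ii), while on that event $\sum_k(\YM_{n,k+1}-\YM_{n,k})^2$ has bounded expectation by the stopped version of (V) and so is controlled by Markov. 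A direct Cauchy--Schwarz argument would require a fourth-moment bound on the sum of squared increments which (ii) alone does not supply, so the splitting is the right move. Also note that the localization at $t_n$ and the passage back to the unstopped process ($a \to \infty$, using non-explosion of the limit) needs to be carried out explicitly, but that is routine given non-explosion of the negative-dimension squared Bessel process.
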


We will also need the following lemma, which is a minor extension of Lemma
3.3 in \cite{Kosygina2014} where the same result is stated in the case $D =
2$.

\begin{lemma}
\label{lem:ContinuousMappingTheorem} For a function $f$ in the Skorokhod
space $D[0,\infty)$, define $\tau_{\epsilon}^f = \inf \{t \geq 0: f(t) \leq
\epsilon\}$, and define $\varphi_{\epsilon}(f) \in D[0,\infty)$ by
$(\varphi_{\epsilon}(f))(t) = f(t \wedge \tau_{\epsilon}^f)$. Let $\psi$ be
any of the following three mappings defined on $D[0,\infty):$
\begin{align*}
f \mapsto \tau_{\epsilon}^f \in [0,\infty], ~~~ f \mapsto \varphi_{\epsilon}(f) \in D[0,\infty), ~~~ f \mapsto \int_0^{\tau_{\epsilon}^f} f^+(t)dt \in [0,\infty].
\end{align*}
Denote by $\mbox{Cont}(\psi) = \{f \in D[0,\infty):\psi \mbox{ is continuous
at } f\}$ the set of continuity points of $\psi$. Then the solution $\YM =
(\YM(t))_{t \geq 0}$ of \eqref{eq:GeneralDiffusionEquation} satisfies $\P(\YM
\in \mbox{Cont}(\psi)) = 1$, for any $b \in \R$, $D > 0$, and $0 < \epsilon <
y < \infty$.
\end{lemma}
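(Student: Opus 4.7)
The plan is to reduce the statement to two standard ingredients: first, a deterministic characterization of when each functional $\psi$ is continuous at a function $f$, and second, the verification that the sample paths of $\YM$ almost surely satisfy the resulting conditions. The key fact used throughout is that if $f \in C[0,\infty)$, then $f_n \to f$ in the $J_1$ topology is equivalent to uniform convergence on compact sets.

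For the deterministic step, I would show that if $f \in C[0,\infty)$ has $\tau_\epsilon^f < \infty$ and satisfies the \emph{strict crossing} condition
\[
\inf_{t \in [\tau_\epsilon^f,\, \tau_\epsilon^f + \eta]} f(t) < \epsilon \qquad \mbox{for every } \eta > 0,
\]
then all three maps $\psi$ are continuous at $f$. The bound $\liminf_n \tau_\epsilon^{f_n} \geq \tau_\epsilon^f$ follows from continuity of $f$: one has $f \geq \epsilon + \eta'$ on $[0, \tau_\epsilon^f - \eta]$ for some $\eta' > 0$, and hence so does $f_n$ for large $n$. The reverse bound $\limsup_n \tau_\epsilon^{f_n} \leq \tau_\epsilon^f$ uses the strict crossing hypothesis to produce a time $t^{\ast}$ just past $\tau_\epsilon^f$ with $f_n(t^{\ast}) < \epsilon$ for large $n$. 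Continuity of $\varphi_\epsilon$ and of $\int_0^{\tau_\epsilon^f} f^+(t)\, dt$ at such an $f$ then follow from continuity of $\tau_\epsilon$ together with uniform convergence of $f_n$ to $f$ on the compact set $[0, \tau_\epsilon^f + 1]$. The case $\tau_\epsilon^f = \infty$ is handled analogously, the sufficient condition being that $\inf_{t \leq T} f(t) > \epsilon$ for every finite $T$.

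For the probabilistic step, continuity of sample paths of $\YM$ is standard for the SDE \eqref{eq:GeneralDiffusionEquation}. To verify the strict crossing condition, I would apply the strong Markov property at the stopping time $\tau_\epsilon^Y$: the shifted process starts at the deterministic value $\epsilon > 0$, where the diffusion coefficient $\sqrt{D\epsilon}$ is strictly positive. By the Blumenthal $0$--$1$ law, the event that this restarted process enters $(-\infty, \epsilon)$ in arbitrarily small times has probability $0$ or $1$. Using the Dambis--Dubins--Schwarz time change for the continuous martingale part of $\YM$, one can compare $\YM(\tau_\epsilon^Y + s) - \epsilon$ on a small interval $[0,\delta]$ with $b\,s + \sqrt{D\epsilon}\,B(s)$ (plus a controllable error from the slowly-varying diffusion coefficient near $\epsilon$); since Brownian motion a.s. takes negative values in every right-neighbourhood of $0$ and dominates any bounded drift on small time scales, the event has probability $1$. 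The case $\tau_\epsilon^Y = \infty$, when it has positive probability, is similarly handled by noting that on that event $\YM$ stays bounded away from $\epsilon$ on every compact time interval a.s.

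The main obstacle will be making the local Brownian comparison rigorous and uniform at the random starting point $\tau_\epsilon^Y$. This is precisely where the nondegeneracy of the diffusion coefficient $\sqrt{D\epsilon} > 0$ at level $\epsilon$ is essential (the statement would fail at $\epsilon = 0$). Fortunately this argument is not new: the case $D = 2$ is exactly Lemma~3.3 of \cite{Kosygina2014}, and its proof uses only strict positivity of the diffusion coefficient at the threshold, so it extends verbatim to general $D > 0$.
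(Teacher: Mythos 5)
Your proof is correct and takes essentially the same route as the paper, which gives no standalone argument and simply cites the lemma as a minor extension of Lemma~3.3 of \cite{Kosygina2014} (the $D=2$ case); you arrive at the same conclusion. Your sketch (the strict-crossing criterion for deterministic continuity of the three maps, then strong Markov plus Blumenthal plus a DDS comparison to show the diffusion's paths a.s. cross level $\epsilon$ strictly, with the $\tau_\epsilon^Y=\infty$ case handled by compactness and path continuity) reproduces the content of the cited proof and correctly identifies that the only input used is strict positivity of the diffusion coefficient $\sqrt{D\epsilon}>0$ at the threshold, which holds for any $D>0$.
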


\begin{proof}[Proof of Lemma \ref{Lem:DiffusionApproximationLimit}]
For concreteness take $N_n = \floor{n^{1/2}}$ and define, for $n \in \N$,
integer-valued (time homogeneous) Markov chains $\ZM_n \equiv (\ZM_{n,k})_{k
\geq 0}$ by
\begin{align}
\label{eq:TransProbsZnk}
& \ZM_{n,0} = \floor{yn} ~\mbox{ and }~ \nonumber \\
& \P(\ZM_{n,k+1} = x + z | \ZM_{n,k} = x) = \begin{cases}
\P(Z_{n,k+1} = x + z| Z_{n,k} = x),~ z \in \Z \mbox{ and } x \geq N_n \\
\P(Z_{n,k+1} = N_n + z| Z_{n,k} = N_n),~ z \in \Z \mbox{ and } x < N_n. \\
\end{cases}
\end{align}
Thus, $\ZM_{n,0} = Z_{n,0} \equiv \floor{yn}$, and if the Markov chain
$(\ZM_{n,k})_{k \geq 0}$ is currently at level $x \geq N_n$, then it has the
same transition probabilities as the Markov chain $(Z_{n,k})_{k \geq 0}$. On
the other hand, if the Markov chain $(\ZM_{n,k})_{k \geq 0}$ is currently at
some level $x < N_n$, then the difference $(\ZM_{n,k+1} - \ZM_{n,k})$ between
the current value and next value has the same law as the difference
$(Z_{n,k+1} - Z_{n,k})$ when $Z_{n,k} = N_n$.

With this construction the chains $(Z_{n,k})_{k \geq 0}$ and $(\ZM_{n,k})_{k
\geq 0}$ can be naturally coupled until the first time $k$ that they fall
below level $\epsilon n$ (for all $n$ large enough that $N_n =
\floor{n^{1/2}} < \epsilon n$). Thus, by Lemma
\ref{lem:ContinuousMappingTheorem}
and the continuous mapping theorem it will suffice to show the following claim to establish the proposition. \\

\noindent \emph{Claim:} Define $(\YM_n(t))_{t \geq 0}$ by $\YM_n(t) = \ZM_{n,
\floor{nt}}/n$. Then $(\YM_n(t))_{t \geq 0} \stackrel{J_1}{\longrightarrow}
(Y(t))_{t \geq 0}$, where $(Y(t))_{t \geq 0}$
is the solution of \eqref{eq:DiffusionApproximationProcessY}. \\

\noindent \emph{Proof of Claim:} We apply Lemma
\ref{lem:GeneralDiffusionApproximation} with $b =  \alpha \beta$ and $D =
2\alpha$. By definition $(Z_{n,k})_{k \geq 0}$ has the distribution of
$(Z_k)_{k \geq 0}$ when $Z_0 = \floor{yn}$, where the Markov chain $(Z_k)_{k
\geq 0}$ is originally defined in Proposition
\ref{prop:GeneralizedMachineryFromLimitLawsERW}. It follows immediately from
condition (B) in the statement of this proposition and the definition
\eqref{eq:TransProbsZnk} that the Markov chains $(\ZM_{n,k})_{k \geq 0}$
satisfy conditions (E) and (V) in (i) of Lemma
\ref{lem:GeneralDiffusionApproximation}. Indeed, $f(x)$ and $g(x)$ are both
$O(x^{-1/3})$. To show (ii) of Lemma \ref{lem:GeneralDiffusionApproximation}
we define $M_n = \max_{1 \leq k \leq (Tn) \wedge t_n} |\ZM_{n,k} -
\ZM_{n,k-1}|$ and write the expectation as
\begin{align*}
\E \left[ \max_{1 \leq k \leq (Tn) \wedge t_n} (\ZM_{n,k} - \ZM_{n,k-1})^2 \right]
= \E(M_n^2)
= \int_0^{\infty} \P(M_n^2 > x) dx
=  \int_0^{\infty} \P(M_n > x^{1/2}) dx.
\end{align*}
The last integral may be decomposed as
\begin{align}
\label{eq:MnDecompose3Integrals}
\int_0^{\infty} \P(M_n > x^{1/2}) dx = \int_0^{n^{3/2}} \P(M_n > x^{1/2})dx + \int_{n^{3/2}}^{\infty} \P(M_n > x^{1/2})dx.
\end{align}
The first integral on the right hand side of \eqref{eq:MnDecompose3Integrals}
is at most $n^{3/2}$. Using the definition \eqref{eq:TransProbsZnk} along
with condition (C) in the statement of Proposition
\ref{prop:GeneralizedMachineryFromLimitLawsERW} and the union bound estimate
\begin{align*}
\P(M_n > x^{1/2}) \leq Tn \left[ \max_{m \leq an} \P\Big(|\ZM_{n,k} - \ZM_{n,k-1}| > x^{1/2} \Big| \ZM_{n,k-1} = m\Big) \right]
\end{align*}
one finds the second integral on the right hand side of
\eqref{eq:MnDecompose3Integrals} is o(1), as $n \rightarrow \infty$. This
establishes (ii) of Lemma \ref{lem:GeneralDiffusionApproximation}, and,
hence, the claim.
\end{proof}

\subsection{Exit probability lemmas}
\label{app:ProofModifiedLemma52FromLimitLawsERW}

In the statement of the following lemmas $(Z_k)_{k \geq 0}$ is an
irreducible, time-homogeneous Markov chain on state space $\N_0$ satisfying
(A)-(D), as in Proposition \ref{prop:GeneralizedMachineryFromLimitLawsERW}.

\begin{lemma}[Analog of Lemma 5.2 from \cite{Kosygina2011}]
\label{lem:ModifiedLemma52FromLimitLawsERW} Fix any $a \in (1,2]$, and define
$\IM_n = [a^n - a^{\frac{2}{3}n}, a^n + a^{\frac{2}{3}n}]$ and $\gamma_n =
\inf\{k \geq 0:Z_k \not\in(a^{n-1},a^{n+1})\}$. Then, for all sufficiently
large $n$ the following hold:
\begin{itemize}
\item[(i)] $P_x^Z\left( \mbox{dist}\big(Z_{\gamma_n},
    (a^{n-1},a^{n+1})\big) \geq a^{\frac{2}{3}(n-1)}\right) \leq
    \exp(-a^{n/10})$,~ for each $x \in \IM_n$.
\item[(ii)] $\Big|P_x^Z(Z_{\gamma_n} \leq a^{n-1}) - a^{1 - \beta}/(1 +
    a^{1- \beta}) \Big| \leq a^{-n/4}$,~ for each $x \in \IM_n$.
\end{itemize}
\end{lemma}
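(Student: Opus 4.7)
The plan is to prove the two parts separately. Part (i) is a pure concentration estimate that follows from condition (D) via the strong Markov property, while part (ii) relies on an optional-stopping argument applied to the approximately harmonic function $F(x) = x^{1-\beta}$, with part (i) used to control the value of $F$ at the exit point.

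For part (i), note that $a \in (1,2]$ implies $a^{n+1} \leq 4 a^{n-1}$, so $\IM_n \subset (a^{n-1}, 4a^{n-1})$ and the interval $(a^{n-1}, a^{n+1})$ is contained in $(a^{n-1}, 4a^{n-1})$. Set $y_n = a^{(2/3)(n-1)}$. To control the undershoot $P_x^Z(Z_{\gamma_n} < a^{n-1} - y_n)$, apply \eqref{eq:ConditionD2} with $x = a^{n-1}$ and $y = y_n$; the quantity $y^{2/3} x^{-1/3}$ equals $a^{(n-1)/9}$, so the bound is at most $C_3(1 + a^{(n-1)/9})\exp(-C_4 a^{(n-1)/9})$, which is below $\exp(-a^{n/10})$ for $n$ large. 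To control the overshoot $P_x^Z(Z_{\gamma_n} > a^{n+1} + y_n)$, condition on the position $Z_{\gamma_n-1} = z \in [a^{n-1}, a^{n+1})$ and use the strong Markov property: conditional on this $z$ and on the event $Z_{\gamma_n} \geq a^{n+1}$, the law of $Z_{\gamma_n}$ is that of $Z_1$ started from $z$ conditioned on reaching or exceeding $a^{n+1}$. The supremum in \eqref{eq:ConditionD1} with $x = a^{n+1}$ gives a uniform bound of the same stretched-exponential form, this time with exponent of order $a^{(n-7)/9}$, which is again below $\exp(-a^{n/10})$ for $n$ large.

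For part (ii), $F(x) = x^{1-\beta}$ is the scale function of the limiting diffusion \eqref{eq:DiffusionApproximationProcessY}, in the sense that the generator $\alpha\beta\,\partial_x + \alpha x\,\partial_x^2$ annihilates $F$. A Taylor expansion of $E_z^Z[F(Z_1)]$ around $F(z)$ with cubic remainder (entirely analogous to the computation in the proof of Lemma \ref{lem:TransienceRecurrenceConditionsForMC}) combines conditions (B) and (C) to give the cancellation
\begin{align*}
\bigl| E_z^Z[F(Z_1)] - F(z) \bigr| = O(z^{-\beta - 1/3}) \qquad \text{as } z \to \infty.
\end{align*}
A standard Lyapunov argument with the linear function $G(x) = x$ --- whose one-step drift is $\alpha\beta + O(x^{-1/3})$, a strictly negative constant for $z$ large --- gives $E_x^Z[\gamma_n] = O(a^n)$. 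Summing the one-step errors over $[0, \gamma_n)$, optional stopping yields
\begin{align*}
\bigl| E_x^Z[F(Z_{\gamma_n})] - F(x) \bigr| = O\bigl(a^n \cdot a^{n(-\beta - 1/3)}\bigr) = O\bigl(a^{n(2/3 - \beta)}\bigr) = O\bigl(a^{-n/3} \cdot F(x)\bigr),
\end{align*}
since $F(x)$ is of order $a^{n(1-\beta)}$ for $x \in \IM_n$. On the exit side, write $p = P_x^Z(Z_{\gamma_n} \leq a^{n-1})$ and use part (i) to restrict $Z_{\gamma_n}$ to be within $y_n$ of the appropriate endpoint up to an event of probability $\exp(-a^{n/10})$; expanding $F$ there gives
\begin{align*}
E_x^Z[F(Z_{\gamma_n})] = p \cdot a^{(n-1)(1-\beta)} + (1-p) \cdot a^{(n+1)(1-\beta)} + O\bigl(a^{n(1-\beta) - n/3}\bigr).
\end{align*}
Equating the two expressions and solving the resulting linear equation in $p$ gives $p = a^{1-\beta}/(1 + a^{1-\beta}) + O(a^{-n/3})$, which is sharper than the claimed rate $a^{-n/4}$ for $n$ large.

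The main obstacle is the quantitative control of $E_x^Z[\gamma_n]$ together with the bookkeeping of three distinct error sources: the initial offset within $\IM_n$, the overshoot of $Z_{\gamma_n}$ from part (i), and the accumulated approximate-harmonicity error from optional stopping. The diffusion approximation of Lemmas \ref{Lem:DiffusionApproximationLimit}--\ref{Lem:DiffusionApproximationProperties} gives the qualitative limit $p \to a^{1-\beta}/(1 + a^{1-\beta})$ immediately, but provides no rate, so one genuinely needs the scale-function approach; the delicate point is that the leading-order cancellation in the Taylor expansion (reducing the naive error $O(z^{-\beta})$ to $O(z^{-\beta - 1/3})$) is precisely what makes the accumulated error small compared to $F(x)$, and hence sharp enough to yield the rate $a^{-n/4}$.
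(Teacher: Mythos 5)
Your overall architecture matches the paper's: part (i) from condition (D) via identifying $\gamma_n$ with the appropriate $\tau^Z_{x^\pm}$ stopping times, and part (ii) from an optional-stopping argument applied to an approximately harmonic function, with the hitting-time bound $E_x^Z[\gamma_n]=O(a^n)$ controlling the accumulated error. Your Lyapunov derivation of that hitting-time bound via the linear function $G(x)=x$ (negative drift $\alpha\beta + O(x^{-1/3})$) is a legitimate alternative to the paper's route, which instead iterates the constant-probability-of-exit-in-time-$a^n$ estimate that follows from the diffusion approximation; both work.

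The gap is in your choice of Lyapunov function for part (ii). You work directly with the unbounded $F(x)=x^{1-\beta}$, whereas the paper replaces it with a \emph{compactly supported} smooth $g\in C_c^\infty([0,\infty))$ agreeing with $t^{1-\beta}$ only on $(\tfrac{2}{3a},\tfrac{3a}{2})$, and then studies $W^n_k = g(Z_{k\wedge\gamma_n}/a^n)$. That modification is doing real work, not cosmetics. First, $g'''$ is uniformly bounded, so the cubic Taylor remainder in the one-step estimate can be controlled by the same three-piece splitting used in the proof of Lemma~\ref{lem:TransienceRecurrenceConditionsForMC}; with the raw $F$, $F'''(\xi)=O(\xi^{-2-\beta})$ is unbounded (at $0$, and at $\infty$ if $\beta<-2$), so the tail contributions where $Z_1\notin[z/2,2z]$ need a separate argument that you have not supplied. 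Second, and more seriously, after optional stopping you expand $E_x^Z[F(Z_{\gamma_n})]$ as $p\,a^{(n-1)(1-\beta)}+(1-p)\,a^{(n+1)(1-\beta)}+O(a^{n(1-\beta)-n/3})$, invoking part (i) to restrict the exit value. But part (i) only bounds the \emph{probability} of a large overshoot; since $F$ is unbounded, you also need to bound $E_x^Z\bigl[F(Z_{\gamma_n})\,\indicator\{Z_{\gamma_n}>a^{n+1}+a^{\frac{2}{3}(n-1)}\}\bigr]$, which requires integrating the stretched-exponential overshoot tail from \eqref{eq:ConditionD1} against $y\mapsto(a^{n+1}+y)^{1-\beta}$, not just multiplying $F$'s typical size by the small probability. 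The same issue arises in establishing uniform integrability so that the optional stopping theorem applies at the unbounded stopping time $\gamma_n$. With the bounded $g$, $|W^n_{\gamma_n}|\le\|g\|_\infty$ makes both of these points trivial. Your argument can almost certainly be repaired by carrying out those tail integrations, but as written there is a genuine hole, and the paper's compactification is precisely the device that closes it.
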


\begin{lemma}[Analog of Lemma 5.3 from \cite{Kosygina2011}]
\label{lem:ModifiedLemma53FromLimitLawsERW} For each $a \in (1,2]$ there is
some $\ell_a \in \N$ such that if $\ell, m, u, x \in \N$ satisfy $\ell_a \leq
\ell < m < u$ and $x \in \IM_m$ (where $\IM_m$ is as in Lemma
\ref{lem:ModifiedLemma52FromLimitLawsERW}) then
\begin{align*}
\frac{h^-_{a,\ell}(m) - 1}{h^-_{a,\ell}(u) - 1} \leq P_x^Z\Big(\tau^Z_{(a^{\ell})^-} > \tau^Z_{(a^{u})^+}\Big) \leq \frac{h^+_{a,\ell}(m) - 1}{h^+_{a,\ell}(u) - 1}
\end{align*}
where $h_{a,\ell}^{\pm}(i) = \prod_{j = \ell + 1}^i (a^{1 - \beta} \mp a^{-
\lambda j})$, $i > \ell$, and $\lambda$ is some small positive number not
depending on $\ell$.
\end{lemma}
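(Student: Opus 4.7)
The plan is to reduce the exit problem for $(Z_k)$ from $(a^{\ell}, a^u)$ to a Gambler's Ruin computation on the integer ``level'' $n$ characterized by $Z_k \in (a^{n-1}, a^{n+1})$. In the idealized case where the ratio of down-to-up exit probabilities from each level equals exactly $a^{1-\beta}$, standard one-dimensional Gambler's Ruin yields hitting probability $(a^{(1-\beta)(m-\ell)}-1)/(a^{(1-\beta)(u-\ell)}-1)$, which is precisely the statement of the lemma with $a^{-\lambda j}$ replaced by $0$. The functions $h_{a,\ell}^{\pm}(i) = \prod_{j=\ell+1}^i (a^{1-\beta} \mp a^{-\lambda j})$ are designed to absorb, with $\lambda$ chosen strictly less than $1/4$, the $O(a^{-n/4})$ error from Lemma \ref{lem:ModifiedLemma52FromLimitLawsERW}(ii).

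First I would define, inductively, stopping times $\eta^{(0)} = 0$ and $\eta^{(j+1)} = \inf\{k > \eta^{(j)}: Z_k \notin (a^{N_j - 1}, a^{N_j + 1})\}$, where $N_j$ is the integer with $Z_{\eta^{(j)}} \in \IM_{N_j}$ (unique once $\ell$ is large). Starting from $Z_0 = x \in \IM_m$ one has $N_0 = m$. By Lemma \ref{lem:ModifiedLemma52FromLimitLawsERW}(i), the overshoot $\mathrm{dist}(Z_{\eta^{(j+1)}}, (a^{N_j-1}, a^{N_j+1}))$ is bounded by $a^{2(N_j-1)/3}$ except on an event of probability at most $\exp(-a^{N_j/10})$, and this forces $Z_{\eta^{(j+1)}} \in \IM_{N_j-1} \cup \IM_{N_j+1}$. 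On the intersection $G$ of these good events across all excursions, $(N_j)$ is a genuine nearest-neighbor birth-death chain on $\{\ell,\ldots,u\}$, and by Lemma \ref{lem:ModifiedLemma52FromLimitLawsERW}(ii) its down-to-up ratio is $\rho_n = a^{1-\beta} + O(a^{-n/4})$.

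Next I would apply Gambler's Ruin to this reduced chain. The probability of reaching level $u$ before level $\ell$ from $m$ equals
\[
\frac{\sum_{k=\ell}^{m-1} \prod_{j=\ell+1}^k \rho_j}{\sum_{k=\ell}^{u-1} \prod_{j=\ell+1}^k \rho_j}.
\]
Since $\beta < 0$ gives $a^{1-\beta} > a > 1$, the $\rho_j$ are bounded away from $1$, both sums are dominated by their top terms, and by routine manipulations one can sandwich this ratio between $(h_{a,\ell}^-(m) - 1)/(h_{a,\ell}^-(u) - 1)$ and $(h_{a,\ell}^+(m) - 1)/(h_{a,\ell}^+(u) - 1)$ for $\ell \geq \ell_a$. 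The complementary event $G^c$ contributes an error that is $O(\exp(-a^{\ell/10}))$ and is absorbed into the same bounds.

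The main obstacle will be the bookkeeping needed to propagate errors over an a priori unbounded number of level crossings: the induced chain $(N_j)$ may oscillate between $\ell$ and $u$ many times before reaching either boundary, so the per-step error $a^{-n/4}$ must not accumulate out of control when we take products and then ratios. Choosing $\lambda$ strictly less than $1/4$ buys enough slack to telescope the corrections $(1 + O(a^{-j/4}))$ into factors of the form $(a^{1-\beta} \mp a^{-\lambda j})$, but carrying out this telescoping and checking that the residual error is dominated by the $a^{-\lambda j}$ perturbation built into $h_{a,\ell}^{\pm}$ is the most delicate step of the argument.
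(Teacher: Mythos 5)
Your reduction to the integer-level chain $(N_j)$ — with one-step transition probabilities controlled by Lemma~\ref{lem:ModifiedLemma52FromLimitLawsERW}(ii) and multi-level jumps ruled out via Lemma~\ref{lem:ModifiedLemma52FromLimitLawsERW}(i) — is the right scaffolding and matches the structure of the proof of Lemma~5.3 in \cite{Kosygina2011}. The gap is in the step you label ``routine manipulations.'' The exact Gambler's Ruin formula $g(m)=\big(\sum_{n=\ell}^{m-1}h(n)\big)\big/\big(\sum_{n=\ell}^{u-1}h(n)\big)$, with $h(n)=\prod_{j=\ell+1}^{n}\rho_j$, does not reduce to $\big(h(m)-1\big)\big/\big(h(u)-1\big)$ unless the $\rho_j$ are constant. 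Since $h(m)-1=\sum_{n=\ell}^{m-1}h(n)(\rho_{n+1}-1)$, the ratio $\big(h(m)-1\big)\big/\big(h(u)-1\big)$ is a $w_n$-weighted version of $g(m)$ with $w_n=\rho_{n+1}-1$, and when $\rho_j$ is increasing (as $\rho^+_j=a^{1-\beta}-a^{-\lambda j}$ is) the weighted ratio is \emph{smaller}, i.e.\ $g(m;\rho^+)\ge\big(h^+(m)-1\big)\big/\big(h^+(u)-1\big)$. Combined with the correct monotonicity $g(m;\rho)\le g(m;\rho^+)$ that you do have from $\rho_j\ge\rho^+_j$, the two inequalities point in opposite directions and do not chain to the desired upper bound; the symmetric obstruction occurs for $h^-$.

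The form $\big(h^{\pm}(\cdot)-1\big)\big/\big(h^{\pm}(u)-1\big)$, together with the product structure of $h^{\pm}$, is instead the signature of an optional-stopping argument: the real content of the lemma is that $h^+(N_{j\wedge\tau})$ is a supermartingale and $h^-(N_{j\wedge\tau})$ a submartingale for the unconditioned level chain once $\ell\ge\ell_a$. Dividing the one-step supermartingale inequality $p_n h^+(n-1)+q_n h^+(n+1)\le h^+(n)$ by $h^+(n)$ and inserting $h^+(n)/h^+(n-1)=a^{1-\beta}-a^{-\lambda n}$ and $p_n=a^{1-\beta}/(1+a^{1-\beta})+O(a^{-n/4})$ reduces it to $1-c\,a^{-\lambda n}+O(a^{-n/4})+O(a^{-2\lambda n})\le 1$ for a positive constant $c$, which holds for $n$ large provided $\lambda<1/4$; this is precisely the calculation that forces the choice of $\lambda$, and it is not a consequence of the Gambler's Ruin formula. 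One then applies optional stopping at the exit time of $(\ell,u)$, with overshoots controlled via monotonicity of $h^{\pm}$ and Lemma~\ref{lem:ModifiedLemma52FromLimitLawsERW}(i). In particular, conditioning on a global good event $G$ is the wrong mechanism for the rare multi-level jumps — it alters the one-step probabilities you are trying to quote from Lemma~\ref{lem:ModifiedLemma52FromLimitLawsERW}(ii) — and becomes unnecessary once the argument is cast as a supermartingale inequality, since the stretched-exponential tail of the overshoot dominates the geometric growth $h^+(n+k)/h^+(n)\le a^{(1-\beta)k}$.
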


\begin{lemma}[Analog of Corollary 5.5 from \cite{Kosygina2011}]
\label{lem:ModifiedCorollary55FromLimitLawsERW} For each $x \in \N_0$ there
exists $C = C(x) > 0$ such that
\begin{align*}
P_x^Z(\tau^Z_{n^+} < \tau^Z_0) \leq C/n^{1 - \beta} ~,~\mbox{ for all } n \in \N.
\end{align*}
Moreover, for each $\epsilon > 0$ there exists $c = c(\epsilon) > 0$ such
that
\begin{align*}
P_n^Z(\tau^Z_0 > \tau^Z_{(c n)^+}) < \epsilon ~,~\mbox{ for all } n \in \N.
\end{align*}
\end{lemma}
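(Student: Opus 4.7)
The proof combines Lemma \ref{lem:ModifiedLemma53FromLimitLawsERW} with the monotonicity condition (A) and the strong Markov property. The bounds in Lemma \ref{lem:ModifiedLemma53FromLimitLawsERW} are controlled by $h_{a,\ell}^{\pm}(i) \sim C_{\pm} a^{(1-\beta)(i-\ell)}$ for $i$ large, and these give single-shell exit probabilities of order $a^{-(1-\beta)(u-m)}$; since $\beta < 0$ (so $1-\beta > 1$) this is the mechanism producing both asserted power-law bounds.

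For the first assertion, fix $x \in \N_0$ (the case $x = 0$ reduces to $x \geq 1$ by one application of the Markov property). Fix $\ell > \ell_a$ large enough that $x < a^\ell$, and note $P_x^Z(\tau_{n^+}^Z < \tau_0^Z) \leq 1$ handles all $n$ up to some threshold upon enlarging $C$. For $n$ large, let $u = u(n)$ satisfy $a^{u-1} < n \leq a^u$; then $\{\tau_{n^+}^Z < \tau_0^Z\} \subseteq \{\tau_{(a^{u-1})^+}^Z < \tau_0^Z\}$. By monotonicity (A), we may replace $x$ by any larger starting point, in particular by a fixed point $y^* \in \IM_{\ell+1}$. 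A single application of Lemma \ref{lem:ModifiedLemma53FromLimitLawsERW} with inner boundary $a^\ell$ and outer boundary $a^{u-1}$ gives the one-cycle bound
\[
P_{y^*}^Z\bigl(\tau_{(a^{u-1})^+}^Z < \tau_{(a^\ell)^-}^Z\bigr) \leq \frac{h_{a,\ell}^+(\ell+1)-1}{h_{a,\ell}^+(u-1)-1} \leq C' \cdot a^{-(1-\beta)(u-\ell-2)}.
\]
Decomposing the full event $\{\tau_{(a^{u-1})^+}^Z < \tau_0^Z\}$ into successive round trips (each cycle either hits $a^{u-1}$, hits $0$, or drops below $a^\ell$ and returns to $\IM_{\ell+1}$), and using a uniform lower bound $q = q(\ell) > 0$ on the conditional probability of hitting $0$ before returning to $\IM_{\ell+1}$---which follows from the irreducibility of $(Z_k)$ together with condition (C) on the bounded region $[0, a^{\ell+2}]$---yields a total bound of order $a^{-(1-\beta)(u-\ell-2)}$, hence at most $C/n^{1-\beta}$.

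For the second assertion, fix $\epsilon > 0$ and set $c = a^k$ for an integer $k$ to be determined. Given $n$, pick $m$ with $a^m \leq n < a^{m+1}$ and $u$ with $a^{u-1} < cn \leq a^u$, so $u - m \geq k - 1$. Then $\{\tau_{(cn)^+}^Z < \tau_0^Z\} \subseteq \{\tau_{(a^u)^+}^Z < \tau_0^Z\}$. For $m$ large, monotonicity allows us to replace the starting level $n$ by $\lceil a^{m+1} \rceil \in \IM_{m+1}$, and Lemma \ref{lem:ModifiedLemma53FromLimitLawsERW} with inner boundary $a^{m-1}$ and outer boundary $a^u$ gives a single-excursion bound of order $a^{-(1-\beta)(u-m-1)}$. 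Iterating over the shells $(a^{m-1-j}, a^u)$ for $j = 0, 1, \ldots, m$ (after the walk exits below $a^{m-1-j}$ we restart at $a^{m-1-j}$ by monotonicity) yields a convergent geometric series whose sum is bounded by $C'' \cdot a^{-(1-\beta)(u-m-1)} \leq C'' \cdot a^{-(1-\beta)(k-2)}$, uniformly in large $n$. The finitely many small values of $n$ are handled directly by the first assertion applied with $x = n$. Choosing $k$ large enough that $C'' \cdot a^{-(1-\beta)(k-2)} < \epsilon$ completes the proof.

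The main obstacle is the geometric-series bookkeeping in the first assertion: the walk can make many round trips from $\IM_{\ell+1}$ down below $a^\ell$ and back up before either hitting $0$ or reaching level $a^{u-1}$. One must establish the uniform positive lower bound $q(\ell) > 0$ on the probability of hitting $0$ before the next return to $\IM_{\ell+1}$, which is where irreducibility and condition (C) must be invoked carefully on the bounded region near $0$. Once this is isolated the rest of the argument reduces to straightforward application of Lemma \ref{lem:ModifiedLemma53FromLimitLawsERW} and geometric summation.
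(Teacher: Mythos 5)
Your approach---power-law exit estimates from Lemma~\ref{lem:ModifiedLemma53FromLimitLawsERW} combined with monotonicity (A) and iteration over shells---is the one the paper points to (Corollary~5.5 of \cite{Kosygina2011}), and your reading of the asymptotics $h_{a,\ell}^{\pm}(i)\asymp a^{(1-\beta)(i-\ell)}$ is correct. But there is a real gap in the round-trip decomposition for the first assertion, and it is not the one you flag. You decompose each cycle as ``hits $a^{u-1}$, hits $0$, or drops below $a^\ell$ and returns to $\IM_{\ell+1}$,'' and then say what remains is a positive lower bound $q(\ell)>0$ coming from irreducibility and condition (C). Two problems. First, nothing forces the walk, upon re-exceeding $a^\ell$, to land in or below $\IM_{\ell+1}$: the re-entry point can overshoot to an arbitrarily high level, possibly at or above $a^{u-1}$ in a single step, and since monotonicity (A) only lets you \emph{raise} the restart point when you want an upper bound, you cannot simply restart from $\IM_{\ell+1}$. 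Controlling the re-entry distribution is exactly what hypothesis (D) (the analog of Lemma~5.1 of \cite{Kosygina2011}) is for, and your argument never invokes it. Second, $q(\ell)>0$ does not follow from ``irreducibility and condition (C) on the bounded region near $0$'': irreducibility gives $P^Z_z(\tau^Z_0<\infty)>0$ but not $P^Z_z(\tau^Z_0<\tau^Z_{(a^\ell)^+})>0$ (the only guaranteed path to $0$ may pass above $a^\ell$), and (C) bounds jump tails rather than supplying a positive lower bound. A cleaner route, and likely closer to what the paper has in mind, is to run for the first assertion the same iteration over descending shell levels that you use for the second---union-bounding over the windows between successive two-shell drops---since that only ever uses monotonicity to \emph{lower} the restart point and thus sidesteps both issues, leaving only a terminal estimate on the finite set below $a^{\ell_a}$.

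Two smaller slips: in the second assertion, with $a^{u-1}<cn\leq a^u$ the usable containment is $\{\tau^Z_{(cn)^+}<\tau^Z_0\}\subseteq\{\tau^Z_{(a^{u-1})^+}<\tau^Z_0\}$ (reaching $a^u$ is the smaller event, not the larger one); and the exponents $u-\ell-2$ and $u-m-1$ you quote are off by additive constants from what Lemma~\ref{lem:ModifiedLemma53FromLimitLawsERW} gives. Neither changes the conclusion, since these constants are absorbed into $C$ and $c$.
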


We will prove Lemma \ref{lem:ModifiedLemma52FromLimitLawsERW} below. The
proof is similar to the proof of Lemma 5.2 in \cite{Kosygina2011}, but the
remainder term $r^n_k$ in \eqref{eq:ExpectationWnkplus1GivenFnk} must be
bounded differently, because we do not have the same explicit form for the
transition probabilities of the Markov chain $(Z_k)$. The proofs of Lemmas
\ref{lem:ModifiedLemma53FromLimitLawsERW} and
\ref{lem:ModifiedCorollary55FromLimitLawsERW} are essentially the same as the
proofs of their counterparts in \cite{Kosygina2011}, and are therefore
omitted.

\begin{proof}[Proof of Lemma \ref{lem:ModifiedLemma52FromLimitLawsERW}]
Part (i) follows directly from condition (D) in the statement of Proposition
\ref{prop:GeneralizedMachineryFromLimitLawsERW} where the Markov chain
$(Z_k)$ is defined. To prove (ii), fix $a \in (1,2]$ and let $g \in
C_c^{\infty}([0,\infty))$ be any non-negative function such that $g(t) = t^{1
- \beta}$ for $t \in (\frac{2}{3a}, \frac{3a}{2})$. Then, for each $n \in
\N$, define a process $W^n \equiv (W^n_k)_{k \geq 0}$ by
\begin{align*}
W^n_k = g\Big(\frac{Z_{k \wedge \gamma_n}}{a^n}\Big).
\end{align*}
Let $\FM_k = \sigma(Z_0, \ldots, Z_k) \supseteq \sigma(W^n_0, \ldots,
W^n_k)$.
At the end of the main proof we will establish the following two claims. \\

\noindent \emph{Claim 1:} There exists some $B_1 = B_1(a) > 0$ such that
\begin{align*}
E_x^Z(\gamma_n) \leq B_1 a^n,~\mbox{for each $n \in \N$ and $x \in \IM_n$}.
\end{align*}

\noindent \emph{Claim 2:} The process $(W^n_k)_{k \geq 0}$ is ``close'' to
being a martingale, when $n$ is large, in the following sense: There exists
some $B_2 = B_2(a) > 0$ such that
\begin{align*}
|E_x^Z(W^n_{k+1}|\FM_k) - W^n_k| \leq B_2a^{-\frac{4}{3}n} ~a.s.,~\mbox{ for each $k \in \N_0$, $n \in \N$, and $x \in \IM_n$}.
\end{align*}

We now show how these two claims can be used to prove the lemma. Assume $Z_0
= x \in \IM_n$ and define a process $(\RM^n_k)_{k \geq 0}$ by
\begin{align*}
\RM^n_0 = 0 ~~\mbox{ and }~~ \RM_k^n = \sum_{j=1}^{k \wedge \gamma_n} \Big[ E_x^Z(W_j^n|\FM_{j-1}) - W_{j-1}^n) \Big] ~,~  k \geq 1.
\end{align*}
Observe that $(W^n_k - \RM^n_k)_{k \geq 0}$ is a martingale with initial
value $W^n_0$. Moreover, by Claims 1 and 2,
\begin{align}
\label{eq:ExpectationZngammaBound}
\left|E^Z_x(\RM^n_{\gamma_n})\right|
\leq E^Z_x\left( \sum_{j=1}^{\gamma_n} \left| E^Z_x(W_j^n|\FM_{j-1}) - W_{j-1}^n)\right| \right)
\leq E^Z_x(\gamma_n) \cdot B_2a^{-\frac{4}{3}n}
\leq B_3 a^{-\frac{1}{3}n}.
\end{align}
Since $|\RM_k^n| \leq \sum_{j=1}^{\gamma_n} \left| E^Z_x(W_j^n|\FM_{j-1}) -
W_{j-1}^n)\right|$, for all $k$, this shows that $(\RM_k^n)_{k \geq 0}$ is
uniformly integrable, and $(W_k^n)_{k \geq 0}$ is also uniformly integrable,
since $|W^n_k| \leq \norm{g}_{\infty}$ with probability 1. Thus, the
martingale  $(W^n_k - \RM^n_k)_{k \geq 0}$ is itself uniformly integrable, so
we may apply the optional stopping theorem to conclude
\begin{align}
\label{eq:WnkStoppingTimeDecomposition}
W^n_0 = E^Z_x(W^n_{\gamma_n}) - E^Z_x(\RM^n_{\gamma_n}).
\end{align}
Combining \eqref{eq:ExpectationZngammaBound} and
\eqref{eq:WnkStoppingTimeDecomposition} and using the fact that $g(t)$ is
equal to $t^{1-\beta}$ on $(\frac{2}{3a}, \frac{3a}{2})$ shows that
\begin{align*}
W_0^n - B_3 a^{-\frac{1}{3} n} \leq E^Z_x(W^n_{\gamma_n}) \leq
\begin{cases}
P_{x}^Z(Z_{\gamma_n} \in [a^{n+1}, a^{n+1} + a^{\frac{2}{3}(n-1)})) \cdot (a + a^{-\frac{1}{3}(n+2)})^{1-\beta} \\
+ P_{x}^Z(Z_{\gamma_n} \in (a^{n-1} - a^{\frac{2}{3}(n-1)}, a^{n-1}]) \cdot a^{-(1 - \beta)} \\
+ E^Z_x[W^n_{\gamma_n} \cdot \indicator{ \{Z_{\gamma_n} \not\in (a^{n-1} - a^{\frac{2}{3}(n-1)}, a^{n+1} + a^{\frac{2}{3}(n-1)})\}}]
\end{cases}
\end{align*}
and
\begin{align*}
W_0^n + B_3 a^{-\frac{1}{3} n} \geq E^Z_x(W^n_{\gamma_n}) \geq
\begin{cases}
P_{x}^Z(Z_{\gamma_n} \in [a^{n+1}, a^{n+1} + a^{\frac{2}{3}(n-1)})) \cdot a^{1-\beta} \\
+ P_{x}^Z(Z_{\gamma_n} \in (a^{n-1} - a^{\frac{2}{3}(n-1)}, a^{n-1}]) \cdot (a^{-1} - a^{-\frac{1}{3}(n+2)})^{1 - \beta} \\
+ E^Z_x[W^n_{\gamma_n} \cdot \indicator{ \{Z_{\gamma_n} \not\in (a^{n-1} - a^{\frac{2}{3}(n-1)}, a^{n+1} + a^{\frac{2}{3}(n-1)})\}}].
\end{cases}
\end{align*}
Using part (i) and the fact that $W^n_{\gamma_n}$ is bounded by
$\norm{g}_{\infty}$ gives
\begin{align*}
W_0^n = (1- p) \cdot a^{1-\beta} ~+~ p \cdot a^{-(1-\beta)} ~+~ O(a^{-\frac{1}{3} n})
\end{align*}
uniformly in the initial value $Z_0 = x \in \IM_n$, where $p \equiv
P_{x}^Z(Z_{\gamma_n} \leq a^{n-1})$. Now, using the definition $W_0^n =
g(Z_0/a^n) = g(x/a^n)$ shows also that $W_0^n = 1 + O(a^{-\frac{1}{3} n})$,
uniformly in $x \in \IM_n$. So, we have
\begin{align*}
1 = (1- p) \cdot a^{1 - \beta} ~+~ p \cdot a^{-(1-\beta)} ~+~ O(a^{-\frac{1}{3} n})
\end{align*}
uniformly in $x \in \IM_n$. Solving for $p$ gives $p = a^{1-\beta}/(1+a^{1-\beta}) + O(a^{-\frac{1}{3} n})$, which implies (ii). \\

\noindent \emph{Proof of Claim 1:} It will suffice to prove the claim for all
sufficiently large $n$. Assume $n$ is large enough that $\IM_n \subset
(a^{n-1},a^{n+1})$, and define $\gamma_n^- = \inf \{k \geq 0: Z_k \leq
a^{n-1}\}$. By monotonicity of the process $(Z_k)_{k \geq 0}$ with respect to
its initial condition
\begin{align*}
P_{z}^Z(\gamma_n < a^n) \geq P_{z}^Z(\gamma_n^- < a^n) \geq P_{\floor{a^{n+1}}}^Z(\gamma_n^- < a^n) ~,\mbox{ for all } z \in (a^{n-1},a^{n+1}).
\end{align*}
Further, by Lemma \ref{Lem:DiffusionApproximationLimit}-(ii), $\liminf_{n \to
\infty} P_{\floor{a^{n+1}}}^Z(\gamma_n^- < a^n) > 0$. So, there exist some $c
> 0$ and $n_0 \in \N$ such that
\begin{align}
\label{eq:ConstantChanceHitInLinearTime}
P_{z}^Z(\gamma_n < a^n) \geq c ~,~ \forall n \geq n_0 \mbox{ and } z \in (a^{n-1},a^{n+1}).
\end{align}
Let $t_0 = 0$ and $t_{i+1} = t_i + \ceil{a^n}$, $i \geq 0$. By
\eqref{eq:ConstantChanceHitInLinearTime} $P_{x}^Z(\gamma_n > t_{i+1} |
\gamma_n > t_i) \leq 1 - c$, for all $n \geq n_0$, $x \in \IM_n$, and $i \geq
0$. Thus, for each $n \geq n_0$, $x \in \IM_n$, and $m \geq 0$
\begin{align*}
P_{x}^Z(\gamma_n > m \cdot \ceil{a^n}) = P_{x}^Z(\gamma_n > t_m) \leq (1-c)^m.
\end{align*}
This implies the claim. \\

\noindent \emph{Proof of Claim 2:} Since $|W^n_k|$ is bounded by
$\norm{g}_{\infty}$, for all $n, k$, it will suffice to show the claim for
sufficiently large $n$. Throughout we assume $n$ is sufficiently large that
$(\frac{a^{n-1} - a^{\frac{2}{3}n}}{a^n},\frac{a^{n+1} +
a^{\frac{2}{3}n}}{a^n}) \subset (\frac{2}{3a},\frac{3a}{2})$ and that $Z_0 =
x \in \IM_n$. All $O(\cdot)$ estimates stated will be uniform in $x \in
\IM_n$ and $k \in \N_0$. By Taylor's Theorem,
\begin{align*}
g\Big(\frac{Z_{k+1}}{a^n}\Big)
= g\Big(\frac{Z_k}{a^n}\Big) + g'\Big(\frac{Z_k}{a^n}\Big) \frac{Z_{k+1} - Z_k}{a^n} + \frac{1}{2} g''\Big(\frac{Z_k}{a^n}\Big) \frac{(Z_{k+1} - Z_k)^2}{a^{2n}} + \frac{1}{6} g'''(t) \frac{(Z_{k+1} - Z_k)^3}{a^{3n}}
\end{align*}
where $t$ is some random point between $Z_k/a_n$ and $Z_{k+1}/a^n$. Thus, on
the event $\{\gamma_n > k\}$, we have
\begin{align}
\label{eq:ExpectationWnkplus1GivenFnk}
E^Z_x&(W^n_{k+1}|\FM_k) - W^n_k
= E^Z_x\Big[g\Big(\frac{Z_{k+1}}{a^n}\Big)\Big|\FM_k\Big] - g\Big(\frac{Z_k}{a^n}\Big) \nonumber \\
& = \frac{1}{a^n} g'\Big(\frac{Z_k}{a^n}\Big) E^Z_x[Z_{k+1} - Z_k|\FM_k] + \frac{1}{2a^{2n}} g''\Big(\frac{Z_k}{a^n}\Big) E^Z_x[(Z_{k+1} - Z_k)^2|\FM_k] + r^n_k \nonumber \\
& = \frac{1}{a^n} g'\Big(\frac{Z_k}{a^n}\Big) \Big[ \alpha \beta + O\left(a^{-\frac{1}{3}n} \right) \Big]
+  \frac{1}{2a^{2n}} g''\Big(\frac{Z_k}{a^n}\Big) \Big[ 2\alpha Z_k + O\left( a^{\frac{2}{3}n} \right) \Big] + r^n_k
\end{align}
by \eqref{eq:ConditionB}, where the remainder $r^n_k$ satisfies
\begin{align*}
|r^n_k| \leq \frac{1}{6} \norm{g'''}_{\infty} E^Z_x\Big[ \frac{|Z_{k+1} - Z_k|^3}{a^{3n}} \Big| \FM_k \Big].
\end{align*}
Now, for $t \in (\frac{2}{3a}, \frac{3a}{2})$, $t g''(t) = - \beta g'(t)$.
So, for $k < \gamma_n$, $\frac{Z_k}{a^n} g''(\frac{Z_k}{a^n}) = - \beta
g'(\frac{Z_k}{a^n})$. Plugging this relation back into
\eqref{eq:ExpectationWnkplus1GivenFnk} and simplifying we find that, on the
event $\{\gamma_n > k\}$,
\begin{align}
\label{eq:ExpectationWnkplus1GivenFnkSimplified}
E^Z_x(W^n_{k+1}|\FM_k) - W^n_k
&=  \frac{1}{a^n} g'\Big(\frac{Z_k}{a^n}\Big) \cdot O\left(a^{-\frac{1}{3}n}\right) ~+~ \frac{1}{2a^{2n}} g''\Big(\frac{Z_k}{a^n}\Big) \cdot O\left( a^{\frac{2}{3}n} \right) ~+~ r^n_k \nonumber \\
& = O\left(a^{-\frac{4}{3}n}\right) + r^n_k.
\end{align}
The remainder term $r^n_k$ can be bounded as
\begin{align}
\label{eq:rnkEstimate}
|r^n_k|
&\leq \frac{ \norm{g'''}_{\infty}}{6 a^{3n}} \max_{z \in (a^{n-1}, a^{n+1})} E^Z_x\left( |Z_{k+1} - Z_k|^3 \Big| Z_k = z \right) \nonumber \\
&= \frac{ \norm{g'''}_{\infty}}{6 a^{3n}} \max_{z \in (a^{n-1}, a^{n+1})} E^Z_z(|Z_1 - z|^3).
\end{align}
We split this last expectation into three pieces:
\begin{align*}
E^Z_z(|Z_1 - z|^3)
& = E^Z_z\Big(\indicator\{Z_1 \leq z/2\} \cdot |Z_1 - z|^3\Big) + E^Z_z\Big(\indicator\{z/2 < Z_1 \leq 2z\} \cdot |Z_1 - z|^3\Big) \\
& ~~~~ + E^Z_z\Big(\indicator\{Z_1 > 2z\} \cdot |Z_1 - z|^3\Big).
\end{align*}
By \eqref{eq:ConditionC} the first term on the right hand side is
$O(e^{-z^{1/4}})$, and using \eqref{eq:ConditionC} and calculations exactly
as in the derivation of \eqref{eq:TermIIMainEstimate} and
\eqref{eq:TermIIIMainEstimate} shows that the second and third terms are,
respectively,  $O(z^{5/3})$ and $O(e^{-z^{1/4}})$. Plugging these estimates
back into \eqref{eq:rnkEstimate} gives $|r^n_k| = O(a^{-\frac{4}{3}n})$, and
combining that with \eqref{eq:ExpectationWnkplus1GivenFnkSimplified} shows
also that
\begin{align*}
E^Z_x(W^n_{k+1}|\FM_k) - W^n_k = O(a^{-\frac{4}{3}n}) ~,~\mbox{ on the event } \{\gamma_n > k\}.
\end{align*}
Of course, on the event $\{\gamma_n \leq k\}$, $W^n_{k+1} = W^n_k$
(deterministically), so this proves the claim.
\end{proof}

\section{Proof of Propositions \texorpdfstring{\ref{prop:DecayOfHittingTimeSigma0V}}{4.3}
and \texorpdfstring{\ref{prop:DecayOfAccumulatedSumTillTimeSigma0V}}{4.4}}
\label{app:ProofOfPropsDecayAndAccumulatedSumTimeSigma0V}

In this section we use Propositions \ref{prop:DecayOfHittingTimeTau0Vh} and
\ref{prop:DecayOfAccumulatedSumTillTimeTau0Vh} to prove Propositions
\ref{prop:DecayOfHittingTimeSigma0V} and
\ref{prop:DecayOfAccumulatedSumTillTimeSigma0V}. It is assumed throughout, if
not otherwise specified, that $R_0 = s$ and $V_0 = 0$. Thus, $\tau_0 = 0$,
$\Vh_0 = 0$, and $\sigma_0^V = \tau_{\tau_0^{\Vh}}$ (where $(\tau_k)_{k \geq
0}$ is as in \eqref{eq:DefTaukTaukprime}). The details of the proofs are
somewhat technical, but the general ideas are fairly simple, so we will
present these first before proceeding to the formal proofs. First, for
Proposition \ref{prop:DecayOfHittingTimeSigma0V}, observe that if
$\tau_0^{\Vh} = m$, for some large $m$, then
\begin{align*}
\sigma_0^V = \tau_m = \sum_{k=1}^m (\tau_k - \tau_{k-1}) \approx m \cdot \mu_s.
\end{align*}
So, by Proposition \ref{prop:DecayOfHittingTimeTau0Vh}, for large $x$,
\begin{align*}
P_{0,s}^V(\sigma_0^V > x) \approx P_{0,s}^V(\tau_0^{\Vh} > x/ \mu_s) \approx  c_{10} \cdot (x/\mu_s)^{-\delta} = c_{12} \cdot x^{-\delta}.
\end{align*}

Next, for Proposition \ref{prop:DecayOfAccumulatedSumTillTimeSigma0V}, note
that if $\sum_{k=0}^{\sigma_0^V} V_k$ is large, generally it will be the case
that $\sigma_0^V$ is large as well, and also that $V_k$ will be relatively
large for most times $k$ between $0$ and $\sigma_0^V$ (because the $V_k$
process is unlikely to remain close to 0 very long without hitting 0). Thus,
by Lemma \ref{lem:ConcentrationEstimateUkVk}, the $\Vh_j$ process, and also
the $V_k$ process, will not fluctuate too much too rapidly, relative to their
current values. So, very roughly speaking, we should expect that
\begin{align*}
\sum_{k=0}^{\sigma_0^V} V_k
= \sum_{j=0}^{\tau_0^{\Vh}-1} \sum_{k = \tau_j}^{\tau_{j+1} - 1} V_k
\approx \sum_{j=0}^{\tau_0^{\Vh}-1} \sum_{k = \tau_j}^{\tau_{j+1} - 1} \Vh_j
\approx \sum_{j = 0}^{\tau_0^{\Vh}-1} \Vh_j \cdot \mu_s
= \sum_{j = 0}^{\tau_0^{\Vh}} \Vh_j \cdot \mu_s,
\end{align*}
when either the sum on the right hand side or left hand side (equivalently
both sums) are large. Therefore, by Proposition
\ref{prop:DecayOfAccumulatedSumTillTimeTau0Vh}, we should expect that, for
large $x$,
\begin{align*}
P_{0,s}^V\left( \sum_{k=0}^{\sigma_0^V} V_k > x \right)
\approx P_{0,s}^V\left( \sum_{j = 0}^{\tau_0^{\Vh}} \Vh_j > x/\mu_s \right)
\approx c_{11} \cdot (x/\mu_s)^{-\delta/2} = c_{13} \cdot x^{-\delta/2}.
\end{align*}

\begin{proof}[Proof of Proposition \ref{prop:DecayOfHittingTimeSigma0V}]
Fix any $\epsilon > 0$. It will suffice to show that
\begin{align}
\label{eq:LimsupBoundProbSigma0VGreatern}
\limsup_{n \to \infty}~ n^{\delta} \cdot P_{0,s}^V(\sigma_0^V > n) \leq c_{12} + \epsilon
\end{align}
and
\begin{align}
\label{eq:LiminfBoundProbSigma0VGreatern}
\liminf_{n \to \infty}~ n^{\delta} \cdot P_{0,s}^V(\sigma_0^V > n) \geq c_{12} - \epsilon.
\end{align}

\noindent \underline{Proof of \eqref{eq:LimsupBoundProbSigma0VGreatern}}:
Choose $\rho > 0$ sufficiently small that $(1+\rho)^{\delta} \mu_s^{\delta}
(c_{10} + \rho) + \rho \leq c_{10} \mu_s^{\delta} + \epsilon = c_{12} +
\epsilon$. For $n \in \N$, let $m = m(n) = (1 + \rho) \mu_s n$ (it is not
assumed that $m$ is an integer). Then $\{\sigma_0^V > m\} \subseteq
\{\tau_0^{\Vh} > n\} \cup \{\tau_n > m\}$. So,
\begin{align}
\label{eq:SplitProbSigma0VGreatermTwoPartsLimsupBound}
m^{\delta} P_{0,s}^V(\sigma_0^V > m) \leq m^{\delta} \left[ P_{0,s}^V(\tau_0^{\Vh} > n) + P_{0,s}^V(\tau_n > m) \right].
\end{align}
By Proposition \ref{prop:DecayOfHittingTimeTau0Vh}, for all sufficiently
large $n$,
\begin{align}
\label{eq:ProbSigma0VGreatermPart1LimsupBound}
m^{\delta}  P_{0,s}^V(\tau_0^{\Vh} > n)
= \frac{m^{\delta}}{n^{\delta}} \cdot  \left[ n^{\delta} P_{0,s}^V(\tau_0^{\Vh} > n) \right] \leq (1 + \rho)^{\delta} \mu_s^{\delta} \cdot (c_{10} + \rho).
\end{align}
Also, by Lemma \ref{lem:GeneralLargeDeviationBoundSumsIID}, $P_{0,s}^V(\tau_n
> m) = P_{0,s}^V\big( \sum_{i=1}^n (\tau_i - \tau_{i-1}) > (1+\rho)\mu_s
n\big)$ decays exponentially in $n$, since the random variables $(\tau_i -
\tau_{i-1})_{i \geq 1}$ are i.i.d. with exponential tails and mean $\mu_s$.
Thus, for all sufficiently large $n$,
\begin{align}
\label{eq:ProbSigma0VGreatermPart2LimsupBound}
m^{\delta} P_{0,s}^V(\tau_n > m) \leq \rho.
\end{align}
Combining the estimates
\eqref{eq:SplitProbSigma0VGreatermTwoPartsLimsupBound},
\eqref{eq:ProbSigma0VGreatermPart1LimsupBound}, and
\eqref{eq:ProbSigma0VGreatermPart2LimsupBound} shows that, for all
sufficiently large $n$,
\begin{align*}
m^{\delta} P_{0,s}^V(\sigma_0^V > m) \leq (1+\rho)^{\delta} \mu_s^{\delta}(c_{10} + \rho) + \rho \leq c_{12} + \epsilon,
\end{align*}
which proves \eqref{eq:LimsupBoundProbSigma0VGreatern}. \\

\noindent \underline{Proof of \eqref{eq:LiminfBoundProbSigma0VGreatern}}:
Choose $\rho \in (0,1)$ sufficiently small that $(1 -\rho)^{\delta}
\mu_s^{\delta} (c_{10} - \rho) - \rho \geq c_{10} \mu_s^{\delta} - \epsilon =
c_{12} - \epsilon$. For $n \in \N$, let $m = m(n) = (1 - \rho) \mu_s n$
(again, it is not assumed that $m$ is an integer). Then $\{\sigma_0^V > m\}
\supseteq \{\tau_0^{\Vh} > n\} \cap \{\tau_n \geq m\}$. So,
\begin{align}
\label{eq:SplitProbSigma0VGreatermTwoPartsLiminfBound}
P_{0,s}^V(\sigma_0^V > m)
\geq P_{0,s}^V(\tau_0^{\Vh} > n, \tau_n \geq m)
\geq P_{0,s}^V(\tau_0^{\Vh} > n) - P_{0,s}^V(\tau_n < m).
\end{align}
Now, by Proposition \ref{prop:DecayOfHittingTimeTau0Vh},
$P_{0,s}^V(\tau_0^{\Vh} > n) \geq n^{-\delta}(c_{10} - \rho)$, for all
sufficiently large $n$. Also, by Lemma
\ref{lem:GeneralLargeDeviationBoundSumsIID}, $P_{0,s}^V(\tau_n < m) =
P_{0,s}^V\big( \sum_{i=1}^n (\tau_i - \tau_{i-1}) < (1-\rho)\mu_s n\big)$
decays exponentially in $n$. So, $P_{0,s}^V(\tau_n < m) \leq \rho \cdot
m^{-\delta} $, for all sufficiently large $n$. Plugging these estimates back
into \eqref{eq:SplitProbSigma0VGreatermTwoPartsLiminfBound} shows that,
\begin{align*}
m^{\delta} P_{0,s}^V(\sigma_0^V > m)
\geq \left(\frac{m}{n}\right)^{\delta}(c_{10} - \rho) - \rho
= (1 -\rho)^{\delta} \mu_s^{\delta} (c_{10} - \rho) - \rho
\geq c_{12} - \epsilon
\end{align*}
for all sufficiently large $n$, which proves
\eqref{eq:LiminfBoundProbSigma0VGreatern}.
\end{proof}

\begin{proof}[Proof of Proposition \ref{prop:DecayOfAccumulatedSumTillTimeSigma0V}]
Fix any $\epsilon_1 \in (0,\frac{1}{22})$ and $\epsilon_2 \in (0, \frac{1}{4}
\epsilon_1)$. Then, for $n \in \N$, define the following random variables:
\begin{itemize}
\item $T_0 = 0$ and $T_{i+1} = \inf\{k \geq T_i + \floor{n^{\epsilon_1}} :
    R_k = s\}$, $i \geq 0$.
\item $i_{\max} = \max\{i \geq 0:T_i \leq \sigma_0^V\}$, $k_{\max} =
    T_{i_{\max}}$, and $j_{\max}$ is the unique $j$ such that
    $\tau_{j_{\max}} = k_{\max}$.
\item $K_i = \{T_{i-1}, T_{i-1} + 1, \ldots, T_i\}$ and $J_i = \{j \in
    \N_0: \tau_j \in K_i\}$, $i \geq 1$.
\item $j_i^{\max} = \max\{j: j \in J_i\}$ and $j_i^{\min} = \min\{j: j \in
    J_i\}$.
\item $J_i^0 = J_i \backslash \{j_i^{\max}\}$ and $\Jt_i = \{j_i^{\min},
    j_i^{\min} + 1, \ldots, j_i^{\min} + \floor{2n^{\epsilon_1}} \}$.
\item $K_i^0 = K_i \backslash \{T_i\}$ and \\
$\Kt_i = \{\tau_{j_i^{\min}}, \tau_{j_i^{\min}}+1, \ldots, \tau_{j_i^{\min}
+ \floor{2n^{\epsilon_1}}}\} = \{T_{i-1}, T_{i-1} + 1, \ldots,
\tau_{j_i^{\min} + \floor{2n^{\epsilon_1}}}\}$.
\end{itemize}
Also, denote $V_{\max} = \max \{V_k: 0 \leq k \leq \sigma_0^V\}$ and
$\Delta_{\tau,j} = \tau_j - \tau_{j -1}$, for $j \geq 1$, and define the
following events:
\begin{itemize}
\item $E_i = \big\{ \max_{k \in K_i} |V_{T_{i-1}} - V_k| > 2n^{\epsilon_1}
    n^{\frac{1}{3}(1 + \epsilon_1)} \big\}$, $i \geq 1$.
\item $F_i =  \Big\{ \left| \sum_{j \in J_i^0}  (\mu_s - \Delta_{\tau,j+1})
    \right| > n^{\frac{1}{2} \epsilon_1 + \epsilon_2}\Big\}$, $i \geq 1$.
\item $A_1 = \{\sigma_0^V > n^{\frac{1}{2}(1 + \epsilon_2)} \}$.
\item $A_2 = \{V_{\max} > 2n^{\frac{1}{2}(1 + \epsilon_2)} \}$.
\item $A_3 = \{ \exists 1 \leq i \leq i_{\max} \mbox{ such that $E_i$
    occurs} \}$.
\item $A_4 = \{ \exists 1 \leq i \leq i_{\max} \mbox{ such that $F_i$
    occurs} \}$.
\item $A_5 = \{ \exists 1 \leq i \leq i_{\max} + 1 \mbox{ such that $T_i -
    T_{i-1} > 2n^{\epsilon_1}$} \}$.
\item $G = A_1^c \cap A_2^c \cap A_3^c \cap A_4^c \cap A_5^c$ (the ``good
    event'').
\end{itemize}
At the end of the main proof we will establish the following claim. \\

\noindent
\emph{Claim 1:} For all sufficiently large $n$, $P_{0,s}^V(G^c) \leq 5 n^{-(\frac{\delta}{2} + \frac{\delta \epsilon_2}{4})}$. \\

\noindent By the triangle inequality,
\begin{align*}
\left|\sum_{k=0}^{\sigma_0^V} V_k - \mu_s \sum_{j=0}^{\tau_0^{\Vh}} \Vh_j \right|
& \leq \left| \sum_{k=0}^{\sigma_0^V} V_k - \sum_{i=1}^{i_{\max}}V_{T_{i-1}}(T_i - T_{i-1}) \right| + \left|\sum_{i=1}^{i_{\max}}V_{T_{i-1}}(T_i - T_{i-1}) -  \mu_s \sum_{j=0}^{\tau_0^{\Vh}} \Vh_j \right| \\
& \leq \begin{cases} \left| \sum_{k=0}^{k_{\max} - 1} V_k - \sum_{i=1}^{i_{\max}}V_{T_{i-1}}(T_i - T_{i-1}) \right|
+ \sum_{k = k_{\max}}^{\sigma_0^V} V_k \\
+ \left| \sum_{j=0}^{j_{\max} - 1} \mu_s \Vh_j - \sum_{i=1}^{i_{\max}}V_{T_{i-1}}(T_i - T_{i-1}) \right|
+ \mu_s \sum_{j = j_{\max}}^{\tau_0^{\Vh}} \Vh_j \end{cases} \\
& \equiv (I) + (II) + (III) + (IV).
\end{align*}
We now show how each of the terms (I), (II), (III), and (IV) can be bounded on the event $G$. \\

\noindent \underline{Bound on (I)}: On the event $G$,
\begin{align*}
&(I) \equiv \left| \sum_{k=0}^{k_{\max} - 1} V_k - \sum_{i=1}^{i_{\max}}V_{T_{i-1}}(T_i - T_{i-1}) \right|
= \left| \sum_{i=1}^{i_{\max}} \sum_{k \in K_i^0} V_k - \sum_{i=1}^{i_{\max}} \sum_{k \in K_i^0} V_{T_{i-1}} \right| \\
& \stackrel{(a)}{\leq} \sum_{i=1}^{i_{\max}}\sum_{k \in K_i^0} 2n^{\epsilon_1} n^{\frac{1}{3}(1 + \epsilon_1)}
= k_{\max} \cdot 2n^{\epsilon_1} n^{\frac{1}{3}(1 + \epsilon_1)}
\leq \sigma_0^V \cdot 2n^{\epsilon_1} n^{\frac{1}{3}(1 + \epsilon_1)} \\
& \stackrel{(b)}{\leq} n^{\frac{1}{2}(1 + \epsilon_2)} 2n^{\epsilon_1} n^{\frac{1}{3}(1 + \epsilon_1)}
\stackrel{(c)}{\leq} 2n^{\frac{5}{6} + \frac{11}{6} \epsilon_1}.
\end{align*}
Step (a) follows from the fact that $G \subset A_3^c$, step (b) follows from the fact that $G \subset A_1^c$, and step (c) follows from the fact that $\epsilon_2 < \epsilon_1$. \\

\noindent \underline{Bound on (II) and (IV)}: First note that $(IV) \equiv
\mu_s \cdot \sum_{j = j_{\max}}^{\tau_0^{\Vh}} \Vh_j \leq \mu_s \cdot \sum_{k
= k_{\max}}^{\sigma_0^V} V_k \equiv \mu_s \cdot (II)$, so it will suffice to
bound (II). Now, on the event $G$,
\begin{align*}
(II) \leq V_{\max} |\sigma_0^V - k_{\max}|
\stackrel{(a)}{\leq} 2n^{\frac{1}{2}(1 + \epsilon_2)} \cdot 2n^{\epsilon_1}
\stackrel{(b)}{\leq} 4n^{\frac{1}{2}(1 + 3 \epsilon_1)}.
\end{align*}
Step (a) follows from the fact that $G \subset A_2^c$ and $G \subset A_5^c$, and step (b) follows from the fact that $\epsilon_2 < \epsilon_1$. \\

\noindent \underline{Bound on (III)}: On the event $G$,
\begin{align*}
(III) & \equiv \left| \sum_{j=0}^{j_{\max} - 1} \mu_s \Vh_j ~-~ \sum_{i=1}^{i_{\max}}V_{T_{i-1}}(T_i - T_{i-1}) \right| \\
& = \left| \sum_{i=1}^{i_{\max}} \sum_{j \in J_i^0} \mu_s \Vh_j ~-~ \sum_{i=1}^{i_{\max}} \sum_{j \in J_i^0} V_{T_{i-1}} \cdot \Delta_{\tau,j+1} \right| \\
& = \left| \sum_{i=1}^{i_{\max}} \sum_{j \in J_i^0} \mu_s (\Vh_j - V_{T_{i-1}})  ~+~ \sum_{i=1}^{i_{\max}} \sum_{j \in J_i^0} V_{T_{i-1}} (\mu_s - \Delta_{\tau,j+1}) \right| \\
\end{align*}
\begin{align*}
& \leq \sum_{i=1}^{i_{\max}} \sum_{j \in J_i^0} \mu_s |\Vh_j - V_{T_{i-1}}| ~+~ \sum_{i=1}^{i_{\max}} V_{T_{i-1}} \left| \sum_{j \in J_i^0} (\mu_s - \Delta_{\tau,j+1}) \right| \\
& \stackrel{(a)}{\leq} \sum_{i=1}^{i_{\max}}  \sum_{j \in J_i^0} \mu_s \cdot 2n^{\epsilon_1} n^{\frac{1}{3}(1 + \epsilon_1)} ~+~ \sum_{i=1}^{i_{\max}} 2n^{\frac{1}{2}(1 + \epsilon_2)} \cdot n^{\epsilon_1/2 + \epsilon_2} \\
& \stackrel{(b)}{\leq} \tau_0^{\Vh} \cdot \mu_s \cdot 2n^{\epsilon_1} n^{\frac{1}{3}(1 + \epsilon_1)} ~+~ \frac{\sigma_0^V}{\floor{n^{\epsilon_1}}} 2n^{\frac{1}{2}(1 + \epsilon_2)} n^{\epsilon_1/2 + \epsilon_2} \\
& \stackrel{(c)}{\leq} n^{\frac{1}{2}(1 + \epsilon_2)} \cdot \mu_s \cdot 2n^{\epsilon_1} n^{\frac{1}{3}(1 + \epsilon_1)} ~+~ \left( n^{\frac{1}{2}(1 + \epsilon_2)} 2n^{-\epsilon_1} \right) \cdot 2n^{\frac{1}{2}(1 + \epsilon_2)} n^{\epsilon_1/2 + \epsilon_2} \\
& \stackrel{(d)}{\leq} 2 \mu_s n^{\frac{5}{6} + \frac{11}{6} \epsilon_1} ~+~ 4n^{1 + 2\epsilon_2 - \epsilon_1/2}.
\end{align*}
Step (a) follows from the fact that $G \subset A_2^c$, $G \subset A_3^c$, and
$G \subset A_4^c$. Step (b) follows from the relations $j_{\max} \leq
\tau_0^{\Vh}$ and $i_{\max} \leq k_{\max}/\floor{n^{\epsilon_1}} \leq
\sigma_0^V/\floor{n^{\epsilon_1}}$. Step (c) follows from the inequality
$\tau_0^{\Vh} \leq \sigma_0^V$
and the fact that $G \subset A_1^c$. Finally, Step (d) follows from the fact that $\epsilon_2 < \epsilon_1$. \\

Now, let $\alpha = \max\left\{(\frac{5}{6} + \frac{11}{6} \epsilon_1), (1 +
2\epsilon_2 - \frac{\epsilon_1}{2})\right\}$. By the choice of $\epsilon_1$
and $\epsilon_2$, we have $\frac{1}{2}(1 + 3 \epsilon_1) < \alpha$ and
$\alpha < 1$. Combining the estimates on the terms (I)-(IV) we find that, on
the event $G$,
\begin{align*}
& \Big|\sum_{k=0}^{\sigma_0^V} V_k - \mu_s \sum_{j=0}^{\tau_0^{\Vh}} \Vh_j \Big|
\leq (I) + (II) + (III) + (IV) \\
& \leq 2n^{\alpha} + 4n^{\alpha} + \left[2 \mu_s n^{\alpha} + 4 n^{\alpha}\right] + 4 \mu_s n^{\alpha}
 \leq 16 \mu_s n^{\alpha}.
\end{align*}
Using this estimate along with Claim 1 and Proposition
\ref{prop:DecayOfAccumulatedSumTillTimeTau0Vh}
we can now establish the proposition. \\

\noindent Given any $\epsilon \in (0,1)$:
\begin{itemize}
\item Choose $N_1 \in \N$ such that $P_{0,s}^V(G^c) \leq 5
    n^{-(\frac{\delta}{2} + \frac{\delta \epsilon_2}{4})}$, for $n \geq
    N_1$ (possible by Claim 1).
\item Choose $N_2 \in \N$ such that $16 \mu_s n^{\alpha} \leq \epsilon n$,
    for $n \geq N_2$ (possible since $\alpha < 1$).
\item Finally, choose $N_3 \in \N$ such that, for all $n \geq N_3$,
\begin{align*}
& P_{0,s}^V\Big( \sum_{j = 0}^{\tau_0^{\Vh}} \Vh_j > \frac{1 - \epsilon}{\mu_s} \cdot n \Big) \leq (c_{11} + \epsilon) \left(  \frac{1 - \epsilon}{\mu_s} \cdot n \right)^{-\delta/2}  \mbox{ and } \\
& P_{0,s}^V\Big( \sum_{j = 0}^{\tau_0^{\Vh}} \Vh_j > \frac{1 + \epsilon}{\mu_s} \cdot n \Big) \geq (c_{11} - \epsilon) \left(  \frac{1 + \epsilon}{\mu_s} \cdot n \right)^{-\delta/2}.
\end{align*}
This is possible by Proposition
\ref{prop:DecayOfAccumulatedSumTillTimeTau0Vh}.
\end{itemize}
Then for each $n \geq N_0 \equiv \max\{N_1, N_2, N_3\}$ we have
\begin{align*}
&P_{0,s}^V\Big(\sum_{k=0}^{\sigma_0^V} V_k > n\Big)
\leq P_{0,s}^V\Big(\mu_s \sum_{j=0}^{\tau_0^{\Vh}} \Vh_j > (1 - \epsilon) n \Big) +  P_{0,s}^V\Big( \Big| \sum_{k=0}^{\sigma_0^V} V_k - \mu_s \sum_{j=0}^{\tau_0^{\Vh}} \Vh_j \Big| > \epsilon n \Big) \\
&\leq (c_{11} + \epsilon) \left(  \frac{1 - \epsilon}{\mu_s} \cdot n \right)^{-\delta/2} + P_{0,s}^V(G^c)
\leq (c_{11} + \epsilon) \left(  \frac{1 - \epsilon}{\mu_s} \cdot n \right)^{-\delta/2} + 5 n^{-(\frac{\delta}{2} + \frac{\delta \epsilon_2}{4})}
\end{align*}
and
\begin{align*}
&P_{0,s}^V\Big(\sum_{k=0}^{\sigma_0^V} V_k > n\Big)
\geq P_{0,s}^V\Big(\mu_s \sum_{j=0}^{\tau_0^{\Vh}} \Vh_j > (1 + \epsilon) n \Big) -  P_{0,s}^V\Big( \Big| \sum_{k=0}^{\sigma_0^V} V_k - \mu_s \sum_{j=0}^{\tau_0^{\Vh}} \Vh_j \Big| > \epsilon n \Big) \\
&\geq (c_{11} - \epsilon) \left(  \frac{1 + \epsilon}{\mu_s} \cdot n \right)^{-\delta/2} - P_{0,s}^V(G^c)
\geq (c_{11} - \epsilon) \left(  \frac{1 + \epsilon}{\mu_s} \cdot n \right)^{-\delta/2} - 5 n^{-(\frac{\delta}{2} + \frac{\delta \epsilon_2}{4})}.
\end{align*}
Since $\epsilon \in (0,1)$ was arbitrary it follows that
\begin{align*}
\lim_{n \to \infty} n^{\delta/2} \cdot P_{0,s}^V\Big(\sum_{k=0}^{\sigma_0^V} V_k > n \Big) = \mu_s^{\delta/2} \cdot c_{11}  = c_{13}.
\end{align*}
This concludes the main proof of the proposition, and it remains only now to
show Claim 1. To do this, though, we will first
need to establish some auxiliary claims that will be used in its proof. \\

\noindent \\
\emph{Claim 2:}  For all sufficiently large $n$,
\begin{align*}
P_{0,s}^V(E_i|V_{T_{i-1}} = x) \leq e^{-n^{\epsilon_1/2}}, \mbox{ for each } 0 \leq x \leq n^{\frac{1}{2}(1 + \epsilon_1)} \mbox{ and } i \geq 1.
\end{align*}

\noindent \emph{Proof.} By Lemma \ref{lem:ConcentrationEstimateUkVk}, we have
that for all sufficiently large $n$
\begin{align}
\label{eq:BoundOnFluctuationVTaujVTaujplus1}
P_{0,s}^V\Big( \max_{\tau_j \leq k \leq \tau_{j+1}} |V_{\tau_j} - V_k| > n^{\frac{1}{3}(1 + \epsilon_1)} \Big| V_{\tau_j} = x\Big)
\leq e^{-n^{\frac{1}{19}(1 + \epsilon_1)}},~0 \leq x \leq 2n^{\frac{1}{2}(1 + \epsilon_1)}.
\end{align}
For $i \geq 1$, define $T_0^{(i)} = T_{i-1}$ and $T_j^{(i)} = \inf\{k >
T_{j-1}^{(i)}:R_k = s\}$, $j \geq 1$. Then, let
\begin{align*}
A_j^{(i)} = \Big\{ \max_{ T_{j-1}^{(i)} \leq k \leq  T_j^{(i)} } |V_{T_{j-1}^{(i)}} - V_k| \leq n^{\frac{1}{3}(1 + \epsilon_1)} \Big\}.
\end{align*}
Observe that, for all sufficiently large $n$ and $1 \leq j \leq
2n^{\epsilon_1}$, if $V_{T_{i-1}} \leq n^{\frac{1}{2}(1 + \epsilon_1)}$ and
$A_1^{(i)}, \ldots, A_j^{(i)}$ all occur then $V_{T_j^{(i)}} \leq
n^{\frac{1}{2}(1 + \epsilon_1)} + j \cdot n^{\frac{1}{3}(1 + \epsilon_1)}
\leq 2n^{\frac{1}{2}(1 + \epsilon_1)}$. Thus, by
\eqref{eq:BoundOnFluctuationVTaujVTaujplus1}, for all sufficiently large $n$
and $0 \leq x \leq  n^{\frac{1}{2}(1 + \epsilon_1)}$ we have
\begin{align}
\label{eq:MaxVTiminus1MinusVkSmall}
P&_{0,s}^V\Big( \max_{k \in \Kt_i} |V_{T_{i-1}} - V_k| > 2n^{\epsilon_1} n^{\frac{1}{3}(1 + \epsilon_1)} \Big|V_{T_{i-1}} = x\Big) \nonumber \\
& \leq P_{0,s}^V\left( \exists 1 \leq j \leq 2n^{\epsilon_1} : \big(A_j^{(i)}\big)^c \mbox{ occurs } \Big| V_{T_{i-1}} = x \right) \nonumber \\
& \leq \sum_{j = 1}^{\floor{2n^{\epsilon_1}}} P_{0,s}^V\left( \big(A_j^{(i)}\big)^c \Big| A_1^{(i)}, \ldots, A_{j-1}^{(i)}, V_{T_{i-1}} = x\right)
\leq 2n^{\epsilon_1} \cdot e^{-n^{\frac{1}{19}(1 + \epsilon_1)}}.
\end{align}
Also, since $|\Kt_i| \geq \floor{2n^{\epsilon_1}}$ (deterministically) and
the event $\{K_i \not\subset \Kt_i\}$ is independent of the value of
$V_{T_{i-1}}$ it follows from \eqref{eq:tausStausRExponentialTail} that, for
any $x$,
\begin{align}
\label{eq:BoundKiNotInKti}
P_{0,s}^V(K_i \not\subset \Kt_i |V_{T_{i-1}} = x) = P_{0,s}^V(K_i \not\subset \Kt_i) \leq P_{0,s}^V(|K_i| > \floor{2n^{\epsilon_1}}) \leq c_3 e^{-c_4 \floor{n^{\epsilon_1}}}.
\end{align}
Combing the estimates \eqref{eq:MaxVTiminus1MinusVkSmall} and
\eqref{eq:BoundKiNotInKti} shows that, for all sufficiently large $n$ and $0
\leq x \leq n^{\frac{1}{2}(1+ \epsilon_1)}$,
\begin{align*}
P_{0,s}^V(E_i|V_{T_{i-1}} = x) \leq 2n^{\epsilon_1} e^{-n^{\frac{1}{19}(1 + \epsilon_1)}} + c_3 e^{-c_4 \floor{n^{\epsilon_1}}} \leq e^{-n^{\epsilon_1/2}}.
\end{align*}

\noindent \emph{Claim 3:} For all sufficiently large $n$,
\begin{align*}
P_{0,s}^V(F_i) \leq e^{-n^{\epsilon_1 \epsilon_2}}, \mbox{ for each } i \geq 1.
\end{align*}

\noindent \emph{Proof.} Define $j_0 \equiv j_1^{\max} = \inf \{j: \tau_j \geq
\floor{n^{\epsilon_1}}\} = \inf\{j : \sum_{\ell = 1}^j \Delta_{\tau,\ell}
\geq \floor{n^{\epsilon_1}} \}$. Since the $(R_k)$ process is Markovian,
\begin{align*}
P_{0,s}^V(F_i) = P_{0,s}^V(F_1) = P_{0,s}^V \Big( \Big| \sum_{j = 1}^{j_0} (\mu_s - \Delta_{\tau,j}) \Big| > n^{\frac{1}{2} \epsilon_1 + \epsilon_2} \Big) ~,~\mbox{ for all } i \geq 1.
\end{align*}
Let $N_n^+ = \lfloor{ (n^{\epsilon_1} + n^{\frac{1}{2}(\epsilon_1 +
\epsilon_2)})/\mu_s \rfloor}$ and $N_n^- = \lfloor{ (n^{\epsilon_1} -
n^{\frac{1}{2}(\epsilon_1 + \epsilon_2)})/\mu_s \rfloor}$. Define events
$B_1$ and $B_2$ by
\begin{align*}
B_1 = \Big\{ \Big| \sum_{j=1}^{N_n^+} \Delta_{\tau,j} - \mu_s N_n^+ \Big| \leq (N_n^+)^{\frac{1}{2} + \epsilon_2} \Big\} ~~\mbox{ and }~~
B_2 = \Big\{ \Big| \sum_{j=1}^{N_n^-} \Delta_{\tau,j} - \mu_s N_n^- \Big| \leq (N_n^-)^{\frac{1}{2} + \epsilon_2} \Big\}.
\end{align*}
Since the random variables $(\Delta_{\tau,j})_{j \geq 1}$ are i.i.d. with
mean $\mu_s$ and exponentials tails (due to
\eqref{eq:tausStausRExponentialTail}), it follows from Lemma
\ref{lem:GeneralLargeDeviationBoundSumsIID} that there exist some constants
$C_1, C_2 > 0$ such that
\begin{align*}
P_{0,s}^V \Big( \Big| \sum_{j=1}^{m} (\Delta_{\tau,j} - \mu_s) \Big| > \epsilon m \Big) \leq C_1 e^{-C_2 \epsilon^2 m} ~,~\mbox{ for all } 0 < \epsilon < 1 \mbox{ and } m \in \N.
\end{align*}
Using this with $m = N_n^+, N_n^-$ and $\epsilon = (N_n^+)^{-1/2 +
\epsilon_2}, (N_n^-)^{-1/2 + \epsilon_2}$, respectively, shows that
\begin{align*}
P_{0,s}^V(B_1^c) \leq C_1 e^{-C_2(N_n^+)^{2 \epsilon_2}} ~~\mbox{ and }~~ P_{0,s}^V(B_2^c) \leq C_1 e^{-C_2(N_n^-)^{2 \epsilon_2}}.
\end{align*}
Hence, for all sufficiently large $n$,
\begin{align*}
P_{0,s}^V\big( (B_1 \cap B_2)^c\big) \leq C_1 e^{-C_2(N_n^+)^{2 \epsilon_2}} + C_1 e^{-C_2(N_n^-)^{2 \epsilon_2}} \leq e^{-n^{\epsilon_1 \epsilon_2}}.
\end{align*}
So, it will suffice to show that, for all sufficiently large $n$,
\begin{align}
\label{eq:WantToShowSumSmallOnB1IntersectB2}
\Big| \sum_{j = 1}^{j_0} (\Delta_{\tau,j} - \mu_s) \Big| \leq n^{\frac{1}{2} \epsilon_1 + \epsilon_2} ~~\mbox{ on the event } B_1 \cap B_2.
\end{align}
Now, since $\epsilon_1 < 1/2$, $n^{\epsilon_1 \epsilon_2} < n^{\frac{1}{2}
\epsilon_2}$. Using this fact and a little bit of algebra it follows from the
definitions of $B_1$ and $B_2$ that, for all sufficiently large $n$, on the
event $B_1 \cap B_2$
\begin{align*}
n^{\epsilon_1} < \sum_{j=1}^{N_n^+} \Delta_{\tau,j} < n^{\epsilon_1} + 2n^{\frac{1}{2}(\epsilon_1 + \epsilon_2)} ~~~\mbox{ and }~~~
n^{\epsilon_1} - 2n^{\frac{1}{2}(\epsilon_1 + \epsilon_2)} <  \sum_{j=1}^{N_n^-} \Delta_{\tau,j} < n^{\epsilon_1}.
\end{align*}
Together these inequalities imply $N_n^- \leq j_0 \leq N_n^+$. So, by the
definitions of $N_n^+$ and $N_n^-$,
\begin{align*}
&\sum_{j=1}^{j_0} (\Delta_{\tau,j} - \mu_s) < [ n^{\epsilon_1} + 2n^{\frac{1}{2}(\epsilon_1 + \epsilon_2)} ] - N_n^- \mu_s \leq 4n^{\frac{1}{2}(\epsilon_1 + \epsilon_2)} \mbox{ and } \\
&\sum_{j=1}^{j_0} (\Delta_{\tau,j} - \mu_s) > [ n^{\epsilon_1} - 2n^{\frac{1}{2}(\epsilon_1 + \epsilon_2)} ] - N_n^+ \mu_s \geq - 4n^{\frac{1}{2}(\epsilon_1 + \epsilon_2)}
\end{align*}
on the event $B_1 \cap B_2$, for all sufficiently large $n$. Since
$4n^{\frac{1}{2}(\epsilon_1 + \epsilon_2)} \leq n^{\frac{1}{2} \epsilon_1 +
\epsilon_2}$ for all sufficiently large $n$,
this shows that \eqref{eq:WantToShowSumSmallOnB1IntersectB2} holds for all sufficiently large $n$. \\

\noindent \emph{Claim 4:} Denote $\Vh_{\max} = \max \{\Vh_j: 0 \leq j \leq
\tau_0^{\Vh} \}$. There exists some $c_{15} > 0$ such that
\begin{align}
\label{eq:VhmaxNotToBig}
P_{0,s}^V(\Vh_{\max} > t) \leq c_{15} t^{-\delta} ~,~ \mbox{ for all } t \in [0,\infty).
\end{align}

\noindent \emph{Proof.} As noted above in the proof of Propositions
\ref{prop:DecayOfHittingTimeTau0Vh} and
\ref{prop:DecayOfAccumulatedSumTillTimeTau0Vh} in Appendix
\ref{app:ProofOfPropsDecayAndAccumulatedSumTimeTau0Vh}, the Markov chain
$(\Vh_k)_{k \geq 0}$ satisfies all conditions (A)-(D) of Proposition
\ref{prop:GeneralizedMachineryFromLimitLawsERW} with $\alpha = \mu_s$ and
$\beta = 1 - \delta$. Thus, all the lemmas in Appendix
\ref{app:ProofOfPropsDecayAndAccumulatedSumTimeTau0Vh} which hold for a
Markov chain $(Z_k)$ satisfying these properties apply to $(\Vh_k)$. We will
use Lemma \ref{lem:ModifiedCorollary55FromLimitLawsERW}. By this lemma there
exists some constant $C = C(0)$ such that
\begin{align*}
P_{0,s}^V(\Vh_{\max} > n) = P_{0,s}^V(\tau_{(n+1)^+}^{\Vh} < \tau_0^{\Vh}) \leq C(n+1)^{-\delta} ~,~ \mbox{ for all } n \in \N.
\end{align*}
This implies \eqref{eq:VhmaxNotToBig}. \\

\noindent \emph{Proof of Claim 1.} We will show that $P_{0,s}^V(A_i) \leq
n^{-(\frac{\delta}{2} + \frac{\delta \epsilon_2}{4})}$, for each $i = 1,
\ldots, 5$.
The estimate for $A_1$ follows directly from Proposition \ref{prop:DecayOfHittingTimeSigma0V}. The bounds for the other events are given below. \\

\noindent
\underline{Bound for $A_2$}: \\
We decompose $P_{0,s}^V(V_{\max} > 2n^{\frac{1}{2}(1 + \epsilon_2)})$ as
\begin{align*}
P_{0,s}^V\left(V_{\max} > 2n^{\frac{1}{2}(1 + \epsilon_2)}\right) & = \begin{cases}
P_{0,s}^V( \Vh_{\max} > n^{\frac{1}{2}(1 + \epsilon_2)},  V_{\max} > 2n^{\frac{1}{2}(1 + \epsilon_2)}) \\
+ P_{0,s}^V( \Vh_{\max} \leq n^{\frac{1}{2}(1 + \epsilon_2)},  V_{\max} > 2n^{\frac{1}{2}(1 + \epsilon_2)}, \tau_0^{\Vh} > n) \\
+  P_{0,s}^V( \Vh_{\max} \leq n^{\frac{1}{2}(1 + \epsilon_2)},  V_{\max} > 2n^{\frac{1}{2}(1 + \epsilon_2)}, \tau_0^{\Vh} \leq n)
\end{cases} \\
& \equiv (I) + (II) + (III).
\end{align*}
By Claim 4, $(I) \leq P_{0,s}^V(\Vh_{\max} > n^{\frac{1}{2}(1 + \epsilon_2)})
\leq c_{15} (n^{\frac{1}{2}(1 + \epsilon_2)})^{-\delta}$. Also, by
Proposition \ref{prop:DecayOfHittingTimeTau0Vh}, $(II) \leq
P_{0,s}^V(\tau_0^{\Vh} > n) \leq 2 c_{10} n^{-\delta}$, for all sufficiently
large $n$. Term $(III)$ is estimated as follows:
\begin{align}
\label{eq:BoundTermIIIClaim2}
(III) &\leq P_{0,s}^V\left (\exists~ 0 \leq j \leq n-1: \Vh_j \equiv V_{\tau_j} \leq n^{\frac{1}{2}(1 + \epsilon_2)}  \mbox{ and }
\max_{\tau_j \leq k \leq \tau_{j+1}} |V_{\tau_j} - V_k| > n^{\frac{1}{2}(1+ \epsilon_2)} \right) \nonumber \\
&\leq \sum_{j=0}^{n-1}  P_{0,s}^V\left( V_{\tau_j} \leq n^{\frac{1}{2}(1 + \epsilon_2)}  \mbox{ and }
\max_{\tau_j \leq k \leq \tau_{j+1}} |V_{\tau_j} - V_k| > n^{\frac{1}{2}(1+ \epsilon_2)} \right) \nonumber \\
&\leq \sum_{j=0}^{n-1}  P_{0,s}^V\left( \max_{\tau_j \leq k \leq \tau_{j+1}} |V_{\tau_j} - V_k| > n^{\frac{1}{2}(1+ \epsilon_2)} \Big|
V_{\tau_j} \leq n^{\frac{1}{2}(1 + \epsilon_2)} \right) \nonumber \\
& \leq n \cdot \left[ \max_{0 \leq x \leq n^{\frac{1}{2}(1 + \epsilon_2)}}
P_{x,s}^V \left(\max_{0 \leq k \leq \tau_s^R} |V_k - x| > n^{\frac{1}{2}(1+ \epsilon_2)} \right) \right].
\end{align}
By \eqref{eq:VkLargeEpsBound} the right hand side of
\eqref{eq:BoundTermIIIClaim2} is at most $n e^{-n^{1/6}}$, for all
sufficiently large $n$ (note that although \eqref{eq:VkLargeEpsBound} is not
directly applicable when $x=0$, $P_{0,s}^V(\max_{0 \leq k \leq \tau_s^R} V_k
> t) \leq P_{1,s}^V(\max_{0 \leq k \leq \tau_s^R} V_k > t)$, for any $t > 0$,
so the bound still holds in this case as well). Combining these estimates on
terms $(I)$, $(II)$, and $(III)$ we find that, for all sufficiently large
$n$,
\begin{align*}
P_{0,s}^V(A_2) \equiv P_{0,s}^V\left(V_{\max} > 2n^{\frac{1}{2}(1 + \epsilon_2)}\right)
\leq c_{15} (n^{\frac{1}{2}(1 + \epsilon_2)})^{-\delta} + 2 c_{10} n^{-\delta} + n e^{-n^{1/6}}
\leq n^{-(\delta/2 + \delta \epsilon_2/4)}.
\end{align*} \\

\noindent
\underline{Bound for $A_3$}: \\
By construction, $i_{\max} \leq \tau_0^{\Vh}$. So, by Proposition
\ref{prop:DecayOfHittingTimeTau0Vh}, for all sufficiently large $n$
\begin{align}
\label{eq:BoundVimaxBiggern}
P_{0,s}^V(i_{\max} > n) \leq P_{0,s}^V(\tau_0^{\Vh} > n) \leq 2 c_{10} n^{-\delta}.
\end{align}
Combining this with Claim 2 and Claim 4 shows that, for all sufficiently
large $n$,
\begin{align*}
P_{0,s}^V(A_3)
& = \begin{cases} P_{0,s}^V(A_3, \Vh_{\max} > n^{\frac{1}{2}(1 + \epsilon_2)}) + P_{0,s}^V(A_3, \Vh_{\max} \leq n^{\frac{1}{2}(1 + \epsilon_2)}, i_{\max} > n) \\
+ P_{0,s}^V(A_3, \Vh_{\max} \leq n^{\frac{1}{2}(1 + \epsilon_2)}, i_{\max} \leq n) \end{cases} \\
& \leq \begin{cases} P_{0,s}^V(\Vh_{\max} > n^{\frac{1}{2}(1 + \epsilon_2)}) +  P_{0,s}^V(i_{\max} > n) \\
+ P_{0,s}^V(\exists 1 \leq i \leq n:V_{T_{i-1}} \leq n^{\frac{1}{2}(1+\epsilon_2)} \mbox{ and $E_i$ occurs}) \end{cases} \\
& \leq c_{15} (n^{\frac{1}{2}(1 + \epsilon_2)})^{-\delta} + 2 c_{10} n^{-\delta} + ne^{-n^{\epsilon_1/2}} \\
& \leq n^{-(\delta/2 + \delta \epsilon_2/4)}.
\end{align*}

\noindent
\underline{Bound for $A_4$}: \\
By \eqref{eq:BoundVimaxBiggern} and Claim 3 we have, for all sufficiently
large $n$,
\begin{align*}
P_{0,s}^V(A_4)
& = P_{0,s}^V(A_4, i_{\max} > n) + P_{0,s}^V(A_4, i_{\max} \leq n) \\
& \leq P_{0,s}^V(i_{\max} > n) + P_{0,s}^V(\exists 1 \leq i \leq n: F_i \mbox{ occurs}) \\
& \leq 2c_{10}n^{-\delta} + n e^{-n^{\epsilon_1 \epsilon_2}} \\
& \leq n^{-(\delta/2 + \delta \epsilon_2/4)}.
\end{align*}

\noindent
\underline{Bound for $A_5$}: \\
By \eqref{eq:BoundKiNotInKti}, $P_{0,s}^V(|K_i| > 2n^{\epsilon_1}) \leq c_3
e^{-c_4 \floor{n^{\epsilon_1}}}$, for each $i$. Using this along with
\eqref{eq:BoundVimaxBiggern} shows that, for all sufficiently large $n$,
\begin{align*}
P_{0,s}^V(A_5)
& = P_{0,s}^V(A_5, i_{\max} > n) + P_{0,s}^V(A_5, i_{\max} \leq n) \\
& \leq P_{0,s}^V(i_{\max} > n) + P_{0,s}^V(\exists 1 \leq i \leq n+1: |K_i| > 2n^{\epsilon_1}) \\
& \leq 2c_{10}n^{-\delta} + (n+1) \cdot c_3 e^{-c_4 \floor{n^{\epsilon_1}}} \\
& \leq n^{-(\delta/2 + \delta \epsilon_2/4)}.\qedhere
\end{align*}
\end{proof}





\ACKNO{I thank Jonathon Peterson for helpful discussion on excited random
walks, and in particular for making me aware of the diffusion approximation
method for analyzing the backward branching process from previous works. I
also thank an anonymous referee for comments that led to significant
improvements in the paper.}

\end{document}